\RequirePackage{fix-cm}
\documentclass[smallextended,numbook]{svjour3}       % onecolumn (second format)
\smartqed  % flush right qed marks, e.g. at end of proof
\usepackage{amssymb,amsmath,bm}
\usepackage{tikz}
\usetikzlibrary{arrows.meta,positioning,matrix,fit}
\usepackage{longtable}
\usepackage{tabularx}
\usepackage{multirow}
\usepackage{setspace}
\usepackage{booktabs}
\usepackage{hyperref}
\usepackage{cleveref}
\usepackage{mathtools}
\usepackage{color}
\usepackage{xcolor}
\usepackage{mathrsfs}
\usepackage[version=4]{mhchem}
\usepackage{subfigure} 
\usepackage{dcolumn}% Align table columns on decimal point
\usepackage{ulem}
\usepackage{xr}
\usepackage{setspace}
\usepackage{enumitem}
\usepackage{graphicx}
\newtheorem{assumptionx}{Assumption}[section]
\Crefname{assumptionx}{Assumption}{Assumptions}
\Crefname{assumptionx}{Assumption}{Assumptions}

\newcommand{\R}{\mathbb{R}}
\newcommand{\N}{\mathbb{N}}
\newcommand{\E}{\mathbb{E}}

\newcommand{\eps}{\varepsilon}

\newcommand{\op}{\mathrm{op}}
\newcommand{\Sym}{\mathrm{Sym}}
\newcommand{\Schur}{\mathrm{Schur}}
\newcommand{\diag}{\mathrm{diag}}
\newcommand{\tr}{\mathrm{tr}}
\newcommand{\supp}{\mathrm{supp}}
\newcommand{\F}{\mathcal{F}}
\newcommand{\G}{\mathcal{G}}
\newcommand{\I}{\mathcal{I}}

\begin{document}

\title{Four Levels of Thermodynamic Convergence of Singularly Perturbed Markov Semigroups} 
%\subtitle{From dynamical convergence to nonequilibrium entropy production} 

\titlerunning{Four levels of thermodynamic convergence}        % if too long for running head

\author{Xinyu Zhang\and
        Liu Hong %etc.
}

\authorrunning{X. Zhang, L. Hong} % if too long for running head

\institute{Xinyu Zhang %\at School of Mathematics, Sun Yat-sen University, Guangzhou, 510275, P. R. China \\
              %Tel.: +123-45-678910\\
              %Fax: +123-45-678910\\
              %\email{zhangxy353@mail2.sysu.edu.com}           %  \\
%             \emph{Present address:} of F. Author  %  if needed
           \and
           Liu Hong \at
               School of Mathematics, Sun Yat-sen University, Guangzhou, 510275, P. R. China\\
              \email{hongliu@sysu.edu.cn}
}

\date{\today / Accepted: }
% The correct dates will be entered by the editor

\maketitle
\begin{abstract}
Assuming the dynamical convergence $P_t^\varepsilon\to\bar P_t$ for singular limits of time-homogeneous Markov diffusion semigroups, we develop a semigroup-level framework that upgrades this convergence into four levels of thermodynamic convergence (including non-reversible diffusions and multiplicative noise).
Level~I yields convergence of the free energy, and under an $\varepsilon$-uniform curvature--dimension bound $CD(-\kappa,\infty)$, Level~II shows convergence of the non-adiabatic entropy production. By further assuming coefficient convergence, Level~III yields sharp $\liminf$ bounds for the adiabatic and total entropy productions.
Moreover, Level~IV holds precisely when a locking condition holds, with no loss on entropy-production arising from unresolved microscopic nonequilibrium forcing. 
We give two verifiable routes to the uniform $CD$ hypothesis (a Ricci-type criterion and an It\^{o}--Kunita derivative-flow method) and illustrate the theory on slow--fast averaging limits and stiff-potential regimes.

\keywords{Thermodynamic convergence \and Markov diffusion semigroups \and Singular limits}
% \PACS{PACS code1 \and PACS code2 \and more}
% \subclass{MSC code1 \and MSC code2 \and more}
\end{abstract}
%\newpage
\tableofcontents
\section{Introduction}
A central task in analysing the high-dimensional multiscale, and noisy systems is to construct simplified effective models that retain physically relevant behaviours of the dynamical system \cite{chung2023multiscale}. 
Such reduction problems have been systematised in the model-reduction literature \cite{benner2017model} and arise across biomolecular and materials modelling \cite{fish2010n}, as well as in climate and geophysical applications \cite{steinhaeuser2012multivariate}. 
At a mesoscopic level, many such systems are naturally described by Fokker--Planck equations, or more abstractly by Markov semigroups in the state space \cite{bogachev2022fokker}. 
In typical applications a small parameter enters the generator, rescaling parts of the drift and/or diffusion and thereby creating fast directions that degenerate in the limit \cite{Khasminskii1966SmallParameter}.

A classical goal of multiscale analysis is to justify such reductions by proving that the microscopic semigroups $(P_t^\varepsilon)_{\varepsilon>0}$ converge, as $\varepsilon\downarrow0$, to a limiting macroscopic semigroup $(\bar P_t)_{t\ge0}$ \cite{pavliotis2008multiscale}. 
This dynamical convergence is well understood in averaging and homogenisation regimes \cite{Khasminskii1963Averaging,Khasminskii2004SIMA}, and it also arises in diffusion-approximation settings for coupled systems \cite{rockner2021diffusion}. 
Results on reduction have been justified at the level of trajectories or probability laws, but they do not by themselves control how nonequilibrium dissipation and irreversibility transform under the limit.

From the viewpoint of nonequilibrium physics, dynamical convergence is only a partial notion of consistency: it reproduces expectations of observables, but it does not control irreversibility and dissipation. 
For diffusions far from equilibrium, these features are quantified by free energy and entropy production \cite{maes2003time,lebowitz1999gallavotti}, with canonical decompositions into non-adiabatic (excess) and housekeeping parts \cite{hatano2001steady,van2010three}. 
Under singular limits and model reduction, however, housekeeping and total entropy productions may lose mass even when $P_t^\varepsilon\to\bar P_t$ dynamically \cite{esposito2012coarse}, reflecting information discarded by the reduction \cite{PolettiniEsposito2017PRL,GomezMarin2008PRE}. 
Therefore, this paper asks: given the dynamical convergence, in what precise sense can one upgrade it to a thermodynamic convergence, and what mechanism characterizes the entropy-production loss?

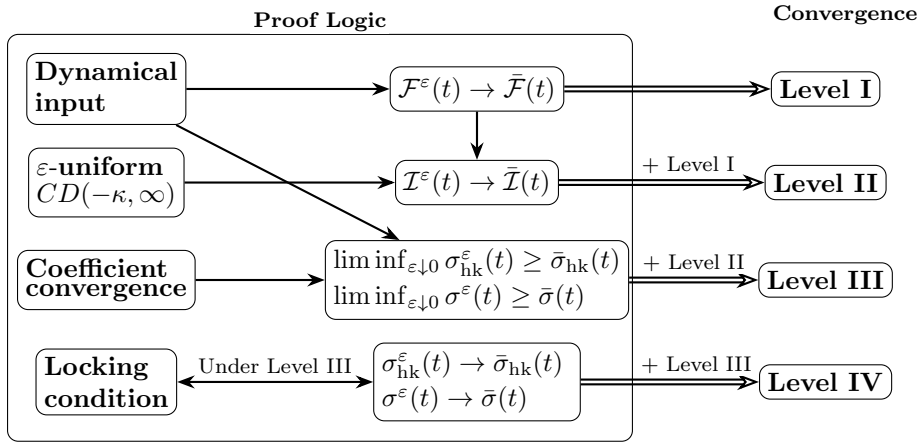
\begin{figure}[t]
\centering
\begin{tikzpicture}[
  >=Stealth,
  box/.style={draw, rounded corners, inner sep=3pt, align=left},
  arr/.style={->, thick}
]
\matrix (m) [matrix, row sep=3mm,
  column sep=17mm]{
\node[box] (a1) {\shortstack[l]{\textbf{Dynamical}\\ \textbf{input}}}; &
\node[box] (r1) {$\F^\varepsilon(t)\to \bar\F(t)$}; &
\node[box] (l1) {\textbf{Level I}}; \\

\node[box] (a2) {\shortstack[l]{$\varepsilon$-\textbf{uniform}\\ $CD(-\kappa,\infty)$}}; &
\node[box] (r2) {$\I^\varepsilon(t)\to \bar\I(t)$}; &
\node[box] (l2) {\textbf{Level II}}; \\

\node[box] (a3) {\shortstack[l]{\textbf{Coefficient}\\ \textbf{convergence}}}; &
\node[box] (r3) {\shortstack[l]{$\liminf_{\varepsilon\downarrow0}\sigma_{\mathrm{hk}}^\varepsilon(t)\ge \bar\sigma_{\mathrm{hk}}(t)$\\ $\liminf_{\varepsilon\downarrow0}\sigma^\varepsilon(t)\ge \bar\sigma(t)$}}; &
\node[box] (l3) {\textbf{Level III}}; \\

\node[box] (a4) {\shortstack[l]{\textbf{Locking}\\ \textbf{condition}}}; &
\node[box] (r4) {\shortstack[l]{
  $\sigma_{\mathrm{hk}}^\varepsilon(t)\to \bar\sigma_{\mathrm{hk}}(t)$\\
  $\sigma^\varepsilon(t)\to \bar\sigma(t)$}
}; &
\node[box] (l4) {\textbf{Level IV}}; \\
};

% arrows
\draw[arr] (a1) -- (r1);
\draw[-{Stealth[open]}, thick, double, double distance=1pt] (r1) -- (l1);

\draw[arr] (a2) -- (r2);
\draw[arr] (r1) -- (r2);
\draw[-{Stealth[open]}, thick, double, double distance=1pt] (r2) -- (l2)
  node[midway, above=2pt, fill=white, inner sep=1pt] {\small {$\text{~~~~~~~+~Level~I}$}};

\draw[arr] (a3) -- (r3);
\draw[arr] (a1) -- (r3);
\draw[-{Stealth[open]}, thick, double, double distance=1pt] (r3) -- (l3)
  node[midway, above=2pt, fill=white, inner sep=1pt] {\small {$\text{+~Level~II}$}};

\draw[<->, thick] (a4) -- (r4)
  node[midway, above=2pt, fill=white, inner sep=1pt] {\small {$\text{Under Level~III}$}};

\draw[-{Stealth[open]}, thick, double, double distance=1pt] (r4) -- (l4)
  node[midway, above=2pt, fill=white, inner sep=1pt] {\small {$\text{~~~~~~~+~Level~III}$}};

% group box around the left two columns
\node[inner sep=0pt, outer sep=0pt] (padR)
  at ([xshift=18pt]r1.east |- r4.south) {};
\node[draw, rounded corners, inner sep=7pt, fit=(a1)(a4)(r1)(r4)(padR)] (logicbox) {};
% title on the group box
\node[fill=white, inner sep=1pt, anchor=south]
  at (logicbox.north) {\textbf{\small{Proof Logic}}};

% title for the right column (aligned with the left title)
\node[anchor=south west]
  at ([xshift=-1mm] l1.north west |- logicbox.north) {\textbf{\small{Convergence}}};

\end{tikzpicture}
\caption{\textbf{Framework overview.} Assumptions (left) yield properties (middle) and define the thermodynamic convergence levels (right). Single arrows indicate sufficient implications; double arrows indicate definitions (typically adding the previous level).}\label{fig:roadmap}
\end{figure}

Our approach is deliberately conditioned on a prescribed dynamical coarse-graining limit.
Specifically, we take as input the convergence of the microscopic semigroups $(P_t^\varepsilon,\pi^\varepsilon)$ to a macroscopic limit $(\bar P_t,\bar\pi)$, a property verified in many multiscale stochastic differential equations (SDE) regimes (see, e.g., \cite{pavliotis2008multiscale}). 
Within a semigroup-level framework, we then lift this dynamical input to a series of thermodynamic statements: for each fixed $t>0$ we compare the microscopic and macroscopic free energy, dissipation, and entropy production functionals, and introduce four nested levels of thermodynamic convergence (Levels~I--IV). 
Our main theorem provides an explicit implication chain upgrading the assumptions in \Cref{fig:roadmap} into Level~I--IV convergence/liminf bounds.

Moreover, we isolate a strictly weaker steady-state target.
The time-dependent housekeeping $\liminf$ bound involves the evolving density $u^\varepsilon(t)$ and therefore uses dynamical input together with coefficient convergence, whereas at stationarity the density is trivial ($u\equiv1$) so that the non-adiabatic contribution vanishes and $\sigma=\sigma_{\mathrm{hk}}$. 
Consequently, the steady-state housekeeping $\liminf$ bound becomes purely static and follows from coefficient convergence alone, leading to the weakened implication chain \eqref{eq:weakened-chain}:
\begin{equation}\label{eq:weakened-chain}
\begin{aligned}
&\text{Weakened dynamic convergence}\Rightarrow \text{Level I}\\
+&\ \eps\text{-uniform }CD(-\kappa,\infty)\Rightarrow \text{Level II}\\
+&\ \text{coefficient convergence}\Rightarrow \text{Level III$_\mathrm{ss}$}.
\end{aligned}
\end{equation}
In particular, our framework makes transparent which inputs are genuinely dynamical and which ones are purely coefficient-level, and yields a streamlined route to steady-state thermodynamic limits.

The key upgrade mechanism is an $\varepsilon$-uniform curvature--dimension bound $CD(-\kappa,\infty)$ on the microscopic semigroups: it provides a time-monotonicity structure for the entropy--dissipation pair that yields dissipation convergence (Level~II) from free-energy convergence (Level~I), and it supplies uniform gradient commutation/regularisation estimates that stabilise the $L^2(\pi^\varepsilon)$ representations of entropy production needed for Levels~III--IV. 
Entropy-production loss is then traced to unresolved nonequilibrium forcing in eliminated directions, and the locking condition is formulated as a recovery-sequence/compactness requirement that rules out such residual dissipation.

To make the curvature hypothesis checkable, we give two complementary criteria: a Ricci-type matrix inequality for block-structured (essentially linear) models, and an It\^{o}--Kunita derivative-flow criterion for nonlinear diffusions with multiplicative noise. 
We illustrate the framework on two prototypical limits: a slow--fast averaging regime in a nonequilibrium setting (with dynamical inputs taken from \cite{BardiKouhkouh2023}), where the full Level~I--IV theory is carried out for a linear Ornstein--Uhlenbeck model and a tractable nonlinear subclass; and a reversible stiff-potential regime, where invariant measures concentrate on a constraint manifold \cite{Hwang1980} and a weak/pointwise limit is available \cite{Katzenberger1991}, and where a uniform curvature bound upgrades the convergence to the semigroup topology required by the abstract theory.

The paper is organised as follows.
In \Cref{sec:setting} we introduce the microscopic/macroscopic semigroup framework, state the standing assumptions, and define the thermodynamic functionals (free energy, dissipation, and entropy production) that will be compared across scales.
\Cref{sec:main-results} contains our main theorems: it formulates the dynamical and coefficient-level inputs and derives the implication chain leading to the four levels of thermodynamic convergence (and their steady-state variant).
To make these inputs checkable, Appendix \ref{sec:Criteria} develops verifiable criteria for the assumptions in terms of tractable analytic/probabilistic conditions.
Finally, \Cref{sec:case_study} applies the general theory to two representative multiscale SDE regimes, illustrating how the abstract criteria can be verified and how the thermodynamic conclusions follow in concrete models.

%The paper is organised as follows. \Cref{sec:setting} introduces the abstract setting, assumptions, and thermodynamic functionals. \Cref{sec:main-results} gives assumptions and proves the convergence theorems, where \Cref{sec:Criteria} develops the criteria for the assumptions, and \Cref{sec:case_study} treats the two case studies.
\section{Setting and Preliminary Thermodynamic Results}
\label{sec:setting}
This section sets up the general semigroup framework for the microscopic--macroscopic singular limit and lists the standing assumptions used throughout the paper.
%\Cref{sec:setting} fixes the structural background.
\Cref{subsec:thermo-prelim} introduces the thermodynamic functionals and records the basic identities among them, emphasizing their three equivalent representations.

%\Cref{subsec:ass} then records the asymptotic inputs driving the main theorem, in particular the dynamical convergence assumption as the starting point, together with the additional hypotheses required for the thermodynamic upgrade chain, including an $\varepsilon$-uniform $CD$ condition, coefficient convergence, and the locking assumption. 

%========================
% Notation (Chapter 2) — concise
%========================

%For the singular perturbation family, the corresponding microscopic objects are denoted by adding a superscript $\varepsilon$ (e.g.\ $P_t^\varepsilon$, $u^\varepsilon(t,\cdot)$, $\F^\varepsilon(t)$), while macroscopic limiting objects are denoted by an overbar (e.g.\ $\bar{\mathbf P}_t$, $\bar u(t,\cdot)$, $\bar{\F}(t)$).
%\subsection{Framework}
%\label{sec:setting}

We work with a microscopic state space $E\subset\R^N$ and a macroscopic state space $\bar E\subset\R^n$, both open and connected. 
Here $x\in E$ represents a microscopic configuration resolving all degrees of freedom of the system, whereas $\bar x\in\bar E$ collects the macroscopic (coarse) variables of interest, such as slow coordinates, reaction coordinates, or experimentally accessible observables. 
The two descriptions are linked by a surjective $C^1$ coarse-graining map $\Phi:E\to\bar E$ with $\mathrm{rank}(D\Phi)=n$, so that $ x=\Phi(z)$ is the macroscopic projection of a microscopic state.

For $\varepsilon>0$, our singular perturbation problem concerns a family of microscopic semigroups $(P_t^\varepsilon)_{t\ge 0}$ on $E$ and a macroscopic limit semigroup $(\bar P_t)_{t\ge 0}$ on $\bar E$, where $\varepsilon$ quantifies the strength of the singular perturbation (e.g.\ time-scale separation, stiffness, or weak noise). 
We denote by $(\pi^\varepsilon,\mathcal L^\varepsilon,\Gamma^\varepsilon,u^\varepsilon)$ the objects associated with $P_t^\varepsilon$, and by $(\bar\pi,\bar{\mathcal L},\bar\Gamma,\bar u)$ those associated with $\bar P_t$; see \Cref{tab:table} for notation.
%When comparing micro and macro quantities on $E$, we identify macroscopic functions with their pullbacks $\Phi^\ast$.

\paragraph{Convention (generic notation).}
Symbols without superscripts $\varepsilon$ or overbars are \emph{generic} and refer to either the microscopic or macroscopic system.
Accordingly, $\nabla$ and $\nabla\!\cdot$ denote the gradient and divergence on the underlying state space; when both systems appear simultaneously we write $\nabla_z,\nabla_z\!\cdot$ on $E$ and $\nabla_x,\nabla_x\!\cdot$ on $\bar E$.

% ----------------------------------------------------------------------
\subsection{Semigroup assumptions and $\Gamma$-calculus objects}
\label{subsec:set-semigroup}

Let $X$ denote either $E$ or $\bar E$, and let $(P_t)_{t\ge 0}$ be a Markov semigroup on $X$.
For a nonnegative initial datum $f$ we set
\[
u(t,\zeta):=(P_t f)(\zeta),\qquad t\ge 0,\ \zeta\in X.
\]
We assume that $P_t:C_b(X)\to C_b(X)$ and $\|P_t f\|_\infty\le \|f\|_\infty$ for all $t\ge 0$, and we identify $P_t f$ with its bounded continuous version whenever pointwise values are used.
Moreover, $(P_t)_{t\ge 0}$ is strongly continuous on $L^2(\pi)$ and admits an infinitesimal generator $\mathcal L$,
\begin{equation}
\mathcal L f=\lim_{t\downarrow 0}\frac{P_t f-f}{t}\quad\text{in }L^2(\pi),\qquad f\in\mathrm{Dom}(\mathcal L),
\label{eq:generator-eps}    
\end{equation}
together with a unique invariant probability measure $\pi$ with strictly positive Lebesgue density.
Finally, we assume that $C_c^\infty(X)$ is a core for $\mathcal L$.

The carr\'e-du-champ $\Gamma$ and its iterated form $\Gamma_2$ are defined on $C_c^\infty(X)$ by
\[
  \Gamma(f,g)
  :=\tfrac12\Bigl(\mathcal L(fg)-f\,\mathcal L g-g\,\mathcal L f\Bigr),
  \qquad
  \Gamma(f):=\Gamma(f,f),
\]
\[
  \Gamma_{2}(f)
  :=\tfrac12\Bigl(\mathcal L \Gamma(f)-2\,\Gamma\bigl(f,\mathcal L f\bigr)\Bigr).
\]

% ----------------------------------------------------------------------
\subsection{Diffusion-form specialization and dictionary}
\label{subsec:set-diffusion}

Here we restrict to diffusion semigroups: we write
\[
\pi(d\zeta)=e^{-V(\zeta)}\,d\zeta
\]
for some $V\in W_{loc}^{2,\infty}(X)$ and assume an irreversible drift $\gamma\in W_{loc}^{1,\infty}(X;\mathbb R^d)$ satisfying
\[
\nabla\!\cdot(\gamma e^{-V})\equiv 0.
\]
We assume that $\mathcal L$ is of the diffusion type and that for every $\varphi\in C_c^\infty(X)$,
\begin{equation}
\mathcal L \varphi=-\langle\gamma,\nabla \varphi\rangle+\frac{\nabla\!\cdot(\pi A\nabla \varphi)}{\pi},
\label{eq:backward-eps1}
\end{equation}
where $A\in W_{loc}^{2,\infty}(X;\R^{d\times d})$ is symmetric and locally uniformly positive definite (here $d=N$ on $E$ and $d=n$ on $\bar E$).
Consequently, for $f\in L^2(\pi)$ the curve $u(t)=P_t f$ is the mild solution of $\partial_t u=\mathcal L u$ with $u(0,\zeta)=f$ in $L^2(\pi)$.

\begin{lemma}[Dictionary between $\mathcal L$, $\Gamma$ and $(A,\gamma)$]
\label{lem:dict-diffusion}
Let $\mathcal L$ be given by Eq.~\eqref{eq:backward-eps1} and let $\mathcal L^\dagger$ denote its $L^2(\pi)$-adjoint.
Define the symmetric and antisymmetric parts
\[
\mathcal L^{s}:=\tfrac12(\mathcal L+\mathcal L^\dagger),\qquad
\mathcal L^{a}:=\tfrac12(\mathcal L-\mathcal L^\dagger).
\]
Then for all $f,g\in C_c^\infty(X)$,
\[
\Gamma(f,g)(\zeta)=\langle \nabla f(\zeta),A(\zeta)\nabla g(\zeta)\rangle,
\qquad
\mathcal L^{a} f(\zeta)=-\langle \gamma(\zeta),\nabla f(\zeta)\rangle.
\]
Moreover, writing $\mathrm{id}:X\to\R^d$ for the coordinate map, we have
\[
\Gamma(\mathrm{id}_i,\mathrm{id}_j)=A_{ij},
\qquad
\mathcal L^{a}\mathrm{id}=-\gamma,
\]
in the sense of componentwise identities on $X$.
\end{lemma}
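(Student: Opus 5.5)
The plan is a direct computation on $C_c^\infty(X)$ in two stages: first identify $\mathcal L^\dagger$ explicitly, then read off $\Gamma$ and $\mathcal L^a$. Throughout write $\pi$ also for the density $e^{-V}$. Expanding Eq.~\eqref{eq:backward-eps1} gives
\[
\mathcal L\varphi=-\langle\gamma,\nabla\varphi\rangle+\mathcal S\varphi,\qquad \mathcal S\varphi:=\frac{\nabla\!\cdot(\pi A\nabla\varphi)}{\pi}.
\]
The regularity assumptions $V,A\in W^{2,\infty}_{loc}$ and $\gamma\in W^{1,\infty}_{loc}$ make all products and divergences below locally integrable. Compact support lets us integrate by parts with no boundary terms.

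\textbf{Step 1: the adjoint.} For $f,g\in C_c^\infty(X)$, integrating by parts gives
\[
\int g\,\mathcal S f\,d\pi=\int g\,\nabla\!\cdot(\pi A\nabla f)\,d\zeta=-\int\langle\nabla g,A\nabla f\rangle\,\pi\,d\zeta .
\]
This expression is symmetric in $f,g$ because $A$ is symmetric, so $\mathcal S$ is symmetric on $C_c^\infty$.

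For the drift term, $\nabla\!\cdot(\gamma\pi)=0$ yields
\[
\int g\,\langle\gamma,\nabla f\rangle\,\pi\,d\zeta=-\int f\,\nabla\!\cdot(g\gamma\pi)\,d\zeta=-\int f\,\langle\gamma,\nabla g\rangle\,\pi\,d\zeta .
\]
Hence $\varphi\mapsto-\langle\gamma,\nabla\varphi\rangle$ is antisymmetric on $C_c^\infty$.

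Therefore $\mathcal L^\dagger g=\mathcal S g+\langle\gamma,\nabla g\rangle$ on $C_c^\infty$, as an identity of the formal adjoint. Since $C_c^\infty$ is a core, this determines the restriction of $\mathcal L^\dagger$ to test functions; that is all the lemma needs, as it is stated only for $f\in C_c^\infty(X)$. Consequently $\mathcal L^s=\mathcal S$ and $\mathcal L^a f=-\langle\gamma,\nabla f\rangle$.

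\textbf{Step 2: the carr\'e-du-champ.} The drift part is a derivation, $\langle\gamma,\nabla(fg)\rangle=f\langle\gamma,\nabla g\rangle+g\langle\gamma,\nabla f\rangle$. It therefore drops out of $\mathcal L(fg)-f\mathcal Lg-g\mathcal Lf$.

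For $\mathcal S$, use $\nabla(fg)=f\nabla g+g\nabla f$ and the product rule for the divergence:
\[
\nabla\!\cdot(\pi A\nabla(fg))=f\,\nabla\!\cdot(\pi A\nabla g)+g\,\nabla\!\cdot(\pi A\nabla f)+2\pi\langle\nabla f,A\nabla g\rangle .
\]
The cross term $2\pi\langle\nabla f,A\nabla g\rangle$ uses the symmetry of $A$. Dividing by $\pi$ and halving gives $\Gamma(f,g)=\langle\nabla f,A\nabla g\rangle$.

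\textbf{Step 3: coordinate identities.} The coordinate functions $\mathrm{id}_i$ are not compactly supported. I will apply the identities locally: fix $\zeta$, choose a cutoff $\chi\in C_c^\infty$ equal to $1$ near $\zeta$, and use $\chi\,\mathrm{id}_i$. Since $\mathcal L$ is a local differential operator, the formulas at $\zeta$ do not depend on $\chi$. Then $\nabla\mathrm{id}_i=e_i$ gives $\Gamma(\mathrm{id}_i,\mathrm{id}_j)=\langle e_i,Ae_j\rangle=A_{ij}$ and $\mathcal L^a\mathrm{id}_i=-\gamma_i$.

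\textbf{Main obstacle.} The only delicate point is Step 1: identifying the abstract $L^2(\pi)$-adjoint with the formal adjoint. My approach is to work with the formal adjoint on the core $C_c^\infty$, which suffices because the statement is restricted to test functions. I will also check that the integrations by parts are justified under the stated $W^{1,\infty}_{loc}$ and $W^{2,\infty}_{loc}$ regularity, which holds since everything has compact support.
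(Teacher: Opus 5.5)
Your proposal is correct. The paper states this lemma without proof, treating it as the standard consequence of the diffusion form \eqref{eq:backward-eps1} (with a pointer to Arnold et al.), and your argument is exactly the one it takes for granted: the formal adjoint on the core via integration by parts and $\nabla\!\cdot(\gamma\pi)=0$, the product rule for $\Gamma$, and cutoff localization for the coordinate maps. One detail is worth making explicit in Step~1. For $g\in C_c^\infty(X)$, the function $\mathcal S g+\langle\gamma,\nabla g\rangle$ is bounded with compact support by the $W^{2,\infty}_{loc}$ and $W^{1,\infty}_{loc}$ regularity, so it lies in $L^2(\pi)$. This, together with the core property (approximating $f\in\mathrm{Dom}(\mathcal L)$ in graph norm), is what places $g$ in $\mathrm{Dom}(\mathcal L^\dagger)$ with the stated formula.
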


\paragraph{Initial data.}
We prescribe admissible initial data by square-roots. For the microscopic family we set
\[
\mathcal M^\eps := \Bigl\{ f=g^2:\ g\in C_b(E)\cap C^\infty(E),\ g\ge 0,\ 
\int _E(\nabla_z g)^{\top} A^\eps \nabla_z g\, d\pi^\eps<\infty\Bigr\},
\]
and assume $\mathcal M_0:=\bigcap_{0<\eps<\eps_0}\mathcal M^\eps\neq\emptyset$ (in all examples $\{g^2:\ g\in C_c^\infty(E)\}\subset\mathcal M^\eps$).
For each $\eps>0$ we consider $u^\eps(t)=P_t^\eps f$ with $f\in\mathcal M_0$, and evaluate all thermodynamic quantities along $u^\eps(t)$.

Similarly, on $\bar E$ we define
\[
\bar{\mathcal M}:=\Bigl\{ \bar f=g^2:\ g\in C_b(\bar E)\cap C^\infty(\bar E),\ g\ge 0,\ 
\int_{\bar E}(\nabla_x g)^{\top}\bar A \nabla_x g\, d\bar\pi<\infty\Bigr\},
\]
and consider $\bar u(t,x):=(\bar P_t\bar f)(x)$ for $\bar f\in\bar{\mathcal M}$.

Throughout the remainder of the paper, all semigroups under consideration are assumed to satisfy the above standing assumptions.

\subsection{Preliminary thermodynamic results}
\label{subsec:thermo-prelim}

In this subsection we define all thermodynamic functionals at the semigroup level.
Under the diffusion-form assumptions in \Cref{subsec:set-diffusion}, \Cref{lem:dict-diffusion}
yields the usual equivalent equation-level expressions and dissipation identities; see
\cite{Arnold2001,ArnoldCarlenJu2008LargeTime}.
Throughout we fix $u(t,\zeta)=(P_t f)(\zeta)$ with $f\in\mathcal M$.

\begin{definition}[Thermodynamic terms]\label{def:sigmas-single}
Let $X$ denote the underlying state space (either $X=E$ or $X=\bar E$), and let $(P_t,\pi,\mathcal L)$ be as in
Eq.~\eqref{eq:generator-eps} on $X$.
Fix a nonnegative initial datum $f$ and write $u(t,\zeta)=P_t f$. 
Whenever we specialize to the diffusion form \eqref{eq:backward-eps1}, we use the associated coefficients $(A,\gamma)$ via \Cref{lem:dict-diffusion}.
For each $t\ge 0$ we define:

\begin{enumerate}
\item \emph{Free energy:}
\[
  \F(t)
  :=\int_X P_t f \log (P_t f)\,d\pi
  =\int_X u(t,\zeta)\,\log u(t,\zeta)\,d\pi(\zeta)\ge 0,
\]
with the convention $0\log0:=0$.

\item \emph{Free energy dissipation rate:}
\begin{equation}
\label{eq:I-def}
  \I(t) :=4\int_X \Gamma(\sqrt{P_t f})\,d\pi
  =4\int_X \Gamma(\sqrt{u(t,\zeta)})\,d\pi(\zeta) \ge 0.
\end{equation}

\item \emph{Housekeeping (adiabatic) entropy production rate:}
\begin{equation}\label{def:hk-single-abstract}
  \sigma_{\mathrm{hk}}(t)
  :=\int_X P_t f\,
        (L_a \mathrm{id})^\top \Gamma(\mathrm{id})^{-1}L_a \mathrm{id}\,d\pi
  =\int_X u(t,\zeta)\,\gamma(\zeta)^\top A(\zeta)^{-1}\gamma(\zeta)\,d\pi(\zeta)\ge 0,
\end{equation}
where $L_a:=\tfrac12(\mathcal L-\mathcal L^\dagger)$ and $\mathcal L^\dagger$ is the $\pi$-adjoint of $\mathcal L$ in $L^2(\pi)$.
Let $\mathrm{id}:X\to\R^d$ denote the coordinate embedding $\mathrm{id}(\zeta)=\zeta$, with components $\mathrm{id}_i(\zeta)=\zeta_i$.
Let $\Gamma(\mathrm{id})$ be the $d\times d$ matrix field with entries $\Gamma(\mathrm{id}_i,\mathrm{id}_j)$.
For diffusion generators of the form in Eq.~\eqref{eq:backward-eps1}, one has $\Gamma(\mathrm{id})=A$ and $L_a\,\mathrm{id}=-\gamma$.

We also set the steady-state housekeeping entropy production rates by
\begin{equation}
\sigma_{\mathrm{hk,ss}}:= \int_X(L_a \mathrm{id})^\top \Gamma(\mathrm{id})^{-1}L_a \mathrm{id}\,d\pi=\int_X \gamma(\zeta)^\top A(\zeta)^{-1}\gamma(\zeta)\,d\pi(\zeta).
\label{eq:def_hkss}
\end{equation}

\item \emph{Excess (nonadiabatic) entropy production rate:}
\begin{equation}\label{def:ex-single-abstract}
  \sigma_{\mathrm{ex}}(t):= \I(t)\ge 0.
\end{equation}

\item \emph{Total entropy production rate:}
\begin{equation}\label{def:epr-single-abstract}
  \sigma(t):= \sigma_{\mathrm{hk}}(t) + \sigma_{\mathrm{ex}}(t)\ge 0.
\end{equation}
\end{enumerate}
\end{definition}

%Thus, by definition, the total entropy production splits into the sum of the housekeeping and excess parts. In this abstract formulation no Lebesgue density or flux is needed; all objects live on the reference space $(E,\pi)$ and are canonically associated with the semigroup $(P_t)$ and its integration–by–parts structure.

\begin{lemma}[Entropy identity]
Consider the framework of \Cref{sec:setting}. Let $u(t,\zeta)=P_t f,f\in \mathcal M$.
Then
\[
\frac{d}{dt}\F(t)=-\I(t).
\]
%For general initial data, the identity can be justified by approximation; 
see \cite[Eq.~(2.41)]{Arnold2001} and \cite[Eq.~2.14]{ArnoldCarlenJu2008LargeTime}.
\end{lemma}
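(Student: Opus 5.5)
The strategy is to differentiate $\F(t)=\int_X u\log u\,d\pi$ in time, use $\partial_t u=\mathcal L u$, and then integrate by parts with respect to $\pi$. In that last step the antisymmetric part of $\mathcal L$ drops out by invariance of $\pi$, and the symmetric part produces $\int\Gamma(u,\log u)\,d\pi=4\int\Gamma(\sqrt u)\,d\pi$. To keep every manipulation legitimate, I would first prove the identity for a regularized free energy and for nice data, and then remove both regularizations.

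\textbf{Step 1 (regularized computation).} Fix $\delta>0$ and let $\phi_\delta(r)=(r+\delta)\log(r+\delta)$. Since $f\in\mathcal M$ is bounded and $P_t$ is sub-Markov on $C_b$, we have $0\le u\le\|f\|_\infty$, so $\phi_\delta'(u)=1+\log(u+\delta)$ is bounded. Suppose first that $f\in\mathrm{Dom}(\mathcal L)$. Then $t\mapsto u(t)$ is $C^1$ in $L^2(\pi)$, and
\[
\frac{d}{dt}\int\phi_\delta(u)\,d\pi=\int\phi_\delta'(u)\,\mathcal L u\,d\pi .
\]
Set $h=\phi_\delta'(u)$. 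Invariance of $\pi$ gives $\int\mathcal L u\,d\pi=0$ (apply it to $\mathcal L^\dagger 1=0$, obtained through the core $C_c^\infty$). The diffusion form \eqref{eq:backward-eps1} together with $\nabla\!\cdot(\gamma e^{-V})=0$ gives $\int h\,\langle\gamma,\nabla u\rangle\,d\pi=0$, because $h\,\nabla u=\nabla\Psi(u)$ for a $C^1$ function $\Psi$. Integrating the symmetric part by parts then yields
\[
\frac{d}{dt}\int\phi_\delta(u)\,d\pi=-\int\Gamma\bigl(u,\log(u+\delta)\bigr)\,d\pi=-4\int\Gamma\bigl(\sqrt{u+\delta}\bigr)\,d\pi ,
\]
using the chain rule for $\Gamma$ from \Cref{lem:dict-diffusion}. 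These integrations by parts must be justified for $u$ lying only in the domain. I would do this by approximating $u(t)$ in graph norm by $C_c^\infty$ functions, which is possible since $C_c^\infty$ is a core, and passing to the limit using the Dirichlet-form closability of $\mathcal E(g)=\int\Gamma(g)\,d\pi$.

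\textbf{Step 2 (integrated form and limits).} Integrating Step 1 over $[s,t]$ gives
\[
\int\phi_\delta(u(t))\,d\pi-\int\phi_\delta(u(s))\,d\pi=-4\int_s^t\!\int\Gamma\bigl(\sqrt{u(r)+\delta}\bigr)\,d\pi\,dr .
\]
I would then let $\delta\downarrow0$. The left-hand side converges by dominated convergence, since $u$ is bounded and $\pi$ is a probability measure. On the right, $\Gamma(\sqrt{u+\delta})=\frac{u}{u+\delta}\,\Gamma(\sqrt u)$ increases monotonically to $\Gamma(\sqrt u)$ on $\{u>0\}$, so monotone convergence applies. Next I would remove the assumption $f\in\mathrm{Dom}(\mathcal L)$. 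Write $f=g^2$ with $g$ of finite energy, and approximate $f$ by $P_\eta f$, or by $g_k^2$ with $g_k$ a core approximation of $g$. Lower semicontinuity of the Dirichlet form gives one inequality and the energy bound gives the other. Finally, $r\mapsto\I(r)$ is nonincreasing for $t>0$: this follows by the same computation at the level of $\sqrt u$, or from $\Gamma_2\ge0$-free arguments via spectral calculus of $\mathcal L^s$. Together with continuity of $\I$ for $t>0$, the integrated identity upgrades to the pointwise derivative.

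\textbf{Main obstacle.} I expect the main obstacle to be the non-reversible part. Because $\mathcal L\neq\mathcal L^\dagger$, spectral calculus is not available for the full generator, and the time-regularity of $\I(t)$ (continuity or monotonicity) is not automatic. I would first try to obtain the integrated identity, which suffices almost everywhere, and then deduce continuity of $t\mapsto\I(t)$ on $(0,\infty)$ from analyticity-type bounds in the local-ellipticity setting. Failing that, I would state the identity in integrated form, as the cited references do.
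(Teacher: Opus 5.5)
Your core computation is the standard entropy-dissipation computation that the paper imports by citing Arnold et al.\ and Arnold--Carlen--Ju: regularize with $\phi_\delta$, differentiate along $\partial_t u=\mathcal L u$, remove the $\gamma$-part using $\nabla\!\cdot(\gamma\pi)=0$, and integrate the symmetric part by parts to get $\int\Gamma(u,\log(u+\delta))\,d\pi=4\int\Gamma(\sqrt{u+\delta})\,d\pi$. The paper offers no argument beyond those citations.

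The genuine gap is in your passage from the integrated identity to the pointwise one. You assert that $r\mapsto\I(r)$ is nonincreasing, ``by the same computation at the level of $\sqrt u$'' or by spectral calculus of $\mathcal L^s$. Neither route works, and the claim is false in this generality:
\begin{itemize}
\item Differentiating $\I$ produces a $\Gamma_2$-type term (plus contributions from $\mathcal L^a$) whose sign is exactly a curvature condition.
\item Spectral calculus of $\mathcal L^s$ is irrelevant, because $u$ is driven by the non-normal operator $\mathcal L=\mathcal L^s+\mathcal L^a$ and $\I$ is nonlinear in $u$.
\end{itemize}
Concretely, take $\mathcal L\varphi=\Delta\varphi-\langle Bz,\nabla\varphi\rangle$ on $\R^2$ with $B=\bigl(\begin{smallmatrix}1&4\\0&1\end{smallmatrix}\bigr)$, which is the paper's OU model with $d_x=d_y=1$ and $\eps=1$. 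For $f=1+\theta g$ with $g$ bounded, $\I(t)=\theta^2\int\Gamma(P_tg)\,d\pi\,(1+O(\theta))$. For $g(z)=\langle a,z\rangle$ one has $\nabla P_tg=e^{-B^\top t}a$, so $\int\Gamma(P_tg)\,d\pi=|e^{-B^\top t}a|^2$. Its derivative at $t=0$ is $-2a^\top\Sym(B)a=2>0$ when $a$ is the unit eigenvector of $\Sym(B)$ for the eigenvalue $-1$. After a harmless truncation of $g$ (the error is controlled via $\nabla P_th=e^{-B^\top t}P_t\nabla h$), $\I$ strictly increases on a short time interval for small $\theta$. This is precisely why the paper needs \Cref{ass:CDkappa} to obtain even the weaker monotonicity of $e^{-2\kappa t}\I(t)$ used in \Cref{thm:FI-conv}.

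Without monotonicity, your argument delivers only $\F(s)-\F(t)=\int_s^t\I(r)\,dr$, i.e.\ $\F'=-\I$ for a.e.\ $t$. Your fallback of stating the integrated form is strictly weaker than the lemma, and the paper uses the pointwise form. \Cref{lem:short-exp-weighted} takes $\F^\eps\in C^1((0,\infty))$, and \Cref{thm:FI-conv} needs $\bar\F'(t)=-\bar\I(t)$ at each fixed $t$, with no curvature assumed for $\bar P_t$. The missing ingredient is continuity of $r\mapsto\I(r)$ on $(0,\infty)$. Lower semicontinuity is free, from closedness of the form and $L^2(\pi)$-continuity of $r\mapsto\sqrt{u(r)}$. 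Upper semicontinuity needs genuine extra input, e.g.\ parabolic regularity together with uniform tail control of $\Gamma(u)/u$; that is the smooth setting in which the cited references derive the identity.

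A secondary point concerns Step~2. There you send $\delta\downarrow0$ before removing the assumption $f\in\mathrm{Dom}(\mathcal L)$, which leaves you only the Fatou inequality; ``the energy bound gives the other'' is not an argument. Reverse the order:
\begin{itemize}
\item For fixed $\delta>0$, approximate $f$ by $f_k:=k\int_0^{1/k}P_sf\,ds\in\mathrm{Dom}(\mathcal L)$ and set $u_k(r):=P_rf_k$. This keeps $0\le f_k\le\|f\|_\infty$, whereas $P_\eta f$ need not lie in the domain when the semigroup is not analytic.
\item Use the linear energy identity $\|P_th\|_{L^2(\pi)}^2+2\int_0^t\mathcal E(P_rh)\,dr=\|h\|_{L^2(\pi)}^2$, where $\mathcal E(h)=\int\Gamma(h)\,d\pi$ and $\mathcal L^a$ drops out. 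Applied to $h=f_k-f$, it gives $A^{1/2}\nabla u_k\to A^{1/2}\nabla u$ in $L^2(dr\otimes\pi)$ on $[s,t]\times X$.
\item Since the weight $(u_k+\delta)^{-1}$ is bounded by $\delta^{-1}$, the $\delta$-regularized dissipation integrals converge.
\item Only then let $\delta\downarrow0$ by monotone convergence.
\end{itemize}
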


In the singular perturbation setting of \Cref{sec:setting}, for $\eps>0$ and $f\in\mathcal M_0$ we denote by
$\F^\eps(t),\I^\eps(t),\sigma_{\mathrm{hk}}^\eps(t),\sigma_{\mathrm{ex}}^\eps(t),\sigma^\eps(t)$
the thermodynamic functionals of \Cref{def:sigmas-single} along $u^\eps(t)=P_t^\eps f$, and similarly by
$\bar\F(t),\bar\I(t),\bar\sigma_{\mathrm{hk}}(t),\bar\sigma_{\mathrm{ex}}(t),\bar\sigma(t)$
their macroscopic counterparts along $\bar u(t)=\bar P_t\bar f$ for $\bar f\in\bar{\mathcal M}$.

These functionals admit two equivalent representations (semigroup $(P_t,\mathcal L,\Gamma)$ and diffusion form $(u,A,\gamma)$);
we state results in the semigroup framework and use the diffusion form for computation.
A forward (Fokker--Planck) viewpoint is deferred to Appendix \ref{sec:app-forward}.

\begin{table}[t]
\centering
\small
\renewcommand{\arraystretch}{1.15}
\begin{tabularx}{\textwidth}{@{} l X @{}}
\toprule
\textbf{Symbol} & \textbf{Meaning} \\
\midrule

\multicolumn{2}{@{}l@{}}{\textbf{State spaces and coarse-graining}}\\
\midrule
$E,\ z$ & Microspace and variable $z\in E\subset\mathbb R^N$.\\
$\bar E,\ x$ & Macrospace and variable $x\in \bar E\subset\mathbb R^n$.\\
$X,\zeta$ & Underlying state space ($X=E$ or $X=\bar E$) and variable $\zeta\in X\subset\mathbb R^d$, $d=N$ or $n$.\\
$\Phi$ & Coarse-graining map; $x=\Phi(z)$ denotes the reduced variable.\\

\midrule
\multicolumn{2}{@{}l@{}}{\textbf{Semigroup-level objects}}\\
\midrule
$P_t$ & Markov semigroup on $X$; backward orbit $u(t,\cdot)=P_t f$.\\
$\pi$ & Invariant probability measure of $P_t$ (assumed to have a strictly positive Lebesgue density).\\
$\mathcal L$ & Generator of $P_t$ on $L^2(\pi)$.\\
$\Gamma(\phi,\psi)$ & Carr\'e du champ (defined via $\mathcal L$):
$\Gamma(\phi,\psi):=\tfrac12\big(\mathcal L(\phi\psi)-\phi\,\mathcal L\psi-\psi\,\mathcal L\phi\big)$;
$\ \Gamma(\phi):=\Gamma(\phi,\phi)$.\\
%$\Gamma_2(\phi)$ & Iterated carr\'e du champ (defined via $\mathcal L$ and $\Gamma$).\\
%$u(t,\cdot)$ & Backward orbit: $u(t,\cdot):=P_t f$.\\
$\mathcal M$ & Admissible class of initial data on $X$ (micro: $\mathcal M_0$; macro: $\bar{\mathcal M}$).\\
$f$ & Initial datum for the backward orbit, $f\in\mathcal M$ (macro: $\bar f\in\bar{\mathcal M}$).\\

\midrule
\multicolumn{2}{@{}l@{}}{\textbf{Diffusion-form specialization (coordinate objects)}}\\
\midrule
$V$ & Potential of the invariant density: $\pi(d\zeta)=e^{-V(\zeta)}\,d\zeta$.\\
$A(\cdot),\ \gamma(\cdot)$ & Diffusion matrix and nonreversible drift component in Eq.~\eqref{eq:backward-eps1}.\\

\midrule
\multicolumn{2}{@{}l@{}}{\textbf{Thermodynamic functionals}}\\
\midrule
$\F(t),\I(t)$ & Free energy and its dissipation rate.\\
$\sigma_{\mathrm{ex}}(t),\sigma_{\mathrm{hk}}(t),\sigma(t)$ &  Entropy production rate: Excess(non-adiabatic), housekeeping(adiabatic), and total: $\sigma(t)=\sigma_{\mathrm{ex}}(t)+\sigma_{\mathrm{hk}}(t)$\\

\bottomrule
\end{tabularx}
\caption{\textbf{Notation table.}
For the singular perturbation family, microscopic objects carry a superscript $\varepsilon$
(e.g.\ $P_t^\varepsilon$ on $E$, $u^\varepsilon(t,\cdot)$, $\F^\varepsilon(t)$),
while macroscopic limiting objects carry an overbar
(e.g.\ $\bar P_t$ on $\bar E$, $\bar u(t,\cdot)$, $\bar{\F}(t)$).}
\label{tab:table}
\end{table}

%\subsubsection{Application to microscopic and macroscopic semigroups}

%\newpage

\section{Main Results: Four levels of Thermodynamic Upgrade}
\label{sec:main-results}
This section gives a layered overview of thermodynamic convergence for the singular limit
$(P_t^\varepsilon,\pi^\varepsilon)\to(\bar P_t,\bar\pi)$.
We first introduce several increasing levels of convergence for the thermodynamic functionals and state the main upgrade theorem, which yields pointwise-in-time convergence for each fixed $t>0$ (we work away from $t=0$ where semigroup regularization is effective).

The section is organized as follows.
In \Cref{subsec:ass} we list the standing assumptions for the upgrade chain.
In \Cref{subsec:conv-result} we prove the main theorem by upgrading dynamical convergence to thermodynamic convergence level by level.
%In \Cref{sec:Criteria} we provide a verification toolbox, giving black-box sufficient conditions to check the assumptions in concrete models.

%A central input is an $\varepsilon$--uniform curvature--dimension bound \Cref{ass:CDkappa}.
%To verify it in applications, \Cref{sec:Criteria} includes two practical criteria implying \Cref{ass:CDkappa}:
%a Ricci-type matrix condition for smooth diffusions and an It\^o--Kunita condition based on synchronous contraction of the stochastic flow.
%Examples are given in \Cref{sec:case_study}.

\begin{definition}[Four levels of thermodynamic convergence]\label{def:thermo-conv}
Consider a family of microscopic semigroups $(P_t^\varepsilon,\pi^\varepsilon)$ with macroscopic limit $(\bar P_t,\bar\pi)$, and let
$\F^\varepsilon,\I^\varepsilon,\sigma_{\mathrm{hk}}^\varepsilon,\sigma^\varepsilon$ and $\bar\F,\bar\I,\bar\sigma_{\mathrm{hk}},\bar\sigma$
be the associated thermodynamic functionals defined in \Cref{def:sigmas-single}.
\begin{enumerate}
\item \textbf{Level I (free-energy convergence).}
We say that the singular limit exhibits \textbf{free-energy convergence} on \(t\) if
\[
\F^\varepsilon(t)\to\bar\F(t).
\]

\item \textbf{Level II (weak thermodynamic convergence).}
We say that the singular limit exhibits \textbf{weak thermodynamic convergence} on \(t\) if it exhibits free-energy convergence on \(t\) and, in addition, 
\[
\I^\varepsilon(t)\to\bar\I(t).
\]

\item \textbf{Level III (liminf thermodynamic convergence).}
We say that the singular limit exhibits \textbf{liminf thermodynamic convergence} on \(t\) if it exhibits weak thermodynamic convergence on \(t\) and, in addition, 
\[
\liminf_{\varepsilon\downarrow0}\sigma_{\mathrm{hk}}^\varepsilon(t)
\;\ge\;\bar\sigma_{\mathrm{hk}}(t),\qquad
\liminf_{\varepsilon\downarrow0}\sigma^{\varepsilon}(t)
\;\ge\;\bar\sigma(t).
\]

\item \textbf{Level IV (strong thermodynamic convergence).}
We say that the singular limit exhibits \textbf{strong thermodynamic convergence} on \(t\) if it exhibits liminf thermodynamic convergence on \(t\) and, in addition, 
\[
\sigma_{\mathrm{hk}}^\varepsilon(t)\to\bar\sigma_{\mathrm{hk}}(t),
\qquad
\sigma^\varepsilon(t)\to\bar\sigma(t).
\]
\end{enumerate}
\end{definition}

We can now state our main result.

\begin{theorem}\label{thm:main}
In the sense of \Cref{def:thermo-conv}, for every $t>0$ the family $(P_t^\varepsilon,\pi^\varepsilon)$ converges thermodynamically to $(\bar P_t,\bar\pi)$ as follows:
\begin{enumerate}
\item \textbf{Level~I} holds under \Cref{ass:standing};
\item \textbf{Level~II} holds under \Cref{ass:standing,ass:CDkappa};
\item \textbf{Level~III} holds under \Cref{ass:standing,ass:CDkappa,ass:coeff-weak};
\item \textbf{Level~IV} holds for this $t$ \emph{if and only if} \Cref{ass:locking} below holds for this $t$, under \Cref{ass:standing,ass:CDkappa,ass:coeff-weak}.
\end{enumerate}
\end{theorem}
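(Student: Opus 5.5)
The proof goes level by level. Each level reuses the previous one and adds the corresponding assumption from \Cref{fig:roadmap}.

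For \textbf{Level~I}, \Cref{ass:standing} presumably provides convergence of $P_t^\varepsilon f$ to (the pullback under $\Phi$ of) $\bar P_t\bar f$, together with $\pi^\varepsilon\to\bar\pi$ in a sense strong enough to pass to the limit in integrals of bounded continuous functions. Since $\|P_t^\varepsilon f\|_\infty\le\|f\|_\infty$ uniformly in $\varepsilon$, the map $s\mapsto s\log s$ is bounded and continuous on $[0,\|f\|_\infty]$. So $\F^\varepsilon(t)=\int u^\varepsilon\log u^\varepsilon\,d\pi^\varepsilon$ converges to $\bar\F(t)$ by the assumed convergence in the semigroup topology, via a uniform-integrability or dominated-convergence step.

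For \textbf{Level~II}, I use the entropy identity $\frac{d}{dt}\F^\varepsilon=-\I^\varepsilon$ together with the $\varepsilon$-uniform $CD(-\kappa,\infty)$ bound of \Cref{ass:CDkappa}. The standard Bakry--\'Emery argument shows that $t\mapsto e^{-2\kappa t}\I^\varepsilon(t)$ is nonincreasing, uniformly in $\varepsilon$. Hence the functions $\F^\varepsilon$ are convex up to an exponential weight, with a modulus independent of $\varepsilon$. The key step is the classical fact that pointwise convergence of (semi)convex functions forces convergence of derivatives at points where the limit is differentiable. Concretely, for $h>0$,
\[
\frac{\F^\varepsilon(t)-\F^\varepsilon(t+h)}{\int_t^{t+h}e^{2\kappa(s-t)}ds}\le \I^\varepsilon(t)\le \frac{\F^\varepsilon(t-h)-\F^\varepsilon(t)}{\int_{t-h}^{t}e^{-2\kappa(t-s)}ds}.
\]
Letting $\varepsilon\downarrow0$ with Level~I at $t,t\pm h$, and then $h\downarrow0$, squeezes both $\liminf$ and $\limsup$ of $\I^\varepsilon(t)$ onto $-\bar\F'(t)=\bar\I(t)$. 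This requires that $\bar\F$ is differentiable with derivative $-\bar\I$, which is the macroscopic entropy identity. This is why $t>0$ is needed: the lower difference quotient uses $t-h>0$.

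For \textbf{Level~III}, since $\sigma=\sigma_{\mathrm{hk}}+\I$ and $\I^\varepsilon\to\bar\I$ by Level~II, it suffices to prove the housekeeping $\liminf$ bound. I would write $\gamma^\top A^{-1}\gamma$ through its variational form
\[
\gamma^\top A^{-1}\gamma=\sup_\xi\bigl(2\langle\xi,\gamma\rangle-\xi^\top A\xi\bigr).
\]
Then $\sigma_{\mathrm{hk}}^\varepsilon(t)\ge\int u^\varepsilon\bigl(2\langle\xi,\gamma^\varepsilon\rangle-\xi^\top A^\varepsilon\xi\bigr)d\pi^\varepsilon$ for test fields $\xi$ lifted from macroscopic fields $\bar\xi$ via $D\Phi$. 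Coefficient convergence (\Cref{ass:coeff-weak}) together with convergence of $u^\varepsilon\pi^\varepsilon$ should send the right-hand side to $\int\bar u\bigl(2\langle\bar\xi,\bar\gamma\rangle-\bar\xi^\top\bar A\bar\xi\bigr)d\bar\pi$. Taking the supremum over $\bar\xi$ gives $\bar\sigma_{\mathrm{hk}}(t)$. Here the $CD$ gradient bounds of \Cref{ass:CDkappa} supply the uniform control of $u^\varepsilon$ needed to pass to the limit in the products. This is the standard lower semicontinuity argument for a convex integral functional.

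For \textbf{Level~IV}, given Level~III, the identity $\sigma^\varepsilon-\bar\sigma=(\sigma_{\mathrm{hk}}^\varepsilon-\bar\sigma_{\mathrm{hk}})+(\I^\varepsilon-\bar\I)$ shows that convergence of $\sigma^\varepsilon$ is equivalent to convergence of $\sigma_{\mathrm{hk}}^\varepsilon$. The latter is equivalent to $\limsup\sigma_{\mathrm{hk}}^\varepsilon(t)\le\bar\sigma_{\mathrm{hk}}(t)$. I expect \Cref{ass:locking} to be formulated as exactly this recovery-sequence or no-residual-dissipation statement, so the equivalence should be essentially tautological once it is unpacked.

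The main obstacle is Level~III. The lift of $\bar\xi$ must be chosen so that the eliminated fast directions contribute only nonnegative residuals, and the integrals must be justified when $\gamma^\varepsilon$ is unbounded; I would handle the latter by truncating $\bar\xi$ to compactly supported smooth fields.
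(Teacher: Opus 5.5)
Your Level~I and Level~II arguments follow the paper. For Level~II, your two-sided squeeze is actually the cleaner form of Lemma~\ref{lem:short-exp-weighted}. The paper's forward-only inequality bounds $G^\varepsilon(t_0+r)$ from above, not $G^\varepsilon(t_0)$, and you need the backward quotient exactly as you wrote it.

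For Level~III, your lower bound with lifted fields $D\Phi^\top(\bar\xi\circ\Phi)$ is exactly the completion-of-squares functional \eqref{eq:tilde-sigma-hk}. The gap is the step you assert rather than prove:
\[
\int_E u^\varepsilon\Bigl(2\langle\bar\xi(\Phi),D\Phi\,\gamma^\varepsilon\rangle-\bar\xi(\Phi)^\top D\Phi A^\varepsilon D\Phi^\top\bar\xi(\Phi)\Bigr)d\pi^\varepsilon\ \longrightarrow\ \int_{\bar E}\bar u\bigl(2\langle\bar\xi,\bar\gamma\rangle-\bar\xi^\top\bar A\bar\xi\bigr)d\bar\pi .
\]
This is the technical heart of Level~III (Lemma~\ref{lem:weighted-cross-quadratic-conv}), and the mechanism you name for it is the wrong one. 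The coefficient measures converge only weakly, and can be tested only against fixed bounded continuous functions; the current only after push-forward, i.e.\ against functions of $\Phi$. $u^\varepsilon(t,\cdot)$ is neither fixed nor a function of $\Phi$.

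The missing idea is to split $u^\varepsilon=\bar u(t,\Phi)+r^\varepsilon$. Test the first part against $J^\varepsilon$ and $Q^\varepsilon$, using $\bar u\bar\xi\in C_b(\bar E)$ and $\Phi_\#Q=\bar A\bar\pi$. Then kill the remainder by Cauchy--Schwarz:
\[
\Bigl|\int_E r^\varepsilon\langle\bar\xi(\Phi),D\Phi\gamma^\varepsilon\rangle\,d\pi^\varepsilon\Bigr|\le\Bigl(\|\bar\xi\|_\infty^2\int_E|r^\varepsilon|^2\,\tr\bigl(D\Phi A^\varepsilon D\Phi^\top\bigr)\,d\pi^\varepsilon\Bigr)^{1/2}\bigl(\mathcal J^\varepsilon_{\mathrm{hk,proj}}\bigr)^{1/2}.
\]
This uses three ingredients: the uniform bound of Assumption~\ref{ass:coeff-weak}(iii), tightness of $\tr(D\Phi A^\varepsilon D\Phi^\top)\pi^\varepsilon$ from (ii), and $r^\varepsilon\to0$ locally uniformly from Assumption~\ref{ass:standing}(ii).

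The $CD$ bound plays no role here; Theorem~\ref{thm:hk-lsc} does not assume it. Gradient control of $u^\varepsilon$ says nothing about where the coefficient-weighted mass of $\pi^\varepsilon$ sits, or about unbounded $\gamma^\varepsilon$. Truncating $\bar\xi$ to compact support does not fix this either. $\Phi^{-1}(\supp\bar\xi)$ is typically non-compact (e.g.\ $\R^{d_x}\times K$ for $\Phi(x,y)=y$), so $r^\varepsilon$ need not be small there. Truncation and mollification are only what you need for the final supremum, i.e.\ for building a recovery sequence.

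For Level~IV, ``tautological'' undersells it, but the same decomposition does the work. Write $\sigma^\varepsilon_{\mathrm{hk}}(t)=\tilde\sigma^\varepsilon_{\mathrm{hk}}(t;\psi)+R^\varepsilon(t;\psi)$, where
\[
\tilde\sigma^\varepsilon_{\mathrm{hk}}(t;\psi)\to\bar\sigma_{\mathrm{hk}}(t)-\int\bar u\,\|\psi-\bar A^{-1}\bar\gamma\|_{\bar A}^2\,d\bar\pi .
\]
Under locking this gives $\limsup\sigma^\varepsilon_{\mathrm{hk}}(t)\le\bar\sigma_{\mathrm{hk}}(t)$. Conversely, convergence forces $R^\varepsilon(t;\psi_k)\to\delta_k\to0$ along any recovery sequence.

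What you omit is finiteness of $\bar\sigma_{\mathrm{hk}}(t)$. It is needed both to subtract in your equivalence between convergence of $\sigma^\varepsilon$ and of $\sigma^\varepsilon_{\mathrm{hk}}$, and for a recovery sequence to exist at all. It follows from (iii): pointwise completion of squares gives $\tilde\sigma^\varepsilon_{\mathrm{hk}}(t;\psi)\le\|f\|_\infty\,\mathcal J^\varepsilon_{\mathrm{hk,proj}}$. Hence the limiting functional is bounded by $\|f\|_\infty\sup_\varepsilon\mathcal J^\varepsilon_{\mathrm{hk,proj}}$ for every $\psi\in C_b$. Its supremum over $\psi$ equals $\bar\sigma_{\mathrm{hk}}(t)$ by your truncation/mollification argument (cf.\ Lemma~\ref{lem:hk_ss_finite}).
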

\Cref{ass:standing,ass:CDkappa,ass:coeff-weak,ass:locking} are stated in \Cref{subsec:ass}.
Items \textup{1}--\textup{4} of \Cref{thm:main} follow from \Cref{thm:free-energy-conv,thm:FI-conv,thm:hk-lsc,thm:hk-conv-iff-locking} in \Cref{subsec:conv-result}, respectively.

In the reversible case (\(\gamma^\eps\equiv 0\)), \(\sigma^\eps_{\mathrm{hk}}\equiv 0,\sigma^\eps\equiv \I^\eps\); hence Levels III–IV are automatically true once Level II holds, and \Cref{ass:coeff-weak,ass:locking} are only needed in genuinely irreversible settings.

In many coarse-graining works \cite{KawaguchiNakayama2013HiddenEP,SkinnerDunkel2021PNAS,TezaStella2020PRL} one is primarily concerned with a steady-state lower-semicontinuity bound for the housekeeping dissipation. We therefore introduce a steady-state notion of thermodynamic convergence, strictly weaker than Level~III in \Cref{def:thermo-conv}. The key point is that this stationary requirement carries much less dynamical content and is often checkable at the coefficient level. In particular, the compact-uniform orbit input can be relaxed to the weaker $L^1(\pi^\varepsilon)$ convergence in \eqref{eq:u-L1-conv} (see \Cref{rem:L1-alt-and-ss}).

\begin{definition}[Steady-state Level~III$_{\mathrm{ss}}$ thermodynamic convergence]\label{def:lsc-ss}
Let $(P_t^\varepsilon,\pi^\varepsilon)$ have macroscopic limit $(\bar P_t,\bar\pi)$, and let
$\sigma_{\mathrm{hk,ss}}^\varepsilon$ and $\bar\sigma_{\mathrm{hk,ss}}$
be as in \Cref{def:sigmas-single}.
We say that $(P_t^\varepsilon,\pi^\varepsilon)$ converges thermodynamically at steady-state Level~III$_{\mathrm{ss}}$ to $(\bar P_t,\bar\pi)$ if it exhibits weak thermodynamic convergence (Level~II in \Cref{def:thermo-conv}) for every $t>0$ and, in addition,
\begin{equation}\label{eq:ss-ineq}
\liminf_{\varepsilon\downarrow0}\sigma_{\mathrm{hk,ss}}^\varepsilon\;\ge\;\bar\sigma_{\mathrm{hk,ss}}.
\end{equation}
\end{definition}

In particular, \eqref{eq:ss-ineq} admits the following purely static sufficient condition.

\begin{corollary}[Steady-state housekeeping lower semicontinuity]\label{cor:ss-lsc}
Suppose \Cref{ass:coeff-weak} holds. Then \eqref{eq:ss-ineq} holds.
\end{corollary}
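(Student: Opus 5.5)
We have not seen \Cref{ass:coeff-weak}, so the plan rests on the natural reading of ``coefficient convergence''. We take it to give: (i) convergence of the invariant measures, $\pi^\varepsilon\circ\Phi^{-1}\to\bar\pi$ weakly, or more generally convergence of $\pi^\varepsilon$ compatible with $\Phi$; (ii) a weak or local convergence of the nonreversible drift and diffusion matrix, in a form from which the macroscopic pair $(\bar A,\bar\gamma)$ is recovered as a limit of projected or averaged microscopic coefficients. For instance, $D\Phi\,\gamma^\varepsilon\,\pi^\varepsilon$ should converge to $\bar\gamma\,\bar\pi$ as vector measures, pulled back along $\Phi$, and similarly $D\Phi A^\varepsilon D\Phi^\top\pi^\varepsilon\to\bar A\bar\pi$. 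Since $u\equiv 1$ at stationarity, $\sigma_{\mathrm{hk,ss}}^\varepsilon=\int_E(\gamma^\varepsilon)^\top(A^\varepsilon)^{-1}\gamma^\varepsilon\,d\pi^\varepsilon$ is a static functional of the pair (flux measure, diffusion measure). The proof is therefore a joint lower semicontinuity argument for this convex functional, and no dynamical input or CD bound is needed.

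\textbf{Step 1: reduce to projected coefficients.} Pointwise on $E$, for any matrix $B$ of full rank (here $B=D\Phi$), we have the Schur-complement/Cauchy--Schwarz inequality
\[
\gamma^\top A^{-1}\gamma\;\ge\;(B\gamma)^\top (BAB^\top)^{-1}(B\gamma).
\]
It holds because $\sup_{\xi}\{2\langle\xi,B\gamma\rangle-\langle\xi,BAB^\top\xi\rangle\}$ is a supremum over test vectors of the form $B^\top\xi$ in the variational formula $\gamma^\top A^{-1}\gamma=\sup_\eta\{2\langle\eta,\gamma\rangle-\langle\eta,A\eta\rangle\}$. This discards the eliminated directions and yields a lower bound depending only on the projected flux and projected diffusion.

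\textbf{Step 2: dual variational representation.} For every $\xi\in C_c^\infty(\bar E;\R^n)$ we write
\[
\sigma_{\mathrm{hk,ss}}^\varepsilon\ge\int_E\Bigl(2\langle\xi\circ\Phi,D\Phi\,\gamma^\varepsilon\rangle-\langle\xi\circ\Phi,D\Phi A^\varepsilon D\Phi^\top\,\xi\circ\Phi\rangle\Bigr)\,d\pi^\varepsilon .
\]
The right-hand side is linear in the measures $D\Phi\gamma^\varepsilon\pi^\varepsilon$ and $D\Phi A^\varepsilon D\Phi^\top\pi^\varepsilon$ tested against compactly supported continuous functions. Under \Cref{ass:coeff-weak} it converges to $\int_{\bar E}\bigl(2\langle\xi,\bar\gamma\rangle-\langle\xi,\bar A\xi\rangle\bigr)\,d\bar\pi$. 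Taking $\liminf$ and then the supremum over $\xi$ gives
\[
\liminf_\varepsilon\sigma_{\mathrm{hk,ss}}^\varepsilon\ge\sup_\xi\int_{\bar E}\bigl(2\langle\xi,\bar\gamma\rangle-\langle\xi,\bar A\xi\rangle\bigr)\,d\bar\pi .
\]

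\textbf{Step 3: identify the supremum.} The supremum equals $\int\bar\gamma^\top\bar A^{-1}\bar\gamma\,d\bar\pi=\bar\sigma_{\mathrm{hk,ss}}$. To see this, approximate the optimizer $\bar A^{-1}\bar\gamma$ by a sequence in $C_c^\infty$. Local uniform positivity of $\bar A$ and $\bar\gamma\in W^{1,\infty}_{loc}$ make the optimizer locally bounded. Truncating with cutoffs $\chi_R$ and using monotone convergence handles the case $\bar\sigma_{\mathrm{hk,ss}}=\infty$ as well.

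\textbf{Main obstacle.} The main difficulty is matching the exact form of \Cref{ass:coeff-weak}. If it instead states local uniform convergence of coefficients, $\gamma^\varepsilon\to\bar\gamma\circ\Phi$ or $\gamma^\varepsilon\to$ a lift of it, together with weak convergence $\pi^\varepsilon\circ\Phi^{-1}\to\bar\pi$, then Step 2 still goes through. In that case we would use Fatou/Portmanteau for the lower semicontinuous nonnegative integrand $(\gamma^\varepsilon)^\top(A^\varepsilon)^{-1}\gamma^\varepsilon$ after Step 1, with the test function $\xi$ providing the required continuity.
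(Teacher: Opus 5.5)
Your proposal is correct and is essentially the paper's argument: drop the nonnegative remainder in the completion of squares with test fields $D\Phi^\top(\psi\circ\Phi)$, pass to the limit using parts (i)--(ii) of \Cref{ass:coeff-weak}, and then let $\psi$ approach $\bar A^{-1}\bar\gamma$. The only difference is that you take a supremum over $\psi$ instead of rewriting the limit as $\bar\sigma_{\mathrm{hk,ss}}-\|\psi-\bar A^{-1}\bar\gamma\|_{L^2(\bar A,\bar\pi)}^2$ along a recovery sequence, so you handle $\bar\sigma_{\mathrm{hk,ss}}=\infty$ directly, whereas the paper's version implicitly needs $\bar\sigma_{\mathrm{hk,ss}}<\infty$, which \Cref{lem:hk_ss_finite} supplies via part (iii).
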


The proof is deferred to Appendix \ref{sec:mr-app}. %Time-dependent bounds for $\sigma_{\mathrm{hk}}^\varepsilon(t)$ and $\sigma^\varepsilon(t)$ require additional dynamical input; see \Cref{subsec:ass}.

\subsection{Assumptions}
\label{subsec:ass}
This subsection collects the assumptions used in the upgrade theorem.
\Cref{ass:standing} encodes the dynamical convergence input.
\Cref{ass:CDkappa} provides the key $\varepsilon$--uniform regularity (curvature--dimension) needed for the thermodynamic upgrade.
\Cref{ass:coeff-weak} formulates a weak convergence of the projected coefficients (drift and diffusion) along the limit.
Finally, \Cref{ass:locking} characterizes full thermodynamic inheritance (Level~IV) and is formulated via a recovery sequence at time $t$.

Practical sufficient conditions for checking the assumptions below are collected in Appendix
\ref{sec:Criteria}.

\begin{assumptionx}[Dynamic convergence]\label{ass:standing}
%We assume the following.
\begin{enumerate}
\item[\textup{(i)}] There exists a Borel probability measure $\Pi$ on $E$ such that
\[
  \pi^\varepsilon \xrightarrow[\varepsilon\to0]{w} \Pi,
  \qquad \Phi_{\#}\Pi=\bar\pi,
\]i.e.\ $\int_E \varphi\,d\pi^\varepsilon\to\int_E \varphi\,d\Pi$ for all $\varphi\in C_b(E)$ and $\int_E \psi\!\circ\!\Phi\,d\Pi=\int_{\bar E}\psi\,d\bar\pi$ for all $\psi\in C_b(\bar E)$.

%\item[\textup{(ii)}] For each $f\in\mathcal M$ there exists $\mathcal Pf\in C_b(\bar E)$ such that
%\begin{equation}\label{eq:P-def-standing}\int_E f(z)\,\varphi\big(\Phi(z)\big)\,d\Pi(z)= \int_{\bar E} (\mathcal P f)(x)\,\varphi(x)\,d\bar\pi(x),\qquad \forall\,\varphi\in C_b(\bar E).\end{equation}
%Equivalently, $\mathcal P f$ is a bounded continuous version of $\mathbb E^\Pi[f(Z)\mid \Phi(Z)=x]$, where $Z\sim\Pi$. We set $\bar f:=\mathcal Pf$ and $\bar{\mathcal M}:=\mathcal P(\mathcal M)\subset C_b(\bar E)$.

\item[\textup{(ii)}] For each $f\in\mathcal M_0$, there exists $\bar f\in \bar {\mathcal M}$ such that for every $t>0$ and every compact set $K\subset E$,
%define \[u^\varepsilon(t,z):=P_t^\varepsilon f(z),\qquad\bar u(t,x):=\bar P_t \bar f(x),\quad \bar f:=\mathcal P f.\]Then for every $t>0$ and every compact set $K\subset E$,
\begin{equation}\label{eq:dyn-conv-standing}
  \lim_{\varepsilon\to0}\,
  \sup_{z\in K}\big|P_t^\varepsilon f(z)-\bar P_t \bar f\big(\Phi(z)\big)\big|=0.
\end{equation}
\end{enumerate}
\end{assumptionx}
%For later use we also define the associated thermodynamic force (or affinity) by
%\begin{equation}\label{eq:force-def}
  %F :=A^{-1}\gamma,
%\end{equation}
%which is the non-equilibrium driving field canonically paired with the current in the housekeeping entropy production.

%Likewise we denote by $\gamma^\varepsilon,F^\varepsilon$ the objects associated with $P_t^\varepsilon$, and by $\bar \gamma$, $\bar F$ those associated with $\bar P_t$.

For Level~I and steady-state statements, \Cref{ass:standing}(ii) can be replaced by a weaker orbit convergence\begin{equation}
\label{eq:u-L1-conv}
    \lim_{\eps\to0}\int_E \bigl|u^\eps(t,z)-\bar u\bigl(t,\Phi(z)\bigr)\bigr|\,d\pi^\eps(z)=0;
\end{equation}see \Cref{rem:L1-alt-and-ss}.
A weighted alternative to \eqref{eq:dyn-conv-standing} is recorded in Appendix \ref{subsec:I}.

\begin{assumptionx}[Uniform curvature–dimension condition]\label{ass:CDkappa}
There exist $\varepsilon_0>0$ and a constant $\kappa\ge0$ such that, for every $0<\varepsilon<\varepsilon_0$, the generator $\mathcal L_\varepsilon$ satisfies the curvature–dimension condition $CD(-\kappa,\infty)$, that is,
\begin{equation}
    %\sqrt{\Gamma^\eps(P^\eps_t f)}\le e^{\kappa t}P^\eps_t(\sqrt{\Gamma^\eps(f)})
    \Gamma^\varepsilon(P_t^\varepsilon f)\;\le\; e^{2\kappa t} P_t^\varepsilon\Gamma^\varepsilon(f),\qquad \forall f\in \mathcal{M}.
    \label{eq:ex3-2}
\end{equation}
    
In this case we say that the semigroup $(P_t^\varepsilon)_{0<\eps<\eps_0}$ satisfies the curvature–dimension condition $CD(-\kappa,\infty)$ uniformly in $\varepsilon$.
\footnote{Since our results are formulated for each fixed $t>0$, the regularisation input in \Cref{ass:CDkappa} is only used away from $t=0$. One could therefore replace it by a time-shifted version (valid for $t\ge\tau>0$ with constants depending on $\tau$), but we keep \Cref{ass:CDkappa} in its uniform form for simplicity.}
\end{assumptionx}

Appendix \ref{subsec:II} provides two $\varepsilon$--uniform sufficient criteria: a Ricci-type matrix test and an It\^o--Kunita criterion via synchronous contraction of the stochastic flow.

\begin{assumptionx}[Coefficient convergence]\label{ass:coeff-weak}
%Under \Cref{ass:standing}, we assume the following.
\begin{enumerate}
\item \textbf{Weak convergence of projected current.} There exists a finite vector measure $\bar J =\bar \gamma \bar \pi\in\mathcal{M}(\bar E;\mathbb{R}^n)$ such that for every $\xi\in C_b(\bar E;\mathbb{R}^n)$,
\begin{equation}\label{eq:J-weak-pushforward}
  \lim_{\varepsilon\to0}
  \int_E \big\langle \xi(\Phi(z)),\,D\Phi(z)\,\gamma^\varepsilon(z)\big\rangle\,d\pi^\varepsilon(z)
  = \int_{\bar E} \big\langle \xi(x),\,d\bar J(x)\big\rangle<\infty.
\end{equation}
Equivalently, the push-forward measures
\[
  J^\varepsilon := \Phi_\#\big(D\Phi\,\gamma^\varepsilon\,\pi^\varepsilon\big)
  \in \mathcal{M}(\bar E;\mathbb{R}^n)
\]
converge weakly to finite measure $\bar J \in \mathcal{M}(\bar E;\mathbb{R}^n)$.

\item \textbf{Weak convergence of projected diffusivity.} For finite matrix–valued measures
\[
  Q^\varepsilon := (D\Phi A^\varepsilon D\Phi^\top)\,\pi^\varepsilon\in \mathcal{M}(E;\mathbb{S}^n_+),\]
there exists a finite matrix–valued measure $Q\in\mathcal{M}(E;\mathbb{S}^n_+),~\bar Q=\bar A\bar \pi(dx)\in\mathcal{M}(\bar E;\mathbb{S}^n_+)$ such that
\begin{equation}\label{eq:Q-weak-pushforward}
\begin{aligned}
    \lim_{\varepsilon\to0}
  \int_E tr\Big(\eta(z)dQ^\eps(z)\Big)
  &= \int_{E} tr\Big(\eta(z)\,dQ(z)\Big)<\infty,~\forall \eta\in C_b(E;\mathbb{S}_+^n)\\
  \int_{E} tr\Big(\bar \eta(\Phi(z))\,dQ(z)\Big)&=\int _{\bar E} tr\Big(\bar \eta(x)d\bar Q(x)\Big),~\forall \bar\eta\in C_b(\bar E;\mathbb{S}_+^n).
\end{aligned}
\end{equation}
%And for the Identity matrix \(\eta=I_n\), for all \(\eps>0\), \[\int_E tr\Big(\eta\big(D\Phi(z)\,A^\varepsilon(z)\,D\Phi(z)^\top\big)\Big)d\pi^\eps<\infty.\]
Equivalently, for \(Q^\eps\),
  we assume
  \[
  Q^\eps \rightharpoonup Q\in \mathcal{M}(E;\mathbb{S}^n_+),\qquad
  \Phi_\#\big(Q\big)=\bar Q:=\bar A\bar\pi.
\]
%converge weakly to finite matrix–valued measure $Q\in \mathcal{M}(E;\mathbb{S}_+^n)$, whose push-forward measure is \(\bar Q\) in $\mathcal{M}(\bar E;\mathbb{S}^n_+)$.

\item \textbf{Uniform projected microscopic housekeeping dissipation.}
For each $0<\varepsilon<\eps_0$ define the projected microscopic housekeeping dissipation by
\[
  \mathcal J_{\mathrm{hk,proj}}^\varepsilon
  := \int_E \big(D\Phi(z)\,\gamma^\varepsilon(z)\big)^\top
         \big(D\Phi(z)\,A^\varepsilon(z)\,D\Phi(z)^\top\big)^{-1}
         \big(D\Phi(z)\,\gamma^\varepsilon(z)\big)\,d\pi^\varepsilon(z),
\]
%where $\dagger$ denotes the Moore--Penrose pseudoinverse. 
We assume
\[
  \sup_{0<\varepsilon<\eps_0} \mathcal J_{\mathrm{hk,proj}}^\varepsilon < \infty.
\]
Equivalently,
\[
  D\Phi\,\gamma^\varepsilon \in
  L^2\!\big((D\Phi A^\varepsilon D\Phi^\top)^{-1};\,\pi^\varepsilon\big)
  \quad\text{with}\quad
  \sup_{\varepsilon>0}
  \left\|
    D\Phi\,\gamma^\varepsilon
  \right\|_{L^2((D\Phi A^\varepsilon D\Phi^\top)^{-1};\,\pi^\varepsilon)}
  <\infty.
\]
\end{enumerate}
\end{assumptionx}

%Let $\bar\pi$ be the invariant probability measure of $(\bar P_t)_{t\ge0}$ from \Cref{ass:standing}. We denote by\[ \bar J = \bar\gamma\,\bar\pi + \bar J^\perp, \qquad\bar Q = \bar A\,\bar\pi + \bar Q^\perp\]the Lebesgue decompositions of $\bar J$ and $\bar Q$ with respect to $\bar\pi$. The coefficients $\bar\gamma,\bar A$ and the invariant measure $\bar\pi$ are associated with the macroscopic semigroup $(\bar P_t)_{t\ge0}$; their construction will be recalled in the appendix.
Appendix \ref{subsec:III} gives a drift-based sufficient condition for the projected-current convergence when $\Phi$ is affine.

\begin{definition}[Recovery sequence]\label{def:recovery}
Fix $t>0$. A sequence $(\psi_k)_{k\in\mathbb N}\subset C_b(\bar E;\mathbb R^n)$ is called a
recovery sequence at time $t$ if
\begin{equation}\label{eq:recovery}
  \int_{\bar E}
  \bar u(t,x)\,
  \big\|\psi_k(x)-\bar A(x)^{-1}\bar\gamma(x)\big\|_{\bar A(x)}^2\,d\bar\pi(x)
  \ \longrightarrow\ 0
  \qquad\text{as }k\to\infty,
\end{equation}
where $\|v\|_{\bar A(x)}^2:=v^\top \bar A(x)\,v$.
\end{definition}

\begin{assumptionx}[Locking via a recovery sequence]\label{ass:locking}
Fix $t>0$. We assume that there exists a recovery sequence $(\psi_k)$ such that
\begin{equation}\label{eq:locking-condition-thm}
  \lim_{k\to\infty}\ \limsup_{\eps\to0}
  \int_E u^\eps(t,z)\,
  \Big\|
    \big(A^\eps(z)\big)^{-1}\gamma^\eps(z)
    - D\Phi(z)^\top \psi_k\big(\Phi(z)\big)
  \Big\|_{A^\eps(z)}^2\,d\pi^\eps(z)
  =0,
\end{equation}
where $\|\xi\|_{A^\eps(z)}^2:=\xi^\top A^\eps(z)\,\xi$.
\end{assumptionx}
Condition Eq.~\eqref{eq:canonical-locking} means that the fluctuations of $F^\eps:=(A^\eps)^{-1}\gamma^\eps$ around $D\Phi^\top\bar F\circ\Phi$ vanish in the microscopic energy metric.
We will later show that $R^\eps(t;\psi)$ in Eq.~\eqref{eq:locking-condition-thm} quantifies the loss of housekeeping dissipation and thus identifies the gap between $\lim_{\varepsilon\to0}\sigma_{\mathrm{hk}}^\varepsilon(t)$ and $\bar\sigma_{\mathrm{hk}}(t)$.

Appendix \ref{subsec:IV} derives a canonical quadratic fluctuation criterion under an additional identification limit.

\subsection{Convergence results for thermodynamic functionals }
\label{subsec:conv-result}
This subsection provides the concrete convergence statements underlying \Cref{thm:main}.  We treat the free energy, information dissipation, and entropy production functionals in turn, and establish their convergence (or lower semicontinuity) for each fixed $t>0$ under the corresponding assumptions.

\begin{theorem}\label{thm:free-energy-conv}
Suppose \Cref{ass:standing} holds. Then for every fixed $t>0$,
\begin{equation}\label{eq:free-energy-pointwise}
  \lim_{\varepsilon\to0} \mathcal F^\varepsilon(t) = \bar{\mathcal F}(t).
\end{equation}
\end{theorem}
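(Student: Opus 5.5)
Write $h(s):=s\log s$ (continuous on $[0,\infty)$, $h(0)=0$). The plan is to show $\int_E h(u^\eps(t))\,d\pi^\eps\to\int_E h(\bar u(t,\Phi(z)))\,d\Pi(z)$, and then use $\Phi_\#\Pi=\bar\pi$ from \Cref{ass:standing}(i) to identify the right-hand side with $\int_{\bar E}h(\bar u(t,x))\,d\bar\pi(x)=\bar\F(t)$. The basic tool is uniform boundedness. Since $f\in\mathcal M_0$ gives $f=g^2$ with $g\in C_b(E)$, we have $0\le u^\eps(t)\le\|f\|_\infty=:M$ for all $\eps$. Likewise $0\le\bar u(t)\le\|\bar f\|_\infty$, with $\bar f\in\bar{\mathcal M}$ bounded, and we set $M':=\max(M,\|\bar f\|_\infty)$. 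On $[0,M']$ the function $h$ is bounded and uniformly continuous, and in particular $|h|\le\max(e^{-1},M'\log M')$.

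We split the difference as
\[
\F^\eps(t)-\bar\F(t)=\int_E\bigl[h(u^\eps(t))-h(\bar u(t)\circ\Phi)\bigr]\,d\pi^\eps+\Bigl(\int_E h(\bar u(t)\circ\Phi)\,d\pi^\eps-\int_E h(\bar u(t)\circ\Phi)\,d\Pi\Bigr).
\]
For the second bracket, $\bar u(t)=\bar P_t\bar f\in C_b(\bar E)$ by the standing assumption $P_t:C_b\to C_b$, and $\Phi$ is continuous. Hence $h(\bar u(t)\circ\Phi)\in C_b(E)$, and weak convergence $\pi^\eps\rightharpoonup\Pi$ sends this bracket to $0$. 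The pushforward identity then gives the identification with $\bar\F(t)$ stated above.

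The first term is the main step: it needs uniform convergence on compacts, but the measures $\pi^\eps$ vary. Tightness handles this. A weakly convergent sequence of probability measures on the Polish space $E$ is tight, by Prokhorov/Le Cam; if one worries about a continuum $\eps\downarrow0$, we argue along arbitrary sequences $\eps_k\to0$. So for any $\delta>0$ there is a compact $K\subset E$ with $\sup_\eps\pi^\eps(E\setminus K)<\delta$. The integral over $E\setminus K$ is then bounded by $2\sup_{[0,M']}|h|\cdot\delta$. On $K$, \Cref{ass:standing}(ii) gives $\sup_K|u^\eps(t)-\bar u(t)\circ\Phi|\to0$, and uniform continuity of $h$ on $[0,M']$ makes $\sup_K|h(u^\eps)-h(\bar u\circ\Phi)|\to0$. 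Since $\pi^\eps(K)\le1$, the integral over $K$ vanishes in the limit. Letting $\delta\downarrow0$ concludes the proof.

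For the weaker input \eqref{eq:u-L1-conv}, only the first term changes. There we would use the modulus of continuity $\omega$ of $h$ on $[0,M']$ together with Jensen's inequality applied to the concave modulus, or split into the sets where $|u^\eps-\bar u\circ\Phi|<\eta$ and its complement via Markov's inequality. Either way the first term is bounded by $\omega(\eta)+2\sup|h|\,\eta^{-1}\|u^\eps-\bar u\circ\Phi\|_{L^1(\pi^\eps)}\to\omega(\eta)$, which can be made arbitrarily small.
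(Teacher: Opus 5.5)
Your proof is correct and follows essentially the paper's argument: tightness of $(\pi^\eps)$ plus compact-uniform convergence, uniform continuity of $s\log s$ on a bounded interval, and weak convergence with $\Phi_\#\Pi=\bar\pi$ for the term $\int_E h(\bar u(t)\circ\Phi)\,d\pi^\eps$. The only difference is the order of steps. The paper first extracts $L^1(\pi^\eps)$-convergence and then applies the modulus/Markov-inequality split, which is exactly your last paragraph, whereas you apply the compact/tail split directly to $h(u^\eps)-h(\bar u\circ\Phi)$. Your use of $M'$ is harmless but unnecessary, since $\bar u(t)\le\|f\|_\infty$ already follows from the pointwise limit and the surjectivity of $\Phi$.
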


\begin{proof}
%Fix $t>0$ and set $u^\eps:=P_t^\eps f$ and $v:=\bar u(t,\Phi(\cdot))$.

\emph{Step 1: $L^1(\pi^\eps)$-convergence.}
By \Cref{ass:standing}(ii), $u^\eps(t,z)\to \bar u(t,\Phi(z))$ uniformly on compacts, and by \Cref{ass:standing}(i) the family $(\pi^\eps)$ is tight.
Thus for any $\delta>0$ we can pick a compact $K\subset E$ with $\limsup_{\eps\to0}\pi^\eps(K^c)\le\delta$, and then
\[
\mathcal R_{L_1}^\eps:=\int_E |u^\eps-\bar u|\,d\pi^\eps
\le \sup_{K}|u^\eps-\bar u| + 2\|f\|_\infty\,\pi^\eps(K^c).
\]
Letting $\eps\to0$ and then $\delta\downarrow0$ yields $\|u^\eps-\bar u\|_{L^1(\pi^\eps)}\to0$.

\emph{Step 2: Free Energy.}
Let $\phi(x)=x\log x$ with $0\log0:=0$ and $M:=\|f\|_\infty$.
By $L^\infty$-contractivity, $0\le u^\eps,\bar u\le M$, hence $\phi$ is bounded and uniformly continuous on $[0,M]$ with modulus
$\omega(\eta):=\sup\{|\phi(a)-\phi(b)|:\ a,b\in[0,M],\,|a-b|\le\eta\}$.
For any $\eta>0$,
\[
\int_E |\phi(u^\eps)-\phi(\bar u)|\,d\pi^\eps
\le \omega(\eta) + 2\|\phi\|_{L^\infty([0,M])}\,\pi^\eps(|u^\eps-\bar u|>\eta)
\le \omega(\eta) + \frac{2\|\phi\|_{L^\infty([0,M])}}{\eta}\mathcal R_{L_1}^\eps.
\]
First let $\eps\to0$ and then $\eta\downarrow0$ to get
$\int_E |\phi(u^\eps)-\phi(\bar u)|\,d\pi^\eps\to0$.

Finally, $\phi(\bar u)$ is bounded continuous, so by \Cref{ass:standing}(i) (i.e.\ $\pi^\eps\rightharpoonup\Pi$ and $\Phi_\#\Pi=\bar\pi$),
\[
\int_E \phi(\bar u(t,\Phi(z))\,d\pi^\eps \to \int_{\bar E}\phi(\bar u(t,x))\,d\bar\pi(x)=\bar\F(t).
\]
Combining the last two displays gives $\F^\eps(t)\to\bar\F(t)$.
\end{proof}
\begin{remark}[$L^1$ alternative for Levels~I--II and Level~III$_\mathrm{ss}$]
\label{rem:L1-alt-and-ss}
If one only pursues Levels~I--II and Level~III$_\mathrm{ss}$, then \Cref{ass:standing}(ii) can be replaced by the weaker orbit convergence \eqref{eq:u-L1-conv}.
Indeed, \Cref{cor:ss-lsc} is purely static, and in \Cref{thm:free-energy-conv} the only use of \Cref{ass:standing}(ii) is to obtain \eqref{eq:u-L1-conv}.
\end{remark}

\begin{lemma}\label{lem:M-invariant}
Suppose \Cref{ass:CDkappa} holds. Then for every $\varepsilon>0$, every $t\ge 0$, and every $f\in\mathcal M$, one has
\[
P_t^\varepsilon f\in\mathcal M,
\]
i.e.
\[\I^\eps<\infty.\]
\end{lemma}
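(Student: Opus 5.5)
Fix $\varepsilon$, $t\ge0$ and $f=g^2\in\mathcal M$ with $g\ge0$ smooth, bounded, and $\int\Gamma^\varepsilon(g)\,d\pi^\varepsilon<\infty$. The membership $P_t^\varepsilon f\in\mathcal M$ means we must produce a square root $h_t:=\sqrt{P_t^\varepsilon f}$ that is bounded, continuous, nonnegative and smooth, and satisfies $\int\Gamma^\varepsilon(h_t)\,d\pi^\varepsilon<\infty$. The last condition is exactly $\I^\varepsilon(t)<\infty$.

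\textbf{Boundedness and regularity.} Positivity and $L^\infty$-contractivity give $0\le P_t^\varepsilon f\le\|g\|_\infty^2$, and $P_t^\varepsilon$ maps $C_b$ into $C_b$, so $h_t\in C_b$ with $h_t\ge0$. For $t>0$, smoothness of $P_t^\varepsilon f$ follows from local hypoellipticity, since $\mathcal L^\varepsilon$ has locally uniformly elliptic coefficients (parabolic interior regularity for $\partial_t u=\mathcal L^\varepsilon u$). At points where $P_t^\varepsilon f=0$, the strong maximum principle forces $P_t^\varepsilon f\equiv0$ unless $f\equiv0$, so $h_t$ is smooth. If the coefficients are only $W^{2,\infty}_{loc}$, I will take ``smooth'' to mean the regularity available and flag this point.

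\textbf{Key step: the Fisher-information bound.} The natural route is the square-root form of $CD(-\kappa,\infty)$, namely
\[\sqrt{\Gamma^\varepsilon(P_t^\varepsilon\phi)}\le e^{\kappa t}P_t^\varepsilon\sqrt{\Gamma^\varepsilon(\phi)},\]
which for diffusion generators is equivalent to the stated $L^2$ form (Bakry). Applying it with $\phi=f=g^2$ and using $\Gamma(g^2)=4g^2\Gamma(g)$ gives
\[\Gamma(P_tf)\le e^{2\kappa t}\bigl(P_t(2g\sqrt{\Gamma(g)})\bigr)^2\le 4e^{2\kappa t}\,P_t(g^2)\,P_t\Gamma(g),\]
where the second inequality is Cauchy--Schwarz for the Markov kernel. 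Then
\[\Gamma(\sqrt{P_tf})=\frac{\Gamma(P_tf)}{4P_tf}\le e^{2\kappa t}P_t\Gamma(g),\]
and integrating against the invariant measure $\pi^\varepsilon$ yields
\[\I^\varepsilon(t)\le 4e^{2\kappa t}\int\Gamma^\varepsilon(g)\,d\pi^\varepsilon<\infty.\]

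\textbf{Main obstacle.} The hypothesis \eqref{eq:ex3-2} is stated only in the $L^2$ form. I will either invoke the standard equivalence of the $L^2$ and $L^1$ gradient bounds for diffusion semigroups satisfying $CD(-\kappa,\infty)$, or avoid it entirely. To avoid it, apply \eqref{eq:ex3-2} directly to $\phi=g$ to get $\Gamma(P_tg)\le e^{2\kappa t}P_t\Gamma(g)$, which controls $P_tg$ but not $\sqrt{P_t g^2}$. The bridge is the $\Gamma_2$ argument: differentiate $s\mapsto P_s\bigl(\Gamma(\sqrt{P_{t-s}f})\bigr)$, or equivalently $P_s\bigl(\Gamma(P_{t-s}f)/P_{t-s}f\bigr)$, along the semigroup and use the curvature bound. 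I expect this step to require regularising $f$ (replace $f$ by $f+\delta$, then let $\delta\downarrow0$ by Fatou) and justifying differentiation under $C_c^\infty$-core approximations. I plan to quote the Bakry--Gentil--Ledoux equivalence rather than redo that computation.
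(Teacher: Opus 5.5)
Your argument is correct, modulo the regularity and approximation points you already flag (the paper does not address these at all). It is not the paper's route, though, and the difference matters.

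The paper applies \eqref{eq:ex3-2} to $f$ itself and integrates against $\pi^\varepsilon$. This gives $\int\Gamma^\varepsilon(P_t^\varepsilon f)\,d\pi^\varepsilon\le e^{2\kappa t}\int\Gamma^\varepsilon(f)\,d\pi^\varepsilon<\infty$, which is finite because $\Gamma(g^2)=4g^2\Gamma(g)$ with $g$ bounded. That controls the energy of $P_t^\varepsilon f$, not of its square root. Since $\Gamma(\sqrt u)=\Gamma(u)/(4u)$ and nothing bounds $u=P_t^\varepsilon f$ from below, finiteness of $\I^\varepsilon(t)$ does not follow from the displayed estimate.

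Your route applies the $L^1$ gradient bound to $g^2$ and then Cauchy--Schwarz for the kernel. It gives the pointwise bound $\Gamma^\varepsilon(\sqrt{P_t^\varepsilon f})\le e^{2\kappa t}P_t^\varepsilon\Gamma^\varepsilon(g)$, hence $\I^\varepsilon(t)\le e^{2\kappa t}\I^\varepsilon(0)$. This is what the lemma actually asserts. Applied to $P_s^\varepsilon f$, it is also exactly the monotonicity $e^{-2\kappa t}\I^\varepsilon(t+s)\le\I^\varepsilon(s)$ used in Step~(1) of \Cref{thm:FI-conv}, which \eqref{eq:I-mono} as written does not supply.

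The price is that you need more than the literal hypothesis, but your obstacle closes cleanly:
\begin{itemize}
\item For $\phi\in C_c^\infty$ and $c>\|\phi\|_\infty$, one has $\phi+c\in\mathcal M$, and $\Gamma,\Gamma_2$ ignore constants. Differentiating \eqref{eq:ex3-2} at $t=0$ therefore gives $\Gamma_2^\varepsilon\ge-\kappa\Gamma^\varepsilon$, which is the equivalence the paper itself asserts in Appendix~\ref{subsec:II}.
\item With $F=P_{t-s}^\varepsilon f+\delta$, the diffusion chain rule yields $\frac{d}{ds}P_s^\varepsilon\bigl(\Gamma^\varepsilon(F)/F\bigr)=2P_s^\varepsilon\bigl(F\,\Gamma_2^\varepsilon(\log F)\bigr)\ge-2\kappa P_s^\varepsilon\bigl(\Gamma^\varepsilon(F)/F\bigr)$.
\item Integrating in $s$ and letting $\delta\downarrow0$ finishes the proof.
\end{itemize}
Prefer this interpolation to quoting Bakry--Gentil--Ledoux. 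The identity is purely algebraic and uses no symmetry of $\mathcal L^\varepsilon$, so it covers the paper's non-reversible generators directly. The Bakry--Gentil--Ledoux strong gradient bound is formulated for symmetric Markov triples and would need re-justification here.
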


\begin{proof}
For $f\in\mathcal M$ we have $P_t^\varepsilon f\in C_b$. Under \Cref{ass:CDkappa}, integrating against $\pi^\varepsilon$,the $CD(-\kappa,\infty)$ gradient estimate gives
\begin{equation}
\label{eq:I-mono}
\int \Gamma^\varepsilon(P_t^\varepsilon f)\,d\pi^\varepsilon
\;\le\; e^{2\kappa t}\int P_t^\varepsilon\Gamma^\varepsilon(f)\,d\pi^\varepsilon
\;=e^{2\kappa t}\int \Gamma^\varepsilon(f)\,d\pi^\varepsilon
\;<\;\infty.
\end{equation}
Hence $\Gamma^\varepsilon(P_t^\varepsilon f)\in L^1(\pi^\varepsilon)$, so $P_t^\varepsilon f\in\mathcal M$.
\end{proof}

\begin{theorem}
\label{thm:FI-conv}
Suppose \Cref{ass:standing,ass:CDkappa} hold. 
Then for every fixed $t>0$,
\[
\lim_{\varepsilon\to0}\mathcal I^\varepsilon(t)=\bar{\mathcal I}(t).
\]
\end{theorem}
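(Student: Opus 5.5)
The idea is to derive convergence of $\I^\eps(t)$ from Level~I (\Cref{thm:free-energy-conv}) together with the entropy identity $\frac{d}{dt}\F=-\I$. The uniform $CD(-\kappa,\infty)$ bound of \Cref{ass:CDkappa} supplies a time-monotonicity structure that makes $t\mapsto e^{-2\kappa t}\I^\eps(t)$ nonincreasing, uniformly in $\eps$. Monotone functions whose integrals converge must converge pointwise at continuity points of the limit. So the plan is:
\begin{enumerate}
\item monotonicity of $e^{-2\kappa t}\I^\eps(t)$;
\item convergence of the time integrals of $\I^\eps$ from Level~I;
\item a sandwich argument at a fixed $t$.
\end{enumerate}

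\emph{Step 1 (uniform quasi-monotonicity).} We will show that $s\mapsto e^{-2\kappa s}\I^\eps(s)$ is nonincreasing on $[0,\infty)$ for each $\eps<\eps_0$, and the same for the limit $\bar\I$. By the semigroup property, it suffices to show $\I^\eps(s+h)\le e^{2\kappa h}\I^\eps(s)$. This is the standard consequence of $CD(-\kappa,\infty)$, namely the reverse form $\Gamma(\sqrt{P_h g})\le e^{2\kappa h}P_h\Gamma(\sqrt g)$, which is equivalent to the stated gradient bound \eqref{eq:ex3-2} for diffusions by the Bakry--\'Emery $\Gamma_2$-calculus. One integrates it against $\pi^\eps$ using invariance, exactly as in \eqref{eq:I-mono}, and \Cref{lem:M-invariant} guarantees finiteness along the orbit. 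For the macroscopic side, I would either assume $\bar P_t$ inherits $CD(-\kappa,\infty)$, or obtain $\bar\I$ monotonicity a posteriori: the limit of the monotone functions $e^{-2\kappa s}\I^\eps(s)$ is monotone and equals $\bar\I$ almost everywhere via Step 2. This inheritance is a delicate point.

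\emph{Step 2 (integrated convergence).} From the entropy identity,
\[\int_a^b\I^\eps(s)\,ds=\F^\eps(a)-\F^\eps(b)\to\bar\F(a)-\bar\F(b)=\int_a^b\bar\I(s)\,ds\]
for all $0<a<b$, by \Cref{thm:free-energy-conv}. The same argument at $a=0$ uses $\F^\eps(0)=\int f\log f\,d\pi^\eps\to\int f\log f\,d\Pi$, which requires care since $\bar f$ need not equal $f$; I will therefore use only $a>0$.

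\emph{Step 3 (sandwich).} Fix $t>0$ and $h\in(0,t)$. Monotonicity gives
\[
e^{-2\kappa h}\frac1h\int_t^{t+h}\I^\eps\,ds\;\le\;\I^\eps(t)\;\le\;e^{2\kappa h}\frac1h\int_{t-h}^{t}\I^\eps\,ds .
\]
Letting $\eps\to0$ with Step 2 bounds $\liminf_{\eps\to0}\I^\eps(t)$ and $\limsup_{\eps\to0}\I^\eps(t)$ between $e^{\mp2\kappa h}$ times the right and left averages of $\bar\I$. Letting $h\downarrow0$, both averages tend to $\bar\I(t)$ provided $\bar\I$ is continuous at $t$.

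The main obstacle is therefore continuity of $\bar\I$ at every $t>0$, as opposed to merely monotonicity, which only gives continuity off a countable set. I would first try to obtain it from smoothing of the limit semigroup: $\bar u(t)$ is a classical solution for $t>0$, and $\bar\I$ is differentiable with $-\bar\I'=8\int\bar\Gamma_2$-type terms, finite for $t>0$, which makes $\bar\I$ locally Lipschitz on $(0,\infty)$. Failing that, I would exploit convexity of $\bar\F$ implied by the monotonicity in Step 1: $\bar\F$ is a uniform limit of functions whose derivatives are quasi-monotone, so it is semiconcave/semiconvex in $t$. I would then argue that the one-sided derivatives coincide using the analogous bound for $\bar P_t$ in the reverse time direction, which the $CD$ bound on $\bar P$ supplies via the local reverse Poincaré-type inequality.
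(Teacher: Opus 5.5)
Your plan matches the paper's proof in structure. The uniform $CD(-\kappa,\infty)$ bound makes $e^{-2\kappa t}\I^\eps(t)$ nonincreasing, Level~I gives convergence of $\F^\eps$, and a squeeze at fixed $t$ (packaged in the paper as \Cref{lem:short-exp-weighted}) transfers this to $\I^\eps(t)$. Two of your details are more careful than the paper's:
\begin{itemize}
\item Step~1 correctly goes through a bound on $\Gamma^\eps(\sqrt{P_h^\eps g})$. The paper instead cites \eqref{eq:I-mono}, which controls $\Gamma^\eps(P_t^\eps f)$, not $\Gamma^\eps(\sqrt{P_t^\eps f})$.
\item Your use of both $[t-h,t]$ and $[t,t+h]$ is what gives an upper bound on $\I^\eps(t)$ itself. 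A squeeze on $[t,t+r]$ alone only bounds $\I^\eps(t+r)$ from above.
\end{itemize}

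The gap is in how you finish. You call continuity of $\bar\I$ at $t$ the main obstacle and leave it unresolved. The routes you suggest need hypotheses that are not available here: \Cref{ass:CDkappa} concerns only the microscopic family, and nothing gives a $CD$ bound, a reverse Poincar\'e inequality, or finite $\bar\Gamma_2$-type integrals for $\bar P_t$. As written, the proof is incomplete.

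The obstacle comes only from rewriting the limits as integrals of $\bar\I$; do not do that. Let $\eps\to0$ in your sandwich using only $\int_a^b\I^\eps\,ds=\F^\eps(a)-\F^\eps(b)$ and Level~I. This gives
\[
e^{-2\kappa h}\,\frac{\bar\F(t)-\bar\F(t+h)}{h}\;\le\;\liminf_{\eps\to0}\I^\eps(t)\;\le\;\limsup_{\eps\to0}\I^\eps(t)\;\le\;e^{2\kappa h}\,\frac{\bar\F(t-h)-\bar\F(t)}{h}.
\]
Both difference quotients tend to $\bar\I(t)$ as $h\downarrow0$, because $\bar\F$ is differentiable at $t$ with $\bar\F'(t)=-\bar\I(t)$. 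That is the entropy identity for the macroscopic semigroup with $\bar f\in\bar{\mathcal M}$. You already used it in Step~2, and it is exactly the input the paper's proof relies on. Differentiability of $\bar\F$ at the single point $t$ is all you need. Neither continuity nor monotonicity of $\bar\I$ (your ``delicate point'' in Step~1) plays any role.
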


\begin{proof}
\emph{(1) Monotonicity.}
Define $\mathcal G^\varepsilon(t):=e^{-2\kappa t}\mathcal I^\varepsilon(t)$ for $t>0$.
Using \Cref{ass:CDkappa}, Eq.~\eqref{eq:I-mono} gives
\[
e^{-2\kappa t}\mathcal I^\varepsilon(t+s)\le \mathcal I^\varepsilon(s)
\qquad (s,t>0).
\]
Equivalently, $\mathcal G^\varepsilon(t+s)\le \mathcal G^\varepsilon(s)$, hence $t\mapsto \mathcal G^\varepsilon(t)$ is nonincreasing on $(0,\infty)$.

\emph{(2) Apply Lemma \ref{lem:short-exp-weighted}.}
Apply \Cref{lem:short-exp-weighted} with
\[
h^\varepsilon=\mathcal F^\varepsilon,\qquad h=\bar{\mathcal F}.
\]
By \Cref{def:ex-single-abstract}, the weighted derivative in \Cref{lem:short-exp-weighted} is
\[
G^\varepsilon(t):=-e^{-2\kappa t}\frac{d}{dt}\mathcal F^\varepsilon(t)
=e^{-2\kappa t}\mathcal I^\varepsilon(t)=\mathcal G^\varepsilon(t),
\]
which is nonincreasing by Step~\textup{(1)}. Moreover, $\mathcal F^\varepsilon(t)\to\bar{\mathcal F}(t)$ for every $t>0$ by
\Cref{thm:free-energy-conv}, and $\bar{\mathcal F}$ is differentiable on $(0,\infty)$ with
$\bar{\mathcal F}'(t)=-\bar{\mathcal I}(t)$ by \Cref{def:ex-single-abstract}.
Therefore \Cref{lem:short-exp-weighted} yields, for each fixed $t>0$,
\[
\mathcal G^\varepsilon(t)\to -e^{-2\kappa t}\bar{\mathcal F}'(t)=e^{-2\kappa t}\bar{\mathcal I}(t).
\]
Multiplying by $e^{2\kappa t}$ gives $\mathcal I^\varepsilon(t)\to \bar{\mathcal I}(t)$.
\end{proof}
\begin{corollary}[Uniform convergence away from $t=0$]
\label{thm:F-uniform}
Suppose \Cref{ass:standing,ass:CDkappa} hold. Fix $T>0$ and $\tau\in(0,T)$.
Then $\F^\eps(t)\to\bar\F(t)$ as $\eps\to0$ uniformly for $t\in[\tau,T]$.
\end{corollary}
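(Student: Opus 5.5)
The natural route is a Dini/Pólya-type argument: pointwise convergence of monotone (or equicontinuous) functions on a compact interval, with a continuous limit, is automatically uniform.

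\emph{Step 1: monotonicity of the limit.} Each $\F^\eps$ is nonincreasing in $t$, since $\frac{d}{dt}\F^\eps(t)=-\I^\eps(t)\le0$ by the entropy identity. The same holds for $\bar\F$.

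\emph{Step 2: continuity of the limit.} $\bar\F$ is continuous on $[\tau,T]$, because it is differentiable on $(0,\infty)$ with derivative $-\bar\I$, as already used in the proof of \Cref{thm:FI-conv}.

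\emph{Step 3: the Pólya-type argument.} \Cref{thm:free-energy-conv} gives pointwise convergence $\F^\eps(t)\to\bar\F(t)$ for every $t\in[\tau,T]$. Given $\delta>0$, uniform continuity of $\bar\F$ on $[\tau,T]$ provides a partition $\tau=t_0<t_1<\dots<t_m=T$ with $|\bar\F(t_{j+1})-\bar\F(t_j)|\le\delta$ for every $j$. Choose $\eps_1$ such that $|\F^\eps(t_j)-\bar\F(t_j)|\le\delta$ for all $j$ and all $\eps<\eps_1$; this is possible since there are finitely many nodes. For $t\in[t_j,t_{j+1}]$, monotonicity gives
\[
\F^\eps(t)-\bar\F(t)\le \F^\eps(t_j)-\bar\F(t_{j+1})\le 2\delta,
\qquad
\bar\F(t)-\F^\eps(t)\le \bar\F(t_j)-\F^\eps(t_{j+1})\le 2\delta .
\]
Hence $\sup_{[\tau,T]}|\F^\eps-\bar\F|\le 2\delta$ for $\eps<\eps_1$, which is the claim.

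\emph{Role of \Cref{ass:CDkappa} and the main obstacle.} As stated, the argument above uses only \Cref{ass:standing}. The one step needing care is justifying monotonicity of $\F^\eps$ and continuity of $\bar\F$ along the orbits, i.e.\ that the entropy identity applies to $P_t^\eps f$ for $f\in\mathcal M_0$. This is exactly where \Cref{ass:CDkappa} enters. \Cref{lem:M-invariant} keeps $P_t^\eps f$ inside $\mathcal M$ with $\I^\eps<\infty$.

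\Cref{ass:CDkappa} also supplies an alternative, equicontinuity-based route. By the monotonicity of $e^{-2\kappa t}\I^\eps(t)$ from Step (1) of \Cref{thm:FI-conv}, we have $\I^\eps(t)\le e^{2\kappa (t-s)}\I^\eps(s)$ for $t\ge s$. Moreover, $\I^\eps(\tau/2)$ is bounded uniformly in $\eps$ because it converges to $\bar\I(\tau/2)$ by \Cref{thm:FI-conv}. Together these give a uniform Lipschitz bound $\sup_{\eps<\eps_1}\sup_{[\tau,T]}\I^\eps\le C$. So $\{\F^\eps\}$ is equi-Lipschitz on $[\tau,T]$, and Arzelà--Ascoli combined with pointwise convergence gives uniform convergence. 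I would present the monotonicity argument as the main proof and note the Lipschitz route as the way \Cref{ass:CDkappa} is used.
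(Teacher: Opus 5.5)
Your proof is correct, but your main argument is not the one the paper uses. The paper's proof is essentially your ``alternative'' route. It takes $\I^\eps(\tau)\to\bar\I(\tau)$ from \Cref{thm:FI-conv} and uses the monotonicity of $e^{-2\kappa t}\I^\eps(t)$ to get $\I^\eps\le L:=e^{2\kappa(T-\tau)}C$ on $[\tau,T]$. It then concludes from equi-Lipschitz continuity together with pointwise convergence.

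Your P\'olya-type argument needs only two facts: each $\F^\eps$ is nonincreasing, and $\bar\F$ is continuous on $[\tau,T]$. Your two-sided bound on each $[t_j,t_{j+1}]$ is correct. This gains generality: the corollary then holds under \Cref{ass:standing} alone. It even holds under the weaker orbit convergence \eqref{eq:u-L1-conv}, since that is all \Cref{thm:free-energy-conv} uses, and it needs no curvature bound and no Level~II input. The paper's route gains something else: an $\eps$-uniform Lipschitz modulus in time for $\F^\eps$ on $[\tau,T]$, which your argument does not give.

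You undersell your own argument, though, because \Cref{ass:CDkappa} does not enter the monotonicity step at all. The entropy identity is stated for $f\in\mathcal M$ with no curvature hypothesis, and $\mathcal M_0\subset\mathcal M^\eps$. It concerns $u^\eps(t)=P_t^\eps f$ with $f$ itself admissible, so \Cref{lem:M-invariant} is not needed either.

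More directly, monotonicity follows from Jensen's inequality and invariance. With $\phi(x)=x\log x$,
\[
\F^\eps(t+s)=\int_E\phi\bigl(P_s^\eps u^\eps(t)\bigr)\,d\pi^\eps\le\int_E P_s^\eps\phi\bigl(u^\eps(t)\bigr)\,d\pi^\eps=\F^\eps(t).
\]
This uses nothing beyond $P_s^\eps$ being a Markov semigroup with invariant measure $\pi^\eps$. You should drop your remark that this is ``exactly where'' \Cref{ass:CDkappa} enters: in your main proof the curvature bound is simply superfluous.
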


\begin{proof}
Fix $T>0$ and $\tau\in(0,T)$. By \Cref{thm:FI-conv} we have $\I^\eps(\tau)\to \bar\I(\tau)<\infty$, hence
$\sup_{\eps\le \eps_1}\I^\eps(\tau)\le C$ for some $\eps_1>0$ and $C<\infty$. By Step~\textup{(1)} in the proof of
\Cref{thm:FI-conv}, the function $\G^\eps(t):=e^{-2\kappa t}\I^\eps(t)$ is nonincreasing on $(0,\infty)$, so for all
$t\in[\tau,T]$,
\[
\I^\eps(t)=e^{2\kappa t}\G^\eps(t)\le e^{2\kappa t}\G^\eps(\tau)=e^{2\kappa(t-\tau)}\I^\eps(\tau)
\le e^{2\kappa(T-\tau)}\,C=:L.
\]
Therefore, for any $s,t\in[\tau,T]$,
\[
|\F^\eps(t)-\F^\eps(s)|
=\Big|\int_s^t \F^{\eps\,\prime}(r)\,dr\Big|
=\int_s^t \I^\eps(r)\,dr
\le L\,|t-s|,
\]
so $\{\F^\eps\}_{\eps\le\eps_1}$ is equi-Lipschitz (hence equicontinuous) on $[\tau,T]$.
Together with the pointwise convergence $\F^\eps(t)\to\bar\F(t)$ for every $t>0$ from \Cref{thm:free-energy-conv},
this implies uniform convergence on $[\tau,T]$.
\end{proof}

\begin{theorem}
\label{thm:hk-lsc}
Suppose \Cref{ass:standing,ass:coeff-weak} hold. Fix $t>0$ and assume $\bar\sigma_{\mathrm{hk}}(t)<\infty$. Then
\[
  \liminf_{\eps\to0}\ \sigma_{\mathrm{hk}}^\eps(t)\ \ge\ \bar\sigma_{\mathrm{hk}}(t).
\]

\end{theorem}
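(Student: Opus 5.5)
We would prove the bound through a variational (Legendre-type) representation of the housekeeping dissipation, combined with a projection inequality that passes the microscopic integrand down to the macroscopic coordinates. The relevant quadratic form is $v^\top M^{-1} v=\sup_{\xi}\{2\langle\xi,v\rangle-\xi^\top M\xi\}$. At the microscopic level we only test with fields of the form $D\Phi^\top\xi(\Phi)$, and this gives the pointwise inequality
\[
(\gamma^\eps)^\top(A^\eps)^{-1}\gamma^\eps\;\ge\;2\langle\xi(\Phi(z)),D\Phi\,\gamma^\eps\rangle-\xi(\Phi(z))^\top D\Phi A^\eps D\Phi^\top\xi(\Phi(z))
\]
for every $\xi\in C_b(\bar E;\R^n)$. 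Multiplying by $u^\eps(t,z)\ge0$ and integrating against $\pi^\eps$ yields
\[
\sigma_{\mathrm{hk}}^\eps(t)\ge \int_E u^\eps\big(2\langle\xi\circ\Phi,D\Phi\gamma^\eps\rangle-(\xi\circ\Phi)^\top D\Phi A^\eps D\Phi^\top(\xi\circ\Phi)\big)\,d\pi^\eps .
\]
On the macroscopic side, since $\bar\sigma_{\mathrm{hk}}(t)<\infty$, we also have
\[
\bar\sigma_{\mathrm{hk}}(t)=\sup_{\xi\in C_b}\int\bar u\big(2\langle\xi,\bar\gamma\rangle-\xi^\top\bar A\xi\big)\,d\bar\pi .
\]
This follows from density of $C_b$ in $L^2(\bar u\bar A;\bar\pi)$, approximating the optimiser $\bar A^{-1}\bar\gamma$.

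The next step is to replace $u^\eps(t,\cdot)$ by $\bar u(t,\Phi(\cdot))$ in the right-hand side. For the drift term, Cauchy--Schwarz in the metric $(D\Phi A^\eps D\Phi^\top)^{-1}$ gives
\[
\Big|\int (u^\eps-\bar u\circ\Phi)\langle\xi\circ\Phi,D\Phi\gamma^\eps\rangle d\pi^\eps\Big|\le \Big(\mathcal J^\eps_{\mathrm{hk,proj}}\Big)^{1/2}\Big(\int |u^\eps-\bar u\circ\Phi|^2\,\xi^\top D\Phi A^\eps D\Phi^\top\xi\,d\pi^\eps\Big)^{1/2}.
\]
The diffusivity term is handled directly against the measure $Q^\eps$. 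Both errors reduce to showing
\[
\int|u^\eps-\bar u\circ\Phi|\,\mathrm{tr}(\xi\xi^\top dQ^\eps)\to0 .
\]
To get this, we use boundedness $|u^\eps-\bar u\circ\Phi|\le 2\|f\|_\infty$, uniform convergence on compacts from \Cref{ass:standing}(ii), and tightness of $(Q^\eps)$. Tightness follows because $Q^\eps$ converges weakly to a finite measure $Q$ (\Cref{ass:coeff-weak}(2)), so the tail outside a compact $K$ is uniformly small. The factor $\mathcal J^\eps_{\mathrm{hk,proj}}$ stays bounded by \Cref{ass:coeff-weak}(3).

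Once $u^\eps$ is replaced by $\bar u\circ\Phi$, the integrands $2\bar u(t,\Phi)\xi(\Phi)$ and $\bar u(t,\Phi)\xi\xi^\top(\Phi)$ are bounded continuous test functions. This is because $\bar u(t,\cdot)\in C_b$, and $\bar u\,\xi\xi^\top\circ\Phi$ is $\mathbb S^n_+$-valued, as required. By \eqref{eq:J-weak-pushforward} and \eqref{eq:Q-weak-pushforward} together with $\Phi_\#Q=\bar A\bar\pi$, the right-hand side converges to $\int\bar u(2\langle\xi,\bar\gamma\rangle-\xi^\top\bar A\xi)\,d\bar\pi$. Hence
\[
\liminf_\eps\sigma^\eps_{\mathrm{hk}}(t)\ge \int\bar u\big(2\langle\xi,\bar\gamma\rangle-\xi^\top\bar A\xi\big)\,d\bar\pi ,
\]
and taking the supremum over $\xi$ gives the claim.

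The main obstacle is the replacement step, specifically the weighted tail control. The $L^1$ convergence of $u^\eps$ from the proof of \Cref{thm:free-energy-conv} is not enough, because the weight $\langle\xi,D\Phi\gamma^\eps\rangle$ is not uniformly bounded. I expect the Cauchy--Schwarz splitting against the projected housekeeping bound, combined with tightness of $Q^\eps$, to handle it. A secondary technical point is the sup representation for $\bar\sigma_{\mathrm{hk}}$, which requires identifying $\bar J$ with $\bar\gamma\bar\pi$ and approximating the field $\bar A^{-1}\bar\gamma$ by bounded continuous fields in $L^2(\bar u\bar A\bar\pi)$. The finiteness hypothesis $\bar\sigma_{\mathrm{hk}}(t)<\infty$ together with the local positive definiteness of $\bar A$ should make that approximation routine, via truncation and mollification.
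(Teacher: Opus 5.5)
Your proposal is correct and follows essentially the same route as the paper. Your pointwise Legendre inequality is exactly the completion-of-squares split $\sigma_{\mathrm{hk}}^\eps(t)=\tilde\sigma_{\mathrm{hk}}^\eps(t;\psi)+R^\eps(t;\psi)$ with $R^\eps\ge0$; your replacement step (Cauchy--Schwarz against $\mathcal J^\eps_{\mathrm{hk,proj}}$ plus tightness of $\mathrm{tr}(Q^\eps)\,\pi^\eps$) is the paper's \Cref{lem:weighted-cross-quadratic-conv}; and your sup representation is realised by the truncation-and-mollification recovery sequence of \Cref{lem:recovery-nonempty}, where you use the hypothesis $\bar\sigma_{\mathrm{hk}}(t)<\infty$ directly while the paper obtains the needed finiteness via \Cref{lem:hk_ss_finite} and boundedness of $\bar u(t,\cdot)$.
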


\begin{proof}
Fix $\psi\in C_b(\bar E;\R^n)$ and define
\begin{equation}
\label{eq:tilde-sigma-hk}
\tilde\sigma_{\mathrm{hk}}^\eps(t;\psi)
  :=\int_E u^\eps(t,z)\Big(
     2\big\langle D\Phi(z)\gamma^\eps(z),\psi(\Phi(z))\big\rangle
     -\psi(\Phi(z))^\top\big(D\Phi(z)A^\eps(z)D\Phi(z)^\top\big)\psi(\Phi(z))
  \Big)\,d\pi^\eps(z).    
\end{equation}

By completion of squares, for every $\eps>0$,
\begin{equation}\label{eq:cs-split}
  \sigma_{\mathrm{hk}}^\eps(t)
  = \tilde\sigma_{\mathrm{hk}}^\eps(t;\psi)
    + \int_E u^\eps(t,z)\Big\|
      \big(A^\eps(z)\big)^{-1}\gamma^\eps(z)-D\Phi(z)^\top\psi(\Phi(z))
    \Big\|_{A^\eps(z)}^2\,d\pi^\eps(z),
\end{equation}
hence
\begin{equation}\label{eq:lsc-psi}
  \liminf_{\eps\to0}\sigma_{\mathrm{hk}}^\eps(t)
  \ \ge\ \liminf_{\eps\to0}\tilde\sigma_{\mathrm{hk}}^\eps(t;\psi).
\end{equation}

By \Cref{lem:weighted-cross-quadratic-conv},
\[
  \tilde\sigma_{\mathrm{hk}}^\eps(t;\psi)\ \longrightarrow\
  2\!\int_{\bar E}\bar u(t,x)\,\langle \psi(x),\bar\gamma(x)\rangle\,d\bar\pi(x)
  -\!\int_{\bar E}\bar u(t,x)\,\psi(x)^\top\bar A(x)\psi(x)\,d\bar\pi(x).
\]
Rewriting the limit by completion of squares gives
\[
  2\langle \psi,\bar\gamma\rangle-\psi^\top\bar A\psi
  = \bar\gamma^\top\bar A^{-1}\bar\gamma
    -\big\|\psi-\bar A^{-1}\bar\gamma\big\|_{\bar A}^2,
\]
thus, for every $\psi\in C_b(\bar E;\R^n)$,
\begin{equation}\label{eq:lsc-ineq}
  \liminf_{\eps\to0}\sigma_{\mathrm{hk}}^\eps(t)
  \ \ge\ \bar\sigma_{\mathrm{hk}}(t)
  -\int_{\bar E}\bar u(t,x)\,
    \big\|\psi(x)-\bar A(x)^{-1}\bar\gamma(x)\big\|_{\bar A(x)}^2\,d\bar\pi(x).
\end{equation}

By \Cref{lem:recovery-nonempty}, there exists $(\psi_k)_{k\in\N}\subset C_b^\infty(\bar E;\R^n)\subset C_b(\bar E;\R^n)$ with
\[
  \int_{\bar E}\bar u(t,x)\,
    \big\|\psi_k(x)-\bar A(x)^{-1}\bar\gamma(x)\big\|_{\bar A(x)}^2\,d\bar\pi(x)\ \longrightarrow\ 0.
\]
Plugging $\psi=\psi_k$ into \eqref{eq:lsc-ineq} and letting $k\to\infty$ yields the claim.
\end{proof}

\begin{theorem}
\label{thm:hk-conv-iff-locking}
Suppose \Cref{ass:standing,ass:coeff-weak} hold. Fix $t>0$. The following are equivalent:
\begin{enumerate}
\item\label{it:hk-conv}
$\displaystyle \lim_{\eps\to0}\sigma_{\mathrm{hk}}^\eps(t)=\bar\sigma_{\mathrm{hk}}(t)$.
\item\label{it:locking}
There exists a recovery sequence $(\psi_k)$ at time $t$ in the sense of \Cref{def:recovery}
such that \eqref{eq:locking-condition-thm} holds.
\end{enumerate}
\end{theorem}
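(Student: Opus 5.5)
The plan is to reuse the exact completion-of-squares decomposition \eqref{eq:cs-split} from the proof of \Cref{thm:hk-lsc}. For any $\psi\in C_b(\bar E;\R^n)$ write
\[
R^\eps(t;\psi):=\int_E u^\eps(t,z)\Big\|\big(A^\eps(z)\big)^{-1}\gamma^\eps(z)-D\Phi(z)^\top\psi(\Phi(z))\Big\|_{A^\eps(z)}^2\,d\pi^\eps(z),
\]
so that $\sigma^\eps_{\mathrm{hk}}(t)=\tilde\sigma^\eps_{\mathrm{hk}}(t;\psi)+R^\eps(t;\psi)$. By \Cref{lem:weighted-cross-quadratic-conv} we have $\tilde\sigma^\eps_{\mathrm{hk}}(t;\psi)\to \bar\sigma_{\mathrm{hk}}(t)-D(\psi)$, where
\[
D(\psi):=\int_{\bar E}\bar u\,\|\psi-\bar A^{-1}\bar\gamma\|_{\bar A}^2\,d\bar\pi,
\]
after the same algebraic rewriting as in \eqref{eq:lsc-ineq}. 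Taking $\limsup$ and $\liminf$, the key identities for every fixed $\psi$ are
\[
\limsup_{\eps\to0}\sigma^\eps_{\mathrm{hk}}(t)=\bar\sigma_{\mathrm{hk}}(t)-D(\psi)+\limsup_{\eps\to0}R^\eps(t;\psi),
\]
and the analogous identity with $\liminf$. Both directions of the equivalence follow from these. I assume $\bar\sigma_{\mathrm{hk}}(t)<\infty$, as in \Cref{thm:hk-lsc}; this finiteness is implicit because $\bar J=\bar\gamma\bar\pi$ and Assumption~\ref{ass:coeff-weak}(3) hold, and in any case it is needed to make sense of the recovery sequence.

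\emph{(2)$\Rightarrow$(1).} Take the recovery sequence $(\psi_k)$ from \Cref{ass:locking}. The $\limsup$ identity gives
\[
\limsup_\eps\sigma^\eps_{\mathrm{hk}}(t)\le\bar\sigma_{\mathrm{hk}}(t)+\limsup_\eps R^\eps(t;\psi_k).
\]
Let $k\to\infty$; the error term vanishes by \eqref{eq:locking-condition-thm}. Combined with the $\liminf$ bound of \Cref{thm:hk-lsc}, this yields convergence.

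\emph{(1)$\Rightarrow$(2).} Take any recovery sequence $(\psi_k)\subset C_b$, which exists by \Cref{lem:recovery-nonempty}. By the identity, for each $k$,
\[
\limsup_\eps R^\eps(t;\psi_k)=\lim_\eps\sigma^\eps_{\mathrm{hk}}(t)-\bar\sigma_{\mathrm{hk}}(t)+D(\psi_k)=D(\psi_k)\to0.
\]
Hence \eqref{eq:locking-condition-thm} holds, and in fact for \emph{every} recovery sequence. This is worth stating as a remark: the locking residual is $\limsup_\eps R^\eps(t;\psi)-D(\psi)$, independent of $\psi$, and it equals exactly the housekeeping gap $\limsup_\eps\sigma^\eps_{\mathrm{hk}}-\bar\sigma_{\mathrm{hk}}$.

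The main obstacle is justifying the use of \Cref{lem:weighted-cross-quadratic-conv} with the time-dependent weight $u^\eps(t)$. Specifically, one needs
\[
\int u^\eps\langle D\Phi\gamma^\eps,\psi\circ\Phi\rangle\,d\pi^\eps\to\int\bar u\langle\psi,\bar\gamma\rangle\,d\bar\pi,
\]
and the analogous convergence for the quadratic term. I would take that lemma as given, as the paper does in \Cref{thm:hk-lsc}. If it must be checked, the route is to split $u^\eps=\bar u\circ\Phi+(u^\eps-\bar u\circ\Phi)$. The first piece is handled by weak convergence of $J^\eps$ and $Q^\eps$ with the bounded continuous test fields $\bar u\,\psi$ and $\bar u\,\psi\psi^\top$. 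The remainder is controlled using uniform convergence on compacts, tightness, and Cauchy--Schwarz with the uniform bound of Assumption~\ref{ass:coeff-weak}(3) together with uniform mass bounds on $Q^\eps$. The only other subtlety is finiteness of all terms, and this is guaranteed by the same bounds.
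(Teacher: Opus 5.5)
Your proposal is correct and follows the paper's proof essentially verbatim. It uses the same completion-of-squares split $\sigma_{\mathrm{hk}}^\eps(t)=\tilde\sigma_{\mathrm{hk}}^\eps(t;\psi)+R^\eps(t;\psi)$ with the limit of $\tilde\sigma_{\mathrm{hk}}^\eps(t;\psi)$ from \Cref{lem:weighted-cross-quadratic-conv}, the $\limsup$ bound combined with \Cref{thm:hk-lsc} for (2)$\Rightarrow$(1), and an arbitrary recovery sequence (via \Cref{lem:recovery-nonempty}) giving $\limsup_\eps R^\eps(t;\psi_k)=D(\psi_k)\to0$ for (1)$\Rightarrow$(2); your explicit flagging of $\bar\sigma_{\mathrm{hk}}(t)<\infty$, obtained from \Cref{lem:hk_ss_finite} and boundedness of $\bar u$, makes explicit a hypothesis the paper leaves implicit when it invokes \Cref{thm:hk-lsc} here.
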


\begin{proof}
For $\psi\in C_b(\bar E;\R^n)$, completion of squares yields,
\begin{equation}\label{eq:sigma-split-lock}
\sigma_{\mathrm{hk}}^\eps(t)=\tilde\sigma_{\mathrm{hk}}^\eps(t;\psi)+R^\eps(t;\psi),\qquad \forall \eps>0
\end{equation}
where 
\begin{equation}
\label{eq:R-def}
    R^\eps(t;\psi)
  :=\int_E u^\eps(t,z)\,
  \Big\|
    \big(A^\eps(z)\big)^{-1}\gamma^\eps(z)
    - D\Phi(z)^\top \psi\big(\Phi(z)\big)
  \Big\|_{A^\eps(z)}^2\,d\pi^\eps(z)\ \ge\ 0.
\end{equation}
Moreover, by \Cref{lem:weighted-cross-quadratic-conv}, for each fixed $\psi\in C_b(\bar E;\R^n)$,
\begin{equation}\label{eq:limit-tilde-short}
  \lim_{\eps\to0}\tilde\sigma_{\mathrm{hk}}^\eps(t;\psi)
  =\bar\sigma_{\mathrm{hk}}(t)
  -\int_{\bar E}\bar u(t,x)\,
    \big\|\psi(x)-\bar A(x)^{-1}\bar\gamma(x)\big\|_{\bar A(x)}^2\,d\bar\pi(x),
\end{equation}

\smallskip
\noindent\emph{\ref{it:locking}$\Rightarrow$\ref{it:hk-conv}.}
Let $(\psi_k)$ satisfy \ref{it:locking}. Fix $k$ and apply \eqref{eq:sigma-split-lock} with $\psi=\psi_k$; taking
$\limsup_{\eps\to0}$ and using \eqref{eq:limit-tilde-short} gives
\[
  \limsup_{\eps\to0}\sigma_{\mathrm{hk}}^\eps(t)
  \le \bar\sigma_{\mathrm{hk}}(t)
     -\!\int_{\bar E}\bar u\,\big\|\psi_k-\bar A^{-1}\bar\gamma\big\|_{\bar A}^2\,d\bar\pi
     +\limsup_{\eps\to0}R^\eps(t;\psi_k).
\]
Let $k\to\infty$ and use \Cref{def:recovery,ass:locking} to conclude
$\limsup_{\eps\to0}\sigma_{\mathrm{hk}}^\eps(t)\le \bar\sigma_{\mathrm{hk}}(t)$.
Together with \Cref{thm:hk-lsc}, this implies \ref{it:hk-conv}.

\smallskip
\noindent\emph{\ref{it:hk-conv}$\Rightarrow$\ref{it:locking}.}
Let $(\psi_k)$ be any recovery sequence at time $t$ (cf.\ \Cref{def:recovery}) and set
\[
  \delta_k:=\int_{\bar E}\bar u(t,x)\,
    \big\|\psi_k(x)-\bar A(x)^{-1}\bar\gamma(x)\big\|_{\bar A(x)}^2\,d\bar\pi(x)\xrightarrow[k\to\infty]{}0.
\]
By \eqref{eq:limit-tilde-short}, $\tilde\sigma_{\mathrm{hk}}^\eps(t;\psi_k)\to\bar\sigma_{\mathrm{hk}}(t)-\delta_k$.
Assuming \ref{it:hk-conv} and subtracting in \eqref{eq:sigma-split-lock} yields
$R^\eps(t;\psi_k)\to\delta_k$, hence $\limsup_{\eps\to0}R^\eps(t;\psi_k)=\delta_k$.
Letting $k\to\infty$ gives \eqref{eq:locking-condition-thm}.
\end{proof}

The field $(A^\varepsilon)^{-1}\gamma^\varepsilon$ is the microscopic thermodynamic force (affinity), with macroscopic counterpart $\bar A^{-1}\bar\gamma$. 
At fixed $t>0$, the locking condition \eqref{eq:locking-condition-thm} says that $(A^\varepsilon)^{-1}\gamma^\varepsilon$ can be approximated, in the dissipation-weighted $L^2(u^\varepsilon(t,\cdot)\pi^\varepsilon)$ norm, by lifts $D\Phi^\top(\psi_k\circ\Phi)$ of macroscopic vector fields $\psi_k\in C_b(\bar E;\R^n)$, where the same recovery sequence also approximates $\bar A^{-1}\bar\gamma$ at the macroscopic level. 
Thus \Cref{thm:hk-conv-iff-locking} states that housekeeping dissipation converges iff no genuinely microscopic force fluctuations contribute to dissipation in the limit $\varepsilon\to0$.

\begin{corollary}[Reduction of entropy production]\label{thm:epr-reduction}
Suppose \Cref{ass:standing,ass:coeff-weak,ass:CDkappa} hold. Then for every $t>0$,
\[
  \liminf_{\eps\downarrow0}\sigma^\eps(t)\;\ge\;\bar\sigma(t).
\]
Moreover, for each fixed $t>0$ the following statements are equivalent:
\begin{enumerate}
\item $\displaystyle\lim_{\eps\downarrow0}\sigma^\eps(t)=\bar\sigma(t)$;
\item the locking condition: \Cref{ass:locking} holds at time $t$ (with weight $u^\eps(t,\cdot)$).
\end{enumerate}
\end{corollary}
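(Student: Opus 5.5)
The plan is to combine the results already proved, using the decomposition $\sigma^\eps(t)=\sigma^\eps_{\mathrm{hk}}(t)+\I^\eps(t)$ from \Cref{def:sigmas-single} (recall $\sigma_{\mathrm{ex}}=\I$), and likewise $\bar\sigma(t)=\bar\sigma_{\mathrm{hk}}(t)+\bar\I(t)$. Under \Cref{ass:standing,ass:CDkappa}, \Cref{thm:FI-conv} gives the genuine limit $\I^\eps(t)\to\bar\I(t)<\infty$ for every $t>0$. Hence the excess part converges, and the whole question reduces to the housekeeping part.

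\emph{Liminf bound.} Since one summand converges, superadditivity of $\liminf$ is an equality here:
\[
\liminf_{\eps\downarrow0}\sigma^\eps(t)=\liminf_{\eps\downarrow0}\sigma_{\mathrm{hk}}^\eps(t)+\bar\I(t).
\]
If $\bar\sigma_{\mathrm{hk}}(t)<\infty$, \Cref{thm:hk-lsc} gives $\liminf\sigma^\eps_{\mathrm{hk}}(t)\ge\bar\sigma_{\mathrm{hk}}(t)$, and adding $\bar\I(t)$ yields the claim. The main obstacle is the case $\bar\sigma_{\mathrm{hk}}(t)=+\infty$, which \Cref{thm:hk-lsc} excludes. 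I would handle it with the same completion-of-squares argument: \eqref{eq:lsc-psi} together with \Cref{lem:weighted-cross-quadratic-conv} gives, for each $\psi\in C_b(\bar E;\R^n)$,
\[
\liminf_{\eps\downarrow0}\sigma^\eps_{\mathrm{hk}}(t)\ge\int_{\bar E}\bar u\,\bigl(2\langle\psi,\bar\gamma\rangle-\psi^\top\bar A\psi\bigr)\,d\bar\pi.
\]
Taking a supremum over bounded truncations $\psi$ of $\bar A^{-1}\bar\gamma$ (using \Cref{lem:recovery-nonempty}-type density together with monotone convergence) makes the right-hand side $+\infty$. Alternatively, one can note that the finiteness of $\bar\sigma_{\mathrm{hk}}(t)$ is implicitly part of the setting, because \Cref{ass:coeff-weak}(iii) and lower semicontinuity force it.

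\emph{Equivalence.} Because $\I^\eps(t)\to\bar\I(t)$ is finite, $\sigma^\eps(t)\to\bar\sigma(t)$ holds if and only if $\sigma^\eps_{\mathrm{hk}}(t)\to\bar\sigma_{\mathrm{hk}}(t)$: subtract or add the convergent finite sequence $\I^\eps(t)$. By \Cref{thm:hk-conv-iff-locking}, the latter is equivalent to the existence of a recovery sequence satisfying \eqref{eq:locking-condition-thm}, which is exactly \Cref{ass:locking} at time $t$ with weight $u^\eps(t,\cdot)$. The only care needed is that $\bar\sigma(t)<\infty$ in statement 1, so that the subtraction is legitimate; convergence to a finite limit presupposes this anyway.
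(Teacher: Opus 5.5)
Your proposal is correct and follows the paper's proof: decompose $\sigma=\sigma_{\mathrm{hk}}+\I$, use \Cref{thm:FI-conv} for the excess part, \Cref{thm:hk-lsc} for the $\liminf$, and \Cref{thm:hk-conv-iff-locking} for the equivalence. The only difference is that the paper proves finiteness first: \Cref{lem:hk_ss_finite} gives $\bar\sigma_{\mathrm{hk,ss}}<\infty$, and the bound $0\le\bar u(t,\cdot)\le\|\bar f\|_\infty$ then gives $\bar\sigma_{\mathrm{hk}}(t)<\infty$, so your separate branch for $\bar\sigma_{\mathrm{hk}}(t)=+\infty$ is vacuous and your ``alternatively'' remark is exactly the paper's route.
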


\begin{proof}
Fix $t>0$. Using $\sigma^\eps(t)=\sigma_{\mathrm{hk}}^\eps(t)+\I^\eps(t)$ and
$\bar\sigma(t)=\bar\sigma_{\mathrm{hk}}(t)+\bar\I(t)$, together with
\[
\liminf_{\eps\downarrow0}\sigma_{\mathrm{hk}}^\eps(t)\ge\bar\sigma_{\mathrm{hk}}(t)
\quad
\qquad\text{and}\qquad
\I^\eps(t)\to\bar\I(t)
\]
proved in \Cref{thm:FI-conv,thm:hk-lsc} yields $\liminf_{\eps\downarrow0}\sigma^\eps(t)\ge\bar\sigma(t)$.

By \Cref{lem:hk_ss_finite}, $\bar\sigma_{\mathrm{hk,ss}}<\infty$. Since $\bar\sigma_{\mathrm{hk}}(t)$ differs from
$\bar\sigma_{\mathrm{hk,ss}}$ only by the additional weight $\bar u(t,\cdot)$, which is bounded for $t>0$ by the Markov property,
we have $\bar\sigma_{\mathrm{hk}}(t)<\infty$. Hence
\[
\sigma^\eps(t)\to\bar\sigma(t)\quad\Longleftrightarrow\quad
\sigma_{\mathrm{hk}}^\eps(t)\to\bar\sigma_{\mathrm{hk}}(t),
\]
and the latter is equivalent to the locking condition at time $t$ by \Cref{thm:hk-conv-iff-locking}.
\end{proof}
%\paragraph{On weakening \Cref{ass:standing}\textup{(iii)}.}

\section{Case Studies: Averaging and Stiff-Potential Limits}\label{sec:case_study}
We now verify the abstract assumptions on two representative classes of singularly perturbed diffusions: a slow--fast averaging regime and a stiff-potential (large-drift) regime concentrating onto a lower-dimensional manifold. 
Along the way we illustrate two complementary ways to check the uniform curvature--dimension bound \Cref{ass:CDkappa}: a Ricci/Schur-type matrix criterion (\Cref{thm:Ricci}) and an It\^{o}--Kunita derivative-flow (synchronous-contraction) criterion (\Cref{thm:flow-CD}).
\subsection{Averaging limit in a slow--fast regime}\label{subsec:averaging}
We specialise the abstract framework to a classical slow--fast setting, where fast variables relax on the time scale $O(\varepsilon)$ and drive an effective averaged dynamics for the slow component on the $O(1)$ time scale.

\paragraph{Outline.}
Slow--fast averaging is a prototypical singular-perturbation regime going back to Khasminskii \cite{Khasminskii1963Averaging,Khasminskii1968ItoAveraging} and developed in modern multiscale analysis \cite{pavliotis2008multiscale}. 
Our goal here is not to re-prove averaging in full generality, but to show how the abstract assumption chain of \Cref{thm:main} can be verified in representative nonequilibrium diffusions.

We proceed as follows. 
We first study a linear Ornstein--Uhlenbeck model, where the standing dynamical assumptions and the additional hypotheses needed for Levels~III--IV reduce to explicit matrix conditions, and the uniform curvature bound is checked by the Ricci-type criterion. 
We then treat a genuinely nonlinear slow--fast diffusion with multiplicative noise: the It\^{o}--Kunita derivative-flow method provides checkable conditions ensuring the uniform $CD(-\kappa,\infty)$ hypothesis. 
Finally, within a tractable (possibly irreversible) subclass admitting a convenient invariant-measure structure, standard averaging inputs such as those in \cite{BardiKouhkouh2023} yield the required dynamical convergence assumptions, so that the thermodynamic conclusions of \Cref{thm:main} apply; in the common-invariant-measure case we additionally obtain strong $L^2(\pi)$ convergence of gradients.

\medskip
We work on the product space
\[
  E=\R^{d_x}\times\R^{d_y},\qquad z=(x,y),
\]
and fix the projection onto the slow variables
\[
  \Phi:E\to\bar E:=\R^{d_y},\qquad \Phi(x,y)=y.
\]
For each $\varepsilon>0$ we consider the diffusion $Z_t^\varepsilon$ solving
\begin{equation}\label{eq:NL-avg-SDE}
  dZ_t^\varepsilon
  = b^\varepsilon(Z_t^\varepsilon)\,dt
    + \sqrt{2A^\varepsilon(Z_t^\varepsilon)}\,dW_t,
\end{equation}
with block-diagonal diffusion matrix
\begin{equation}\label{eq:NL-avg-coeff}
  A^\varepsilon(z)
  :=
  \begin{pmatrix}
    \varepsilon^{-1} a_1(z) & 0\\[2pt]
    0 & a_2(z)
  \end{pmatrix},
\end{equation}
and drift in divergence-form parametrisation
\begin{equation}\label{eq:NL-avg-drift}
  b^\varepsilon(z)
  :=-A^\varepsilon(z)\,\nabla V^\varepsilon(z)
    +\big(\nabla\!\cdot A^{\varepsilon}(z)\big)
    +\gamma^\varepsilon(z)
  =\begin{bmatrix}\varepsilon^{-1}b_1(z)\\ b_2(z)\end{bmatrix},
\end{equation}
where $\gamma^\varepsilon=[\gamma_x^\varepsilon;\gamma_y^\varepsilon]$ and
\[
  b_1(z):=-a_1(z)\nabla_x V^\varepsilon(z)+\nabla_x\!\cdot a_1(z)+\gamma_x^\varepsilon(z),
  \qquad
  b_2(z):=-a_2(z)\nabla_y V^\varepsilon(z)+\nabla_y\!\cdot a_2(z)+\gamma_y^\varepsilon(z).
\]

\medskip

Freezing $y$ and considering only the fast dynamics in $x$, we introduce the fast generator
\begin{equation}\label{eq:fast-generator}
  \mathcal L_y^{\mathrm{fast}}\phi(x)
  := \mathrm{tr}\big(a_1(x,y)\nabla_x^2\phi(x)\big)
     + \big(-a_1(x,y)\nabla_x V^\varepsilon(x,y)
            +(\nabla_x\!\cdot a_1)(x,y)\big)\!\cdot\!\nabla_x\phi(x),
\end{equation}
acting on $C_c^\infty(\R^{d_x})$.

\begin{assumptionx}[Standing assumptions for the averaging model]\label{ass:fundamental}
\begin{enumerate}
  \item $a_1:E\to\mathbb S^{d_x}_+$ and $a_2:E\to\mathbb S^{d_y}_+$ are smooth and locally uniformly elliptic, and $\gamma^\varepsilon:E\to\R^{d_x+d_y}$ is smooth.
  \item $V^\varepsilon$ is smooth and confining so that
  \[
    \pi^\varepsilon(dz):=\frac{1}{Z^\varepsilon}e^{-V^\varepsilon(z)}\,dz
  \]
  is a probability measure on $E$, and $\nabla\cdot(\pi^\varepsilon\gamma^\varepsilon)\equiv 0$.
  \item For each $y\in\R^{d_y}$, the fast generator $\mathcal L_y^{\mathrm{fast}}$ admits a unique invariant probability
  measure $\mu^y$ with strictly positive density on $\R^{d_x}$. Moreover, $y\mapsto\mu^y$ is weakly continuous, and $\mu^y$ is characterized by
  \[
    \int_{\R^{d_x}} \mathcal L_y^{\mathrm{fast}}\phi(x)\,\mu^y(dx)=0,
    \qquad \forall\,\phi\in C_c^\infty(\R^{d_x}).
  \]
\end{enumerate}
\end{assumptionx}

Equation \eqref{eq:NL-avg-SDE} induces a Markov semigroup $(P_t^\varepsilon)_{t\ge0}$ on $E$ with generator $\mathcal L^\varepsilon$ as in \Cref{sec:setting}. We define the averaging projection $\mathcal P$ by
\[
  (\mathcal P f)(y):=\int_{\R^{d_x}} f(x,y)\,\mu^y(dx),
  \qquad f\in C_b(E),
\]
and write $\bar E=\R^{d_y}$ for the slow state space. For \(y\mapsto \mu^y(x)\) is weakly continuous, \(\mathcal P f \in C_b(\bar E)\) .

\begin{lemma}[Continuity of the averaging projection]\label{lem:P-continuity}
Assume that $y\mapsto \mu^y$ is weakly continuous, i.e.\ for every $\phi\in C_b(\R^{d_x})$ the map $y\mapsto \int_{\R^{d_x}}\phi(x)\,\mu^y(dx)$ is continuous. Then for every $f\in C_b(E)$ the averaged function
\[
(\mathcal P f)(y):=\int_{\R^{d_x}} f(x,y)\,\mu^y(dx)
\]
belongs to $C_b(\R^{d_y})$.
\end{lemma}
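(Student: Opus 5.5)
Boundedness is immediate: since each $\mu^y$ is a probability measure, $|(\mathcal P f)(y)|\le \int |f(x,y)|\,\mu^y(dx)\le\|f\|_\infty$. The real content is continuity. Fix $y_0\in\R^{d_y}$ and a sequence $y_k\to y_0$. We split the difference as
\[
(\mathcal P f)(y_k)-(\mathcal P f)(y_0)
=\int \bigl(f(x,y_k)-f(x,y_0)\bigr)\,\mu^{y_k}(dx)
+\Bigl(\int f(x,y_0)\,\mu^{y_k}(dx)-\int f(x,y_0)\,\mu^{y_0}(dx)\Bigr).
\]
The second term tends to $0$ by weak continuity of $y\mapsto\mu^y$, because $x\mapsto f(x,y_0)$ lies in $C_b(\R^{d_x})$.

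The first term is the main obstacle. Here $f$ is only continuous, not uniformly continuous, in $y$, and the measures $\mu^{y_k}$ move with $k$. The plan is to use tightness.
\begin{enumerate}
\item The family $\{\mu^{y_k}\}_k\cup\{\mu^{y_0}\}$ converges weakly, so by Prokhorov's theorem (the converse direction on the Polish space $\R^{d_x}$) it is tight. Given $\delta>0$, choose a compact $K\subset\R^{d_x}$ with $\sup_k\mu^{y_k}(K^c)\le\delta$.
\item Let $B$ be a closed ball around $y_0$ containing all the $y_k$. Then $f$ is uniformly continuous on the compact set $K\times B$, so $\sup_{x\in K}|f(x,y_k)-f(x,y_0)|\to0$.
\item These two facts give
\[
\limsup_{k\to\infty}\Bigl|\int \bigl(f(x,y_k)-f(x,y_0)\bigr)\,\mu^{y_k}(dx)\Bigr|
\le \limsup_{k\to\infty}\sup_{x\in K}|f(x,y_k)-f(x,y_0)| + 2\|f\|_\infty\,\delta = 2\|f\|_\infty\,\delta .
\]
\end{enumerate}
Since $\delta>0$ is arbitrary, the first term tends to $0$ as well.

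Combining the two terms gives sequential continuity of $\mathcal P f$ at $y_0$. Sequential continuity is equivalent to continuity on the metric space $\R^{d_y}$, so $\mathcal P f\in C_b(\R^{d_y})$.

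Measurability of $x\mapsto f(x,y)$ is automatic, since $f$ is continuous. The only nontrivial ingredient is the tightness of a weakly convergent sequence, which is standard on Polish spaces.
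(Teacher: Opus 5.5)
Your proof is correct and uses the same three ingredients as the paper: tail control for $\{\mu^{y_k}\}$, uniform continuity of $f$ on a compact product set, and weak continuity of $y\mapsto\mu^y$. The genuine difference is the decomposition.

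The paper truncates to a ball and splits into three pieces. Its measure-variation term is therefore $\int f(x,y)\mathbf 1_{\{|x|\le R\}}\,(\mu^{y_k}-\mu^{y})(dx)$, which has a discontinuous integrand. To pass to the limit it needs $\mu^y(\{|x|=R\})=0$. It takes this from the positive density of $\mu^y$ in \Cref{ass:fundamental}, which is not a hypothesis of the lemma as stated. The issue could also be avoided by choosing $R$ outside the at most countably many radii charged by $\mu^y$. The paper also gets its tail bound only for large $k$, via Portmanteau on $\{|x|\ge R\}$.

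Your two-term split applies weak continuity directly to the untruncated $f(\cdot,y_0)\in C_b(\R^{d_x})$, so no continuity-set argument is needed. You then get a tail bound for the whole sequence from tightness of a convergent sequence. As a result, your argument is a bit shorter and proves the lemma under exactly its stated hypothesis.

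The one heavier tool you use, the converse Prokhorov theorem, is not needed on $\R^{d_x}$. The paper's Portmanteau step, plus enlarging $R$ to cover the finitely many remaining indices, gives the same uniform tail bound.
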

\begin{proof}
Clearly $|(\mathcal Pf)(y)|\le \|f\|_\infty$, so $\mathcal Pf$ is bounded.
Let $y_k\to y$ and fix $\delta>0$. Choose $R>0$ such that $\mu^y(\{|x|>R\})\le \delta$.
By weak continuity $\mu^{y_k}\Rightarrow\mu^y$ and Portmanteau (for the closed set $\{|x|\ge R\}$),
we have $\mu^{y_k}(\{|x|>R\})\le \delta$ for all $k$ large enough.

Decompose
\[
(\mathcal Pf)(y_k)-(\mathcal Pf)(y)
=\int_{|x|\le R}\!\!\big(f(x,y_k)-f(x,y)\big)\,\mu^{y_k}(dx)
+\int_{|x|\le R}\!\! f(x,y)\,(\mu^{y_k}-\mu^{y})(dx)
+\mathrm{Tail}_k,
\]
where $\mathrm{Tail}_k:=\int_{|x|>R} f(x,y_k)\,d\mu^{y_k}-\int_{|x|>R} f(x,y)\,d\mu^{y}$.
The first term tends to $0$, since $f(\cdot,y_k)\to f(\cdot,y)$ uniformly on $\{|x|\le R\}$.

For the second term, $g_R(x):=f(x,y)\mathbf 1_{\{|x|\le R\}}$ is bounded and is $\mu^y$--a.e.\ continuous because $\mu^y(\{|x|=R\})=0$ (density assumption). Hence $\int g_R\,d\mu^{y_k}\to\int g_R\,d\mu^y$.

Finally,
\[
|\mathrm{Tail}_k|\le \|f\|_\infty\big(\mu^{y_k}(\{|x|>R\})+\mu^y(\{|x|>R\})\big)\le 2\|f\|_\infty\,\delta
\]
for $k$ large. Since $\delta$ is arbitrary, $(\mathcal Pf)(y_k)\to(\mathcal Pf)(y)$.
\end{proof}

Since $\varphi\circ\Phi(x,y)=\varphi(y)$ is independent of $x$ and $A^\varepsilon$ is block-diagonal, the fast part drops out and the full generator acts on $\varphi\circ\Phi$ as
\begin{equation}\label{eq:NL-avg-L-on-Phi}
  \mathcal L^{\varepsilon}(\varphi\circ\Phi)(x,y)
  = \mathrm{tr}\big(a_2(x,y)\nabla_y^2\varphi(y)\big)
    +b_2(x,y)\cdot\nabla_y\varphi(y),
\end{equation}
which is independent of $\varepsilon$.
The effective (averaged) generator on $\bar E$ is then defined by
\begin{equation}\label{eq:NL-avg-barL}
  \bar{\mathcal L}\varphi(y)
  := \mathcal P\Big(\mathcal L^{\varepsilon}(\varphi\circ\Phi)\Big)(y),
\end{equation}
and has the diffusion--drift form
\[
  \bar{\mathcal L}\varphi(y)
  = \bar a(y):\nabla_y^2\varphi(y)
    + \bar b(y)\cdot\nabla_y\varphi(y),
\]
where
\[
  \bar a(y)
  := \int_{\R^{d_x}} a_2(x,y)\,\mu^y(dx),\qquad
  \bar b(y)
  := \int_{\R^{d_x}} b_2(x,y)\,\mu^y(dx).
\]

\subsubsection{Linear OU model: verification of assumptions}
We begin with a simple Ornstein--Uhlenbeck (OU) model that already exhibits the slow--fast structure and for which all four assumptions--
\Cref{ass:standing,ass:CDkappa,ass:coeff-weak,ass:locking} can be verified explicitly.
Let $d_x,d_y\in\mathbb N$ and $d=d_x+d_y$, and write $z=(x,y)\in\R^{d_x}\times\R^{d_y}$. For each $\varepsilon>0$ consider the linear diffusion with drift and diffusion
\[
  b^\varepsilon(z) = - I^\varepsilon B z,
  \qquad
  A^\varepsilon(z) = I^\varepsilon,
\]
where
\[
  I^\varepsilon
  :=
  \begin{pmatrix}
    \varepsilon^{-1}I_{d_x} & 0\\[2pt]
    0 & I_{d_y}
  \end{pmatrix},
  \qquad
  B =
  \begin{pmatrix}
    B_{11} & B_{12}\\[2pt]
    B_{21} & B_{22}
  \end{pmatrix}\!\in\R^{d\times d},
\]
so that $Z_t^\varepsilon$ solves the SDE
\begin{equation}\label{eq:SDE-OU}
  dZ_t^\varepsilon
  = -I^\varepsilon BZ_t^\varepsilon\,dt
    + \sqrt{2I^\varepsilon}\,dW_t.
\end{equation}
The scaling in $I^\varepsilon$ accelerates the $x$--coordinates by a factor $\varepsilon^{-1}$, producing the slow--fast separation.

The effective slow dynamics lives on the $y$--coordinates and is again an OU process on $\R^{d_y}$,
\begin{equation}\label{eq:averaged-slow1}
  \bar b(y) = -Cy,\qquad \bar A(y) = I_{d_y},\qquad
  C := B_{22}-B_{21}B_{11}^{-1}B_{12},
\end{equation}
where $C$ is the Schur complement of $B_{11}$ in $B$.

\medskip
We now provide an OU test case for our thermodynamic convergence framework.

\begin{example}\label{ex:OU}
Assume that $B_{11}$ and $C$ are Hurwitz matrices. Then the family of semigroups $(P_t^\varepsilon)_{\varepsilon>0}$ associated with the singularly perturbed OU process \eqref{eq:SDE-OU}--\eqref{eq:averaged-slow1} satisfies the following:
\begin{enumerate}
  \item \Cref{ass:standing,ass:coeff-weak} hold.
  \item If, in addition, $\Sym(B_{11})\succ 0$, then \Cref{ass:CDkappa} holds.
  \item If, in addition, $B_{11}=B_{11}^\top$ and $B_{12}=B_{21}^\top$, then \Cref{ass:locking} holds.
\end{enumerate}
The proofs are deferred to the appendix; see \Cref{lem:standing,lem:OU-CD,lem:OU-locking}.
\end{example}

\noindent
The Hurwitz conditions on $B_{11}$ and $C$ are the standard linear stability assumptions ensuring that the fast frozen OU dynamics and the effective slow OU dynamics admit centred invariant Gaussian measures.
The additional symmetry conditions in item \textup{(iii)} enforce an alignment of thermodynamic forces, in the sense that the microscopic force has no residual fluctuations in the fast directions, which yields the locking property in the singular limit.

\subsubsection{Nonlinear multiplicative noise}

We now move beyond the linear Ornstein--Uhlenbeck prototype and consider genuinely nonlinear slow--fast diffusions with multiplicative noise.

\paragraph{Uniform $CD$ verification.}
For completeness, we first record that the $CD$ verification used in the Ornstein--Uhlenbeck case extends verbatim to the averaging model with additive noise (constant diffusivity).

\begin{theorem}[Schur-type criterion for uniform $CD(-\kappa,\infty)$ in the averaging model]
\label{thm:CD_schur_averaging}
Suppose that \Cref{ass:fundamental} holds and $A^\eps$ is constant.
Denote the Jacobian of the unscaled drift by
\[
\mathcal Jb(z)=
\begin{pmatrix}
\nabla_x b_1(z) & \nabla_y b_1(z)\\
\nabla_x b_2(z) & \nabla_y b_2(z)
\end{pmatrix}.
\]
Define
\[
\mathsf S(z)
:=\diag(a_1,a_2)^{1/2}\,\Sym\!\bigl(-\mathcal Jb(z)\bigr)\,\diag(a_1,a_2)^{1/2}
=
\begin{pmatrix}
\mathsf S_{11}(z) & \mathsf S_{12}(z)\\
\mathsf S_{21}(z) & \mathsf S_{22}(z)
\end{pmatrix},
\]
and assume that there exist $\eps_0>0$ and $\kappa\ge0$ such that for all $0<\eps\le\eps_0$ and all $z\in E$,
\[
\mathsf S_{11}(z)\succ 0,
\qquad
\lambda_{\min}\!\Bigl(\Schur(\mathsf S_{11})(z)\Bigr)\ge -\kappa,
\]
where $\Schur(\mathsf S_{11})(z):=\mathsf S_{22}(z)-\mathsf S_{21}(z)\mathsf S_{11}(z)^{-1}\mathsf S_{12}(z)$.
Then \Cref{ass:CDkappa} holds uniformly in $\eps$, i.e.\ $(P_t^\eps)_{t\ge0}$ satisfies $CD(-\kappa,\infty)$ for all $0<\eps\le\eps_0$.
\end{theorem}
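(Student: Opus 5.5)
We reduce the uniform $CD(-\kappa,\infty)$ gradient bound to a pointwise Bakry--\'Emery inequality $\Gamma_2^\eps(f)\ge-\kappa\,\Gamma^\eps(f)$ for all $f\in C_c^\infty(E)$, uniformly in $0<\eps\le\eps_0$. The plan is to apply the Ricci-type criterion \Cref{thm:Ricci} from Appendix~\ref{subsec:II}, which handles exactly constant-diffusivity generators. If one argues directly instead, the standard equivalence is used: $\Gamma_2\ge-\kappa\Gamma$ together with $C_c^\infty$ being a core gives $\Gamma(P_tf)\le e^{2\kappa t}P_t\Gamma(f)$. This follows by differentiating $s\mapsto e^{2\kappa s}P_s\Gamma(P_{t-s}f)$ and approximating $f\in\mathcal M$ by smooth compactly supported functions.

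\emph{Step 1: Bochner formula for constant $A^\eps$.} Since $A^\eps=\diag(\eps^{-1}a_1,a_2)$ is constant, we have $\nabla\!\cdot A^\eps=0$, and the generator is $\mathcal L^\eps f=\tr(A^\eps\nabla^2 f)+b^\eps\cdot\nabla f$ with $b^\eps=I^\eps b$, where $I^\eps:=\diag(\eps^{-1}I,I)$ and $b=(b_1,b_2)$. A direct computation gives
\[
\Gamma_2^\eps(f)=\tr\big((A^\eps\nabla^2 f)^2\big)+\langle \nabla f,\,A^\eps\,\Sym(-\mathcal J b^\eps)^{\!*}\,\nabla f\rangle ,
\]
where the second term is more precisely $-\langle A^\eps\nabla f,(\mathcal Jb^\eps)^\top\nabla f\rangle$. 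Symmetrizing, we write it as $\langle w,\,\Sym(-(A^\eps)^{1/2}(\mathcal J b^\eps)^\top (A^\eps)^{-1/2})\,w\rangle$ with $w=(A^\eps)^{1/2}\nabla f$. The Hessian term is nonnegative and we drop it. Since $\Gamma^\eps(f)=|w|^2$, it suffices to show
\[
\Sym\!\big((A^\eps)^{1/2}(-\mathcal J b^\eps)^\top(A^\eps)^{-1/2}\big)\succeq-\kappa I .
\]

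\emph{Step 2: Reduction to $\mathsf S$ with $\eps$-scaling.} Substitute $\mathcal Jb^\eps=I^\eps\mathcal Jb$ and $A^\eps=I^\eps D$ with $D=\diag(a_1,a_2)$; the matrices $I^\eps$ and $D$ commute. The matrix becomes $\Sym\big(I^{\eps\,1/2}D^{1/2}(-\mathcal Jb)^\top D^{-1/2}I^{\eps\,1/2}\big)$. This has to be matched with $\mathsf S$, which is defined as $D^{1/2}\Sym(-\mathcal Jb)D^{1/2}$. The two expressions coincide up to a congruence involving $D$ when $a_1,a_2$ are scalar multiples of the identity. In general, I would instead work with the variable $v=\nabla f$ and the quadratic form $\langle \nabla f,A^\eps\Sym(-\mathcal Jb^\eps)\nabla f\rangle$, checking against \Cref{thm:Ricci} to see which normalization that criterion uses, and adopt it.

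In block form, the symmetrized matrix is then
\[
M^\eps=\begin{pmatrix}\eps^{-1}\mathsf S_{11}&\eps^{-1/2}\mathsf S_{12}\\ \eps^{-1/2}\mathsf S_{21}&\mathsf S_{22}\end{pmatrix},
\]
up to the congruence just discussed.

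\emph{Step 3: Schur-complement bound (main step).} Because $\mathsf S_{11}\succ0$, block elimination gives, for $w=(w_1,w_2)$,
\[
\langle w,M^\eps w\rangle=\eps^{-1}\big|\mathsf S_{11}^{1/2}(w_1+\eps^{1/2}\mathsf S_{11}^{-1}\mathsf S_{12}w_2)\big|^2+\langle w_2,\Schur(\mathsf S_{11})w_2\rangle\ge-\kappa|w_2|^2\ge-\kappa|w|^2 .
\]
The key point is that the $\eps$-powers cancel exactly in the Schur complement, so the bound is uniform in $\eps$.

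The main obstacle I anticipate is Step 2. There one must get the exact scaling and conjugation by $D^{1/2}$ right, so that the cross terms scale as $\eps^{-1/2}$ and not worse. One must also make sure that the non-symmetric part of $D^{1/2}(\mathcal Jb)^\top D^{-1/2}$ is correctly captured by the paper's definition of $\mathsf S$, which presumably relies on \Cref{thm:Ricci}'s convention. Step 4 is the semigroup upgrade from the pointwise bound to the gradient estimate, which is routine given the core assumption.
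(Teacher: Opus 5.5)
Your Steps 1 and 3 are correct and use the same mechanism as the paper, whose proof just points to \Cref{lem:OU-CD}: Bochner formula, drop the Hessian term, then complete the square in the fast block so that the powers of $\eps$ cancel in the Schur complement. The gap is the Step 2 identification you flagged yourself, and it cannot be repaired. Put $D:=\diag(a_1,a_2)$ and $w=(A^\eps)^{1/2}\nabla f$. Your Step 1 correctly shows that the relevant matrix is $(I^\eps)^{1/2}N(I^\eps)^{1/2}$, with
\[
N:=\Sym\bigl(D^{1/2}(-\mathcal Jb)^\top D^{-1/2}\bigr)=D^{-1/2}\,\Sym\bigl(-\mathcal Jb\,D\bigr)\,D^{-1/2}.
\]
Step 3 therefore needs $N_{11}\succ0$ and $\Schur(N_{11})\succeq-\kappa$. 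This does not follow from the hypothesis on $\mathsf S=D^{1/2}\Sym(-\mathcal Jb)D^{1/2}$, and no $\kappa$-preserving congruence links the two. Already for $a_1=a_2=cI$ one has $\mathsf S=c\,N$, so the hypothesis only gives $\Schur(N_{11})\succeq-\kappa/c$, i.e.\ $CD(-\kappa/c,\infty)$. For $a_1\neq a_2$ the off-diagonal blocks weight $\nabla_y b_1$ and $(\nabla_x b_2)^\top$ differently, so they do not match either. Your fallback does not help. The form in $\Gamma_2^\eps$ is $\langle\nabla f,(-\mathcal Jb^\eps)A^\eps\nabla f\rangle$, not $\langle\nabla f,A^\eps\Sym(-\mathcal Jb^\eps)\nabla f\rangle$. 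The constant-$A$ Ricci condition of \Cref{thm:Ricci}, written in drift form, is $\Sym(D^{-1}(-\mathcal Jb))\succeq-\kappa D^{-1}(I^\eps)^{-1}$; after congruence by $D^{1/2}$ this is again your $N$, not $\mathsf S$.

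In fact the statement as written fails when $D\neq I$. Take $d_x=d_y=1$, $a_1=a_2=\tfrac12$, $\gamma\equiv0$, and $V(x,y)=\tfrac12x^2+\tfrac14y^4-\tfrac12y^2$. Then \Cref{ass:fundamental} holds (the fast law is $\mathcal N(0,1)$), and $\mathsf S=\tfrac14\diag(1,3y^2-1)$, so the hypothesis holds with $\kappa=\tfrac14$. Now take $z=(x_0,0)$ and $f$ with $\nabla f(z)=(0,1)$ and $\nabla^2 f(z)=0$. One computes $\Gamma^\eps(f)(z)=\tfrac12$ and $\Gamma_2^\eps(f)(z)=-\tfrac14$. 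So the pointwise inequality $\Gamma_2^\eps\ge-\tfrac14\Gamma^\eps$, which is necessary for $CD(-\tfrac14,\infty)$, fails; the sharp constant is $\tfrac12$. The paper's cited argument only treats $a_1=I$, $a_2=I$, where $N=\mathsf S$ and your Steps 1 and 3 reproduce it exactly. What your argument actually proves is the criterion with $\mathsf S$ replaced by $N$, or equivalently the stated criterion when $\diag(a_1,a_2)=I$. You should state and prove that version rather than assert a congruence that loses the constant.
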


\begin{proof}
See \Cref{lem:OU-CD}.
\end{proof}

For multiplicative noise one may still work via the Ricci matrix, but the resulting block expressions and definiteness checks quickly become unwieldy. We therefore use the It\^o--Kunita derivative-flow approach.

In this setting we impose the structural restriction $a_2(x,y)\equiv a_2(y)$, which is standard in strong averaging: fast--slow coupling in the slow diffusion typically precludes strong convergence and leaves only weak convergence in law; see \cite{Givon2007StrongConvergence,Liu2010StrongAveraging}.

\begin{assumptionx}[IKB structural assumptions (condensed)]
\label{ass:IKB-condensed}
Consider the fast--slow SDE in Eq.~\eqref{eq:NL-avg-SDE} on $\R^{d_x}\times\R^{d_y}$ with
$A^\varepsilon=\mathrm{diag}(\varepsilon^{-1}a_1(x,y),\,a_2(y))$ and $G^\varepsilon=(A^\varepsilon)^{-1}$.
Assume:
\begin{enumerate}
\item $a_1$ and $a_2$ are uniformly elliptic with bounds $(\lambda_i,\Lambda_i)$.
\item $b_1,b_2$ and the noise coefficients are $C^2$, with $a_2$ independent of $x$.
\item The weighted constants $K_x^{(W)},B_{xy}^{(W)},B_{2x}^{(W)},M_{2y}^{(W)}$ are finite (see \Cref{ass:IKB} in the appendix).
\item The derived constants $\alpha_0$ and $c$ (defined in \Cref{ass:IKB}) satisfy $\alpha_0>c$.
\end{enumerate}
Let $\rho=(\beta_0+d)/2$ be the constant defined in \Cref{ass:IKB}.
\end{assumptionx}

\begin{theorem}[Uniform $CD(-\rho,\infty)$ via It\^o--Kunita] \label{thm:CD-negative-short} Under \Cref{ass:IKB-condensed}, there exists $\varepsilon_0\in(0,1]$ such that for all $\varepsilon\in(0,\varepsilon_0]$, the generator $\mathcal L^{\varepsilon}$ satisfies the Bakry--\'Emery curvature--dimension condition $CD(-\rho,\infty)$.
\end{theorem}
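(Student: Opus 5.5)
We prove the gradient bound $\Gamma^\eps(P_t^\eps f)\le e^{2\rho t}P_t^\eps\Gamma^\eps(f)$ by differentiating the stochastic flow, i.e.\ the It\^o--Kunita derivative-flow route behind \Cref{thm:flow-CD}. Let $Z_t^\eps(z)$ be the flow of \eqref{eq:NL-avg-SDE} with $\sigma^\eps=\sqrt{2A^\eps}$. By the $C^2$ hypotheses it is differentiable in $z$, and $P_t^\eps f(z)=\E f(Z_t^\eps(z))$.

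\emph{Step 1: reduction to a flow estimate.} Differentiating under the expectation gives
\[
\nabla P_t^\eps f(z)=\E\big[(DZ_t^\eps)^\top\nabla f(Z_t^\eps)\big].
\]
Set $G^\eps=(A^\eps)^{-1}$. Then for every $v$,
\[
\big|\langle\nabla P_t^\eps f,v\rangle\big|\le \E\big[\Gamma^\eps(f)(Z_t)^{1/2}\,\|DZ_t v\|_{G^\eps(Z_t)}\big].
\]
By Cauchy--Schwarz it therefore suffices to prove the weighted moment bound
\[
\E\|DZ_t^\eps v\|^2_{G^\eps(Z_t^\eps)}\le e^{2\rho t}\|v\|^2_{G^\eps(z)}.
\]
Taking the supremum over $v$ then yields $\Gamma^\eps(P_t^\eps f)\le e^{2\rho t}P_t^\eps\Gamma^\eps(f)$, because $\Gamma^\eps(g)=\sup_v\langle\nabla g,v\rangle^2/\|v\|^2_{G^\eps}$.

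\emph{Step 2: It\^o computation of the weighted norm.} Write $\xi_t=DZ_tv=(\xi^x,\xi^y)$. It solves the linearised equation
\[
d\xi=\nabla b^\eps(Z)\xi\,dt+\textstyle\sum_k\nabla\sigma_k^\eps(Z)\xi\,dW^k.
\]
Apply It\^o's formula to $N_t:=\xi^\top G^\eps(Z_t)\xi=\eps\,\xi^{x\top}a_1^{-1}\xi^x+\xi^{y\top}a_2(y)^{-1}\xi^y$. The drift of $dN_t$ collects four kinds of terms:
\begin{itemize}
\item the quadratic form from $\mathrm{Sym}(G\nabla b)$;
\item the Itô correction $\sum_k|\nabla\sigma_k\xi|_G^2$;
\item the cross-variation between $dG(Z)$ and $d\xi$;
\item the generator $\mathcal L^\eps G$ applied to the weight.
\end{itemize}
Use the block structure to sort these by powers of $\eps$. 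In the fast--fast block the $\eps^{-1}$ factor in $b^\eps$ and the $\eps$ in the weight cancel, leaving an $O(1)$ dissipative term of size $-\alpha_0\,\eps|\xi^x|^2_{a_1^{-1}}\cdot\eps^{-1}$. Because $a_2$ is independent of $x$, the slow weight $a_2(y)^{-1}$ sees no fast noise, so no $\eps^{-1}$ blow-up comes from $\mathcal L^\eps a_2^{-1}$. The $x$-dependence of $a_1$ produces terms controlled by $K_x^{(W)}$. The off-diagonal blocks $\nabla_yb_1$ (scaled $\eps^{-1}$) and $\nabla_xb_2$ generate cross terms $\xi^x\cdot\xi^y$, bounded by $B_{xy}^{(W)}$ and $B_{2x}^{(W)}$. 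The slow--slow block contributes $M_{2y}^{(W)}$ and the trace-type Itô corrections, which together make up $\beta_0$ and a dimensional term $d$.

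\emph{Step 3: absorbing the cross terms.} Apply Young's inequality to the cross terms, choosing the split parameter so that the fast contribution is absorbed into the $\alpha_0$ dissipation; this is exactly where $\alpha_0>c$ is used. The remaining residual is at most $(\beta_0+d)N_t=2\rho N_t$ plus an $O(\eps)$ remainder, which is absorbed for $\eps\le\eps_0$. This gives
\[
d\E N_t\le 2\rho\,\E N_t\,dt,
\]
after localisation by stopping times, with the martingale parts justified by the weighted-moment finiteness. Gronwall's inequality closes Step 1.

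The main obstacle is Step 3. The $\eps^{-1}$-scaled coupling $\nabla_y b_1$ enters the fast equation at a large rate, and it has to be balanced against the $\eps$-weighted fast norm. The natural move is the rescaled Young inequality
\[
2\eps^{-1}\cdot\eps\,|\xi^x||\xi^y|\le \delta|\xi^x|^2+\delta^{-1}|\xi^y|^2.
\]
With this choice the fast part of $N$, namely $\eps|\xi^x|^2_{a_1^{-1}}$, is dissipated at rate $\eps^{-1}\alpha_0$, which dominates the $\eps^{-1}c$ coupling loss, while the slow part acquires only an $O(1)$ loss. If this uniform-in-$\eps$ balance fails, I would fall back on a modified weight $G^\eps+\eps\,\theta$ with an off-diagonal corrector $\theta$, chosen to cancel the leading cross term.
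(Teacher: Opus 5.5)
Your proposal is correct and is essentially the paper's argument. The paper runs the same $\varepsilon$-weighted energy $g_\varepsilon=\varepsilon h+j$, but for the synchronously coupled pair rather than the derivative flow. It bounds the drift by $(-\alpha_0+c+\varepsilon C_h)\,h+(\beta_0+d)\,j$, uses $\alpha_0>c$ to drop the $h$-term for small $\varepsilon$, applies Gronwall, and then converts the contraction into the gradient bound exactly as in your Step~1 (via \Cref{thm:flow-CD}); working directly with $\xi_t=DZ_tv$ just skips the difference-quotient/Fatou passage. Some of your bookkeeping is off, though it does not affect the argument. $\beta_0$ comes from the $\xi^y$-terms of the fast block ($\nabla_y b_1$, $\nabla_y\eta_1$ and the cross-variation), not from the slow--slow block. $d$ is a coefficient constant, not a dimensional one. $K_x^{(W)}$ measures $\nabla_x b_1$, not the $x$-dependence of $a_1$.
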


All constants in \Cref{ass:IKB-condensed} are explicit in terms of uniform bounds on the coefficients and their first/second derivatives; see \Cref{ass:IKB} for the full expressions and the It\^o computations.

\paragraph{Tractable subclass.}
We now isolate an irreversible subclass of the averaging model in \Cref{ass:fundamental} (with $\Phi(x,y)=y$). We consider drifts of divergence form
\begin{equation}\label{eq:tractable-drift}
  b^\varepsilon(z)
  =-A^\varepsilon(z)\nabla V(z)+\big(\nabla\!\cdot A^\varepsilon(z)\big)+\gamma(z),
  \qquad
  \gamma(z)=[0;\gamma_y(z)],
\end{equation}
so that the irreversible drift acts only along the slow variables. %These assumptions ensure that $Q^\varepsilon=(D\Phi A^\varepsilon D\Phi^\top)\,\pi^\varepsilon$ is a finite matrix-valued measure for each $\varepsilon$ and that the projected housekeeping dissipation is well-defined. Moreover, 
\eqref{eq:tractable-drift} implies that both the invariant measure and the irreversible field are $\varepsilon$--independent,
\[
  \pi^\varepsilon(dz)=\pi(dz)=\frac{1}{Z}e^{-V(z)}\,dz,
  \qquad
  \gamma^\varepsilon\equiv \gamma,
\]
so that all purely static convergences in \Cref{ass:standing,ass:coeff-weak} reduce to fibrewise disintegration identities. We first record the corresponding push-forward limits for the current and diffusivity measures with an integrable assumption:
\begin{assumptionx}\label{ass:averaging-integrable}
The following integrability conditions hold:
\[
\int_E \tr(a_2(z))\,d\pi(z)<\infty,\qquad
\int_E |\gamma_y(z)|\,d\pi(z)<\infty,\qquad
\int_E \gamma_y(z)^{\top} a_2(z)^{-1}\gamma_y(z)\,d\pi(z)<\infty.
\]
\end{assumptionx}
Then the push-forward current and diffusivity measures are finite, and the projected housekeeping dissipation is well-defined.

\begin{lemma}[Static push-forward limits in the tractable subclass]\label{lem:tractable-static}
Suppose \Cref{ass:fundamental} holds as well as\eqref{eq:tractable-drift}. %Then \Cref{ass:standing} \textup{(i)(ii)},\Cref{ass:coeff-weak} holds. 
Then
\[
  \pi(dx,dy)=\mu^y(dx)\,\bar\pi(dy).
\]
\end{lemma}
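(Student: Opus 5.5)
The plan is to identify the fibrewise conditional of $\pi$ with the fast invariant measure $\mu^y$, using the uniqueness in \Cref{ass:fundamental}(3). Throughout, $\bar\pi$ denotes the $y$-marginal $\Phi_\#\pi$. This is consistent with $\Phi_\#\Pi=\bar\pi$ in \Cref{ass:standing}(i), since $\pi^\varepsilon\equiv\pi$.

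\emph{Step 1: disintegration.} Write $\rho(x,y):=Z^{-1}e^{-V(x,y)}$. By Tonelli, the marginal density $p(y):=\int_{\R^{d_x}}\rho(x,y)\,dx$ is finite for Lebesgue-a.e.\ $y$, and it is strictly positive everywhere. On the full-measure set $Y_0:=\{p<\infty\}$, define the conditional probability density $\nu^y(dx):=\rho(x,y)p(y)^{-1}dx$. This density is strictly positive. Then, by construction,
\[
\pi(dx,dy)=\nu^y(dx)\,p(y)\,dy=\nu^y(dx)\,\bar\pi(dy).
\]
It therefore suffices to show that $\nu^y=\mu^y$ for every $y\in Y_0$. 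The complement of $Y_0$ is $\bar\pi$-null, so it does not affect the identity of measures.

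\emph{Step 2: $\nu^y$ is invariant for $\mathcal L_y^{\mathrm{fast}}$.} Under \eqref{eq:tractable-drift} we have $V^\varepsilon=V$. A direct expansion of \eqref{eq:fast-generator} then gives the divergence form
\[
\mathcal L_y^{\mathrm{fast}}\phi=\frac{\nabla_x\!\cdot\big(e^{-V(\cdot,y)}a_1(\cdot,y)\nabla_x\phi\big)}{e^{-V(\cdot,y)}},
\]
because $\tr(a_1\nabla_x^2\phi)+(\nabla_x\!\cdot a_1)\cdot\nabla_x\phi-a_1\nabla_xV\cdot\nabla_x\phi$ is exactly this expression. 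Hence, for $\phi\in C_c^\infty(\R^{d_x})$,
\[
\int\mathcal L_y^{\mathrm{fast}}\phi\,d\nu^y=p(y)^{-1}Z^{-1}\int\nabla_x\!\cdot\big(e^{-V}a_1\nabla_x\phi\big)\,dx=0,
\]
since the integrand is the divergence of a compactly supported smooth field. The key point is that the irreversible part $\gamma$ has no $x$-component, so it does not enter the fast generator. This is precisely why the tractable subclass \eqref{eq:tractable-drift} is needed.

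\emph{Step 3: uniqueness and conclusion.} By \Cref{ass:fundamental}(3), $\mu^y$ is the unique invariant probability measure characterized by $\int\mathcal L_y^{\mathrm{fast}}\phi\,d\mu=0$ for all $\phi\in C_c^\infty$. Therefore $\nu^y=\mu^y$ for every $y\in Y_0$, and Step~1 gives $\pi(dx,dy)=\mu^y(dx)\,\bar\pi(dy)$.

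\emph{Expected obstacle.} The only delicate point is measure-theoretic: the conditional density is a priori defined only for a.e.\ $y$. One must check that the uniqueness characterization applies pointwise on $Y_0$ and that the exceptional set is $\bar\pi$-null. The weak continuity of $y\mapsto\mu^y$ could then be used to select the continuous version of the disintegration, but it is not needed for the identity itself.
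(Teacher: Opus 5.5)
Your Steps 1--3 are correct, but they only establish
\[
\pi(dx,dy)=\mu^y(dx)\,(\Phi_\#\pi)(dy),
\]
that is, that the conditional law of $\pi$ given $y$ is $\mu^y$. The statement asserts more than this. In the paper $\bar\pi$ is not a free symbol. It is the invariant probability measure of the averaged semigroup $\bar P_t$ with generator $\bar{\mathcal L}$ from \eqref{eq:NL-avg-barL}, unique by the standing assumptions of \Cref{sec:setting}. So the lemma also asserts that the $y$-marginal of $\pi$ equals the invariant law of the averaged dynamics.

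You bypass this by setting $\bar\pi:=\Phi_\#\pi$ and citing \Cref{ass:standing}(i). That assumption is not a hypothesis of the lemma, and in the paper the implication runs the other way. \Cref{prop:tractable-assumptions} uses this lemma to \emph{derive} \Cref{ass:standing}(i). The proof of \Cref{thm:L2-strong} uses it to identify $\|a_2^{1/2}\nabla_y v\|_{L^2(\pi)}^2$ with $\bar\I(t)/4$, which requires the marginal to be the invariant measure entering $\bar\I$. Your appeal is therefore circular, and the identification of the marginal is a genuinely missing step.

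The fix is short and is exactly the second half of the paper's proof. Take $\varphi\in C_c^\infty(\R^{d_y})$ and combine invariance of $\pi$ under $\mathcal L^\eps$, \eqref{eq:NL-avg-L-on-Phi}, and your fibre identification:
\[
0=\int_E\mathcal L^\eps(\varphi\circ\Phi)\,d\pi=\int_{\R^{d_y}}\mathcal P\big(\mathcal L^\eps(\varphi\circ\Phi)\big)\,d(\Phi_\#\pi)=\int_{\R^{d_y}}\bar{\mathcal L}\varphi\,d(\Phi_\#\pi).
\]
Hence $\Phi_\#\pi$ is invariant for $\bar{\mathcal L}$, and uniqueness of the invariant probability measure of $\bar P_t$ gives $\Phi_\#\pi=\bar\pi$. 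Two remarks on this half:
\begin{itemize}
\item Unlike your Step~2, it genuinely uses $\nabla\cdot(\pi\gamma)=0$, through invariance of $\pi$.
\item Since $\varphi\circ\Phi$ is not compactly supported in $x$, the first equality needs a cutoff together with integrability of $a_2,b_2$ on $\R^{d_x}\times\supp\varphi$. The paper passes over this point too.
\end{itemize}

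For the fibrewise half, your route differs from the paper's. The paper tests invariance of $\pi$ on products $\phi(x)\psi(y)$, multiplies by $\eps$, and lets $\eps\downarrow0$. Your divergence-form identity for $\mathcal L_y^{\mathrm{fast}}$, valid because $\gamma_x=0$, is more direct and does not need invariance of $\pi$ at all. It also gives $\nu^y=\mu^y$ for every $y$ with $p(y)<\infty$. This avoids the countable-family argument the paper's route implicitly needs to obtain a single $\bar\pi$-null set valid for all test functions $\phi$.
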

\begin{proof}
Let $\pi(dx,dy)=\pi^y(dx)\,\tilde\pi(dy)$ be a regular conditional disintegration.
Since $\pi$ is invariant for $\mathcal L^\varepsilon$, for $\phi\in C_c^\infty(\R^{d_x})$, $\psi\in C_c^\infty(\R^{d_y})$,
\[
0=\int \mathcal L^\varepsilon(\phi\psi)\,d\pi
=\frac1\varepsilon\int \psi(y)\,\mathcal L_y^{\mathrm{fast}}\phi(x)\,d\pi
+\int \phi(x)\,\mathcal L^{\mathrm{slow}}\psi(y)\,d\pi.
\]
Multiplying by $\varepsilon$ and letting $\varepsilon\downarrow0$ gives
\[
\int \psi(y)\,\mathcal L_y^{\mathrm{fast}}\phi(x)\,d\pi=0,
\]
hence, by disintegration and arbitrariness of $\psi$,
\[
\int_{\R^{d_x}} \mathcal L_y^{\mathrm{fast}}\phi(x)\,\pi^y(dx)=0
\quad\text{for $\tilde\pi$-a.e.\ }y,\ \forall\,\phi\in C_c^\infty(\R^{d_x}).
\]
Thus $\pi^y$ is $\mathcal L_y^{\mathrm{fast}}$-invariant for $\tilde\pi$-a.e.\ $y$, and by uniqueness of the fast invariant law,
$\pi^y=\mu^y$ $\tilde\pi$-a.e., proving $\pi(dx,dy)=\mu^y(dx)\,\tilde\pi(dy)$.

Finally, for $\varphi\in C_c^\infty(\R^{d_y})$, invariance of $\pi$ applied to $\varphi\circ\Phi$ yields
\[
0=\int_E \mathcal L^\varepsilon(\varphi\circ\Phi)\,d\pi.
\]
Using $\bar{\mathcal L}\varphi=\mathcal P(\mathcal L^\varepsilon(\varphi\circ\Phi))$ and the disintegration above,
\[
\int_{\R^{d_y}} \bar{\mathcal L}\varphi\,d\tilde\pi
=\int_{\R^{d_y}}\Big(\int_{\R^{d_x}}\mathcal L^\varepsilon(\varphi\circ\Phi)(x,y)\,\mu^y(dx)\Big)\tilde\pi(dy)
=\int_E \mathcal L^\varepsilon(\varphi\circ\Phi)\,d\pi
=0.
\]
Hence $\tilde\pi$ is invariant for $\bar{\mathcal L}$, and uniqueness of the averaged invariant measure gives $\tilde\pi=\bar\pi$.
\end{proof}
To verify the genuinely dynamical part of \Cref{ass:standing}, we invoke a sufficient condition from the averaging literature. In particular, under the hypotheses of \cite[Thm.~4.4]{BardiKouhkouh2023}, the compact-uniform convergence \eqref{eq:dyn-conv-standing} holds for $u^\varepsilon(t)=P_t^\varepsilon f$ for each fixed $t>0$.%(We do not reproduce these assumptions here, since they are independent of the curvature-based arguments in this work and can be replaced by any alternative condition yielding \eqref{eq:dyn-conv-standing}.)

Concrete irreversible coefficients satisfying simultaneously the averaging assumptions of \cite[Thm.~4.4]{BardiKouhkouh2023} and the It\^o--Kunita structural bounds of \Cref{ass:IKB} can be constructed within the tractable subclass \eqref{eq:tractable-drift} by choosing sufficiently regular coefficients with small fast--slow coupling; we do not pursue an explicit parametrisation here. 

\begin{proposition}[Assumption verification via \cite{BardiKouhkouh2023}]\label{prop:tractable-assumptions}
Assume \eqref{eq:tractable-drift} and \Cref{ass:fundamental,ass:averaging-integrable}, and suppose that for each $t>0$
the compact-uniform convergence \eqref{eq:dyn-conv-standing} holds (for instance, under the hypotheses of
\cite[Thm.~4.4]{BardiKouhkouh2023}). Then \Cref{ass:standing,ass:coeff-weak} hold for the tractable subclass.
\end{proposition}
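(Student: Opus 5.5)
We must verify \Cref{ass:standing} (i) and (ii) and \Cref{ass:coeff-weak} (1)--(3) for the tractable subclass. Item (ii) of \Cref{ass:standing} is the compact-uniform convergence \eqref{eq:dyn-conv-standing}, which is assumed directly. We only have to make sure the limit datum lies in $\bar{\mathcal M}$. We take $\bar f:=\mathcal P f$, which is in $C_b(\bar E)$ by \Cref{lem:P-continuity}. For the square-root and finite-gradient requirement, we use $\bar u(t)=\bar P_t\bar f$ with regularity of the averaged semigroup, or equivalently we accept $\bar f$ as supplied by the averaging theorem of \cite{BardiKouhkouh2023}.

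Everything else is static, because $\pi^\eps\equiv\pi$ and $\gamma^\eps\equiv\gamma$ do not depend on $\eps$. For \Cref{ass:standing}(i) we set $\Pi:=\pi$, so the weak convergence $\pi^\eps\to\Pi$ is trivial. \Cref{lem:tractable-static} gives $\pi(dx,dy)=\mu^y(dx)\bar\pi(dy)$. Integrating $\psi\circ\Phi=\psi(y)$ against this disintegration yields $\Phi_\#\Pi=\bar\pi$.

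For \Cref{ass:coeff-weak}(1), note that $D\Phi=[0\ I_{d_y}]$, so $D\Phi\,\gamma^\eps=\gamma_y$ and the $\eps$-sequence $J^\eps=\Phi_\#(\gamma_y\pi)$ is constant. It is a finite measure because $\int|\gamma_y|\,d\pi<\infty$ by \Cref{ass:averaging-integrable}. We identify it via the disintegration as
\[
\Phi_\#(\gamma_y\pi)(dy)=\Big(\int\gamma_y(x,y)\,\mu^y(dx)\Big)\bar\pi(dy)=:\bar\gamma(y)\,\bar\pi(dy).
\]
We then check that this $\bar\gamma$ is exactly the irreversible part of the averaged drift $\bar b$. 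The steps are: average $b_2=-a_2\nabla_yV+\nabla_y\!\cdot a_2+\gamma_y$ against $\mu^y$, write $e^{-V}=\mu^y(x)\bar\pi(y)$, and integrate by parts in $y$. This should give
\[
\bar b=-\bar a\nabla_y\bar V+\nabla_y\!\cdot\bar a+\bar\gamma,\qquad \bar\pi=e^{-\bar V}.
\]
This uses $a_2=a_2(y)$ or a careful fibrewise computation, and it is the main technical point. We expect the most delicate part to be the interchange of $\nabla_y$ with the $\mu^y$-integral in that integration by parts. We would justify it with the local smoothness in \Cref{ass:fundamental} and the integrability bounds. We also check $\nabla_y\!\cdot(\bar\gamma\bar\pi)=0$, by integrating $\nabla\!\cdot(\gamma\pi)=0$ over $x$.

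For \Cref{ass:coeff-weak}(2) we have $Q^\eps=a_2\pi$, again independent of $\eps$. We take $Q:=a_2\pi$, which is finite since $\int\tr a_2\,d\pi<\infty$. Pushing forward with the disintegration gives $\Phi_\#Q=\bar a\bar\pi=\bar A\bar\pi$, and weak convergence is trivial. For \Cref{ass:coeff-weak}(3), the projected dissipation is
\[
\mathcal J^\eps_{\mathrm{hk,proj}}=\int\gamma_y^\top a_2^{-1}\gamma_y\,d\pi,
\]
which is finite by assumption and independent of $\eps$, so the supremum is finite.
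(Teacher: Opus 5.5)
Your proposal is correct and follows essentially the same route as the paper. Both take $\Pi=\pi$ with the disintegration from \Cref{lem:tractable-static} for the static parts, take the compact-uniform convergence as the dynamical hypothesis, and observe that $J^\eps$, $Q^\eps$ and $\mathcal J^\eps_{\mathrm{hk,proj}}$ are $\eps$-independent, identifying their limits by that disintegration. Your extra check, that $\int\gamma_y\,\mu^y(dx)$ is the irreversible part of the averaged drift, is simply asserted in the paper; it actually holds without $a_2=a_2(y)$, because the terms $\pm\int a_2\nabla_y\mu^y\,dx$ coming from $-a_2\nabla_yV$ and from $\nabla_y\cdot a_2$ cancel.
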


\begin{proof}
Since \eqref{eq:tractable-drift} yields $\pi^\varepsilon\equiv\pi$ and $\gamma^\varepsilon\equiv\gamma$, the static part of \Cref{ass:standing}\textup{(i)} reduces to identifying the $\Phi$--push-forward of $\pi$ and its disintegration along the fibres.
Under the uniqueness of the frozen fast invariant laws, \Cref{lem:tractable-static} gives $\pi(dx,dy)=\mu^y(dx)\,\bar\pi(dy)$ and thus
\Cref{ass:standing}\textup{(i)}. The definition of $\mathcal P$ in terms of $\mu^y$ together with weak continuity of $y\mapsto\mu^y$
gives \Cref{ass:standing}\textup{(ii)}. Finally, \eqref{eq:dyn-conv-standing} is exactly \Cref{ass:standing}\textup{(iii)}.

For \Cref{ass:coeff-weak}, note that $D\Phi\,\gamma^\varepsilon=\gamma_y$ and $D\Phi A^\varepsilon D\Phi^\top=a_2$, hence
\[
J^\varepsilon=\Phi_\#(\gamma_y\,\pi)=\bar\gamma\bar\pi,\qquad Q^\varepsilon=a_2\,\pi,
\]
and the finiteness and weak convergence assertions follow from \Cref{ass:averaging-integrable} and the disintegration $\pi(dx,dy)=\mu^y(dx)\,\bar\pi(dy)$.
\end{proof}

We record the following structural characterisation of the locking property.

\begin{proposition}[Locking and $x$--independence of the slow thermodynamic force]\label{prop:locking-tractable}
Suppose \Cref{ass:fundamental} holds and \eqref{eq:tractable-drift}. Then \Cref{ass:locking} holds if and only if
\[
  a_2(x,y)^{-1}\gamma_y(x,y)=\bar A(y)^{-1}\bar\gamma_y(y)
  \quad\text{for $\pi$--a.e.\ }(x,y).
\]
%where $\bar A(y):=(\mathcal P a_2)(y)$ and $\bar\gamma_y(y):=(\mathcal P\gamma_y)(y)$.
\end{proposition}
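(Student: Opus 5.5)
Since $\gamma=[0;\gamma_y]$, $\Phi(x,y)=y$, $D\Phi=[0\ I_{d_y}]$ and $A^\eps=\diag(\eps^{-1}a_1,a_2)$, the microscopic force is $(A^\eps)^{-1}\gamma=[0;\,a_2^{-1}\gamma_y]$, which does not depend on $\eps$. For a lift we have $D\Phi^\top\psi(y)=[0;\psi(y)]$. The fast block therefore drops out of the $A^\eps$-norm, and the residual in \eqref{eq:locking-condition-thm} reduces to
\[
R^\eps(t;\psi)=\int_E u^\eps(t,z)\,\big\|a_2^{-1}\gamma_y(z)-\psi(y)\big\|_{a_2(z)}^2\,d\pi(z),
\]
with $\pi^\eps\equiv\pi$ by \Cref{lem:tractable-static}. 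Write $F(x,y):=a_2^{-1}\gamma_y$ and $\bar F(y):=\bar A(y)^{-1}\bar\gamma_y(y)$. Here $\bar A=\mathcal P a_2$ and $\bar\gamma_y=\mathcal P\gamma_y$, so that $\bar F$ is the $a_2$-weighted fibre average of $F$: $\int a_2(F-\bar F)\,\mu^y(dx)=0$. The plan is to compute the $\limsup_\eps$ of $R^\eps$ explicitly. Then we minimise over $\psi_k$ using a fibrewise Pythagoras identity. Locking then becomes equivalent to the vanishing of a fibrewise variance.

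\emph{Step 1 (passing $\eps\to0$).} By \Cref{prop:tractable-assumptions}, \Cref{ass:standing} holds. As in Step 1 of \Cref{thm:free-energy-conv}, $u^\eps(t)\to\bar u(t,y)$ in $L^1(\pi)$, and the $u^\eps$ are bounded by $\|f\|_\infty$. The integrand $h_\psi:=\|F-\psi\|_{a_2}^2\le 2F^\top a_2F+2\psi^\top a_2\psi$ lies in $L^1(\pi)$ for $\psi\in C_b$ by \Cref{ass:averaging-integrable}. I then use a truncation argument: split $h_\psi=h_\psi\wedge M+(h_\psi-M)_+$. The first piece converges by $L^1$ convergence of $u^\eps$. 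The second is bounded by $\|f\|_\infty\int(h_\psi-M)_+d\pi\to0$ as $M\to\infty$, uniformly in $\eps$. This gives $\lim_\eps R^\eps(t;\psi)=\int_E\bar u(t,y)\,h_\psi\,d\pi$. I expect this step to be the main technical point, mainly keeping integrability in hand, but it is routine once $\pi^\eps$ is fixed.

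\emph{Step 2 (fibrewise Pythagoras).} Disintegrate $\pi=\mu^y\bar\pi$. Because of the orthogonality relation above, for each $y$
\[
\int\|F-\psi(y)\|_{a_2}^2\,\mu^y(dx)=\int\|F-\bar F(y)\|_{a_2}^2\,\mu^y(dx)+\|\bar F(y)-\psi(y)\|_{\bar A(y)}^2 .
\]
Hence
\[
\lim_\eps R^\eps(t;\psi)=\mathcal V(t)+\int\bar u\,\|\psi-\bar F\|_{\bar A}^2d\bar\pi ,
\qquad
\mathcal V(t):=\int_E\bar u(t,y)\|F-\bar F\|_{a_2}^2\,d\pi .
\]
Along any recovery sequence (\Cref{def:recovery}; one exists by \Cref{lem:recovery-nonempty}) the second term tends to zero. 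So \Cref{ass:locking} holds if and only if $\mathcal V(t)=0$.

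\emph{Step 3 (removing the weight).} If $F=\bar F$ $\pi$-a.e., then $\mathcal V(t)=0$ trivially. Conversely, $\mathcal V(t)=0$ forces $F=\bar F$ $\pi$-a.e. on $\{\bar u(t,\cdot)>0\}$. For the converse to give the stated a.e. identity I need $\bar u(t,y)>0$ for $\bar\pi$-a.e. $y$. For nonzero $f$ I would get this from the strong positivity of the nondegenerate averaged diffusion at $t>0$ (strictly positive transition density, using local ellipticity of $\bar a$). I would flag this as the point where a positivity/irreducibility property of $\bar P_t$ is used. The trivial case $f\equiv0$ is excluded, or read with weight $1$ in the steady-state sense.
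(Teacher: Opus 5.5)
Your proof is correct, but it takes a different route from the paper's. The paper computes the limit of the housekeeping rate itself,
$\lim_{\eps\to0}\sigma_{\mathrm{hk}}^\eps(t)=\int_E\bar u(t,y)\,\gamma_y^\top a_2^{-1}\gamma_y\,d\pi$, via a compact/tail split that uses local uniform convergence and \Cref{ass:averaging-integrable}. It then disintegrates $\pi=\mu^y\bar\pi$ and applies fibrewise Jensen to the jointly convex map $(A,g)\mapsto g^\top A^{-1}g$, simply asserting that equality holds iff $a_2^{-1}\gamma_y=\bar A^{-1}\bar\gamma_y$. The step from ``$\sigma_{\mathrm{hk}}^\eps(t)\to\bar\sigma_{\mathrm{hk}}(t)$'' to locking is left implicit and rests on \Cref{thm:hk-conv-iff-locking}.

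You instead work directly with the locking residual $R^\eps(t;\psi)$, and you replace Jensen by the fibrewise Pythagoras identity coming from $\int a_2(F-\bar F)\,\mu^y(dx)=0$. This buys three things.
\begin{enumerate}
\item You need only the existence of one recovery sequence, not the abstract equivalence theorem.
\item You actually prove the equality case that the paper asserts. The map $(A,g)\mapsto g^\top A^{-1}g$ is jointly $1$-homogeneous, hence not strictly convex, so the equality case is not automatic. Your identity with $\psi=0$ shows that the Jensen gap is exactly $\int\|F-\bar F\|_{a_2}^2\,\mu^y(dx)$.
\item You obtain the exact value $\lim_k\lim_\eps R^\eps(t;\psi_k)=\mathcal V(t)$. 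This is precisely the entropy-production loss that the paper states after the proposition without proof.
\end{enumerate}
The paper's route, in exchange, phrases the characterisation directly as convergence of $\sigma_{\mathrm{hk}}^\eps(t)$, i.e.\ at Level~IV.

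Your Step~3 caveat is justified. The paper's conclusion ``for $\bar\pi$-a.e.\ $y$'' tacitly requires $\bar u(t,\cdot)>0$ $\bar\pi$-a.e.; for $f\equiv0$ locking holds trivially whatever $F$ is, so the ``only if'' direction fails. The paper only acknowledges this later, when it restricts to strictly positive data. Finally, both you and the paper use \Cref{ass:averaging-integrable} and the compact-uniform convergence \eqref{eq:dyn-conv-standing}, neither of which is among the hypotheses listed in the statement.
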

\begin{proof}
In this subclass, work under the assumptions, fix $R>0$. Splitting the integral over $\{|(x,y)|\le R\}$ and its complement, we get
\begin{align*}
&\int_E |u^\varepsilon(t,x,y)-\bar u(t,y)|\,\gamma_y^\top a_2^{-1}\gamma_y\,d\pi \\
&\le \sup_{|(x,y)|\le R}|u^\varepsilon(t,x,y)-\bar u(t,y)|
      \int_{|(x,y)|\le R}\gamma_y^\top a_2^{-1}\gamma_y\,d\pi
   +2\|f\|_\infty\int_{|(x,y)|> R}\gamma_y^\top a_2^{-1}\gamma_y\,d\pi.
\end{align*}
The first term $\to0$ as $\varepsilon\to0$ by local uniform convergence, while the second term $\to0$ as $R\to\infty$ by \Cref{ass:averaging-integrable}. Hence
\[
\lim_{\varepsilon\to0}\sigma_{\mathrm{hk}}^\varepsilon(t)
=\int_E \bar u(t,y)\,\gamma_y(x,y)^\top a_2(x,y)^{-1}\gamma_y(x,y)\,d\pi(x,y).
\]
Disintegrating $\pi(dx,dy)=\mu^y(dx)\,\bar\pi(dy)$ and using $\bar u(t,\Phi(x,y))=\bar u(t,y)$,
\[
\lim_{\varepsilon\to0}\sigma_{\mathrm{hk}}^\varepsilon(t)
=\int \bar u(t,y)\Big(\int \gamma_y^\top a_2^{-1}\gamma_y\,\mu^y(dx)\Big)\,d\bar\pi(y).
\]
By fibrewise Jensen for the jointly convex map $(A,g)\mapsto g^\top A^{-1}g$,
\[
\int \gamma_y^\top a_2^{-1}\gamma_y\,\mu^y(dx)
\ge \bar\gamma_y(y)^\top \bar A(y)^{-1}\bar\gamma_y(y)\quad\text{for $\bar\pi$--a.e.\ }y,
\]
hence $\lim_{\varepsilon\to0}\sigma_{\mathrm{hk}}^\varepsilon(t)\ge \bar\sigma_{\mathrm{hk}}(t)$. Moreover, equality holds iff $a_2(x,y)^{-1}\gamma_y(x,y)=\bar A(y)^{-1}\bar\gamma_y(y)$ holds $\mu^y$--a.s.\ for $\bar\pi$--a.e.\ $y$, equivalently $\pi$--a.e.\ $(x,y)$.
\end{proof}
In the present subclass, locking becomes especially transparent: it holds precisely when the slow thermodynamic force
$F_y(x,y):=a_2(x,y)^{-1}\gamma_y(x,y)$ carries no fast-scale fluctuations, i.e.\ it is $\pi$-a.e.\ independent of $x$.
We emphasize that whenever this fails, the subclass provides a clear irreversible, nonlinear averaging example with strictly positive loss,
quantified for any strictly positive datum \(f\in \mathcal M\) by the fibrewise $a_2$-variance
\[
\int \bar u(t,y)\,\bigl(F_y(x,y)-\bar F(y)\bigr)^\top a_2(x,y)\,\bigl(F_y(x,y)-\bar F(y)\bigr)\,d\mu^y(x)\bar\pi(y) \;>\;0,
\]
where the averaged thermodynamic force is $\bar F(y):=\bar A(y)^{-1}\bar\gamma_y(y)$.

Beyond the thermodynamic functionals, the dissipation convergence from \Cref{thm:FI-conv} has a direct PDE consequence in this fixed-$\pi$ subclass: it yields a strong $L^2$ convergence of the gradients of the backward Kolmogorov solutions. This type of gradient stability is a central ingredient in multiscale error analysis and has been investigated since the classical averaging work of Khasminskii. In our setting it follows abstractly from the dissipation identity
\[\I^\varepsilon(t)=4\int_E \Gamma^\varepsilon\!\big(\sqrt{u^\varepsilon(t)}\big)\,d\pi,\]
which quantifies the energetic separation between fast and slow scales. We state the resulting estimate as the following theorem.

\begin{theorem}[$L^2$ gradient convergence under a fixed invariant measure]\label{thm:L2-strong}
Suppose that \Cref{ass:standing,ass:CDkappa,ass:fundamental} hold and that the averaging model satisfies the tractable-subclass structure \eqref{eq:tractable-drift}, so that $\pi^\varepsilon\equiv\pi$ for all $\varepsilon$.
Fix $t>0$ and set $u^\varepsilon(t)=P_t^\varepsilon f$, $\bar u(t)=\bar P_t\bar f$. Then:
\begin{align*}
\text{(i)}\;&
\|\nabla_x \sqrt{u^\varepsilon(t)}\|_{L^2(\pi;a_1)}^2=o(\varepsilon),
\\
\text{(ii)}\;&
%a_2^{1/2}\nabla_y \sqrt{u^\varepsilon(t)}
\nabla_y \sqrt{u^\varepsilon(t)}
\;\longrightarrow\;
%a_2^{1/2}\nabla_y \sqrt{\bar u(t)\circ\Phi}
\sqrt{\bar u(t)\circ\Phi}
\quad\text{strongly in }L^2(a_2;\pi),
\end{align*}
and consequently
\begin{align*}
\text{(iii)}\;&
\|\nabla_x u^\varepsilon(t)\|_{L^2(\pi;a_1)}^2=o(\varepsilon),
\\
\text{(iv)}\;&
%a_2^{1/2}
\nabla_y u^\varepsilon(t)
\;\longrightarrow\;
%a_2^{1/2}
\nabla_y \big(\bar u(t)\circ\Phi\big)
\quad\text{strongly in }L^2(a_2;\pi).
\end{align*}
\end{theorem}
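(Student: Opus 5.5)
The plan is to combine the dissipation convergence of \Cref{thm:FI-conv} with weak convergence of gradients, and then upgrade weak to strong convergence by convergence of norms. Write $g^\eps:=\sqrt{u^\eps(t)}$ and $\bar g:=\sqrt{\bar u(t)}\circ\Phi$. Since $\pi^\eps\equiv\pi$ and $A^\eps=\diag(\eps^{-1}a_1,a_2)$, \Cref{lem:dict-diffusion} splits the dissipation as
\[
\tfrac14\I^\eps(t)=\eps^{-1}\|\nabla_x g^\eps\|^2_{L^2(\pi;a_1)}+\|\nabla_y g^\eps\|^2_{L^2(\pi;a_2)} .
\]
The macroscopic functional satisfies $\tfrac14\bar\I(t)=\int\langle\nabla\sqrt{\bar u},\bar A\nabla\sqrt{\bar u}\rangle d\bar\pi$ with $\bar A(y)=\int a_2\,d\mu^y$. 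By the disintegration $\pi=\mu^y\bar\pi$ of \Cref{lem:tractable-static}, this equals $\|\nabla_y\bar g\|^2_{L^2(\pi;a_2)}$, and $\nabla_x\bar g=0$. So \Cref{thm:FI-conv} gives
\[
\eps^{-1}\|\nabla_x g^\eps\|^2_{a_1}+\|\nabla_y g^\eps\|^2_{a_2}\to\|\nabla_y\bar g\|^2_{a_2}.
\]
I read statement (ii) as convergence of $\nabla_y\sqrt{u^\eps}$ to $\nabla_y\sqrt{\bar u\circ\Phi}$.

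\textbf{Step 1: weak convergence and lower semicontinuity.} By the previous display, $\nabla_y g^\eps$ is bounded in $L^2(\pi;a_2)$. From \eqref{eq:dyn-conv-standing} and the argument in Step~1 of \Cref{thm:free-energy-conv}, $u^\eps\to\bar u\circ\Phi$ in $L^1(\pi)$. Since everything is bounded by $\|f\|_\infty$, we get $g^\eps\to\bar g$ in $L^2(\pi)$, using $|\sqrt a-\sqrt b|^2\le|a-b|$. For a smooth compactly supported vector field $\xi$, integrate by parts against $\pi=e^{-V}dz/Z$:
\[
\int\langle\nabla_y g^\eps,a_2\xi\rangle d\pi=-\int g^\eps\,\nabla_y^{*}(a_2\xi)\,d\pi .
\]
This identifies every weak limit point of $\nabla_y g^\eps$ in $L^2(\pi;a_2)$ as $\nabla_y\bar g$, so $\nabla_y g^\eps\rightharpoonup\nabla_y\bar g$. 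Weak lower semicontinuity of the norm then gives $\liminf\|\nabla_y g^\eps\|^2_{a_2}\ge\|\nabla_y\bar g\|^2_{a_2}$.

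\textbf{Step 2: squeeze.} Combine Step~1 with the convergence of the sum. Since both summands are nonnegative, $\limsup\eps^{-1}\|\nabla_xg^\eps\|^2_{a_1}\le0$, which is (i). It also gives $\|\nabla_yg^\eps\|_{a_2}\to\|\nabla_y\bar g\|_{a_2}$. Weak convergence plus norm convergence in the Hilbert space $L^2(\pi;a_2)$ yields strong convergence, which is (ii).

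\textbf{Step 3: pass to $u$.} Use $\nabla u^\eps=2g^\eps\nabla g^\eps$ with $0\le g^\eps\le\|f\|_\infty^{1/2}$. Then (iii) follows directly from (i) with constant $4\|f\|_\infty$. For (iv), write
\[
\nabla_yu^\eps-\nabla_y\bar u=2g^\eps(\nabla_yg^\eps-\nabla_y\bar g)+2(g^\eps-\bar g)\nabla_y\bar g .
\]
The first term tends to $0$ by (ii) and the uniform bound on $g^\eps$. For the second, $g^\eps\to\bar g$ in $\pi$-measure along subsequences and is uniformly bounded. Since $|\nabla_y\bar g|^2_{a_2}\in L^1(\pi)$, dominated convergence applies after extracting an a.e.-convergent subsequence; uniqueness of the limit then upgrades this to the full family.

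\textbf{Main obstacle.} The delicate step is identifying the weak limit in Step~1 rigorously with weight $a_2$. This requires test fields with $\nabla^*_y(a_2\xi)$ bounded, which holds for smooth compactly supported $\xi$ since $V$ and $a_2$ are smooth. It also requires such $a_2\xi$ to be dense, and knowing $\bar g$ has a weak gradient in $L^2(\pi;a_2)$, which follows from $\bar f\in\bar{\mathcal M}$ and $\bar\I(t)<\infty$. A second point to check is that $\sqrt{u^\eps}$ is regular enough for the $\Gamma$-identity when $u^\eps$ vanishes. I would handle this by replacing $f$ with $f+\delta$ and letting $\delta\downarrow0$ via monotone convergence.
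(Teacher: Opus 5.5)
Your proposal is correct and follows the paper's proof essentially step for step. The shared steps are $L^2(\pi)$ convergence of the square roots from the $L^1$ orbit convergence, the block splitting $\tfrac14\I^\eps=\alpha^\eps+\beta^\eps$ with \Cref{thm:FI-conv}, weak convergence and lower semicontinuity of the $y$-energy, identification of $\bar\I(t)/4$ via the disintegration of \Cref{lem:tractable-static}, the squeeze, weak-plus-norm convergence, and the chain rule for (iii)--(iv). The only differences are cosmetic: you identify the weak limit by explicit integration by parts against $C_c^\infty$ test fields where the paper cites the closability lemma \Cref{lem:averaging-close} (which rests on the same integration by parts), and your splitting of $\nabla_y u^\eps-\nabla_y\bar u$ in (iv) makes the dominated-convergence step slightly cleaner.
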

The proof is deferred to \Cref{subsec:L2-str} in the appendix.

Theorem~\ref{thm:L2-strong} illustrates a perhaps unexpected payoff of thermodynamic convergence: beyond identifying macroscopic limits of free energy and entropy production, it yields a genuinely strong PDE stability statement for the backward Kolmogorov solutions, namely \(L^2(\pi)\)–convergence of gradients with an explicit fast/slow scale separation. In this sense, thermodynamic convergence is not merely a bookkeeping device for dissipation functionals; it provides a robust route to quantitative control of sensitivities, which are central in multiscale error analysis. This offers a concrete answer to the question “why study thermodynamic convergence?”—it furnishes strong analytic information on the limiting dynamics that is typically inaccessible from dynamical convergence alone.

\subsection{Stiff-potential limit in a large-drift regime}\label{subsec:stiff}
We next consider a complementary singular limit in which a stiff confining potential (equivalently, a large restoring drift)
drives the dynamics rapidly towards a lower-dimensional constraint set. Such regimes arise when certain degrees of freedom are
penalised at scale $\varepsilon^{-2}$ (e.g.\ rigid bonds, strong springs, or penalty formulations of constraints), so that on $O(1)$
time scales the motion is effectively confined near a constraint manifold while normal fluctuations remain close to local equilibrium.

To keep the presentation concise and focus on the curvature mechanism, we restrict to the reversible setting and a class of
globally parameterisable constraints compatible with the standing coarse-graining framework of \Cref{sec:setting}.
Our aim is to highlight two points:
\begin{enumerate}
\item pathwise weak convergence (in a large-drift regime) yields pointwise convergence of the semigroups, and a uniform $CD(-\kappa,\infty)$ bound upgrades this to compact-uniform convergence in space as required in \Cref{ass:standing};
\item the Gibbs measures $\pi^\varepsilon\propto e^{-V-\varepsilon^{-2}U}$ concentrate and converge weakly to a probability measure $\Pi$ supported on the constraint manifold, and its push-forward under the coarse-graining map coincides with the invariant measure $\bar\pi$ of the limiting dynamics on $\bar E$.
\end{enumerate}

\medskip

Let
\[
E=\R^{d_x}\times\R^{d_y}\ni z=(x,y),
\qquad
\bar E=\R^{d_y}.
\]
Define the phase map (coarse-graining map) $\Phi:E\to\bar E$ by
\begin{equation}\label{eq:stiff-phase-map}
  \Phi(x,y):=(I+H^\top H)^{-1}\big(y+H^\top(x-b)\big).
\end{equation}
Let $V\in C^2(E)$ and let $B\in\mathbb S_{++}^{d_x}$, $H\in\R^{d_x\times d_y}$ and $b\in\R^{d_x}$ be fixed.
Consider the stiff-potential (large-drift) diffusion
\begin{equation}\label{eq:stiff-nl-SDE}
  dZ_t^\varepsilon
  = -\nabla\!\big(V+\varepsilon^{-2}U\big)(Z_t^\varepsilon)\,dt + \sqrt{2}\,dW_t,
  \qquad
  U(x,y):=\tfrac12\,(x-Hy-b)^\top B\,(x-Hy-b),
\end{equation}
with generator
\[
  \mathcal L^\varepsilon f
  = \Delta f - \nabla\!\big(V+\varepsilon^{-2}U\big)\cdot\nabla f,
  \qquad f\in C_c^\infty(E).
\]
Assume moreover that $\nabla V$ is locally Lipschitz with at most linear growth, so \eqref{eq:stiff-nl-SDE} is well-posed and non-explosive for all $\varepsilon>0$.

The process is reversible with respect to the Gibbs measure
\[
  \pi^\varepsilon(dz)=Z_\varepsilon^{-1}\exp\!\big(-V(z)-\varepsilon^{-2}U(z)\big)\,dz,
\]
so $\gamma^\varepsilon\equiv0$.

\begin{assumptionx}[Quadratic graph constraint]\label{ass:stiff-geom}
The affine graph
\[
  M:=\{(Hu+b,u):u\in\bar E\}\subset E
\]
is the (unique) minimiser set of $U$, and $B\in\mathbb S_{++}^{d_x}$.
Finally, $e^{-V}\in L^1(E)$ and
\[
  \bar Z:=\int_{\bar E}\exp\!\big(-V(Hu+b,u)\big)\,du<\infty.
\]
\end{assumptionx}

\noindent
Note that $U\ge0$, hence $Z_\varepsilon\le\int_E e^{-V}\,dz<\infty$ under \Cref{ass:stiff-geom}.

Let $\iota:\bar E\to E$ denote the embedding $\iota(u)=(Hu+b,u)$, so that $\iota(\bar E)=M$ and $\Phi\circ\iota=\mathrm{Id}_{\bar E}$.
Define the (coarse-grained) projection operator $\mathcal P:C_b(E)\to C_b(\bar E)$ by
\begin{equation}\label{eq:stiff-P-def}
  (\mathcal P f)(u):=f(\iota(u))=f(Hu+b,u).
\end{equation}
In this graph setting, $\mathcal P f$ is simply the restriction of $f$ to $M$ expressed in the global coordinate $u$:
$(\mathcal Pf)(u)=f(\iota(u))=f|_M(\iota(u))$.
For probability measures supported on $M$, $\mathcal Pf$ coincides with a continuous version of the conditional expectation of $f$
given $\Phi=u$.

\medskip
\noindent\emph{Pointwise limit and $CD$ upgrade.}
We first establish the pointwise limit for $u^\varepsilon=P_t^\varepsilon f$.

\begin{proposition}[Pointwise semigroup limit]\label{prop:stiff-pointwise}
Suppose \Cref{ass:stiff-geom} holds and that $\nabla V$ is locally Lipschitz with at most linear growth, so that \eqref{eq:stiff-nl-SDE} is non-explosive.
Let $G:=I+H^\top H$ and $\bar V(u):=V(\iota(u))=V(Hu+b,u)$.
Let $(\bar P_t)_{t\ge0}$ denote the Markov semigroup on $\bar E$ with generator
\begin{equation}\label{eq:stiff-limit-generator}
  \bar{\mathcal L}g(u)=\mathrm{tr}\!\big(G^{-1}\nabla^2 g(u)\big)-\big\langle G^{-1}\nabla\bar V(u),\,\nabla g(u)\big\rangle,
  \qquad g\in C_c^\infty(\bar E),
\end{equation}
equivalently, the semigroup of the diffusion
\begin{equation}\label{eq:stiff-limit-SDE}
  dU_t=-G^{-1}\nabla \bar V(U_t)\,dt+\sqrt{2}\,G^{-1/2}\,dB_t,\qquad U_0=u\in\bar E.
\end{equation}
Then for every $t>0$, every $f\in C_b(E)$ and every $z\in E$,
\begin{equation}\label{eq:stiff-pointwise}
  P_t^\varepsilon f(z)\longrightarrow \bar P_t(\mathcal P f)\big(\Phi(z)\big)
  \qquad\text{as }\varepsilon\downarrow0.
\end{equation}
\end{proposition}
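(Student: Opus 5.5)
The plan is to prove \eqref{eq:stiff-pointwise} pathwise: show that for fixed $z$ the law of $Z^\eps_\cdot$ started at $z$ converges, on each interval $[\tau,T]$ with $\tau>0$, to the law of $\iota(U_\cdot)$ with $U_0=\Phi(z)$. Since $f\in C_b(E)$, evaluating at time $t>0$ then gives $P_t^\eps f(z)=\E f(Z^\eps_t)\to \E f(\iota(U_t))=\bar P_t(\mathcal Pf)(\Phi(z))$. Following the Katzenberger large-drift framework, the natural coordinates are $W^\eps:=\Phi(Z^\eps)$ and the normal deviation $\xi^\eps:=X^\eps-HY^\eps-b$. Then $\nabla U=(B\xi,-H^\top B\xi)$, and
\[
D\Phi\,\nabla U = G^{-1}\big(H^\top B\xi - H^\top B\xi\big)=0,
\]
where $D\Phi=G^{-1}[H^\top,\ I]$. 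So $\Phi$ annihilates the stiff drift exactly; this is why $\Phi$ was chosen.

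\textbf{Step 1 (slow coordinate).} By It\^o, since $\Phi$ is affine,
\[
dW^\eps_t=-D\Phi\,\nabla V(Z^\eps_t)\,dt+\sqrt2\,D\Phi\,dW_t .
\]
This has no $\eps^{-2}$ term, and its noise has covariance $2D\Phi D\Phi^\top=2G^{-1}G\,G^{-1}=2G^{-1}$, which matches \eqref{eq:stiff-limit-SDE}.

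\textbf{Step 2 (fast normal deviation).} We have
\[
d\xi^\eps=-\eps^{-2}(I+HH^\top)B\,\xi^\eps\,dt+O(|\nabla V|)\,dt+\sqrt2\,[I,-H]\,dW_t .
\]
The matrix $(I+HH^\top)B$ is similar to the symmetric positive definite matrix $B^{1/2}(I+HH^\top)B^{1/2}$, so it has spectrum bounded below by some $c>0$. A variation-of-constants/Gronwall estimate, localised by stopping at the exit time of a ball of radius $R$ so that $\nabla V$ is bounded, should give
\[
\sup_{s\in[\tau,T\wedge\theta_R]}|\xi^\eps_s|\to0
\]
in probability. The initial deviation $\xi_0=x-Hy-b$ decays like $e^{-c s/\eps^2}$, and the stochastic convolution is $O(\eps\sqrt{\log(1/\eps)})$.

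\textbf{Step 3 (identification of the limit).} Write $Z^\eps=\iota(W^\eps)+(\xi^\eps,0)$. Then $\nabla V(Z^\eps)=\nabla V(\iota(W^\eps))+o(1)$ on the localised event, by local Lipschitzness. By the chain rule,
\[
D\Phi\,\nabla V(\iota(u))=G^{-1}\big(H^\top\nabla_xV+\nabla_yV\big)(\iota(u))=G^{-1}\nabla\bar V(u).
\]
Hence $W^\eps$ solves the limit SDE up to a drift perturbation that vanishes in probability. Well-posedness of \eqref{eq:stiff-limit-SDE} ($\nabla\bar V$ is locally Lipschitz with linear growth) together with a stability/Gronwall argument driven by the same Brownian motion gives $W^\eps\to U$ uniformly on $[0,T\wedge\theta_R]$ in probability, with $U_0=\Phi(z)$. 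The initial layer is harmless because $\Phi(z)$ is not moved by the fast drift.

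\textbf{Step 4 (removing the localisation).} Linear growth of $\nabla V$ gives moment bounds on $W^\eps$ that are uniform in $\eps$. Combined with Step 2, this should make $\sup_\eps P(\theta_R\le T)\to0$ as $R\to\infty$. I expect this to be the main obstacle: the process $X^\eps$ itself is not controlled directly. The bound $\sup_{s\le T}|Z^\eps_s|\le C(|W^\eps|+|\xi^\eps|)$ means a uniform $L^2$ bound on $\xi^\eps$ is needed. I would try the Lyapunov function $\xi^\top B\xi$. Its generator contains the term $-\eps^{-2}c|\xi|^2$, which dominates the $O(1+|\xi|+|W|)\,|\xi|$ cross terms, and this yields $\E\sup|\xi^\eps|^2\lesssim|\xi_0|^2+\eps^2(1+\E\sup|W^\eps|^2)$.

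\textbf{Conclusion.} With these, $Z^\eps_t\to\iota(U_t)$ in probability, and bounded convergence gives \eqref{eq:stiff-pointwise}.
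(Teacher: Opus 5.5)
Your argument is correct, and it takes a genuinely different route from the paper. The paper's proof is essentially a citation. It notes that the fast flow $\dot r=-(I+HH^\top)Br$ contracts onto $M$ and that $y+H^\top x$ is conserved, which yields $\Phi$. It then invokes \cite[Thm.~6.3]{Katzenberger1991} for weak convergence in path space to the diffusion constrained to $M$, and transports the limit through the chart $\Phi|_M$.

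You instead use the exact affine/quadratic structure to give a self-contained coupling proof. Because $\Phi$ is affine and $D\Phi\,\nabla U\equiv0$, the slow coordinate $W^\eps=\Phi(Z^\eps)$ has neither a stiff drift nor an It\^o correction. If the Brownian motion in the limit equation is taken to be $G^{1/2}D\Phi\,W_t$ (a standard Brownian motion on $\R^{d_y}$), the noises of $W^\eps$ and $U$ coincide. Then $W^\eps-U$ has bounded variation and a pathwise Gronwall estimate closes. The normal deviation $\xi^\eps$ is an OU-type process controlled through $\xi^\top B\xi$.

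Your route gives a stronger, quantitative conclusion: convergence in probability under a synchronous coupling, of order $\eps$ up to logarithmic factors on localized events. It also makes explicit two things the citation leaves implicit: the verification of Katzenberger's hypotheses, and the passage from his localized statements (stopped at exits from compact sets) to the non-compact graph $M$. The paper's route buys generality. Katzenberger's theorem covers curved attracting manifolds, where $\Phi$ is nonlinear and its second derivatives produce extra drift in the limit; your argument does not reach that case.

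A few small corrections, none affecting the conclusion.
\begin{itemize}
\item In Step~3, $Z^\eps-\iota(W^\eps)$ equals $(\xi^\eps,0)$ only when $H^\top\xi^\eps=0$. From $W^\eps=Y^\eps+G^{-1}H^\top\xi^\eps$ one gets $Z^\eps-\iota(W^\eps)=\bigl((I+HH^\top)^{-1}\xi^\eps,\,-G^{-1}H^\top\xi^\eps\bigr)$. You only use $|Z^\eps-\iota(W^\eps)|\le C|\xi^\eps|$, which still holds.
\item The Gronwall step should be fed by $\int_0^{T\wedge\theta_R}|\xi^\eps_s|\,ds\to0$, not just by the supremum over $[\tau,T]$. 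The initial layer contributes only $O(\eps^2|\xi_0|)$ to this integral, and that is the precise sense in which it is harmless.
\item In Step~4, the bound $\E\sup_{s\le T}|\xi^\eps_s|^2\lesssim|\xi_0|^2+\eps^2(\cdots)$ is too optimistic. For $V\equiv0$ and $\xi_0=0$ the supremum is already of order $\eps^2\log(1/\eps)$, as your own Step~2 indicates. The Lyapunov/BDG computation gives $|\xi_0|^2+C\bigl(1+\eps^2\E\sup_{s\le T}|W^\eps_s|^2\bigr)$, which is all you need.
\item Step~4 is not really an obstacle. On $[0,T\wedge\theta_R]$, Steps~2--3 give $|Z^\eps_s-\iota(U_s)|\le Ce^{-cs/\eps^2}|\xi_0|+o_P(1)$ uniformly. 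Since $U$ does not explode, a continuity argument at $\theta_R$ already yields $\limsup_{\eps\to0}P(\theta_R\le T)\to0$ as $R\to\infty$, without any $\eps$-uniform moment bounds.
\end{itemize}
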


\begin{proof}
Let $r:=x-Hy-b$. The fast flow $\dot z=-\nabla U(z)$ satisfies $\dot r=-(I+HH^\top)B\,r$, hence contracts exponentially onto $M=\{r=0\}$.
Moreover, $y+H^\top x$ is conserved, which yields the global asymptotic phase map $\Phi$ and projection $\mathrm{proj}=\iota\circ\Phi$.
Therefore the hypotheses of \cite[Thm.~6.3]{Katzenberger1991} apply to the large-drift SDE \eqref{eq:stiff-nl-SDE} with stable manifold $M$,
and imply weak convergence in path space to a diffusion constrained to $M$.
Transporting the constrained diffusion through the chart $\Phi|_M:M\to\bar E$ gives the limiting dynamics Eqs.~\eqref{eq:stiff-limit-generator},\eqref{eq:stiff-limit-SDE}.
In particular, for each fixed $t>0$ we have $Z_t^\varepsilon \Rightarrow \iota(U_t)$ in distribution.
Therefore, for every $f\in C_b(E)$,
\[
  P_t^\varepsilon f(z)=\E_z[f(Z_t^\varepsilon)] \to \E_{\Phi(z)}[f(\iota(U_t))]=\bar P_t(\mathcal Pf)(\Phi(z)).
\]

\end{proof}

The dependence of the limit only through $\Phi(z)$ reflects the initial layer: the stiff drift $-\varepsilon^{-2}\nabla U$ relaxes $z$ to $\mathrm{proj}(z)$ on $O(\varepsilon^2)$ time scales, while the $O(1)$ dynamics is governed by \eqref{eq:stiff-limit-SDE} on $\bar E$.

\begin{assumptionx}[Uniform Hessian bound]\label{ass:stiff-Hessian}
There exist $\kappa\ge0$ and $\varepsilon_0>0$ such that for all $\varepsilon\in(0,\varepsilon_0]$,
\[
  \nabla^2\!\big(V+\varepsilon^{-2}U\big)\succeq -\kappa I
  \qquad\text{on }E.
\]
\end{assumptionx}

\noindent
A common structural feature with the averaging examples is worth noting:
in any singular regime with an $\varepsilon^{-2}$ contribution, the blown-up directions must be non-concave
(in an appropriate metric sense), otherwise negative curvature would be amplified to $-\infty$ as $\varepsilon\downarrow0$
and no uniform $CD$ lower bound can hold. In the present quadratic constraint, $\nabla^2U\succeq0$ is constant, so such
amplification cannot occur. In particular, \Cref{ass:stiff-Hessian} is implied by the simpler condition $\nabla^2V\succeq-\kappa I$ on $E$.

\begin{proposition}[Uniform $CD$ and compact-uniform convergence in space]\label{prop:stiff-uniform}
Under \Cref{ass:stiff-geom,ass:stiff-Hessian}, the generators $\mathcal L^\varepsilon$ satisfy $CD(-\kappa,\infty)$ uniformly for all $\varepsilon\in(0,\varepsilon_0]$.
Consequently, for every $t>0$, every compact $K\subset E$, and every $f\in C_b^1(E)$, we have
\[
  \sup_{(x,y)\in K}\big|P_t^\varepsilon f(x,y)-\bar P_t(\mathcal P f)\big(\Phi(x,y)\big)\big|\longrightarrow 0.
\]
\end{proposition}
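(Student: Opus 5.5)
Two steps are needed. First, the $CD(-\kappa,\infty)$ bound comes from the Bakry--\'Emery computation. Second, compact-uniform convergence follows from pointwise convergence plus equicontinuity.

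\emph{Step 1 (uniform $CD$).} For $\mathcal L^\varepsilon=\Delta-\nabla W_\varepsilon\cdot\nabla$ with $W_\varepsilon:=V+\varepsilon^{-2}U$, Bochner's formula gives
\[
\Gamma_2^\varepsilon(f)=\|\nabla^2 f\|_{HS}^2+\langle\nabla f,\nabla^2W_\varepsilon\nabla f\rangle .
\]
By \Cref{ass:stiff-Hessian} this is at least $-\kappa\,\Gamma^\varepsilon(f)$ for all $\varepsilon\le\varepsilon_0$, where $\Gamma^\varepsilon(f)=|\nabla f|^2$. This is the integrated-out $\Gamma_2$-form of $CD(-\kappa,\infty)$. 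I then pass to the semigroup form \eqref{eq:ex3-2} by the usual interpolation argument. Set $\Lambda(s):=P_s^\varepsilon\Gamma^\varepsilon(P_{t-s}^\varepsilon f)$ for $s\in[0,t]$. Then $\Lambda'(s)=2P_s^\varepsilon\Gamma_2^\varepsilon(P_{t-s}^\varepsilon f)\ge-2\kappa\Lambda(s)$, and Gr\"onwall gives $\Gamma^\varepsilon(P_t^\varepsilon f)\le e^{2\kappa t}P_t^\varepsilon\Gamma^\varepsilon(f)$. Justifying the differentiation requires some care. $W_\varepsilon$ is only $C^2$ with $\nabla^2W_\varepsilon$ bounded below, so I rely on the standard result that a Hessian lower bound together with non-explosion yields the gradient bound. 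Equivalently, I can use the synchronous-coupling argument: the derivative flow $J_t$ satisfies $\dot J_t=-\nabla^2W_\varepsilon(Z_t)J_t$, hence $|J_t|\le e^{\kappa t}$, and therefore
\[
|\nabla P_t^\varepsilon f|\le e^{\kappa t}P_t^\varepsilon|\nabla f|.
\]
Squaring and applying Jensen gives \eqref{eq:ex3-2}. I would take this coupling route, since it is the content of the It\^o--Kunita criterion \Cref{thm:flow-CD} with additive noise, and it avoids any domain issues.

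\emph{Step 2 (equicontinuity).} Take $f\in C_b^1(E)$. The pointwise gradient bound from Step 1 gives
\[
\sup_E|\nabla P_t^\varepsilon f|\le e^{\kappa t}\|\nabla f\|_\infty ,
\]
uniformly in $\varepsilon\le\varepsilon_0$. Hence $\{P_t^\varepsilon f\}_\varepsilon$ is uniformly Lipschitz on $E$, and it is also bounded by $\|f\|_\infty$. The limit $g:=\bar P_t(\mathcal Pf)\circ\Phi$ is continuous, because $\Phi$ is affine and $\bar P_t$ is Feller, being a diffusion with locally Lipschitz coefficients. By \Cref{prop:stiff-pointwise}, $P_t^\varepsilon f\to g$ pointwise on $E$.

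\emph{Step 3 (Arzel\`a--Ascoli).} On a compact $K$, an equi-Lipschitz family converging pointwise to a continuous limit converges uniformly. To see this, cover $K$ by finitely many balls of radius $\delta$ centred at points $z_i$. At each $z_i$ the convergence holds, and the oscillation of $P_t^\varepsilon f$ within each ball is at most $e^{\kappa t}\|\nabla f\|_\infty\delta$. The uniform continuity of $g$ on a neighbourhood of $K$ controls its oscillation in the same way. Letting $\varepsilon\to0$ and then $\delta\to0$ proves the claim.

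The main obstacle is the rigorous justification of the gradient commutation in Step 1. The perturbation $\varepsilon^{-2}U$ has unbounded drift, so the semigroup and derivative-flow regularity must be checked for each fixed $\varepsilon$. This is handled by the local Lipschitz and linear growth of $\nabla V$ together with the linearity of $\nabla U$, which give a differentiable stochastic flow. The uniformity in $\varepsilon$ then comes for free from $\nabla^2U\succeq0$.
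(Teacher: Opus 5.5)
Your proposal is correct and follows the paper's proof. The Hessian bound gives uniform $CD(-\kappa,\infty)$ through the Bochner formula (the paper cites its Ricci criterion for this). Gradient commutation then yields the $\varepsilon$-uniform Lipschitz bound $e^{\kappa t}\|\nabla f\|_\infty$. Finally, a finite $\delta$-net argument upgrades the pointwise limit of \Cref{prop:stiff-pointwise} to compact-uniform convergence.

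There are two minor differences. You obtain the gradient estimate from the synchronous-coupling/derivative-flow bound $|J_t|\le e^{\kappa t}$, whereas the paper cites Bolley--Gentil. You also get continuity of the limit from the Feller property, whereas the paper observes that the limit is itself $L$-Lipschitz on $K$, being a pointwise limit of $L$-Lipschitz functions.
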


\begin{proof}
The uniform $CD(-\kappa,\infty)$ bound follows from \Cref{ass:stiff-Hessian} by the Ricci/Hessian criterion for reversible diffusions with constant diffusion matrix $I$
(cf.\ \Cref{thm:Ricci}).

Fix $t>0$ and $f\in C_b^1(E)$. By the gradient commutation estimate under $CD(-\kappa,\infty)$ (see, e.g., \cite[Ex.~3]{bolley-gentil-phi-entropy}),
\[
  \Gamma(P_t^\varepsilon f)\le e^{2\kappa t}\,P_t^\varepsilon\Gamma(f)\le e^{2\kappa t}\,\|\nabla f\|_\infty^2,
\]
and since here $\Gamma(g)=|\nabla g|^2$, we obtain the uniform Lipschitz bound
\[
  \|\nabla P_t^\varepsilon f\|_\infty\le e^{\kappa t}\,\|\nabla f\|_\infty=:L,
\]
so $\{P_t^\varepsilon f\}_{\varepsilon\in(0,\varepsilon_0]}$ is equi-Lipschitz on $E$.

Let $K\subset E$ be compact and set $u_\varepsilon:=P_t^\varepsilon f$.
By equi-Lipschitzness and pointwise convergence \eqref{eq:stiff-pointwise}, the limit
\[
  F(z):=\bar P_t(\mathcal P f)\big(\Phi(z)\big)
\]
is $L$--Lipschitz on $K$ (hence uniformly continuous). In particular, its modulus of continuity
$\omega_F(\delta):=\sup\{|F(z)-F(z')|:\ z,z'\in K,\ |z-z'|\le\delta\}$ satisfies $\omega_F(\delta)\to0$ as $\delta\downarrow0$.

Given $\delta>0$, choose a finite $\delta$-net $\{z_i\}_{i=1}^N\subset K$. For any $z\in K$, pick $i$ with $|z-z_i|\le \delta$, and write
\[
\begin{aligned}
|u_\varepsilon(z)-F(z)|
&\le |u_\varepsilon(z)-u_\varepsilon(z_i)|
 + |u_\varepsilon(z_i)-F(z_i)|
 +|F(z_i)-F(z)| \\
&\le L\,\delta + \max_{1\le i\le N}|u_\varepsilon(z_i)-F(z_i)| + \omega_F(\delta).
\end{aligned}
\]
Letting $\varepsilon\downarrow0$ and using \eqref{eq:stiff-pointwise} at the finitely many points $z_i$, and then letting $\delta\downarrow0$,
yields the claim.
\end{proof}

\medskip
\noindent\emph{Weak convergence of invariant measures and identification of the limit.}
Under \Cref{ass:stiff-geom}, Laplace's method in the sense of weak convergence of probability measures \cite{Hwang1980}
for nondegenerate minimum manifolds implies that the Gibbs measures $\pi^\varepsilon$ converge weakly to a probability measure $\Pi$ supported on $M$,
\begin{equation}\label{eq:stiff-pi-weak}
  \pi^\varepsilon \rightharpoonup \Pi
  \qquad\text{as }\varepsilon\downarrow0,
  \qquad \Pi(M)=1.
\end{equation}
(Although $M$ is non-compact, tightness follows from $\bar Z<\infty$, and the local Laplace expansion on compact subsets of $M$ may be patched using tail control.)
Moreover, $\Pi$ admits a density representation on $M$ of the form
\[
  \Pi(d\sigma_M)\ \propto\ \frac{e^{-V}}{\sqrt{\det(\nabla^2_{N}U)}}\,d\sigma_M,
\]
where $d\sigma_M$ denotes the intrinsic (Hausdorff) measure on $M$ and $\nabla^2_{N}U$ is the Hessian restricted to normal directions.

In the present quadratic graph constraint, the normal Hessian $\nabla_N^2U$ is constant along $M$, hence
$\det(\nabla_N^2U)\equiv C_U$ for some constant $C_U>0$. Moreover, the surface element on the affine graph satisfies
$d\sigma_M = C_M\,du$ with a constant Jacobian $C_M>0$. Therefore these prefactors cancel upon normalisation, and
\begin{equation}\label{eq:stiff-barpi0}
  \Phi_\#\Pi(du)=\bar\pi_0(du),\qquad
  \bar\pi_0(du)=\bar Z^{-1}\exp\!\big(-V(Hu+b,u)\big)\,du.
\end{equation}

We now identify the invariant measure $\bar\pi$ of the limiting semigroup $(\bar P_t)$ from \eqref{eq:stiff-pointwise}.

\begin{proposition}[Invariant measure in the stiff limit]\label{prop:stiff-pi}
Suppose \Cref{ass:stiff-geom,ass:stiff-Hessian} hold. Let $\Pi$ be the weak limit in \eqref{eq:stiff-pi-weak}, and let $\bar\pi$ denote the (unique)
invariant probability measure of $(\bar P_t)$ from \Cref{sec:setting}. Then
\[
  \Phi_\#\Pi=\bar\pi
  \qquad\text{and}\qquad
  \Pi=\iota_\#\bar\pi.
\]
In particular, $\bar\pi=\bar\pi_0$ with $\bar\pi_0$ given by \eqref{eq:stiff-barpi0}.
\end{proposition}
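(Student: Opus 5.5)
We want $\Phi_\#\Pi=\bar\pi$ and $\Pi=\iota_\#\bar\pi$, so that $\bar\pi=\bar\pi_0$. The plan is to show directly that $\bar\pi_0$ from \eqref{eq:stiff-barpi0} is invariant for $\bar{\mathcal L}$ in \eqref{eq:stiff-limit-generator}. Uniqueness of the invariant measure (standing assumption of \Cref{sec:setting}) then gives $\bar\pi=\bar\pi_0=\Phi_\#\Pi$. The second identity follows from the support property: since $\Pi(M)=1$ and $\iota\circ\Phi=\mathrm{id}$ on $M$ (as $\Phi\circ\iota=\mathrm{Id}_{\bar E}$ and $\iota$ is a bijection onto $M$), we get $\Pi=(\iota\circ\Phi)_\#\Pi=\iota_\#\Phi_\#\Pi=\iota_\#\bar\pi$.

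\emph{Invariance of $\bar\pi_0$.} The generator \eqref{eq:stiff-limit-generator} has constant diffusion matrix $G^{-1}$ and drift $-G^{-1}\nabla\bar V$, so it can be written in divergence form
\[
\bar{\mathcal L}g=e^{\bar V}\nabla\!\cdot\!\big(e^{-\bar V}G^{-1}\nabla g\big),\qquad g\in C_c^\infty(\bar E).
\]
Integrating by parts against $e^{-\bar V}du$ gives $\int \bar{\mathcal L}g\,d\bar\pi_0=0$ for all $g\in C_c^\infty(\bar E)$. In fact $\bar{\mathcal L}$ is symmetric in $L^2(\bar\pi_0)$, so it is reversible, consistent with $\gamma\equiv0$. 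Since $C_c^\infty(\bar E)$ is a core, $\bar\pi_0$ is infinitesimally invariant, and therefore invariant for $\bar P_t$. Here $\bar Z<\infty$ (\Cref{ass:stiff-geom}) is what makes $\bar\pi_0$ a probability measure.

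\emph{Main obstacle.} The step that needs care is passing from infinitesimal invariance to invariance of the semigroup. I would first try to invoke the core property together with the fact that the limiting SDE \eqref{eq:stiff-limit-SDE} is non-explosive: $\nabla\bar V$ inherits local Lipschitz continuity and linear growth from $\nabla V$. Together these give uniqueness of the martingale problem, and hence $\int\bar P_tg\,d\bar\pi_0=\int g\,d\bar\pi_0$.

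As a cross-check independent of the Laplace prefactor computation, one could instead pass invariance of $\pi^\varepsilon$ to the limit. Under \Cref{prop:stiff-uniform} we have $\int P_t^\varepsilon f\,d\pi^\varepsilon=\int f\,d\pi^\varepsilon$. Compact-uniform convergence combined with tightness of $(\pi^\varepsilon)$ then yields $\int \bar P_t(\mathcal Pf)\circ\Phi\,d\Pi=\int f\,d\Pi$. Taking $f=g\circ\Phi$, so that $\mathcal Pf=g$, this says that $\Phi_\#\Pi$ is $\bar P_t$-invariant. This route gives $\Phi_\#\Pi=\bar\pi$ directly by uniqueness, and \eqref{eq:stiff-barpi0} then identifies $\bar\pi$ with $\bar\pi_0$.
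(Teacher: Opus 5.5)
Your \emph{cross-check} paragraph is exactly the paper's proof, and it is complete. The paper tests invariance of $\pi^\varepsilon$ on $g\circ\Phi$, passes to the limit using the compact-uniform convergence of \Cref{prop:stiff-uniform} together with tightness, concludes by uniqueness of $\bar\pi$ on the separating class $C_b^1(\bar E)$, and finishes with $\Pi=\iota_\#\Phi_\#\Pi$ from $\Pi(M)=1$. One small correction: $\int P_t^\varepsilon f\,d\pi^\varepsilon=\int f\,d\pi^\varepsilon$ is just invariance of $\pi^\varepsilon$. \Cref{prop:stiff-uniform} is what justifies the limit, and it needs $f\in C_b^1(E)$; this holds for $f=g\circ\Phi$ with $g\in C_b^1(\bar E)$ because $\Phi$ is affine.

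Your primary route is genuinely different: you prove $\bar\pi=\bar\pi_0$ directly and then read off $\Phi_\#\Pi=\bar\pi$ from \eqref{eq:stiff-barpi0}. The divergence-form computation is correct, but the step you flag is not closed as written.
\begin{itemize}
\item The core property of \Cref{sec:setting} is a statement in $L^2(\bar\pi)$, where $\bar\pi$ is not yet identified. It gives no control of integrals against $\bar\pi_0$ unless you already know something like $d\bar\pi_0/d\bar\pi\in L^2(\bar\pi)$, so invoking it here is circular.
\item Well-posedness of the martingale problem does not by itself turn $\int\bar{\mathcal L}g\,d\bar\pi_0=0$ into $\bar\pi_0\bar P_t=\bar\pi_0$. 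The missing ingredient is Echeverr\'{\i}a's theorem (Ethier--Kurtz, Ch.~4, \S 9), which produces a stationary solution of the martingale problem with one-dimensional law $\bar\pi_0$.
\item Well-posedness then identifies that stationary solution with the diffusion started from $\bar\pi_0$. Non-explosion does transfer to the limit SDE, since $\iota$ is affine.
\end{itemize}
With that citation your primary route works.

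The two routes trade off as follows. Your primary route never uses \Cref{ass:stiff-Hessian} or \Cref{prop:stiff-uniform}. However, it makes $\Phi_\#\Pi=\bar\pi$ rest entirely on the Laplace asymptotics behind \eqref{eq:stiff-barpi0}, whose tail control the paper only sketches. The paper's route makes $\Phi_\#\Pi=\bar\pi$ independent of that computation, which enters only in the final \emph{in particular}.

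The two halves combine well. The limit argument gives $\Phi_\#\Pi=\bar\pi$, and your direct invariance computation (plus Echeverr\'{\i}a) gives $\bar\pi=\bar\pi_0$. Together they prove all the claims and even re-derive \eqref{eq:stiff-barpi0} without any prefactor computation. So at the end of your cross-check you did not need to appeal to \eqref{eq:stiff-barpi0}.
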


\begin{proof}
Fix $t>0$ and $g\in C_b^1(\bar E)$. Consider the lift $f:=g\circ\Phi\in C_b^1(E)$.
Invariance of $\pi^\varepsilon$ gives
\[
  \int_E P_t^\varepsilon (g\circ\Phi)\,d\pi^\varepsilon=\int_E (g\circ\Phi)\,d\pi^\varepsilon.
\]
Let $\varepsilon\downarrow0$. By \Cref{prop:stiff-uniform} (applied to $f=g\circ\Phi$) we have
\[
  P_t^\varepsilon(g\circ\Phi)(z)\to \bar P_t(\mathcal P(g\circ\Phi))\big(\Phi(z)\big)=\bar P_t g\big(\Phi(z)\big)
\]
uniformly on compacts, and by \eqref{eq:stiff-pi-weak} the measures $\pi^\varepsilon$ are tight.
Passing to the limit yields
\[
  \int_E \bar P_t g(\Phi(z))\,\Pi(dz)=\int_E g(\Phi(z))\,\Pi(dz),
\]
i.e.\ $\Phi_\#\Pi$ is invariant for $\bar P_t$ on the separating class $C_b^1(\bar E)$.
By uniqueness of the invariant probability measure for $(\bar P_t)$ (assumed in \Cref{sec:setting}), we conclude $\Phi_\#\Pi=\bar\pi$.

Finally, since $\Pi$ is supported on $M$ and $\Phi|_M:M\to\bar E$ is a bijection with inverse $\iota$, we have
\[
  \Pi=\iota_\#(\Phi_\#\Pi)=\iota_\#\bar\pi.
\]
The last statement $\bar\pi=\bar\pi_0$ follows by combining $\Phi_\#\Pi=\bar\pi$ with \eqref{eq:stiff-barpi0}.
\end{proof}

\medskip
\noindent
Since $\gamma^\varepsilon\equiv0$, one has $J^\varepsilon\equiv0$ according to the notation of \Cref{sec:main-results}.
Thus \Cref{prop:stiff-uniform,prop:stiff-pi} place the reversible stiff-potential regime within the scope of \Cref{sec:main-results}.
In particular, here the curvature hypothesis is used not only to control thermodynamic functionals (which will be simplified in the reversible case),
but also to strengthen the dynamical convergence from pointwise to compact-uniform in space, matching the standing assumptions needed for thermodynamic convergence.

%\section{Discussion and perspectives}\label{sec:discussion}

%We developed a semigroup-level approach that upgrades dynamical convergence in singular limits to thermodynamic convergence statements, and identifies a sharp mechanism (locking) for the absence of entropy-production loss. 

%The role of curvature is twofold: a uniform $CD(-\kappa,\infty)$ bound provides both the time structure needed to control dissipation and the spatial regularisation needed to stabilise entropy-production functionals.
%The $\varepsilon$-uniform Bakry--\'Emery bound is a convenient sufficient condition rather than an optimal one. 
%In our proofs it yields a monotonicity/differential-inequality mechanism for dissipation (which can be read as a convexity statement when sufficient time differentiability is available) and uniform gradient commutation estimates.

%A natural generalization is to look for weaker hypotheses by replacing the global $CD(-\kappa,\infty)$ bound with structural conditions tailored to the singular-projection geometry, obtained through suitably chosen differentiable time-transformations/weighted combinations of the free energy $\F$ and the dissipation $\I$. Beyond the present diffusion setting, it would be interesting to investigate analogous thermodynamic notions for jump/nonlocal generators, for path-space formulations, and for hypoelliptic or other degenerate geometries.

\section{Discussion and Perspectives}\label{sec:discussion}

This work develops a semigroup-level framework that upgrades a given dynamical coarse-graining limit in singular perturbations into quantitative thermodynamic statements.
Starting from the convergence of microscopic semigroups $(P_t^\varepsilon,\pi^\varepsilon)$ to an effective macroscopic limit $(\bar P_t,\bar\pi)$, we compare the associated thermodynamic functionals (free energy, dissipation, and entropy production) and organise the resulting statements into four nested levels of thermodynamic convergence for each fixed $t>0$.
Our main theorem provides a transparent implication chain: under tractable dynamical and coefficient-level inputs, one obtains free-energy convergence (Level~I), dissipation convergence (Level~II), lower-semicontinuity bounds for housekeeping/total entropy production (Level~III), and finally strong convergence of these entropy-production functionals (Level~IV), cf.\ \Cref{fig:roadmap}.

A central message is that entropy-production loss under coarse-graining is governed by a sharp mechanism.
We identify a locking condition which captures when the limiting procedure does not dissipate entropy production: locking promotes the $\liminf$ bounds of Level~III to the strong convergence of Level~IV and thus characterises the absence of entropy-production loss within our framework.
In addition, we isolate a strictly weaker steady-state target.
The time-dependent housekeeping $\liminf$ bound involves the evolving density $u^\varepsilon(t)$ and therefore uses dynamical input together with coefficient convergence, whereas at stationarity the density is trivial ($u\equiv1$) so the non-adiabatic contribution vanishes and $\sigma=\sigma_{\mathrm{hk}}$.
Consequently, the steady-state housekeeping $\liminf$ bound becomes purely static and follows from coefficient convergence alone, leading to the weakened implication chain \eqref{eq:weakened-chain}.
In particular, if one only aims at Level~III$_{\mathrm{ss}}$, the dynamical input of our framework can be streamlined substantially.

To make the abstract assumptions checkable in concrete multiscale models, we also develop verifiable criteria for the required inputs and apply them in two representative case studies.
These examples illustrate how the semigroup-level viewpoint unifies a variety of singular limits: once dynamical convergence is available from standard multiscale arguments, the thermodynamic conclusions follow by verifying a small number of structural conditions on the coefficients and (when needed) a regularisation mechanism.

The role of curvature in our analysis is twofold.
An $\varepsilon$-uniform $CD(-\kappa,\infty)$ bound provides both the time structure needed to control dissipation and the spatial regularisation needed to stabilise entropy-production functionals.
We emphasise that this Bakry--\'Emery input is a convenient sufficient condition rather than an optimal one: in our proofs it yields a monotonicity/differential-inequality mechanism for dissipation (equivalently, a convexity-type control when enough differentiability is available) together with uniform gradient-commutation estimates.

Several extensions are natural.
First, while our convergence statements are formulated pointwise for each fixed $t>0$ (thus avoiding the initial layer), it would be interesting to establish convergence uniformly on $t\ge\tau$ for arbitrary $\tau>0$ (e.g.\ on $[\tau,T]$), which would require time-uniform regularisation and stability estimates away from $t=0$.
%On the analytic side, one may also seek weaker, geometry-adapted hypotheses tailored to the singular projection $\Phi$, for instance by exploiting suitable time changes or weighted combinations of the free energy $\F$ and the dissipation $\I$ in place of global curvature bounds.
On the analytic side, the curvature input can be viewed as a monotonicity/convexity mechanism: we build an $\varepsilon$-uniformly modified free-energy functional whose time derivative is monotone (equivalently, an $\varepsilon$-uniform time weight that makes the dissipation $\I$ monotone), which yields convergence of $\I$. A natural direction is to find other $\F$-based functionals with the same monotonicity property---possibly adapted to the projection $\Phi$---thereby weakening or replacing global curvature bounds.
Beyond the present diffusion setting, it would be interesting to investigate analogous thermodynamic notions for jump/nonlocal generators, for path-space formulations, and for hypoelliptic or other degenerate geometries, where both coarse-graining and entropy production exhibit new structural features.
\section*{Acknowledgment}
This work was supported by the
Guangdong Provincial Key Laboratory of Mathematical and Neural Dynamical Systems (2024B1212010004).

\appendix
\section{Forward (Fokker--Planck) Representation and Information Geometry.}
\label{sec:app-forward}
The definitions in \Cref{def:sigmas-single} are the only ones used in the main results.
Nevertheless, under additional regularity assumptions (e.g.\ $\pi(d\zeta)=\pi(\zeta)\,d\zeta$ with $\pi$ smooth and strictly positive, and $u(t,\zeta)=P_t f$ sufficiently regular for $t>0$), one may equivalently work with the forward density
\[
\rho(t,\zeta):=u(t,\zeta)\,\pi(\zeta).
\]
If in addition $\int_X f\,d\pi=1$, then $\int_X u(t,\zeta)\,d\pi=1$ for all $t\ge0$ and hence $\rho(t,\zeta)$ is a probability density on $X$ with respect to Lebesgue measure.

Starting from the diffusion form in Eq.~\eqref{eq:backward-eps1} and using the $\pi$-divergence-free condition $\nabla\!\cdot(\gamma\pi)=0$, one can check (in the distributional sense) that $\rho$ satisfies the continuity (or Fokker--Planck) equation
\begin{equation}
\partial_t\rho(t,\zeta)=-\nabla\!\cdot J(t,\zeta),\qquad 
J(t,\zeta):=\gamma(\zeta)\,\rho(t,\zeta)\;-\;\pi(\zeta)\,A(\zeta)\,\nabla\!\Bigl(\frac{\rho(t,\zeta)}{\pi(\zeta)}\Bigr),
\label{eq:FPE-current}
\end{equation}
where $J(t,\cdot)$ is the probability current. 
Note that the sign in the drift part comes from rewriting the backward term $-\gamma\cdot\nabla u$ in divergence form:
using $\nabla\!\cdot(\gamma\pi)=0$ and $\rho=u\pi$, one has $\pi\,\gamma\cdot\nabla u=\nabla\!\cdot(\gamma\rho)$, hence the contribution $+\gamma\rho$ in $J$.
In particular, the stationary state $u\equiv 1$ (equivalently $\rho\equiv \pi$) yields the stationary current
\[
J^{\mathrm{ss}}(\zeta):=J(t,\zeta)\big|_{\rho=\pi}=\gamma(\zeta)\,\pi(\zeta)=-\pi(\zeta)\,L_a\,\mathrm{id}(\zeta),
\qquad \nabla\!\cdot J^{\mathrm{ss}}=0.
\]

In this forward picture the free energy becomes the familiar Kullback--Leibler divergence
\[
\F(t)=\int_X \rho(t,\zeta)\,\log\frac{\rho(t,\zeta)}{\pi(\zeta)}\,d\zeta
=\int_X u(t,\zeta)\log u(t,\zeta)\,d\pi(\zeta),
\]
and is typically used as a Lyapunov functional for relaxation towards $\pi$.

Define the stationary and instantaneous thermodynamic forces by
\[
F(\zeta):=A(\zeta)^{-1}\frac{J^{\mathrm{ss}}(\zeta)}{\pi(\zeta)}
=A(\zeta)^{-1}\gamma(\zeta),
\qquad
\tilde F(t,\zeta):=A(\zeta)^{-1}\frac{J(t,\zeta)}{\rho(t,\zeta)}
=F(\zeta)-\nabla\log u(t,\zeta).
\]
With the weighted norm $\|G\|_{L^2(\pi;A)}^2:=\int_X G^\top A\,G\,d\pi$, we recover the standard identities
\[
\sigma_{\mathrm{hk}}(t)=\big\|\sqrt{u(t)}\,F\big\|^2_{L^2(\pi;A)},
\sigma_{\mathrm{ex}}(t)=\big\|\sqrt{u(t)}\big(\tilde F(t)-F\big)\big\|^2_{L^2(\pi;A)}=\I(t),
\sigma(t)=\big\|\sqrt{u(t)}\,\tilde F(t)\big\|^2_{L^2(\pi;A)},
\]
and,  $\I(t)=\sigma_{\mathrm{ex}}(t)=-\frac{d}{dt}\F(t)$; see, e.g., \cite{esposito2010three}.
Consequently, $\sigma(t)=\sigma_{\mathrm{hk}}(t)+\sigma_{\mathrm{ex}}(t)$ can be viewed as a Pythagoras theorem in $L^2(\pi;A)$; cf.\ \cite[Eq.~(2.19)--(2.20)]{ArnoldCarlenJu2008LargeTime}.

Finally, this paragraph does \emph{not} introduce a new evolution: it merely rewrites the same semigroup trajectory $u(t,\zeta)=P_t f$ and the same thermodynamic functionals via the identification $\rho=u\,\pi$.
In particular, any statement formulated for the forward density $\rho(t,\zeta)$ (e.g.\ convergence of $\rho(t,\zeta)$ towards $\pi$ in a thermodynamic sense) can be translated into the present framework by passing to $u(t,\zeta)=\rho(t,\zeta)/\pi$.

\section{Verification Toolbox for the Assumptions}
\label{sec:Criteria}
The upgrade results in \Cref{subsec:conv-result} are formulated under
\Cref{ass:standing,ass:CDkappa,ass:coeff-weak,ass:locking}.
The purpose of this section is to provide practical ways to check these hypotheses in concrete singular-perturbation models, level by level.

%For Level~I we comment on the dynamical input and record a weighted alternative to the compact-uniform convergence \eqref{eq:dyn-conv-standing}, which is often sufficient for the arguments. For Level~II we give two $\varepsilon$--uniform sufficient criteria implying the curvature--dimension bound \Cref{ass:CDkappa}: a Ricci-type matrix test in the smooth diffusion setting, and an It\^o--Kunita criterion based on synchronous contraction of the associated stochastic flow. For Level~III we provide a drift-based sufficient condition for the projected-current convergence when the coarse-graining map $\Phi$ is affine. Finally, for Level~IV we interpret the locking condition and show how it reduces to a more canonical quadratic fluctuation condition under an additional identification limit.
\subsection{Level I: dynamical input and a weighted alternative }
\label{subsec:I}
\Cref{ass:standing} provides the dynamical input needed throughout the upgrade chain.
Indeed, the nonequilibrium thermodynamic functionals are evaluated along the backward orbit
$u^\varepsilon(t,\cdot)=P_t^\varepsilon f$ and weighted by the invariant measure $\pi^\varepsilon$,
so some form of convergence for the pair $(u^\varepsilon(t,\cdot),\pi^\varepsilon)$ is indispensable
(compare, e.g., \Cref{cor:ss-lsc,thm:hk-conv-iff-locking}).

The locally uniform convergence \eqref{eq:dyn-conv-standing} is adopted mainly for compatibility with
standard singular-limit results in the literature.
However, many arguments in \Cref{subsec:conv-result} only require a weaker weighted $L^2(\pi^\varepsilon)$
convergence at fixed $t>0$, for instance
\[
\int_E \bigl|u^\varepsilon(t,z)-\bar u\!\bigl(t,\Phi(z)\bigr)\bigr|^2\,
\tr\!\bigl(D\Phi(z)\,A^\varepsilon(z)\,D\Phi(z)^\top\bigr)\,d\pi^\varepsilon(z)\ \longrightarrow\ 0.
\]

The locally uniform convergence \eqref{eq:dyn-conv-standing} is used essentially only in \Cref{thm:free-energy-conv} and \Cref{lem:weighted-cross-quadratic-conv}; Levels~II and IV do not rely on it, so their proofs are unchanged under the replacement below. 
For Levels~I and III it suffices to assume instead that
\[
\int_E \bigl|u^\varepsilon(t,z)-\bar u\!\bigl(t,\Phi(z)\bigr)\bigr|^2\,\tr\!\bigl(D\Phi A^\varepsilon D\Phi^\top\bigr)(z)\,d\pi^\varepsilon(z)\to0,
\]
which, together with local uniform ellipticity of $A^\varepsilon$, implies Eq.~\eqref{eq:u-L1-conv} and yields \Cref{thm:free-energy-conv}; moreover, Cauchy--Schwarz and \Cref{ass:coeff-weak} give \Cref{lem:weighted-cross-quadratic-conv} and hence Level~III.

\subsection{Level II: uniform $CD(-\kappa,\infty)$ and practical criteria }
\label{subsec:II}

At the framework of \cite[Ex.3]{bolley-gentil-phi-entropy}, Eq.~\eqref{eq:ex3-2} is equivalent to 
\[
  \Gamma_{2}^\eps(f) + \kappa\,\Gamma^\varepsilon(f) \;\ge\; 0,
  \qquad \forall f\in \mathcal{M}.
\]
\Cref{ass:CDkappa} is the key device that upgrades the dynamical convergence in \Cref{ass:standing} to the convergence of thermodynamic functionals. Indeed, we only allow a controlled exponential growth of gradients along the semigroup, in the sense that $\Gamma^\varepsilon(P_t^\varepsilon f)$ may grow like $e^{2\kappa t}$, rather than requiring any exponential contraction. The parameter $\kappa$ can be large, and thus \Cref{ass:CDkappa} is deliberately weak on the ``non-singular'' part of the dynamics.

%\subsection{Sufficient conditions for the uniform $CD(-\kappa,\infty)$ assumption}
%\label{subsec:CDcriteria}

%A central standing assumption of this paper is the uniform curvature--dimension condition \Cref{ass:CDkappa}.
Direct verification is delicate: the generator depends on the scale--separation parameter $\varepsilon$, and checking
the condition for every $\varepsilon$ is neither practical nor informative. In this subsection we provide
an $\varepsilon$--uniform sufficient criterion implying \Cref{ass:CDkappa}.

\paragraph{The Ricci criterion.}
In the smooth diffusion regime of \cite{ArnoldCarlenJu2008LargeTime}, the condition $CD(-\kappa,\infty)$ can be verified
via a pointwise matrix inequality for a suitable Ricci-type tensor. Following \cite[Eq.~(2.13)]{ArnoldCarlenJu2008LargeTime},
there exists a symmetric matrix field $\mathfrak{R}_z(V,F,A)\in\R^{N\times N}$ (the \emph{Ricci matrix}) such that
\[
  \Gamma_{2}(f)(z)\;\ge\;
  \big\langle \nabla f(z),\,\mathfrak{R}_z(V,F,A)\,\nabla f(z)\big\rangle,
  \qquad z\in E.
\]
More explicitly, for every $U\in\R^N$ one has
\begin{equation}\label{mathfrak}
  \begin{aligned}
    U^\top \mathfrak{R}_z(V,F,A)\,U
    =&\ U^\top A\bigl(\nabla^2 V-\Sym(\mathcal{J}F)\bigr)A\,U
      - \tfrac14 \operatorname{Tr}\Bigl(\mathbf{E}^{\top}
        + A \mathbf{E} A^{-1}-(U^\top A\nabla ) A^{-1}\Bigr)^{2} \\
     &+ U^{\top}\Bigl[
           \tfrac12\operatorname{Tr}\!\Bigl(A \tfrac{\partial^{2}}{\partial z^{2}}\Bigr)A
           + \tfrac12\bigl(\nabla^{\top} A \nabla\bigr) A
           - A\Bigl(\tfrac{\partial^{2}}{\partial z^{2}} A\Bigr)
           - \tfrac12\bigl((\nabla V -F)^\top A \nabla \bigr) A
        \Bigr] U \\
     &+ \tfrac12\Bigl(U^{\top} A\mathbf{E}(\nabla V -F)
        +(\nabla V -F)^{\top} \mathbf{E}^{\top} AU\Bigr),
  \end{aligned}
\end{equation}
where $\Sym(M):=\tfrac12(M+M^\top)$, $\mathcal{J}F$ denotes the Jacobian matrix of $F$, and
$\mathbf{E}=(e_i^{j})$ is the matrix with entries $e_i^{j}=(\partial_i A^{jk})U_k$.
The notation $\tfrac{\partial^{2}}{\partial z^{2}}A$ stands for the matrix of second derivatives of $A$ with entries
$\partial_{ij}(A^{jk})$.

\begin{theorem}[Ricci-type sufficient condition for uniform $CD(-\kappa,\infty)$]
\label{thm:Ricci}
Assume that, for each $\varepsilon>0$, the semigroup $P_t^\varepsilon$ falls into the smooth diffusion setting of this paragraph and
admits an invariant probability measure $\pi^\varepsilon(dz)=e^{-V^\varepsilon(z)}\,dz$ with strictly positive smooth density.
Let
\[
  \mathfrak{R}^\varepsilon(z):=\mathfrak{R}_z\bigl(V^\varepsilon,F^\varepsilon,A^\varepsilon\bigr)
\]
be the corresponding Ricci matrix field in the sense of \cite[Eq.~(2.13)]{ArnoldCarlenJu2008LargeTime}. If there exist
$\varepsilon_0>0$ and $\kappa\ge0$ such that
\begin{equation}\label{eq:Ricci-matrix-condition}
  \mathfrak{R}^\varepsilon(z)+\kappa\,A^\varepsilon(z)\ \succeq\ 0,
  \qquad \forall\,z\in E,\ \ 0<\varepsilon<\varepsilon_0,
\end{equation}
then \Cref{ass:CDkappa} holds uniformly in $\varepsilon$; cf.\ \cite[Ex.~4]{bolley-gentil-phi-entropy}.
\end{theorem}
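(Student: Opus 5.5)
The plan is to reduce \Cref{ass:CDkappa} to the pointwise Bakry--\'Emery inequality $\Gamma_2^\eps+\kappa\Gamma^\eps\ge0$ and then pass from it to the gradient commutation bound \eqref{eq:ex3-2} by the standard interpolation argument. Throughout, $\kappa$ and $\eps_0$ are the constants in \eqref{eq:Ricci-matrix-condition}, and everything is uniform in $\eps$.

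\emph{Step 1: pointwise $CD(-\kappa,\infty)$.} Fix $0<\eps<\eps_0$ and $f\in C_c^\infty(E)$. By the Arnold--Carlen--Ju representation,
\[
\Gamma_2^\eps(f)\ge\langle\nabla f,\mathfrak R^\eps\nabla f\rangle .
\]
The matrix hypothesis \eqref{eq:Ricci-matrix-condition} gives
\[
\langle\nabla f,\mathfrak R^\eps\nabla f\rangle\ge-\kappa\langle\nabla f,A^\eps\nabla f\rangle .
\]
Since $\langle\nabla f,A^\eps\nabla f\rangle=\Gamma^\eps(f)$ by \Cref{lem:dict-diffusion}, we obtain $\Gamma_2^\eps(f)+\kappa\Gamma^\eps(f)\ge0$ pointwise on $E$. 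The constant $\kappa$ does not depend on $\eps$.

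\emph{Step 2: from $\Gamma_2$ to the gradient bound.} Fix $t>0$ and $f$, and consider the interpolation
\[
\Lambda(s):=e^{2\kappa s}P_s^\eps\bigl(\Gamma^\eps(P_{t-s}^\eps f)\bigr),\qquad s\in[0,t].
\]
Differentiating formally gives
\[
\Lambda'(s)=2e^{2\kappa s}P_s^\eps\bigl(\Gamma_2^\eps(P_{t-s}^\eps f)+\kappa\Gamma^\eps(P_{t-s}^\eps f)\bigr)\ge0,
\]
using Step 1 and the positivity of $P_s^\eps$. Hence $\Lambda(0)\le\Lambda(t)$, which reads
\[
\Gamma^\eps(P_t^\eps f)\le e^{2\kappa t}P_t^\eps\Gamma^\eps(f).
\]
This is exactly \eqref{eq:ex3-2}, i.e.\ the equivalence recorded in \cite[Ex.~3--4]{bolley-gentil-phi-entropy}, which we may invoke directly.

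\emph{Step 3: extension to $\mathcal M$.} To pass from $C_c^\infty$ to $f\in\mathcal M$, approximate $f$ using the core property of $C_c^\infty(E)$. The right-hand side converges by the finiteness of $\int\Gamma^\eps(\sqrt f)\,d\pi^\eps$ and smooth cutoffs. The left-hand side is handled by Fatou's lemma, or by local uniform convergence of gradients of $P^\eps_t f_k$ from parabolic regularity.

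\emph{Main obstacle.} The delicate point is justifying the differentiation of $\Lambda$. One needs $P_{t-s}^\eps f$ to stay in a class where $\Gamma_2^\eps$ makes sense and the derivative may be moved inside $P_s^\eps$. This is automatic on compact manifolds but not on unbounded $E$ with unbounded coefficients. I would first try to rely on the smooth diffusion setting of \cite{ArnoldCarlenJu2008LargeTime} and Bakry's abstract argument, which needs an algebra $\mathcal A$ stable under $P_t^\eps$ with completeness (essential self-adjointness of $\mathcal L^{\eps,s}$). Failing that, I would use a localization by cutoffs $\chi_R$ and send $R\to\infty$. Crucially, none of these steps introduces $\eps$-dependent constants, so the uniformity in $\eps$ comes entirely from Step 1.
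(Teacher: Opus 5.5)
Your proposal is correct and follows the same route the paper takes implicitly: the Arnold--Carlen--Ju lower bound $\Gamma_2^\eps(f)\ge\langle\nabla f,\mathfrak R^\eps\nabla f\rangle$ combined with \eqref{eq:Ricci-matrix-condition} gives $\Gamma_2^\eps+\kappa\Gamma^\eps\ge0$ with $\eps$-independent $\kappa$. The passage from there to \eqref{eq:ex3-2} is exactly the Bakry--\'Emery equivalence the paper cites from Bolley--Gentil; the only difference is that you sketch the interpolation argument and the regularity/extension issues yourself, whereas the paper delegates them entirely to the references.
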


\noindent
When $A^\varepsilon$ is constant (in $z$), the Ricci matrix simplifies to
\[
  \mathfrak{R}^\varepsilon(z)
  = A^\varepsilon\bigl(\nabla^2 V^\varepsilon(z)-\Sym(\mathcal{J}F^\varepsilon(z))\bigr)A^\varepsilon,
  \qquad z\in E,
\]
so \eqref{eq:Ricci-matrix-condition} is equivalent to the pointwise matrix inequality
\[
  \nabla^2 V^\varepsilon(z)-\Sym(\mathcal{J}F^\varepsilon(z))
  \ \succeq\ -\kappa\,(A^\varepsilon)^{-1},
  \qquad z\in E,\ \ 0<\varepsilon<\varepsilon_0.
\]

\paragraph{The It\^o--Kunita--Bolley criterion.}

For each $\varepsilon>0$, let $(P_t^\varepsilon)_{t\ge0}$ be a Markov semigroup on $E\subset\R^N$, and assume that its generator
$\mathcal L^\varepsilon$ admits an It\^o SDE representation
\[
  dZ_t^\varepsilon=b^\varepsilon(Z_t^\varepsilon)\,dt+\sigma^\varepsilon(Z_t^\varepsilon)\,dW_t,
\]
with $b^\varepsilon,\sigma^\varepsilon\in C_b^2$, so that the SDE generates a $C^1$ stochastic flow in the sense of Kunita.

We set
\[
  \tfrac12\,\sigma^\varepsilon(z)\sigma^\varepsilon(z)^\top=:A^\varepsilon(z),
  \qquad
  b^\varepsilon(z):=\gamma^\varepsilon(z)+\nabla\cdot A^\varepsilon(z)-A^\varepsilon(z)\nabla V^\varepsilon(z),
\]
where $(A^\varepsilon,V^\varepsilon,\gamma^\varepsilon)$ are as in Eq.~\eqref{eq:backward-eps1}, and $F^\varepsilon:=(A^\varepsilon)^{-1}\gamma^\varepsilon$.

\begin{theorem}[Synchronous contraction implies uniform $CD(-\rho,\infty)$]
\label{thm:flow-CD}
Assume that there exist $\varepsilon_0>0$ and $\rho\in\R$ such that for every $0<\varepsilon\le\varepsilon_0$, every
$z_1,z_2\in E$, and every $t\ge0$, the synchronously coupled solutions
$Z_t^{1,\varepsilon},Z_t^{2,\varepsilon}$ satisfy
\begin{equation}\label{eq:sync-assumption}
  \mathbb E\Big[
    (Z_t^{1,\varepsilon}-Z_t^{2,\varepsilon})^\top
    \big(A^\varepsilon(Z_t^{1,\varepsilon})\big)^{-1}
    (Z_t^{1,\varepsilon}-Z_t^{2,\varepsilon})
  \Big]
  \;\le\;
  e^{2\rho t}\,
  (z_1-z_2)^\top \big(A^\varepsilon(z_1)\big)^{-1}(z_1-z_2).
\end{equation}
Then, for every $0<\varepsilon\le\varepsilon_0$, the gradient commutation estimate
\begin{equation}\label{eq:grad-comm}
  \Gamma^\varepsilon(P_t^\varepsilon f)(z)
  \;\le\;
  e^{2\rho t}\,\big(P_t^\varepsilon\Gamma^\varepsilon(f)\big)(z),
  \qquad t\ge0,\ z\in E,\ f\in \mathcal{M}.%\cap C_b^1(E),
\end{equation}
holds. In particular, $P_t^\varepsilon$ satisfies the curvature--dimension condition $CD(-\rho,\infty)$ uniformly in
$\varepsilon\in(0,\varepsilon_0]$.
\end{theorem}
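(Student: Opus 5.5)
We prove \Cref{thm:flow-CD} by the standard derivative-flow (Bismut--Kunita) route. The idea is to express $\nabla P_t^\varepsilon f$ through the Jacobian of the stochastic flow, and then convert \eqref{eq:sync-assumption} into an operator-norm bound on this Jacobian in the metric $(A^\varepsilon)^{-1}$. Throughout, fix $\varepsilon\in(0,\varepsilon_0]$ and drop the superscript. Since $b,\sigma\in C_b^2$, Kunita's theory gives a $C^1$ flow $z\mapsto Z_t(z)$ with derivative $J_t(z):=D_zZ_t(z)$. For $f\in\mathcal M$ with bounded gradient (first for $f\in C_b^1$, then by approximation) we can differentiate under the expectation:
\[
\nabla P_tf(z)=\E\big[J_t(z)^\top\nabla f(Z_t(z))\big].
\]

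Step 1 (linearise the coupling). Apply \eqref{eq:sync-assumption} with $z_1=z$, $z_2=z+hv$, divide by $h^2$ and let $h\to0$. Using $Z_t(z+hv)-Z_t(z)=hJ_t(z)v+o(h)$ in $L^2$ (flow regularity plus uniform moment bounds) and continuity of $(A)^{-1}$, we obtain
\[
\E\big[v^\top J_t(z)^\top A(Z_t(z))^{-1}J_t(z)v\big]\le e^{2\rho t}\,v^\top A(z)^{-1}v,\qquad v\in\R^N.
\]
One point needs care here: the weight in \eqref{eq:sync-assumption} is evaluated at $Z^{1}$ only. This is harmless, because in the limit both endpoints merge.

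Step 2 (duality in the $A$-metric). Write $\Gamma(P_tf)(z)=\sup_{v}\langle\nabla P_tf(z),v\rangle^2/(v^\top A(z)^{-1}v)$. This holds since $\langle w,Aw\rangle=\sup_v \langle w,v\rangle^2/\langle v,A^{-1}v\rangle$. For a fixed $v$, set $g=\nabla f(Z_t)$ and $a=A(Z_t)$, and use pointwise Cauchy--Schwarz in the $a$-metric:
\[
\langle J_tv,g\rangle^2\le \big(g^\top a\,g\big)\big((J_tv)^\top a^{-1}J_tv\big)=\Gamma(f)(Z_t)\,(J_tv)^\top a^{-1}J_tv.
\]
Then apply Cauchy--Schwarz in expectation:
\[
\langle\nabla P_tf(z),v\rangle^2\le \E[\Gamma(f)(Z_t)]\;\E\big[(J_tv)^\top A(Z_t)^{-1}J_tv\big]\le e^{2\rho t}\,P_t\Gamma(f)(z)\,v^\top A(z)^{-1}v.
\]
Dividing by $v^\top A(z)^{-1}v$ and taking the supremum over $v$ yields \eqref{eq:grad-comm}. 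The constants are $\varepsilon$-independent, so the bound is uniform on $(0,\varepsilon_0]$, and the final sentence of the theorem follows by the equivalence with $\Gamma_2+\rho\Gamma\ge0$ recalled from \cite[Ex.~3]{bolley-gentil-phi-entropy}.

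Main obstacle. The delicate step is justifying the differentiation formula and the limit in Step 1 for general $f\in\mathcal M$, which is only assumed to have $\int\Gamma(f)\,d\pi<\infty$ rather than bounded gradient. I would first prove \eqref{eq:grad-comm} for $f\in C_b^1$, where everything follows from $C_b^2$ coefficients and $L^p$ bounds on $J_t$. For general $f=g^2\in\mathcal M$, I would approximate by $f_n\in C_b^1$ with $f_n\to f$ locally in $C^1$ and boundedly. Then $P_tf_n\to P_tf$ in $C^1_{loc}$ by the Bismut formula, and $P_t\Gamma(f_n)\to P_t\Gamma(f)$ pointwise by dominated or Fatou-type convergence. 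Both sides of \eqref{eq:grad-comm} then pass to the limit.
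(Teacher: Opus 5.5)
Your proposal is correct and follows essentially the same route as the paper: linearise the synchronous coupling at $(z,z+hv)$ to bound $\E[(J_tv)^\top A(Z_t)^{-1}J_tv]$, use $\nabla P_tf=\E[J_t^\top\nabla f(Z_t)]$, apply Cauchy--Schwarz in the random $A(Z_t)^{-1}$-metric, and take the supremum over $v$ via the dual representation of $\Gamma$.

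Two small technical remarks. First, the paper passes to the limit $h\to0$ by Fatou's lemma. That is the right tool: with $z_1=z$ the weight $A(Z_t(z))^{-1}$ does not depend on $h$, so continuity plays no role, but the weight need not be bounded. Second, in your extra approximation step for general $f\in\mathcal M$ (which the paper skips by citing Elworthy--Li), the limit of $P_t\Gamma(f_n)$ needs dominated convergence with a majorant such as $2\Gamma(f)+C$ for cutoffs $f_n=\chi_n f$. Fatou only gives the wrong-direction inequality here.
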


\begin{proof}
Fix $\varepsilon\in(0,\varepsilon_0]$, $z\in E$, and $u\in\R^N$. Let $Z_t^{z,\varepsilon}$ denote the solution started from $z$.
By Kunita's $C^1$-flow theory \cite[Chapter~4]{kunita1997stochastic}, the map $z\mapsto Z_t^{z,\varepsilon}$ is differentiable and
\[
  \delta Z_t(z;u):=\lim_{h\to0}\frac{Z_t^{z+hu,\varepsilon}-Z_t^{z,\varepsilon}}{h}
  =:J_t(z)u
\quad\text{exists in }L^2(\Omega).
\]
Apply \eqref{eq:sync-assumption} with $(z_1,z_2)=(z,z+hu)$ and divide by $h^2$.
Letting $h\to0$ and using Fatou's lemma yields
\begin{equation}\label{eq:deriv-energy-G}
  \mathbb E\Big[
    \delta Z_t(z;u)^\top \big(A^\varepsilon(Z_t^{z,\varepsilon})\big)^{-1}
    \delta Z_t(z;u)
  \Big]
  \;\le\;
  e^{2\rho t}\,u^\top \big(A^\varepsilon(z)\big)^{-1}u.
\end{equation}

For $f\in\mathcal M%\cap C_b^1(E)
$ we use the gradient representation (see, e.g., \cite[Section~2]{elworthy-li-heat-derivatives})
\[
  \nabla P_t^\varepsilon f(z)=\mathbb E\big[J_t(z)^\top\nabla f(Z_t^{z,\varepsilon})\big],
\]
hence
\[
  u^\top\nabla P_t^\varepsilon f(z)=\mathbb E\big[\langle \delta Z_t(z;u),\nabla f(Z_t^{z,\varepsilon})\rangle\big].
\]
Applying Cauchy--Schwarz inequality with the random inner product
$\langle\xi,\eta\rangle:=\xi^\top \big(A^\varepsilon(Z_t^{z,\varepsilon})\big)^{-1}\eta$, we obtain
\[
  \big(u^\top\nabla P_t^\varepsilon f(z)\big)^2
  \le
  \mathbb E\big[\delta Z_t(z;u)^\top \big(A^\varepsilon(Z_t^{z,\varepsilon})\big)^{-1}\delta Z_t(z;u)\big]\,
  \mathbb E\big[\nabla f(Z_t^{z,\varepsilon})^\top A^\varepsilon(Z_t^{z,\varepsilon})\nabla f(Z_t^{z,\varepsilon})\big].
\]
Combining this with \eqref{eq:deriv-energy-G} and the definition of $\Gamma^\varepsilon$ yields
\[
  \big(u^\top\nabla P_t^\varepsilon f(z)\big)^2
  \le e^{2\rho t}\,u^\top \big(A^\varepsilon(z)\big)^{-1}u\,
      \big(P_t^\varepsilon\Gamma^\varepsilon(f)\big)(z).
\]

Finally, recall the quadratic representation
\[
  \Gamma^\varepsilon(P_t^\varepsilon f)(z)
  =\nabla P_t^\varepsilon f(z)^\top A^\varepsilon(z)\nabla P_t^\varepsilon f(z)
  =\sup_{u\neq0}\frac{\big(u^\top\nabla P_t^\varepsilon f(z)\big)^2}{u^\top \big(A^\varepsilon(z)\big)^{-1}u}.
\]
Taking the supremum over $u\neq0$ in the previous bound gives \eqref{eq:grad-comm}. 
\end{proof}

From the viewpoint of singular perturbation problems, \Cref{ass:CDkappa} is consistent with the interpretation that the genuine singular
component of the microscopic dynamics is sufficiently dissipative so as to collapse in the limit, while placing only mild restrictions on the
remaining (non-singular) evolution.

\subsection{Level III: projected coefficient convergence via a drift criterion}
\label{subsec:III}

In many concrete models the irreversible drift term $\gamma^\varepsilon$ is not available in a closed form, so the weak convergence of the projected current \eqref{eq:J-weak-pushforward} may be inconvenient to be verified directly.
We therefore provide a more tractable sufficient condition, expressed in terms of the drift, which applies in particular when the coarse-graining map $\Phi$ is affine; see \Cref{prop:drift-implies-current}.

\begin{proposition}[Drift criterion for \Cref{ass:coeff-weak}\textup{(i)}]\label{prop:drift-implies-current}
Suppose \Cref{ass:standing} and \Cref{ass:coeff-weak}\textup{(ii)--(iii)} hold.
Further assume that  $\Phi$ is affine and define
\[
b^\eps:=\gamma^\eps+\nabla\!\cdot A^\eps-A^\eps\nabla V^\eps,
\qquad\text{so that}\qquad
\gamma^\eps\pi^\eps=b^\eps\pi^\eps-\nabla\!\cdot(A^\eps\pi^\eps).
\]
Set $B^\eps:=\Phi_\#(D\Phi\,b^\eps\,\pi^\eps)\in\mathcal M(\bar E;\R^n)$.
If $B^\eps\rightharpoonup \bar B$ in $\mathcal M(\bar E;\R^n)$, then \Cref{ass:coeff-weak}\textup{(i)} holds.
\end{proposition}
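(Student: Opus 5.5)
Since $\Phi$ is affine, $D\Phi\equiv D$ is a constant $n\times N$ matrix, so $\xi(\Phi(z))$ is the only $z$-dependence in the test integrand. I will test the identity $\gamma^\eps\pi^\eps=b^\eps\pi^\eps-\nabla\!\cdot(A^\eps\pi^\eps)$ against $z\mapsto D^\top\xi(\Phi(z))$ for $\xi\in C_c^\infty(\bar E;\R^n)$ and integrate by parts. The divergence term gives
\[
\int_E\big\langle D^\top\xi(\Phi(z)),\nabla\!\cdot(A^\eps\pi^\eps)\big\rangle\,dz
=-\int_E \tr\!\Big(\nabla\xi(\Phi(z))\,D A^\eps D^\top\Big)\,d\pi^\eps,
\]
using $\nabla_z[\xi(\Phi(z))]=\nabla\xi(\Phi)D$. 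Boundary terms vanish because $\xi\circ\Phi$ is bounded with bounded derivatives and $A^\eps\pi^\eps$ is integrable against the projected trace; this last fact follows from \Cref{ass:coeff-weak}(ii) with $\eta=I_n$. Hence
\[
\int_{\bar E}\langle \xi,dJ^\eps\rangle=\int_{\bar E}\langle\xi,dB^\eps\rangle+\int_E \tr\!\big(\Sym(\nabla\xi)\circ\Phi\;dQ^\eps\big).
\]

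Next I pass to the limit in each term. The first converges to $\int\langle\xi,d\bar B\rangle$ by hypothesis. For the second, $\Sym(\nabla\xi)\circ\Phi$ is bounded continuous but not $\mathbb S^n_+$-valued. I will split it as $\eta_+-\eta_-$ with $\eta_\pm:=\Sym(\nabla\xi)\circ\Phi+cI_n\succeq0$ and $cI_n$ for a constant $c\ge\|\nabla\xi\|_\infty$. By linearity, \eqref{eq:Q-weak-pushforward} then gives convergence to $\int_E\tr(\Sym(\nabla\xi)\circ\Phi\,dQ)=\int_{\bar E}\tr(\Sym(\nabla\xi)\,d\bar Q)$. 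This proves $\int\langle\xi,dJ^\eps\rangle\to\ell(\xi):=\int\langle\xi,d\bar B\rangle+\int\tr(\nabla\xi\,\bar A)\,d\bar\pi$ for all $\xi\in C_c^\infty$.

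To upgrade this to a finite vector measure $\bar J$ and to convergence against all of $C_b$, I will use \Cref{ass:coeff-weak}(iii) for a uniform total-variation bound with tightness. By Cauchy--Schwarz in the metric $D A^\eps D^\top$,
\[
\int_{\Phi^{-1}(S)}|D\gamma^\eps|\,d\pi^\eps\le \big(\mathcal J^\eps_{\mathrm{hk,proj}}\big)^{1/2}\Big(\int_{\Phi^{-1}(S)}\tr(DA^\eps D^\top)\,d\pi^\eps\Big)^{1/2}.
\]
Taking $S=\bar E$, (ii) with $\eta=I_n$ gives $\sup_\eps|J^\eps|(\bar E)<\infty$. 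Taking $S=\{|x|>R\}$, the tightness of $\tr Q^\eps$ (weak convergence of finite measures, tested with cutoffs) makes the tails uniformly small. Prokhorov then yields subsequential weak limits, and the $C_c^\infty$ identification above shows every limit equals the measure representing $\ell$. So the whole family converges weakly to that limit, which I call $\bar J$.

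The remaining requirement is $\bar J=\bar\gamma\bar\pi$. I expect this identification to be the main obstacle: it must hold in the sense of the macroscopic coefficients, i.e.\ the divergence-form relation $\bar\gamma\bar\pi=\bar b\bar\pi-\nabla\!\cdot(\bar A\bar\pi)$ on $\bar E$ must agree with $\bar B=\bar b\bar\pi$. I will get this from the structure of the macroscopic generator, reading the identity distributionally exactly as in the microscopic one; if needed I will interpret $\bar J$ as defined by $\ell$, which is consistent with \eqref{eq:J-weak-pushforward}.
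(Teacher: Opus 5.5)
Your proposal is correct and follows the paper's proof essentially step for step: the affine duality identity obtained by integration by parts, the limit for compactly supported smooth $\xi$, a uniform total-variation bound and tightness of $(J^\eps)$ from \Cref{ass:coeff-weak}\textup{(iii)} via Cauchy--Schwarz, and then Prokhorov together with uniqueness of the limit. Your splitting of $\Sym(\nabla\xi)\circ\Phi$ into $\mathbb S^n_+$-valued test matrices, and your Cauchy--Schwarz in the $DA^\eps D^\top$ metric, are in fact more careful than the paper, which applies \Cref{ass:coeff-weak}\textup{(ii)} directly to $\nabla\xi$ and uses an unweighted bound on $\int_E|D\Phi\,\gamma^\eps|^2\,d\pi^\eps$. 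On the point you flag as the main obstacle, the paper simply takes your fallback and defines $\bar J$ by $\langle\bar J,\xi\rangle:=\langle\bar B,\xi\rangle+\langle\bar Q,\nabla\xi\rangle$, without identifying it with the macroscopic $\bar\gamma\bar\pi$; that identification would need $\bar B=\bar b\,\bar\pi$ for the macroscopic drift $\bar b$, which is not among the hypotheses.
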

\begin{proof}%[Proof of \textbf{\Cref{prop:drift-implies-current}}.]

\emph{Step 1: duality identity.}
Fix $\xi\in C_c^1(\bar E;\R^n)$. Using $\gamma^\eps\pi^\eps=b^\eps\pi^\eps-\nabla\!\cdot(A^\eps\pi^\eps)$ and integration by parts,
\[
\int_E\!\langle \xi(\Phi),D\Phi\gamma^\eps\rangle\,d\pi^\eps
=\int_E\!\langle \xi(\Phi),D\Phi b^\eps\rangle\,d\pi^\eps
+\int_E (A^\eps\pi^\eps):\nabla\!\big(D\Phi^\top\xi(\Phi)\big)\,dz.
\]
Since $\Phi$ is affine, $\nabla\!\big(D\Phi^\top\xi(\Phi)\big)=D\Phi^\top(\nabla\xi)(\Phi)\,D\Phi$, so
\[
\langle J^\eps,\xi\rangle
=\langle B^\eps,\xi\rangle
+\int_E\tr\!\big((\nabla\xi)(\Phi(z))\,D\Phi A^\eps(z)D\Phi^\top\big)\,d\pi^\eps(z)
=\langle B^\eps,\xi\rangle+\big\langle \Phi_\# Q^\eps,\nabla\xi\big\rangle,
\]
where $\langle \Phi_\# Q^\eps,\nabla\xi\rangle:=\int_{\bar E}\tr\big((\nabla\xi)(x)\,d(\Phi_\#Q^\eps)(x)\big)$.

\emph{Step 2: passage to the limit on $C_c^1$.}
By assumption, $B^\eps\rightharpoonup \bar B$ in $\mathcal M(\bar E;\R^n)$, and \Cref{ass:coeff-weak}\textup{(ii)} implies
$\Phi_\#Q^\eps\rightharpoonup \Phi_\#Q=\bar Q=\bar A\bar\pi$ in $\mathcal M(\bar E;\mathbb S_+^n)$.
Passing to the limit in the previous identity yields $\langle J^\eps,\xi\rangle\to \langle \bar J,\xi\rangle$ for all $\xi\in C_c^1(\bar E;\R^n)$, where
\[
\langle \bar J,\xi\rangle:=\langle \bar B,\xi\rangle+\langle \bar Q,\nabla\xi\rangle .
\]

\emph{Step 3: extension to $C_b$.}
By \Cref{ass:standing} we have $\nu^\eps:=\Phi_\#\pi^\eps\rightharpoonup\bar\pi$, hence $(\nu^\eps)_\eps$ is tight.
Moreover, by Cauchy--Schwarz inequality and \Cref{ass:coeff-weak}\textup{(iii)},
\[
|J^\eps|(A)
=\int_{\Phi^{-1}(A)}|D\Phi\,\gamma^\eps|\,d\pi^\eps
\le \Big(\int_E |D\Phi\,\gamma^\eps|^2\,d\pi^\eps\Big)^{1/2}\,\nu^\eps(A)^{1/2}
\le C\,\nu^\eps(A)^{1/2},
\]
so $(J^\eps)_\eps$ is tight and $\sup_\eps |J^\eps|(\bar E)<\infty$.
Hence every subsequence admits a further weakly convergent subsequence in $\mathcal M(\bar E;\R^n)$.
Step~2 identifies the unique possible limit on $C_c^1$, so $J^\eps\rightharpoonup \bar J$ in $\mathcal M(\bar E;\R^n)$,
i.e.\ \Cref{ass:coeff-weak}\textup{(i)}.
\end{proof}

In view of \Cref{prop:drift-implies-current}, we stress that \Cref{ass:coeff-weak} is not an ad hoc technicality, but a minimal set of coefficient-convergence inputs needed for the thermodynamic $\liminf$ arguments. 
The proposition is useful because $b^\eps$ is often more explicit than $\gamma^\eps$. 
Moreover, in standard slow--fast averaging settings with affine $\Phi$ (e.g.\ \cite{RocknerXie2021CMP}), the projected drift and diffusivity are often $\eps$-independent, so verifying \Cref{ass:coeff-weak}\textup{(i)--(ii)} reduces to combining $\pi^\eps\rightharpoonup\Pi$ with  suitable $\{\pi^\eps\}$-uniform integrability of these projected quantities.
\subsection{Level IV: locking and a canonical simplification }
\label{subsec:IV}

We will next turn to a further condition that typically fails in coarse-graining settings, but will be shown below to be equivalent to full thermodynamic inheritance in the limit.

Although the formulation of \Cref{ass:locking} via recovery sequences is somewhat lengthy, it can be simplified under the additional identification limit in Eq.~\eqref{eq:CI-barF}.
This limit holds automatically when the macroscopic thermodynamic force $\bar F:=\bar A^{-1}\bar\gamma$ is bounded (so that Eq.~\eqref{eq:w-quad} applies with $\psi=\bar F$), and in that case locking is equivalent to a much more concise canonical condition; see \Cref{prop:canonical-locking-CI}.
\begin{proposition}[Canonical simplification of locking under a quadratic identification]\label{prop:canonical-locking-CI}
%Fix $t>0$ and recall $\bar F=\bar A^{-1}\bar\gamma$.
Assume that the following identification limit holds:
\begin{equation}\label{eq:CI-barF}
  \int_E u^\eps(t,z)\,
  \bar F(\Phi(z))^\top\big(D\Phi(z)A^\eps(z)D\Phi(z)^\top\big)\bar F(\Phi(z))\,d\pi^\eps(z)
  \ \longrightarrow\
  \int_{\bar E}\bar u(t,x)\,\bar F(x)^\top\bar A(x)\bar F(x)\,d\bar\pi(x).
\end{equation}
Then the locking condition in \Cref{ass:locking} at time $t$ % (i.e.\ Eq.~\eqref{eq:locking-condition-thm})
is equivalent to the canonical condition
\begin{equation}\label{eq:canonical-locking}
  \limsup_{\eps\to0} \int_E u^\eps(t,z)\,
  \Big\|
    \big(A^\eps(z)\big)^{-1}\gamma^\eps(z)
    - D\Phi(z)^\top (\bar A(\Phi(z)))^{-1} \bar \gamma(\Phi(z))
  \Big\|_{A^\eps(z)}^2\,d\pi^\eps(z)
  =0.
\end{equation}
\end{proposition}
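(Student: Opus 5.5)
The proof will pass through \Cref{thm:hk-conv-iff-locking}. Under \Cref{ass:standing,ass:coeff-weak}, \Cref{ass:locking} at time $t$ is equivalent to $\sigma_{\mathrm{hk}}^\eps(t)\to\bar\sigma_{\mathrm{hk}}(t)$. It therefore suffices to show that, given \eqref{eq:CI-barF}, the canonical condition \eqref{eq:canonical-locking} is also equivalent to this convergence. The tool is the completion-of-squares identity \eqref{eq:sigma-split-lock}, applied with the specific test field $\psi=\bar F$ instead of a bounded continuous $\psi$. Pointwise one has
\[
\big\|F^\eps-D\Phi^\top\bar F\circ\Phi\big\|_{A^\eps}^2
=(\gamma^\eps)^\top (A^\eps)^{-1}\gamma^\eps-2\langle D\Phi\gamma^\eps,\bar F\circ\Phi\rangle+(\bar F\circ\Phi)^\top D\Phi A^\eps D\Phi^\top(\bar F\circ\Phi).
\]
Integrating against $u^\eps(t)\,\pi^\eps$ gives $R^\eps(t;\bar F)=\sigma_{\mathrm{hk}}^\eps(t)-2X^\eps+Y^\eps$. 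Here $X^\eps$ is the cross term and $Y^\eps$ is the quadratic term. By \eqref{eq:CI-barF}, $Y^\eps\to\int\bar u\,\bar F^\top\bar A\bar F\,d\bar\pi=\bar\sigma_{\mathrm{hk}}(t)$, since $\bar F^\top\bar A\bar F=\bar\gamma^\top\bar A^{-1}\bar\gamma$.

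The key step is the cross term: we need $X^\eps\to\int\bar u\,\langle\bar F,\bar\gamma\rangle\,d\bar\pi=\bar\sigma_{\mathrm{hk}}(t)$. If $\bar F$ were in $C_b$, this would follow directly from \Cref{lem:weighted-cross-quadratic-conv}. In general I would approximate $\bar F$ by a recovery sequence $\psi_k\in C_b$, which exists by \Cref{lem:recovery-nonempty}, and split
\[
X^\eps=X^\eps(\psi_k)+\int u^\eps\langle D\Phi\gamma^\eps,(\bar F-\psi_k)\circ\Phi\rangle\,d\pi^\eps .
\]
The error term is bounded by Cauchy--Schwarz in the metric $D\Phi A^\eps D\Phi^\top$, using $\langle g,v\rangle\le \|g\|_{(D\Phi A^\eps D\Phi^\top)^{-1}}\|v\|_{D\Phi A^\eps D\Phi^\top}$. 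The first factor is controlled uniformly by \Cref{ass:coeff-weak}(iii), since $u^\eps\le\|f\|_\infty$. The second factor is $\int u^\eps\,(\bar F-\psi_k)^\top D\Phi A^\eps D\Phi^\top(\bar F-\psi_k)\,d\pi^\eps$. Its $\limsup_\eps$ must be compared with $\int\bar u\,\|\bar F-\psi_k\|_{\bar A}^2\,d\bar\pi\to0$. This is the main obstacle. I would expand the square and handle the three pieces as follows. The $\bar F\bar F$ piece converges by \eqref{eq:CI-barF}. The $\psi_k\psi_k$ piece converges by \Cref{lem:weighted-cross-quadratic-conv}. The mixed $\bar F\psi_k$ piece is the same type of weighted quadratic quantity with one bounded factor; I would handle it by a second truncation, replacing $\bar F$ by $\bar F\mathbf 1_{\{|\bar F|\le M\}}$ smoothed. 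Where this is delicate, I would fall back on the remark that when $\bar F$ is bounded, \eqref{eq:w-quad} applies directly with $\psi=\bar F$.

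Once $X^\eps\to\bar\sigma_{\mathrm{hk}}(t)$ and $Y^\eps\to\bar\sigma_{\mathrm{hk}}(t)$, we get $R^\eps(t;\bar F)-\sigma_{\mathrm{hk}}^\eps(t)\to-\bar\sigma_{\mathrm{hk}}(t)$. Hence $\limsup_\eps R^\eps(t;\bar F)=0$ holds if and only if $\limsup_\eps\sigma_{\mathrm{hk}}^\eps(t)\le\bar\sigma_{\mathrm{hk}}(t)$. Combined with the $\liminf$ bound of \Cref{thm:hk-lsc}, this is equivalent to $\sigma_{\mathrm{hk}}^\eps(t)\to\bar\sigma_{\mathrm{hk}}(t)$. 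Applying \Cref{thm:hk-conv-iff-locking} then shows this is equivalent to \Cref{ass:locking}, which closes the equivalence. Finiteness of $\bar\sigma_{\mathrm{hk}}(t)$, needed to subtract limits, comes from \Cref{lem:hk_ss_finite} and the boundedness of $\bar u$.
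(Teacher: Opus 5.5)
Your reduction is sound as far as it goes. Expanding the square gives $R^\eps(t;\bar F)=\sigma^\eps_{\mathrm{hk}}(t)-2X^\eps+Y^\eps$, and $Y^\eps\to\bar\sigma_{\mathrm{hk}}(t)$ by \eqref{eq:CI-barF}. If also $X^\eps\to\bar\sigma_{\mathrm{hk}}(t)$, then \Cref{thm:hk-lsc,thm:hk-conv-iff-locking} close the equivalence. The gap is the step you call the main obstacle, which is the whole analytic content of the proposition and is left open. To control the cross term you need
\[
\lim_{k\to\infty}\limsup_{\eps\to0}\int_E u^\eps\,\big\|(\bar F-\psi_k)\circ\Phi\big\|^2_{D\Phi A^\eps D\Phi^\top}\,d\pi^\eps=0,
\]
that is, an $\eps$-uniform bound on the contribution of large values of $\bar F$. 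Neither of your suggestions gives it:
\begin{itemize}
\item Mollifying $\bar F\mathbf 1_{\{|\bar F|\le M\}}$ gives a field $h_M$ whose error $\bar F-h_M$ has no pointwise relation to any quantity whose $\eps$-integral is known to converge. Bounding $\limsup_\eps\int u^\eps\|(\bar F-h_M)\circ\Phi\|^2_{D\Phi A^\eps D\Phi^\top}d\pi^\eps$ is therefore the same problem you started with.
\item Without mollification, the indicator truncation is discontinuous, so \eqref{eq:w-quad} does not apply to it.
\item The fallback to bounded $\bar F$ covers only the trivial case: for $\bar F\in C_b$, \eqref{eq:CI-barF} is just \eqref{eq:w-quad} with $\psi=\bar F$.
\end{itemize}

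The missing idea is a continuous multiplicative truncation, plus an elementary inequality that turns convergence of the full quadratic form into uniform smallness of its tail.
\begin{itemize}
\item Take $\bar F_M:=T_M(\bar F)=\alpha_M\bar F$ with $\alpha_M\in[0,1]$. This field is bounded and continuous, since $\bar F$ is continuous under the standing regularity of $\bar A,\bar\gamma$.
\item Because $(1-\alpha)^2\le 1-\alpha^2$ on $[0,1]$, every $Q\succeq0$ satisfies, pointwise, $\|\bar F-\bar F_M\|_Q^2\le \bar F^\top Q\bar F-\bar F_M^\top Q\bar F_M$.
\item Integrate against $u^\eps\pi^\eps$ with $Q=D\Phi A^\eps D\Phi^\top$. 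The first term converges by \eqref{eq:CI-barF} and the second by \eqref{eq:w-quad} with $\psi=\bar F_M$.
\end{itemize}
By monotone convergence this gives
\[
\limsup_{\eps\to0}\int_E u^\eps\,\big\|(\bar F-\bar F_M)\circ\Phi\big\|^2_{D\Phi A^\eps D\Phi^\top}\,d\pi^\eps
\le\int_{\bar E}\bar u\,\big(\bar F^\top\bar A\bar F-\bar F_M^\top\bar A\bar F_M\big)\,d\bar\pi\xrightarrow[M\to\infty]{}0 .
\]

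With this tail bound your mixed and cross terms go through; you can even use $\bar F_M$ in place of $\psi_k$. Your route through \Cref{thm:hk-conv-iff-locking} then becomes a valid proof, and it yields the extra identity $R^\eps(t;\bar F)-\sigma^\eps_{\mathrm{hk}}(t)\to-\bar\sigma_{\mathrm{hk}}(t)$. The paper argues more directly with the same bound. Since $\|D\Phi^\top v\|_{A^\eps}^2=v^\top D\Phi A^\eps D\Phi^\top v$, the triangle inequality compares $R^\eps(t;\bar F)$ with $R^\eps(t;\bar F_M)$ and $R^\eps(t;\psi_k)$. This shows that $(\bar F_M)$ is itself a recovery sequence witnessing locking, and conversely, without using convergence of $\sigma^\eps_{\mathrm{hk}}(t)$ or \Cref{thm:hk-lsc}.
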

%The proof is deferred, see \Cref{sec:2-app}.

\begin{proof}%[Proof of \textbf{\Cref{prop:canonical-locking-CI}}.]
\underline{Step 1: tail control.}
Let $T_M$ be the radial truncation and $\bar F_M:=T_M(\bar F)\in C_b(\bar E;\R^n)$.
Since $\bar F_M=\alpha_M\bar F$ with $\alpha_M\in[0,1]$, for any symmetric $Q\ge0$,
\[
\|\bar F-\bar F_M\|_{Q}^2 \le \bar F^\top Q\bar F-\bar F_M^\top Q\bar F_M .
\]
Hence
\[
\int_E u^\eps\,\|\bar F(\Phi)-\bar F_M(\Phi)\|_{Q^\eps}^2\,d\pi^\eps
\le
\int_E u^\eps\,\bar F(\Phi)^\top Q^\eps \bar F(\Phi)\,d\pi^\eps
-
\int_E u^\eps\,\bar F_M(\Phi)^\top Q^\eps \bar F_M(\Phi)\,d\pi^\eps .
\]
Taking $\limsup_{\eps\to0}$, using Eq.~\eqref{eq:CI-barF} for the first term and
Eq.~\eqref{eq:w-quad} with $\psi=\bar F_M$ for the second term yields
\[
\limsup_{\eps\to0}\int_E u^\eps\,\|\bar F(\Phi)-\bar F_M(\Phi)\|_{Q^\eps}^2\,d\pi^\eps
\le
\int_{\bar E}\bar u\,\big(\bar F^\top\bar A\bar F-\bar F_M^\top\bar A\bar F_M\big)\,d\bar\pi.
\]
Since $\bar F_M^\top\bar A\bar F_M\uparrow \bar F^\top\bar A\bar F$ and the RHS of
Eq.~\eqref{eq:CI-barF} is finite, monotone convergence gives
\begin{equation}\label{eq:tail-Q}
\lim_{M\to\infty}\ \limsup_{\eps\to0}\int_E u^\eps(t,z)\,\|\bar F(\Phi(z))-\bar F_M(\Phi(z))\|_{Q^\eps(z)}^2\,d\pi^\eps(z)=0.
\end{equation}

\medskip\noindent \underline{Step 2: Eq.~\eqref{eq:canonical-locking}$\Rightarrow$ \Cref{ass:locking}.}
From the definition in Eq.~\eqref{eq:R-def} and the above estimate,
\[
R^\eps(t;\bar F_M)\ \le\ 2R^\eps(t;\bar F)
+2\int_E u^\eps\,\|\bar F(\Phi)-\bar F_M(\Phi)\|_{Q^\eps}^2\,d\pi^\eps.
\]
Taking $\limsup_{\eps\to0}$ and then $M\to\infty$, Eq.~\eqref{eq:canonical-locking} and Eq.~\eqref{eq:tail-Q} imply
$\lim_{M\to\infty}\limsup_{\eps\to0}R^\eps(t;\bar F_M)=0$.
Moreover, $(\bar F_M)$ is a recovery sequence in the sense of \Cref{def:recovery} since
$\int \bar u\,\|\bar F_M-\bar F\|_{\bar A}^2\,d\bar\pi\to0$ (monotone convergence).
Thus Eq.~\eqref{eq:locking-condition-thm} holds with $\psi_k=\bar F_k$, i.e.\ \Cref{ass:locking} holds.

\medskip\noindent \underline{Step 3: \Cref{ass:locking}$\Rightarrow$ Eq.~\eqref{eq:canonical-locking}.}
Let $(\psi_k)\subset C_b$ be a recovery sequence such that Eq.~\eqref{eq:locking-condition-thm} holds.
Fix $M$ and write again $\bar F_M$.
By repeated use of the elementary estimate,
\[
R^\eps(t;\bar F)
\ \le\ 2R^\eps(t;\psi_k)
+4\int_E u^\eps\,\|\psi_k(\Phi)-\bar F_M(\Phi)\|_{Q^\eps}^2\,d\pi^\eps
+4\int_E u^\eps\,\|\bar F_M(\Phi)-\bar F(\Phi)\|_{Q^\eps}^2\,d\pi^\eps.
\]
Take $\limsup_{\eps\to0}$.
The first term vanishes as $k\to\infty$ by Eq.~\eqref{eq:locking-condition-thm}.
The last term can be made arbitrarily small by Eq.~\eqref{eq:tail-Q} (choose $M$ large).
For the middle term, since $\psi_k-\bar F_M\in C_b$, Eq.~\eqref{eq:w-quad} yields
\[
\lim_{\eps\to0}\int_E u^\eps\,\|\psi_k(\Phi)-\bar F_M(\Phi)\|_{Q^\eps}^2\,d\pi^\eps
=
\int_{\bar E}\bar u\,\|\psi_k-\bar F_M\|_{\bar A}^2\,d\bar\pi.
\]
Using the recovery property from \Cref{def:recovery},
\[
\limsup_{k\to\infty}\int_{\bar E}\bar u\,\|\psi_k-\bar F_M\|_{\bar A}^2\,d\bar\pi
\le
2\int_{\bar E}\bar u\,\|\bar F-\bar F_M\|_{\bar A}^2\,d\bar\pi,
\]
and the RHS tends to $0$ as $M\to\infty$ (monotone convergence).
Letting $k\to\infty$ and then $M\to\infty$ yields Eq.~\eqref{eq:canonical-locking}.
\end{proof}
%\textcolor{red}{Need correct. \(\bar F_M\) definition ambiguous.}

%\section{Proofs for criteria section}
%\label{sec:2-app}

\section{Proofs for Section 3}
\label{sec:mr-app}
\begin{lemma}[A monotone exponential weight identifies the pointwise limit of derivatives]
\label{lem:short-exp-weighted}
Let $\kappa\ge0$. For each $\varepsilon>0$ let $h^\varepsilon\in C^{1}((0,\infty))$ and assume that
\begin{enumerate}
\item[\textup{(i)}] $h^\varepsilon(t)\to h(t)$ for every $t>0$;
\item[\textup{(ii)}] $G^\varepsilon(t):=-e^{-2\kappa t}\,h^{\varepsilon\,\prime}(t)$ is nonincreasing on $(0,\infty)$.
\end{enumerate}
If $h$ is differentiable at some $t_0>0$, then $G^\varepsilon(t_0)\to -e^{-2\kappa t_0}h'(t_0)$, and hence
$h^{\varepsilon\,\prime}(t_0)\to h'(t_0)$.
\end{lemma}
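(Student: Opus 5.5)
The plan is a monotone difference-quotient sandwich: since $G^\varepsilon$ is monotone, averages of $G^\varepsilon$ over short intervals to the left and right of $t_0$ bound $G^\varepsilon(t_0)$ from both sides. These averages are expressed through $h^\varepsilon$ alone, and $h^\varepsilon \to h$ pointwise lets us pass to the limit. The weight $e^{-2\kappa t}$ is what makes the integration awkward, so the first step is to handle it.

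Fix $\delta\in(0,t_0)$ and write $h^{\varepsilon\,\prime}(s)=-e^{2\kappa s}G^\varepsilon(s)$. Because $G^\varepsilon$ is nonincreasing, $G^\varepsilon(s)\ge G^\varepsilon(t_0)$ for $s\in[t_0-\delta,t_0]$ and $G^\varepsilon(s)\le G^\varepsilon(t_0)$ for $s\in[t_0,t_0+\delta]$. Integrating $h^{\varepsilon\,\prime}\in C((0,\infty))$ over these intervals gives
\[
h^\varepsilon(t_0)-h^\varepsilon(t_0-\delta)=-\int_{t_0-\delta}^{t_0}e^{2\kappa s}G^\varepsilon(s)\,ds\le -G^\varepsilon(t_0)\,w_-(\delta),
\]
\[
h^\varepsilon(t_0+\delta)-h^\varepsilon(t_0)\ge -G^\varepsilon(t_0)\,w_+(\delta),
\]
where $w_-(\delta)=\int_{t_0-\delta}^{t_0}e^{2\kappa s}ds$ and $w_+(\delta)=\int_{t_0}^{t_0+\delta}e^{2\kappa s}ds$.

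One subtlety is that $G^\varepsilon(s)$ may have either sign. The inequalities above only use $e^{2\kappa s}>0$ together with the monotonicity of $G^\varepsilon$, so the sign causes no trouble. Rearranging, we get
\[
\frac{h^\varepsilon(t_0)-h^\varepsilon(t_0+\delta)}{w_+(\delta)}\le G^\varepsilon(t_0)\le \frac{h^\varepsilon(t_0-\delta)-h^\varepsilon(t_0)}{w_-(\delta)}.
\]

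Next let $\varepsilon\to0$ with $\delta$ fixed. By hypothesis (i) the outer bounds converge, so $\liminf_\varepsilon G^\varepsilon(t_0)$ and $\limsup_\varepsilon G^\varepsilon(t_0)$ are squeezed between
\[
\frac{h(t_0)-h(t_0+\delta)}{w_+(\delta)}\quad\text{and}\quad \frac{h(t_0-\delta)-h(t_0)}{w_-(\delta)}.
\]
Finally let $\delta\downarrow0$. Since $w_\pm(\delta)=\delta e^{2\kappa t_0}(1+o(1))$ and $h$ is differentiable at $t_0$, both bounds tend to $-e^{-2\kappa t_0}h'(t_0)$. Hence $G^\varepsilon(t_0)\to -e^{-2\kappa t_0}h'(t_0)$, and multiplying by $-e^{2\kappa t_0}$ gives $h^{\varepsilon\,\prime}(t_0)\to h'(t_0)$.

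The main obstacle is only bookkeeping. One must order the limits correctly ($\varepsilon$ first, then $\delta$), and one must apply monotonicity to $G^\varepsilon$ rather than to $h^{\varepsilon\,\prime}$, since the latter need not be monotone when $\kappa>0$. No uniform bound in $\varepsilon$ is needed, because the two-sided bound holds for each $\varepsilon$ separately.
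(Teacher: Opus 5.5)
Your proof is correct and uses the same monotone difference-quotient squeeze as the paper: integrate $h^{\varepsilon\,\prime}=-e^{2\kappa s}G^\varepsilon$ over a short interval, let $\varepsilon\to0$ first, then shrink the interval. Your two-sided version is actually more complete than the paper's, which integrates only over $[t_0,t_0+r]$: there the upper bound involves $G^\varepsilon(t_0+r)$ rather than $G^\varepsilon(t_0)$, so that squeeze alone gives only $\liminf_{\varepsilon}G^\varepsilon(t_0)\ge -e^{-2\kappa t_0}h'(t_0)$, and it is your left interval $[t_0-\delta,t_0]$ (where two-sided differentiability of $h$ at $t_0$ is used) that supplies the matching $\limsup$ bound.
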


\begin{proof}
Fix $t_0>0$ and $r>0$. Since $h^{\varepsilon\,\prime}(t)=-e^{2\kappa t}G^\varepsilon(t)$ and $G^\varepsilon$ is nonincreasing,
\[
G^\varepsilon(t_0+r)\le G^\varepsilon(t)\le G^\varepsilon(t_0)\qquad(t\in[t_0,t_0+r]).
\]
Integrating $h^{\varepsilon\,\prime}$ over $[t_0,t_0+r]$ gives the squeeze
\[
-\,G^\varepsilon(t_0)\,A_\kappa(t_0,r)
\ \le\
\frac{h^\varepsilon(t_0+r)-h^\varepsilon(t_0)}{r}
\ \le\
-\,G^\varepsilon(t_0+r)\,A_\kappa(t_0,r),
\]
where
\[
A_\kappa(t_0,r):=\frac1r\int_{t_0}^{t_0+r}e^{2\kappa t}\,dt
=
\begin{cases}
e^{2\kappa t_0}, & \kappa=0,\\[2mm]
\dfrac{e^{2\kappa(t_0+r)}-e^{2\kappa t_0}}{2\kappa r}, & \kappa>0.
\end{cases}
\]
Letting $\varepsilon\to0$ and using $h^\varepsilon\to h$ pointwise yields the same squeeze with $h$ in place of $h^\varepsilon$.
Then letting $r\downarrow0$ gives $A_\kappa(t_0,r)\to e^{2\kappa t_0}$ and, if $h$ is differentiable at $t_0$,
\[
\frac{h(t_0+r)-h(t_0)}{r}\to h'(t_0).
\]
This forces $\lim_{\varepsilon\to0}G^\varepsilon(t_0)=-e^{-2\kappa t_0}h'(t_0)$, hence
$h^{\varepsilon\,\prime}(t_0)=-e^{2\kappa t_0}G^\varepsilon(t_0)\to h'(t_0)$.
\end{proof}

\begin{lemma}[Weighted convergence of projected cross and quadratic terms]
\label{lem:weighted-cross-quadratic-conv}
Suppose \Cref{ass:standing} and \Cref{ass:coeff-weak} hold. Then for every $\psi\in C_b(\bar E;\R^n)$,
\begin{align}
\label{eq:w-cross}
  \int_E u^\eps(t,z)\,
  \big\langle \psi(\Phi(z)),\,D\Phi(z)\gamma^\eps(z)\big\rangle\,d\pi^\eps(z)
  &\longrightarrow
  \int_{\bar E}\bar u(t,x)\,
  \big\langle \psi(x),\,\bar\gamma(x)\big\rangle\,d\bar\pi(x),\\[1mm]
\label{eq:w-quad}
  \int_E u^\eps(t,z)\,
  \psi(\Phi(z))^\top\big(D\Phi(z)A^\eps(z)D\Phi(z)^\top\big)\psi(\Phi(z))\,d\pi^\eps(z)
  &\longrightarrow
  \int_{\bar E}\bar u(t,x)\,
  \psi(x)^\top \bar A(x)\psi(x)\,d\bar\pi(x).
\end{align}
\end{lemma}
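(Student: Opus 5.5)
The plan is to write each left-hand side as the pairing of a fixed test function against the weakly convergent coefficient measures of \Cref{ass:coeff-weak}, with an error term coming from $u^\eps(t)$. For \eqref{eq:w-cross}, split
\[
u^\eps(t,z)=\bar u(t,\Phi(z))+\bigl(u^\eps(t,z)-\bar u(t,\Phi(z))\bigr).
\]
The main part is $\int_E\langle \bar u(t,\Phi)\psi(\Phi),D\Phi\gamma^\eps\rangle\,d\pi^\eps$. Since $\bar u(t,\cdot)\in C_b(\bar E)$, the field $\xi:=\bar u(t,\cdot)\psi\in C_b(\bar E;\R^n)$. Then \eqref{eq:J-weak-pushforward} gives the limit $\int_{\bar E}\bar u\langle\psi,\bar\gamma\rangle\,d\bar\pi$, because $\bar J=\bar\gamma\bar\pi$.

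Similarly, for \eqref{eq:w-quad}, the main part is $\int_E \tr(\eta\,dQ^\eps)$ with
\[
\eta(z):=\bar u(t,\Phi(z))\,\psi(\Phi(z))\psi(\Phi(z))^\top\in C_b(E;\mathbb S^n_+).
\]
The first line of \eqref{eq:Q-weak-pushforward} passes this to $\int_E\tr(\eta\,dQ)$. Since $\eta=\bar\eta\circ\Phi$, the second line identifies this with $\int_{\bar E}\bar u\,\psi^\top\bar A\psi\,d\bar\pi$.

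It remains to show that the error terms vanish. Fix $\delta>0$ and choose a compact $K\subset E$. On $K$, \Cref{ass:standing}(ii) gives $\sup_K|u^\eps-\bar u\circ\Phi|\to0$.
\begin{itemize}
\item The quadratic error on $K$ is then bounded by $\sup_K|u^\eps-\bar u\circ\Phi|\,\|\psi\|_\infty^2\,Q^\eps(E)$, and $Q^\eps(E)$ stays bounded because $Q^\eps\rightharpoonup Q$ with $\eta=I_n$.
\item The cross error on $K$ is bounded by $\|\psi\|_\infty\sup_K|u^\eps-\bar u\circ\Phi|\int_E|D\Phi\gamma^\eps|\,d\pi^\eps$. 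The last integral is uniformly bounded by Cauchy--Schwarz:
\[
\int_E|D\Phi\gamma^\eps|\,d\pi^\eps\le(\mathcal J^\eps_{\mathrm{hk,proj}})^{1/2}\Bigl(\int_E\tr(D\Phi A^\eps D\Phi^\top)\,d\pi^\eps\Bigr)^{1/2},
\]
using \Cref{ass:coeff-weak}(iii) together with the bounded total mass of $Q^\eps$.
\end{itemize}

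The main obstacle is the tail $K^c$. There $|u^\eps-\bar u\circ\Phi|\le2\|f\|_\infty$, so I need uniform smallness of $Q^\eps(K^c)$ and of $\int_{K^c}|D\Phi\gamma^\eps|\,d\pi^\eps$.

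For $Q^\eps$, I would obtain tightness from the weak convergence itself, testing against $\eta=\chi I_n$ with $\chi\in C_b$, $0\le\chi\le1$, $\chi=1$ off a large ball. Then $\limsup_\eps\int_E\tr(\chi\,dQ^\eps)=\int_E\tr(\chi\,dQ)$, which is small since $Q$ is a finite measure.

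For the current, the same Cauchy--Schwarz splitting localized to $K^c$ gives
\[
\int_{K^c}|D\Phi\gamma^\eps|\,d\pi^\eps\le(\mathcal J^\eps_{\mathrm{hk,proj}})^{1/2}\bigl(\tr Q^\eps(K^c)\bigr)^{1/2},
\]
which is then uniformly small. Letting $\eps\to0$ and then $\delta\downarrow0$ concludes both limits. The point requiring care is that the weak convergence in \eqref{eq:J-weak-pushforward} alone does not give tightness of $|J^\eps|$. The uniform housekeeping bound (iii), paired with the tightness of $Q^\eps$, is what supplies it.
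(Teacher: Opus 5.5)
Your proposal is correct and follows essentially the same route as the paper. Both arguments split $u^\eps=\bar u\circ\Phi+r^\eps$, identify the main terms via $J^\eps\rightharpoonup\bar J$ (with $\xi=\bar u(t,\cdot)\psi$) and $Q^\eps\rightharpoonup Q$ followed by $\Phi_\#Q=\bar A\bar\pi$, and control the remainders by a compact/tail argument using tightness of $\mathrm{tr}(D\Phi A^\eps D\Phi^\top)\,\pi^\eps$ and Cauchy--Schwarz against $\mathcal J^\eps_{\mathrm{hk,proj}}$. The only differences are minor: you derive that tightness from the weak convergence of $Q^\eps$, which the paper merely asserts, and you apply Cauchy--Schwarz separately on $K$ and $K^c$ rather than once with the weight $|r^\eps|^2\psi^\top Q^\eps\psi$.
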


\begin{proof}
Fix $t>0$ and $\psi\in C_b(\bar E;\R^n)$. Set
\[
r^\eps(z):=u^\eps(t,z)-\bar u(t,\Phi(z)),\qquad Q^\eps(z):=D\Phi(z)A^\eps(z)D\Phi(z)^\top .
\]
Then $|r^\eps|\le 2\|f\|_\infty$, and $r^\eps\to0$ locally uniformly by \Cref{ass:standing}\textup{(iii)}.
Moreover, define the finite measures
\[
\mu^\eps(dz):=\tr(Q^\eps(z))\,d\pi^\eps(z).
\]
By \Cref{ass:coeff-weak}, the family $(\mu^\eps)_\eps$ is tight and has uniformly bounded total mass.

\smallskip
\noindent\emph{Cross term.}
Decompose
\[
\int_E u^\eps\,\langle \psi(\Phi),D\Phi\gamma^\eps\rangle\,d\pi^\eps
=
\int_E \bar u(t,\Phi)\,\langle \psi(\Phi),D\Phi\gamma^\eps\rangle\,d\pi^\eps
+\int_E r^\eps\,\langle \psi(\Phi),D\Phi\gamma^\eps\rangle\,d\pi^\eps.
\]
For the first term, $J^\eps\rightharpoonup\bar J$ and $\xi(x):=\bar u(t,x)\psi(x)\in C_b(\bar E;\R^n)$ yield
\[
\int_E \bar u(t,\Phi)\,\langle \psi(\Phi),D\Phi\gamma^\eps\rangle\,d\pi^\eps
\longrightarrow
\int_{\bar E}\bar u(t,x)\,\langle \psi(x),\bar\gamma(x)\rangle\,d\bar\pi(x).
\]
For the remainder, the Cauchy--Schwarz inequality and the definition of $\mathcal J_{\mathrm{hk,proj}}^\varepsilon$ give
\[
\Big|\int_E r^\eps\,\langle \psi(\Phi),D\Phi\gamma^\eps\rangle\,d\pi^\eps\Big|
\le
\Big(\int_E |r^\eps|^2\,\psi(\Phi)^\top Q^\eps\psi(\Phi)\,d\pi^\eps\Big)^{1/2}
\big(\mathcal J_{\mathrm{hk,proj}}^\varepsilon\big)^{1/2}.
\]
Since $\sup_\eps\mathcal J_{\mathrm{hk,proj}}^\varepsilon<\infty$, it suffices to show
\[
S^\eps:=\int_E |r^\eps|^2\,\psi(\Phi)^\top Q^\eps\psi(\Phi)\,d\pi^\eps \longrightarrow 0.
\]
Using $\psi(\Phi)^\top Q^\eps\psi(\Phi)\le \|\psi\|_\infty^2\,\tr(Q^\eps)$, we have
$S^\eps\le \|\psi\|_\infty^2\int_E |r^\eps|^2\,d\mu^\eps$.
Fix $\delta>0$. By tightness of $(\mu^\eps)_\eps$, choose a compact $K\subset E$ such that
$\sup_\eps\mu^\eps(K^c)\le\delta$. Then
\[
\int_E |r^\eps|^2\,d\mu^\eps
=\int_K |r^\eps|^2\,d\mu^\eps+\int_{K^c} |r^\eps|^2\,d\mu^\eps
\le (\sup_K|r^\eps|^2)\mu^\eps(K)+4\|f\|_\infty^2\,\mu^\eps(K^c).
\]
As $\eps\to0$, $\sup_K|r^\eps|\to0$ and $\sup_\eps\mu^\eps(K)<\infty$, hence the first term $\to0$.
Moreover the second term is bounded by $4\|f\|_\infty^2\delta$. Therefore
\[
\limsup_{\eps\to0} \int_E |r^\eps|^2\,d\mu^\eps \le 4\|f\|_\infty^2\delta,
\]
and since $\delta$ is arbitrary, $\int_E |r^\eps|^2\,d\mu^\eps\to0$, hence $S^\eps\to0$ and \eqref{eq:w-cross} follows.

\smallskip
\noindent\emph{Quadratic term.}
Similarly,
\[
\int_E u^\eps\,\psi(\Phi)^\top Q^\eps\psi(\Phi)\,d\pi^\eps
=
\int_E \bar u(t,\Phi)\,\psi(\Phi)^\top Q^\eps\psi(\Phi)\,d\pi^\eps
+\int_E r^\eps\,\psi(\Phi)^\top Q^\eps\psi(\Phi)\,d\pi^\eps.
\]
The remainder satisfies
\[
\Big|\int_E r^\eps\,\psi(\Phi)^\top Q^\eps\psi(\Phi)\,d\pi^\eps\Big|
\le \|\psi\|_\infty^2 \int_E |r^\eps|\,d\mu^\eps,
\]
and the same compact/tail argument (with $|r^\eps|\le 2\|f\|_\infty$) shows it tends to $0$.

For the main term, set
\[
\eta_t(z):=\bar u(t,\Phi(z))\,\psi(\Phi(z))\psi(\Phi(z))^\top \in C_b(E;\mathbb S_+^n).
\]
Then
\[
\int_E \bar u(t,\Phi)\,\psi(\Phi)^\top Q^\eps\psi(\Phi)\,d\pi^\eps
=\int_E \tr(\eta_t Q^\eps)\,d\pi^\eps \longrightarrow \int_E \tr(\eta_t Q)\,d\pi
\]
by $Q^\eps\rightharpoonup Q$. Using $\Phi_\# Q=\bar A\,\bar\pi$ yields
\[
\int_E \tr(\eta_t Q)\,d\pi
=\int_{\bar E}\bar u(t,x)\,\psi(x)^\top\bar A(x)\psi(x)\,d\bar\pi(x),
\]
which is \eqref{eq:w-quad}.
\end{proof}

\begin{lemma}[Finiteness of the macroscopic steady housekeeping dissipation]
\label{lem:hk_ss_finite}
Suppose \Cref{ass:coeff-weak} holds. Then the macroscopic steady housekeeping dissipation is finite: \(\bar\sigma_{\mathrm{hk,ss}}<\infty.\)
\end{lemma}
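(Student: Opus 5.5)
The plan is to obtain $\bar\sigma_{\mathrm{hk,ss}}$ as a supremum of linear-minus-quadratic functionals and to bound each such functional uniformly using \Cref{ass:coeff-weak}\textup{(iii)}. The starting point is the pointwise variational (Legendre) identity, valid for every $x$ with $\bar A(x)\succ0$:
\[
\bar\gamma(x)^\top\bar A(x)^{-1}\bar\gamma(x)=\sup_{v\in\R^n}\Bigl(2\langle v,\bar\gamma(x)\rangle-v^\top\bar A(x)v\Bigr).
\]
From this identity we shall show that
\[
\bar\sigma_{\mathrm{hk,ss}}=\sup_{\psi\in C_b(\bar E;\R^n)}\int_{\bar E}\Bigl(2\langle\psi,\bar\gamma\rangle-\psi^\top\bar A\psi\Bigr)\,d\bar\pi .
\]
The inequality ``$\ge$'' is immediate. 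For ``$\le$'', note that the pointwise maximiser $\bar A^{-1}\bar\gamma$ is measurable. We truncate it, $\psi_M:=T_M(\bar A^{-1}\bar\gamma)$, and then approximate $\psi_M$ by continuous bounded fields in $L^2(\bar\pi)$ using Lusin's theorem. For truncated fields the integrand is $\bar\pi$-integrable, because $\bar J$ and $\bar Q$ are finite measures. Monotone convergence of the truncated integrands $2\langle\psi_M,\bar\gamma\rangle-\psi_M^\top\bar A\psi_M=(2\alpha_M-\alpha_M^2)\bar\gamma^\top\bar A^{-1}\bar\gamma$, with $\alpha_M\in[0,1]$ increasing to $1$, then gives the identity, with the value $+\infty$ allowed a priori.

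Next we fix $\psi\in C_b(\bar E;\R^n)$ and pass to the limit in the microscopic analogue. By \eqref{eq:J-weak-pushforward} applied with $\xi=\psi$, and by \eqref{eq:Q-weak-pushforward} applied with $\eta=\psi(\Phi)\psi(\Phi)^\top\in C_b(E;\mathbb S^n_+)$ together with $\Phi_\#Q=\bar A\bar\pi$, we obtain
\[
\int_E\Bigl(2\langle\psi(\Phi),D\Phi\gamma^\eps\rangle-\psi(\Phi)^\top D\Phi A^\eps D\Phi^\top\psi(\Phi)\Bigr)d\pi^\eps\longrightarrow\int_{\bar E}\Bigl(2\langle\psi,\bar\gamma\rangle-\psi^\top\bar A\psi\Bigr)d\bar\pi .
\]
This is exactly the weightless case $u\equiv1$ of \Cref{lem:weighted-cross-quadratic-conv}, and it needs no dynamical input.

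Pointwise in $z$, completion of squares in the metric $Q^\eps(z):=D\Phi A^\eps D\Phi^\top$ gives
\[
2\langle w,D\Phi\gamma^\eps\rangle-w^\top Q^\eps w\le (D\Phi\gamma^\eps)^\top (Q^\eps)^{-1}D\Phi\gamma^\eps .
\]
Hence the left-hand integral is at most $\mathcal J^\eps_{\mathrm{hk,proj}}\le C:=\sup_{\eps}\mathcal J^\eps_{\mathrm{hk,proj}}<\infty$. Letting $\eps\to0$, each functional in the supremum is bounded by $C$, so taking the supremum over $\psi$ yields $\bar\sigma_{\mathrm{hk,ss}}\le C<\infty$.

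The main obstacle is the first step, namely justifying the reduction of the supremum to $C_b$ test fields. One difficulty is that $\bar\gamma$ is only known as the density of the finite measure $\bar J$ with respect to $\bar\pi$, so $\bar\gamma\in L^1(\bar\pi)$ but not a priori more. The other is that $\bar A$ must be positive definite $\bar\pi$-a.e. for $\bar A^{-1}$ to make sense, which I take from the standing assumptions on the macroscopic diffusion form. To handle the $L^2$ approximation of $\psi_M$ by continuous fields, I would first try dominated convergence. Since $|\psi_M|\le M$, one can choose continuous approximants also bounded by $M$ and converging $\bar\pi$-a.e. The integrand is then dominated by $2M|\bar\gamma|+M^2|\bar A|$, which is $\bar\pi$-integrable because $\bar J$ and $\bar Q$ are finite.
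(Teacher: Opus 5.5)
Your argument is correct and is essentially the paper's proof. Both bound $\tilde\sigma_{\mathrm{hk,ss}}(\psi)\le\sup_\eps\mathcal J^\eps_{\mathrm{hk,proj}}$ for every $\psi\in C_b(\bar E;\R^n)$ via pointwise completion of squares and the weightless ($u\equiv1$) case of \Cref{lem:weighted-cross-quadratic-conv}, and both then show that the supremum over $C_b$ test fields reaches $\bar\sigma_{\mathrm{hk,ss}}$ by truncating $\bar F=\bar A^{-1}\bar\gamma$.

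The only difference is the density step. The paper argues by contradiction, using a spatial cutoff plus Gaussian mollification controlled through the standing local regularity of $(\bar\gamma,\bar A,\bar\pi)$. Your bounded a.e.\ approximants with dominated convergence (dominant $2M|\bar\gamma|+M^2\,\mathrm{tr}\,\bar A$) use only the finiteness of $\bar J$ and $\bar Q$. That is slightly cleaner, and it makes explicit that no dynamical input is needed.
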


\begin{proof}
For $\psi\in C_b(\bar E;\R^n)$ define the steady variational functional
\[
  \tilde\sigma_{\mathrm{hk,ss}}(\psi)
  :=2\int_{\bar E}\langle \psi(x),\bar\gamma(x)\rangle\,d\bar\pi(x)
     -\int_{\bar E}\psi(x)^\top\bar A(x)\psi(x)\,d\bar\pi(x),
\]
and its microscopic counterpart
\[
  \tilde\sigma_{\mathrm{hk,ss}}^\eps(\psi)
  :=2\int_E\!\big\langle \psi(\Phi(z)),D\Phi(z)\gamma^\eps(z)\big\rangle\,d\pi^\eps(z)
     -\int_E\!\psi(\Phi(z))^\top\!\big(D\Phi(z)A^\eps(z)D\Phi(z)^\top\big)\psi(\Phi(z))\,d\pi^\eps(z).
\]

\smallskip
\noindent\emph{Step 1: $\tilde\sigma_{\mathrm{hk,ss}}$ is uniformly bounded above on $C_b$.}
By pointwise completion of squares,
\[
  2\langle D\Phi\,\gamma^\eps,\psi(\Phi)\rangle
  -\psi(\Phi)^\top(D\Phi A^\eps D\Phi^\top)\psi(\Phi)
  \le (D\Phi\,\gamma^\eps)^\top(D\Phi A^\eps D\Phi^\top)^{-1}(D\Phi\,\gamma^\eps),
\]
hence $\tilde\sigma_{\mathrm{hk,ss}}^\eps(\psi)\le \mathcal J_{\mathrm{hk,proj}}^\varepsilon$ for all $\psi\in C_b(\bar E;\R^n)$.
Moreover, by \Cref{lem:weighted-cross-quadratic-conv} applied with the constant observable $f\equiv 1$
(so $u^\eps\equiv \bar u\equiv 1$), we have
$\tilde\sigma_{\mathrm{hk,ss}}^\eps(\psi)\to \tilde\sigma_{\mathrm{hk,ss}}(\psi)$ for each fixed $\psi\in C_b(\bar E;\R^n)$.
Therefore, with $C:=\sup_{\eps>0}\mathcal J_{\mathrm{hk,proj}}^\varepsilon<\infty$,
\begin{equation}\label{eq:sig_tilde_bounded}
  \tilde\sigma_{\mathrm{hk,ss}}(\psi)\le C\qquad \forall\,\psi\in C_b(\bar E;\R^n),
  \qquad\text{so}\qquad
  \sup_{\psi\in C_b}\tilde\sigma_{\mathrm{hk,ss}}(\psi)\le C<\infty.
\end{equation}

\smallskip
\noindent\emph{Step 2: contradiction if $\bar\sigma_{\mathrm{hk,ss}}=\infty$.}
Assume for contradiction that
\[
  \bar\sigma_{\mathrm{hk,ss}}
  =\int_{\bar E}\bar\gamma^\top\bar A^{-1}\bar\gamma\,d\bar\pi
  =\int_{\bar E} \bar F^\top \bar A \bar F\,d\bar\pi
  =+\infty,
  \qquad \bar F:=\bar A^{-1}\bar\gamma.
\]
Fix $L>0$. Since $\bar F^\top\bar A\bar F\ge0$ and $(B_R\cap\{|\bar F|\le M\})_{R,M}$ increases to $\bar E$,
by monotone convergence we can choose $R,M$ such that
\begin{equation}\label{eq:RM_choice_sig_tilde}
  \int_{B_R\cap\{|\bar F|\le M\}} \bar F^\top\bar A\bar F\,d\bar\pi\ge 4L.
\end{equation}
Let $\chi\in C_c^\infty(\R^n)$ satisfy $0\le\chi\le 1$, $\chi\equiv 1$ on $B_R$, $\supp\chi\subset B_{R+1}$, and let
$T_M$ be the radial truncation. Set
\[
  h:=\chi\,T_M(\bar F).
\]
Writing $T_M(\bar F)=\alpha \bar F$ with $\alpha\in[0,1]$, we have pointwise
\[
  2\langle \bar\gamma,h\rangle-h^\top\bar A h
  =\chi(2\alpha-\chi\alpha^2)\,\bar F^\top\bar A \bar F
  \ge \chi\,\mathbf 1_{\{|\bar F|\le M\}}\,\bar F^\top\bar A \bar F,
\]
hence
\begin{equation}\label{eq:sig_tilde_h_lower}
  \tilde\sigma_{\mathrm{hk,ss}}(h)\ge
  \int_{B_R\cap\{|\bar F|\le M\}} \bar F^\top\bar A \bar F\,d\bar\pi
  \ge 4L.
\end{equation}

\smallskip
\noindent\emph{Step 3: smoothing.}
Let $\rho_\epsilon$ be the standard Gaussian mollifier and set $\tilde h_\epsilon:=\rho_\epsilon*h$. By \cite[Prop.~8.8]{Folland1999RealAnalysis}, $\tilde h_\epsilon\in C_b$ and $\|\tilde h_\epsilon\|_\infty\le\|h\|_\infty$. Moreover, since $h\in L^2(\R^n)$, \cite[Thm.~8.14(a)]{Folland1999RealAnalysis} (with $p=2$) yields $\|\tilde h_\epsilon-h\|_{L^2(\R^n)}\to0$ as $\epsilon\downarrow0$. Choose $\tilde\chi\in C_c^\infty$ with $\tilde\chi\equiv 1$ on $\supp\chi$ and set $\psi_\epsilon:=\tilde\chi\,\tilde h_\epsilon\in C_b^\infty(\bar E;\R^n)$. By the standing local regularity assumptions on $(\bar\gamma,\bar A,\bar\pi)$, there exists $C_{R,M}<\infty$ such that
\[
  \big|\tilde\sigma_{\mathrm{hk,ss}}(\psi_\epsilon)-\tilde\sigma_{\mathrm{hk,ss}}(h)\big|
  \le C_{R,M}\Big(\|\tilde h_\epsilon-h\|_{L^2(\R^n)}+\|\tilde h_\epsilon-h\|_{L^2(\R^n)}^2\Big)\xrightarrow[\epsilon\downarrow0]{}0.
\]
Thus for $\epsilon$ small enough, \eqref{eq:sig_tilde_h_lower} gives $\tilde\sigma_{\mathrm{hk,ss}}(\psi_\epsilon)\ge 2L$.
Since $L>0$ is arbitrary, we obtain $\sup_{\psi\in C_b}\tilde\sigma_{\mathrm{hk,ss}}(\psi)=\infty$, contradicting
\eqref{eq:sig_tilde_bounded}. Hence $\bar\sigma_{\mathrm{hk,ss}}<\infty$.
\end{proof}
\begin{lemma}[Non-emptiness of recovery sequences via Gaussian mollification]
\label{lem:recovery-nonempty}Fix $t>0$ and set $\bar F:=\bar A^{-1}\bar\gamma$. Suppose \Cref{lem:hk_ss_finite} holds. Then there exists a sequence $(\psi_k)_{k\in\N}\subset C_b^\infty(\bar E;\R^n)$ such that

\begin{equation}\label{eq:recovery_L2_weighted}
  \int_{\bar E}\bar u(t,x)\,
  \big(\psi_k(x)-\bar F(x)\big)^\top\bar A(x)\big(\psi_k(x)-\bar F(x)\big)\,d\bar\pi(x)
  \longrightarrow 0,
\end{equation}
and $\|\psi_k\|_{L^\infty}\le k$ for all $k$. In particular, the set of recovery sequences in \Cref{ass:locking} is nonempty.
\end{lemma}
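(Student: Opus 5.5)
The plan is to work in the weighted Hilbert space $H_t:=L^2(\bar u(t)\bar\pi;\bar A)$, with norm $\|v\|_{H_t}^2:=\int_{\bar E}\bar u(t,x)\,v(x)^\top\bar A(x)v(x)\,d\bar\pi(x)$. I will first show that $\bar F=\bar A^{-1}\bar\gamma$ belongs to $H_t$, and then approximate it by bounded smooth fields in two stages: truncation followed by Gaussian mollification.

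\emph{Step 1: $\bar F\in H_t$.} We have $0\le\bar u(t,\cdot)\le\|\bar f\|_\infty$ by $L^\infty$-contractivity. Since $\bar F^\top\bar A\bar F=\bar\gamma^\top\bar A^{-1}\bar\gamma$, this gives
\[
\|\bar F\|_{H_t}^2\le\|\bar f\|_\infty\,\bar\sigma_{\mathrm{hk,ss}},
\]
and the right-hand side is finite by \Cref{lem:hk_ss_finite}.

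\emph{Step 2: truncation and localisation.} Let $T_M$ denote the radial truncation at level $M$, and let $\chi_R\in C_c^\infty$ be a cutoff equal to $1$ on $B_R$, with values in $[0,1]$. Set $h_{R,M}:=\chi_R\,T_M(\bar F)$. Pointwise $h_{R,M}=\beta\bar F$ for some $\beta\in[0,1]$ with $\beta\to1$ as $R,M\to\infty$. Hence
\[
\|h_{R,M}-\bar F\|_{\bar A}^2=(1-\beta)^2\,\bar F^\top\bar A\bar F\le\bar F^\top\bar A\bar F .
\]
This bound is integrable against $\bar u\,\bar\pi$ by Step 1, so dominated convergence gives $\|h_{R,M}-\bar F\|_{H_t}\to0$. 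Note that $h_{R,M}$ is bounded by $M$ and compactly supported in $B_{R+1}$, though in general it is only measurable.

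\emph{Step 3: mollification.} This is the step I expect to be the main technical point, because the weight $\bar u\,\bar A\,\bar\pi$ must be compared with Lebesgue measure. Since $h:=h_{R,M}\in L^2(\R^n)$, the Gaussian mollifications $\rho_\delta*h$ are smooth, satisfy $\|\rho_\delta*h\|_\infty\le M$, and converge to $h$ in $L^2(\R^n)$ by the Folland results quoted in the proof of \Cref{lem:hk_ss_finite}. I will multiply by a cutoff $\tilde\chi\in C_c^\infty$ with $\tilde\chi\equiv1$ on $B_{R+1}$ and set $\psi:=\tilde\chi\,(\rho_\delta*h)$; this lies in $C_b^\infty$ and has sup norm at most $M$. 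On the compact set $\supp\tilde\chi$, the function $\bar u$ is bounded, $\bar A$ is locally bounded (since $\bar A\in W^{2,\infty}_{loc}$), and the density $e^{-\bar V}$ is locally bounded (since $\bar V\in W^{2,\infty}_{loc}$). Therefore
\[
\|\psi-h\|_{H_t}^2\le C_R\,\|\tilde\chi(\rho_\delta*h)-h\|_{L^2}^2\le C_R\,\|\rho_\delta*h-h\|_{L^2}^2\to0 .
\]

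\emph{Step 4: diagonal choice and conclusion.} For each $k$, I will choose $R_k,M_k$ with $M_k\le k$ and $\|h_{R_k,M_k}-\bar F\|_{H_t}<1/(2k)$. I will then choose $\delta_k$ such that the resulting $\psi_k$ satisfies $\|\psi_k-h_{R_k,M_k}\|_{H_t}<1/(2k)$. By the triangle inequality this gives \eqref{eq:recovery_L2_weighted}, with $\|\psi_k\|_\infty\le M_k\le k$. Since $C_b^\infty\subset C_b$, the sequence $(\psi_k)$ is a recovery sequence in the sense of \Cref{def:recovery}, so the set of recovery sequences is nonempty.
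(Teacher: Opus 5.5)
Your proposal follows the paper's proof. It uses the same truncation $h_{R,M}=\chi_R\,T_M(\bar F)$, controlled by dominated convergence from $0\le\bar u\le\|\bar f\|_\infty$ and \Cref{lem:hk_ss_finite}, then Gaussian mollification and a diagonal choice.

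Your extra cutoff $\tilde\chi$ (taken with $0\le\tilde\chi\le1$) is an improvement over the paper. In the paper's Step~2 the uncut convolution $\rho_\epsilon*h_{R,M}$ has full support. The bound by $C_R\|\psi_{R,M,\epsilon}-h_{R,M}\|_{L^2}^2$ is justified there only through local regularity on $B_{R+1}$, which does not control the region outside $B_{R+1}$. With your $\tilde\chi$, the difference $\tilde\chi(\rho_\delta*h)-h=\tilde\chi\,(\rho_\delta*h-h)$ is supported in the compact set $\supp\tilde\chi$, so the local bounds really suffice. This is also how the paper mollifies inside \Cref{lem:hk_ss_finite}.

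One step fails as written. In Step~4 you cannot in general choose $(R_k,M_k)$ with both $M_k\le k$ and $\|h_{R_k,M_k}-\bar F\|_{H_t}<1/(2k)$. Write $T_M(\bar F)=\alpha_M\bar F$ with $\alpha_M\in[0,1]$ nondecreasing in $M$. Then pointwise $\|h_{R,M}-\bar F\|_{\bar A}=(1-\chi_R\alpha_M)\|\bar F\|_{\bar A}\ge(1-\alpha_k)\|\bar F\|_{\bar A}=\|T_k(\bar F)-\bar F\|_{\bar A}$ whenever $M\le k$. So your requirement forces $\|T_k(\bar F)-\bar F\|_{H_t}<1/(2k)$ for every $k$. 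This fails, for example, as soon as $\|T_1(\bar F)-\bar F\|_{H_t}\ge\tfrac12$, or when the tails of $\bar F$ decay slowly.

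The repair is to drop the prescribed rate, as the paper does. Take $R_k=M_k=k$, so that $e_k:=\|h_{k,k}-\bar F\|_{H_t}\to0$ by your Step~2. Then pick $\delta_k$ with $\|\psi_k-h_{k,k}\|_{H_t}<1/k$. The triangle inequality gives $\|\psi_k-\bar F\|_{H_t}\le e_k+1/k\to0$, with $\|\psi_k\|_\infty\le k$.

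Finally, like the paper, you tacitly take $\bar E=\R^n$. For a general open $\bar E$, $\bar A$ and $e^{-\bar V}$ are only bounded on compact subsets of $\bar E$. The balls and $\tilde\chi$ should then be replaced by a compact exhaustion of $\bar E$ and cutoffs in $C_c^\infty(\bar E)$.
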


\begin{proof}
Since $\bar u(t,\cdot)=\bar P_t\bar f$ with $\bar f\in C_b$, the Markov property yields
$\|\bar u(t,\cdot)\|_{L^\infty}<\infty$. Moreover, by \Cref{lem:hk_ss_finite},
\[
  \int_{\bar E}\bar F^\top\bar A\bar F\,d\bar\pi<\infty,
  \qquad\text{hence}\qquad
  \int_{\bar E}\bar u(t,\cdot)\,\bar F^\top\bar A\bar F\,d\bar\pi<\infty.
\]

\smallskip
\noindent\emph{Step 1 (space and amplitude truncation).}
Let $\chi_R\in C_c^\infty(\R^n)$ satisfy $0\le\chi_R\le 1$, $\chi_R\equiv 1$ on $B_R$ and
$\supp\chi_R\subset B_{R+1}$. Let $T_M$ be the radial truncation on $\R^n$ and set
$h_{R,M}:=\chi_R\,T_M(\bar F)$ on $\bar E$.
Write $\|v\|_{\bar A}^2:=v^\top\bar A v$. Using
\[
\bar F-h_{R,M}=(1-\chi_R)\bar F+\chi_R(\bar F-T_M(\bar F)),\qquad (a+b)^2\le 2a^2+2b^2,
\]
and $(1-\chi_R)^2\le \mathbf 1_{B_R^c}$, we obtain
\begin{align*}
\int_{\bar E}\bar u\,\|\bar F-h_{R,M}\|_{\bar A}^2\,d\bar\pi
&\le 2\int_{\bar E\cap B_R^c}\bar u\,\|\bar F\|_{\bar A}^2\,d\bar\pi
   +2\int_{\bar E}\bar u\,\|\bar F-T_M(\bar F)\|_{\bar A}^2\,d\bar\pi.
\end{align*}
The first term tends to $0$ as $R\to\infty$ by dominated convergence, since
$\bar u\in L^\infty$ and $\bar u\,\|\bar F\|_{\bar A}^2\in L^1(d\bar\pi)$.
For the second term, note that pointwise
\[
\bar F-T_M(\bar F)=\mathbf 1_{\{|\bar F|>M\}}\Big(1-\frac{M}{|\bar F|}\Big)\bar F
\quad\Longrightarrow\quad
\|\bar F-T_M(\bar F)\|_{\bar A}^2\downarrow 0,
\]
and $0\le \|\bar F-T_M(\bar F)\|_{\bar A}^2\le \|\bar F\|_{\bar A}^2$, hence it tends to $0$ as $M\to\infty$
by dominated convergence. In particular, choosing $R=M=k$ gives
\begin{equation}\label{eq:step1_diag}
\int_{\bar E}\bar u\,\|\bar F-h_{k,k}\|_{\bar A}^2\,d\bar\pi\xrightarrow[k\to\infty]{}0.
\end{equation}

\smallskip
\noindent\emph{Step 2 (Gaussian mollification).}
Extend $h_{R,M}$ by $0$ outside $\bar E$ (still denoted by $h_{R,M}$) and let $\rho_\epsilon$ be the standard
Gaussian mollifier on $\R^n$. Define $\psi_{R,M,\epsilon}:=\rho_\epsilon*h_{R,M}$ on $\R^n$ and then restrict it
to $\bar E$. By \cite[Prop.~8.8]{Folland1999RealAnalysis}, $\psi_{R,M,\epsilon}\in C_b^\infty(\bar E;\R^n)$ and
$\|\psi_{R,M,\epsilon}\|_{L^\infty}\le \|h_{R,M}\|_{L^\infty}\le M$.
Since $h_{R,M}\in L^2(\R^n)$ (bounded with compact support),
\cite[Thm.~8.14(a)]{Folland1999RealAnalysis} (with $p=2$) yields
$\|\psi_{R,M,\epsilon}-h_{R,M}\|_{L^2(\R^n)}\to0$ as $\epsilon\downarrow0$.
By the standing local regularity assumptions on $(\bar u,\bar A,\bar\pi)$ and $\supp h_{R,M}\subset B_{R+1}$,
there exists $C_R<\infty$ such that for all $\epsilon>0$,
\begin{equation}\label{eq:step2_weighted}
  \int_{\bar E}\bar u\,\|\psi_{R,M,\epsilon}-h_{R,M}\|_{\bar A}^2\,d\bar\pi
  \le C_R\,\|\psi_{R,M,\epsilon}-h_{R,M}\|_{L^2(\R^n)}^2
  \xrightarrow[\epsilon\downarrow0]{}0.
\end{equation}

\smallskip
\noindent\emph{Step 3 (diagonal choice).}
Set $R_k=M_k:=k$. Choose $\epsilon_k\downarrow0$ such that the left-hand side of \eqref{eq:step2_weighted}
(with $R=R_k$, $M=M_k$, $\epsilon=\epsilon_k$) is at most $k^{-1}$, and define
$\psi_k:=\psi_{R_k,M_k,\epsilon_k}$. Then $\psi_k\in C_b^\infty(\bar E;\R^n)$ and $\|\psi_k\|_{L^\infty}\le k$.
Finally, by $(a+b)^2\le 2a^2+2b^2$,
\[
\int_{\bar E}\bar u\,\|\psi_k-\bar F\|_{\bar A}^2\,d\bar\pi
\le 2\int_{\bar E}\bar u\,\|\psi_k-h_{k,k}\|_{\bar A}^2\,d\bar\pi
   +2\int_{\bar E}\bar u\,\|\bar F-h_{k,k}\|_{\bar A}^2\,d\bar\pi
\xrightarrow[k\to\infty]{}0
\]
by \eqref{eq:step1_diag} and the choice of $\epsilon_k$. This proves \eqref{eq:recovery_L2_weighted} and
$\|\psi_k\|_{L^\infty}\le k$. Hence the set of recovery sequences in \Cref{def:recovery} is nonempty.
\end{proof}

\begin{corollary}[Steady-state housekeeping lower semicontinuity]\label{cor:ss-lsc-app}
Suppose \Cref{ass:coeff-weak} holds. Then \eqref{eq:ss-ineq} holds.
\end{corollary}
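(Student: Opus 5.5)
The plan is to run the argument of \Cref{thm:hk-lsc} at stationarity, that is, with the constant datum $f\equiv1$. Then $u^\eps\equiv\bar u\equiv1$, so $\sigma^\eps_{\mathrm{hk}}(t)=\sigma^\eps_{\mathrm{hk,ss}}$ and $\bar\sigma_{\mathrm{hk}}(t)=\bar\sigma_{\mathrm{hk,ss}}$. No dynamical input is needed beyond the static part of \Cref{ass:standing}: the weak convergence $\pi^\eps\rightharpoonup\Pi$ with $\Phi_\#\Pi=\bar\pi$ is only used through the coefficient limits already encoded in \Cref{ass:coeff-weak}.

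\emph{Step 1: reduction to the projected dissipation.} For fixed $\psi\in C_b(\bar E;\R^n)$, take the steady functionals $\tilde\sigma^\eps_{\mathrm{hk,ss}}(\psi)$ and $\tilde\sigma_{\mathrm{hk,ss}}(\psi)$ introduced in the proof of \Cref{lem:hk_ss_finite}. Pointwise completion of squares in the $A^\eps$-metric gives
\[
(\gamma^\eps)^\top(A^\eps)^{-1}\gamma^\eps-\bigl(2\langle D\Phi\gamma^\eps,\psi(\Phi)\rangle-\psi(\Phi)^\top D\Phi A^\eps D\Phi^\top\psi(\Phi)\bigr)=\bigl\|(A^\eps)^{-1}\gamma^\eps-D\Phi^\top\psi(\Phi)\bigr\|_{A^\eps}^2\ge0 .
\]
Integrating against $\pi^\eps$ yields $\sigma^\eps_{\mathrm{hk,ss}}\ge\tilde\sigma^\eps_{\mathrm{hk,ss}}(\psi)$ for every $\eps$, which is \eqref{eq:cs-split} with $u^\eps\equiv1$.

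\emph{Step 2: passage to the limit for fixed $\psi$.} The functional $\tilde\sigma^\eps_{\mathrm{hk,ss}}(\psi)$ is exactly the pairing of $J^\eps$ with $\xi=\psi$, plus the pairing of $Q^\eps$ with $\eta=\psi(\Phi)\psi(\Phi)^\top\in C_b(E;\mathbb S^n_+)$. \Cref{ass:coeff-weak}(i)--(ii), together with $\Phi_\#Q=\bar A\bar\pi$, therefore give
\[
\tilde\sigma^\eps_{\mathrm{hk,ss}}(\psi)\to\tilde\sigma_{\mathrm{hk,ss}}(\psi)=\bar\sigma_{\mathrm{hk,ss}}-\int_{\bar E}\|\psi-\bar A^{-1}\bar\gamma\|_{\bar A}^2\,d\bar\pi ,
\]
where the second equality is the macroscopic completion of squares. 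This is \Cref{lem:weighted-cross-quadratic-conv} with $f\equiv1$. In this case the remainder $r^\eps$ vanishes identically, so no orbit convergence is used, which makes the argument purely static. Hence $\liminf_\eps\sigma^\eps_{\mathrm{hk,ss}}\ge\bar\sigma_{\mathrm{hk,ss}}-\int\|\psi-\bar F\|_{\bar A}^2\,d\bar\pi$ with $\bar F:=\bar A^{-1}\bar\gamma$.

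\emph{Step 3: optimisation over $\psi$.} The main point is to make the defect term arbitrarily small, and this requires $\bar\sigma_{\mathrm{hk,ss}}<\infty$. That finiteness follows from \Cref{lem:hk_ss_finite}, which uses \Cref{ass:coeff-weak}(iii). With it, \Cref{lem:recovery-nonempty} applied with $\bar u\equiv1$ produces $\psi_k\in C_b^\infty$ such that $\int\|\psi_k-\bar F\|_{\bar A}^2\,d\bar\pi\to0$; its proof only uses $\bar u\in L^\infty$ and the finiteness of $\int\bar F^\top\bar A\bar F\,d\bar\pi$. Plugging in $\psi=\psi_k$ and letting $k\to\infty$ gives \eqref{eq:ss-ineq}.

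If one prefers to avoid the recovery lemma, a fallback is to take the supremum over $\psi$ in Step 2 directly: $\liminf_\eps\sigma^\eps_{\mathrm{hk,ss}}\ge\sup_{\psi\in C_b}\tilde\sigma_{\mathrm{hk,ss}}(\psi)$. The truncation and mollification argument of Steps 2--3 of \Cref{lem:hk_ss_finite} shows that this supremum equals $\bar\sigma_{\mathrm{hk,ss}}$, and it equals $+\infty$ if $\bar\sigma_{\mathrm{hk,ss}}=\infty$. In that form the inequality holds in both cases.
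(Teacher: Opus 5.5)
Your proposal is correct and takes essentially the same route as the paper's proof. Both arguments use pointwise completion of squares against $D\Phi^\top(\psi\circ\Phi)$, pass to the limit in the cross and quadratic terms via \Cref{ass:coeff-weak}(i)--(ii) (i.e.\ \Cref{lem:weighted-cross-quadratic-conv} with $u^\eps\equiv\bar u\equiv1$), and then remove the defect $\int_{\bar E}\|\psi-\bar A^{-1}\bar\gamma\|_{\bar A}^2\,d\bar\pi$ with a weight-one recovery sequence. Your explicit appeal to \Cref{lem:hk_ss_finite} for $\bar\sigma_{\mathrm{hk,ss}}<\infty$ spells out a step the paper leaves implicit, since that finiteness is needed both to split the macroscopic square and to invoke \Cref{lem:recovery-nonempty}; your supremum-over-$\psi$ variant is also valid and bypasses the finiteness lemma altogether.
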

\begin{proof}
Fix $\psi\in C_b(\bar E;\R^n)$. By expanding the square,
\begin{align*}
&\sigma_{\mathrm{hk,ss}}^\varepsilon
-\bigl\|(A^\varepsilon)^{-1}\gamma^\varepsilon-D\Phi^\top(\psi\circ\Phi)\bigr\|_{L^2(A^\varepsilon,\pi^\varepsilon)}^2\\
=&2\int_E (D\Phi^\top\psi\circ\Phi)^\top\gamma^\varepsilon\,d\pi^\varepsilon
-\int_E \psi^\top(D\Phi A^\varepsilon D\Phi^\top)\psi\,d\pi^\varepsilon\\
&\xrightarrow[\varepsilon\to0]{}\;
2\int_{\bar E}\psi^\top\bar\gamma\,d\bar\pi-\int_{\bar E}\psi^\top\bar A\,\psi\,d\bar\pi\\
=&\bar\sigma_{\mathrm{hk,ss}}-\|\bar A^{-1}\bar\gamma-\psi\|_{L^2(\bar A,\bar\pi)}^2,
\end{align*}
where we used \Cref{ass:coeff-weak}. Hence
\[
\liminf_{\varepsilon\downarrow0}\sigma_{\mathrm{hk,ss}}^\varepsilon
\;\ge\;\bar\sigma_{\mathrm{hk,ss}}-\|\bar A^{-1}\bar\gamma-\psi\|_{L^2(\bar A,\bar\pi)}^2.
\]
Taking $\psi=\psi_k$ along a recovery sequence and $k\to\infty$ gives the liminf inequality, with gap
\[
\inf_{\{\psi_k\}\ \text{recovery}}
\ \lim_{k\to\infty}\ \limsup_{\varepsilon\downarrow0}
\bigl\|(A^\varepsilon)^{-1}\gamma^\varepsilon-D\Phi^\top(\psi_k\circ\Phi)\bigr\|_{L^2(A^\varepsilon,\pi^\varepsilon)}^2.
\]
\end{proof}

\section{Proofs for Results in Section 4}
\subsection{Fast–slow OU process}
Let $d_x,d_y\in\mathbb{N}$ and $d=d_x+d_y$.  
We consider, for each $\varepsilon>0$, the Ornstein--Uhlenbeck process
\begin{equation}
  dZ_t^\varepsilon
  = -I^{\varepsilon} B Z_t^\varepsilon\,dt + \sqrt{2I^{\varepsilon}}\,dW_t,
  \qquad
  Z_t^\varepsilon=(X_t^\varepsilon,Y_t^\varepsilon)\in\mathbb{R}^{d_x}\times\mathbb{R}^{d_y},
  \label{eq:OU-eps}
\end{equation}
where
\[
  I^{\varepsilon}
  :=\begin{pmatrix}\varepsilon^{-1}I_{d_x} & 0\\[2pt] 0 & I_{d_y}\end{pmatrix},
  \qquad
  B=\begin{pmatrix}B_{11} & B_{12}\\[2pt] B_{21} & B_{22}\end{pmatrix}\in\mathbb{R}^{d\times d},
\]
and $(W_t)_{t\ge0}$ is a standard $d$–dimensional Brownian motion.
We assume throughout that $B$ is invertible and that the fast block $B_{11}$ is invertible.

The (backward) generator associated with $Z^\varepsilon$ is
\[
  \mathcal{L}^\varepsilon f(z)
  = \mathrm{tr}\bigl(I^{\varepsilon} D^2 f(z)\bigr)
    - (I^{\varepsilon} B z)\cdot\nabla f(z),
  \qquad z\in\mathbb{R}^d.
\]
We write $z=(x,y)\in\mathbb{R}^{d_x}\times\mathbb{R}^{d_y}$.
For each $\varepsilon>0$, a (centered) invariant Gaussian measure of $Z^\varepsilon$ is of the
form $\pi^\varepsilon=\mathcal{N}(0,\Sigma^\varepsilon)$, where $\Sigma^\varepsilon$ is a
positive definite solution to the Lyapunov equation
\begin{equation}
  I^{\varepsilon} B\Sigma^\varepsilon + \Sigma^\varepsilon B^\top I^{\varepsilon} = 2I^{\varepsilon}.
  \label{eq:Lyap-eps}
\end{equation}

We denote by $J_{\mathrm{ss}}^\varepsilon$ the stationary probability current (or flux)
\[
  J^{\mathrm{ss}}_\eps(z)
  := \gamma^\varepsilon(z)\,\pi^\varepsilon(z),
  \qquad z\in\mathbb{R}^d,
\]
and by
\[
  \gamma^\varepsilon(z)
  := b^{\varepsilon}(z) - I^{\varepsilon}\nabla\log\pi^\varepsilon(z)
  = I^{\varepsilon}\bigl((\Sigma^\varepsilon)^{-1}-B\bigr)z
\]
the corresponding stationary velocity field.

Given a terminal function $f:\mathbb{R}^d\to\mathbb{R}$ and $T>0$, we consider the backward Kolmogorov equation
\begin{equation}
  \begin{cases}
    \partial_t u^\varepsilon(t,z) + \mathcal{L}^\varepsilon u^\varepsilon(t,z) = 0,
      & t\in[0,T),\ z\in\mathbb{R}^d,\\[2pt]
    u^\varepsilon(T,z)=f(z), & z\in\mathbb{R}^d.
  \end{cases}
  \label{eq:backward-eps}
\end{equation}

In the singular perturbation regime $\varepsilon\to 0$, the component $X^\varepsilon$ is fast and $Y^\varepsilon$ is slow. The formal averaging principle proceeds as follows.

For each fixed $y\in\mathbb{R}^{d_y}$, we consider the frozen fast dynamics
\begin{equation}
  dX_t^{(y)}
  = -\bigl(B_{11}X_t^{(y)} + B_{12}y\bigr)\,dt + \sqrt{2}\,dW_t^{(x)},
  \label{eq:frozen-fast}
\end{equation}
which is an Ornstein--Uhlenbeck process on $\mathbb{R}^{d_x}$ with generator
\[
  \mathcal{L}^{\mathrm{fast}}_y \varphi(x)
  = \Delta_x \varphi(x) - \bigl(B_{11}x + B_{12}y\bigr)\cdot\nabla_x\varphi(x).
\]
Under the stability assumption that $B_{11}$ is Hurwitz, the process \eqref{eq:frozen-fast} is ergodic and admits a unique nondegenerate Gaussian invariant measure, denoted by $\mu_y$.

Averaging the slow equation with respect to $\mu_y$ yields an effective drift for the slow variable. Formally replacing $X_t^\varepsilon$ in the slow equation by its stationary mean $-\!B_{11}^{-1}B_{12}y$ under $\mu_y$, one obtains the averaged slow dynamics
\begin{equation}
  d\bar Y_t = -C\,\bar Y_t\,dt + \sqrt{2}\,dW_t^{(y)},
  \qquad
  C := B_{22} - B_{21}B_{11}^{-1}B_{12},
  \label{eq:averaged-slow}
\end{equation}
which is again an Ornstein--Uhlenbeck process on $\mathbb{R}^{d_y}$. Its (backward) generator acts on test functions $\psi:\mathbb{R}^{d_y}\to\mathbb{R}$ as
\begin{equation}
  \bar{\mathcal{L}} \psi(y)
  = \Delta_y \psi(y) - (C y)\cdot\nabla_y \psi(y),
  \qquad y\in\mathbb{R}^{d_y}.
  \label{eq:effective-generator}
\end{equation}
The process \eqref{eq:averaged-slow} is ergodic with a unique invariant Gaussian measure $\bar\pi=\mathcal{N}(0,\Sigma^y)$, where $\Sigma^y$ solves
\[
  C\Sigma^y + \Sigma^y C^\top = 2 I_{d_y}.
\]

The pair $(\mu_y,\bar\pi)$ will appear as the building blocks of the limiting invariant measure $\pi^0$ of $Z^\varepsilon$ and of the effective backward equation associated with $\bar{\mathcal{L}}$.
\begin{lemma}[Fast--slow estimate for the OU mean]\label{lem:OU-mean}
Assume that $B_{11}$ and $C:=B_{22}-B_{21}B_{11}^{-1}B_{12}$ are Hurwitz.
Fix $t>0$ and let $m_s^\eps(z)=e^{-(I^\eps B)s}z=(x_s^\eps(z),y_s^\eps(z))$ be the mean of $Z_s^\eps$
given $Z_0^\eps=z=(x,y)$. Define
\[
w_s^\eps(z):=x_s^\eps(z)+B_{11}^{-1}B_{12}y_s^\eps(z),\qquad s\in[0,t].
\]
Then for every compact $K\subset\R^{d}$ there exists a constant $C_{t,K}<\infty$ such that, for all
$\eps\in(0,1]$,
\begin{align}
\sup_{z\in K}\sup_{0\le s\le t}|w_s^\eps(z)| &\le C_{t,K}\,\eps, \label{eq:w-Oeps}\\
\sup_{z\in K}\sup_{0\le s\le t}\big|y_s^\eps(z)-e^{-Cs}\Phi(z)\big| &\le C_{t,K}\,\eps.\label{eq:y-Oeps}
\end{align}
In particular, with
\[
m_s^0(z):=\bigl(-B_{11}^{-1}B_{12}e^{-Cs}\Phi(z),\ e^{-Cs}\Phi(z)\bigr),
\]
we have $\sup_{z\in K}\sup_{0\le s\le t}|m_s^\eps(z)-m_s^0(z)|\to0$ as $\eps\to0$.
\end{lemma}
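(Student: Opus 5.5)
The mean solves the linear ODE $\dot m=-I^\eps B m$, which in components reads
\[
\eps\dot x_s=-(B_{11}x_s+B_{12}y_s),\qquad \dot y_s=-(B_{21}x_s+B_{22}y_s).
\]
The plan is to rewrite this system in the coordinates $(w_s,y_s)$, where $w_s=x_s+B_{11}^{-1}B_{12}y_s$ measures the distance to the slow manifold. One computes
\[
B_{11}x+B_{12}y=B_{11}w,\qquad B_{21}x+B_{22}y=B_{21}w+Cy,
\]
so that
\[
\dot y_s=-Cy_s-B_{21}w_s,\qquad
\dot w_s=-\eps^{-1}B_{11}w_s-B_{11}^{-1}B_{12}\bigl(Cy_s+B_{21}w_s\bigr).
\]
This is a standard singularly perturbed linear system with a Hurwitz fast matrix $B_{11}$. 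Since $\Phi(x,y)=y$, the initial data are $w_0=x+B_{11}^{-1}B_{12}y$ and $y_0=\Phi(z)$.

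\emph{Step 1: a priori bound.} Because $B_{11}$ is Hurwitz, there is a norm (equivalently a Lyapunov matrix $P_{11}\succ0$ with $B_{11}^\top P_{11}+P_{11}B_{11}\succ0$) in which $\|e^{-B_{11}\tau}\|\le c\,e^{-\lambda\tau}$. I treat the $w$-equation by variation of constants:
\[
w_s=e^{-B_{11}s/\eps}w_0-\int_0^s e^{-B_{11}(s-r)/\eps}\,B_{11}^{-1}B_{12}\bigl(Cy_r+B_{21}w_r\bigr)\,dr .
\]
The integral kernel has $L^1$-norm $O(\eps)$. Gronwall applied to the $y$-equation, together with this representation, should give $\sup_{s\le t}(|w_s|+|y_s|)\le C_{t,K}$ uniformly in $\eps\le1$ and $z\in K$.

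\emph{Step 2: the $w$ bound.} The same representation then yields
\[
|w_s|\le c\,e^{-\lambda s/\eps}|w_0|+C\eps .
\]
This is where I expect the main obstacle: the initial-layer term is not $O(\eps)$ uniformly in $s\in[0,t]$ unless $w_0=0$. At $s=0$ one has $|w_0|$, which is $O(1)$ for generic $z\in K$. So \eqref{eq:w-Oeps} as stated, with $\sup_{0\le s\le t}$, can only hold for $z$ on the slow manifold. In the proof I would therefore either restrict to $s\in[\tau,t]$ for some $\tau>0$, or carry along the bound $c\,e^{-\lambda s/\eps}|w_0|+C\eps$. Only the value at the fixed time $t$ (the final time, $t>0$) is needed later, and there the layer term is exponentially small and hence $O(\eps)$.

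\emph{Step 3: the $y$ bound.} Duhamel applied to the $y$-equation gives
\[
y_s-e^{-Cs}y_0=-\int_0^s e^{-C(s-r)}B_{21}w_r\,dr .
\]
Inserting the Step 2 bound, the layer contributes $\int_0^s e^{-\lambda r/\eps}\,dr\,|w_0|\le(\eps/\lambda)|w_0|$, and the remainder contributes $C\eps$. Hence \eqref{eq:y-Oeps} holds uniformly on $[0,t]$ with no initial-layer issue.

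\emph{Step 4: the mean.} Finally, $m_s^\eps-m_s^0=\bigl(w_s-B_{11}^{-1}B_{12}(y_s-e^{-Cs}\Phi(z)),\;y_s-e^{-Cs}\Phi(z)\bigr)$. Steps 2 and 3 then give convergence of $m^\eps_s$ to $m^0_s$ for each $s>0$ (and uniformly on $[\tau,t]$). This is the form used for the pointwise-in-$t$ dynamical convergence.
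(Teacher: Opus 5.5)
Your analysis is correct. The problem you flag is a genuine error in the statement and in the paper's own proof, not a gap in your argument.

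At $s=0$ one has $w_0^\eps(z)=x+B_{11}^{-1}B_{12}y$, which does not depend on $\eps$. So \eqref{eq:w-Oeps} with $\sup_{0\le s\le t}$ fails for every compact $K$ not contained in the slow manifold $\{x=-B_{11}^{-1}B_{12}y\}$. Already for $B=I$ one has $w_s^\eps(z)=e^{-s/\eps}x$, so $\sup_{0\le s\le t}|w_s^\eps(z)|=|x|$. The closing claim fails for the same reason, since $m_0^\eps(z)-m_0^0(z)=(w_0^\eps(z),0)$.

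The paper takes exactly your route: the $(w,y)$ coordinates \eqref{eq:wYsys-short}, variation of constants with \eqref{eq:fast-semigroup}, and absorption of $\eps c_1W^\eps+\eps c_2Y^\eps$ for small $\eps$. Its error is the last sentence of that step. Inserting $W^\eps\le 2A_{t,K}$ back into \eqref{eq:Wineq-short} only yields $W^\eps\le M\sup_K|w_0|+O(\eps)$, because the initial-layer term is $O(1)$. The paper then derives \eqref{eq:y-Oeps} from $\int_0^t\sup_K|w^\eps_r|\,dr\le C_{t,K}\eps$, which rests on the false bound. Your Step 3 integrates the layer profile via $\int_0^s e^{-\lambda r/\eps}\,dr\le\eps/\lambda$, and that is what actually makes \eqref{eq:y-Oeps} true uniformly on $[0,t]$.

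Your corrected estimate $|w_s^\eps(z)|\le c\,e^{-\lambda s/\eps}|w_0(z)|+C\eps$ gives $O(\eps)$ uniformly on $[\tau,t]$, and in particular at $s=t$, using $e^{-\lambda t/\eps}\le\eps/(e\lambda t)$. That is all \Cref{lem:standing} needs.
\begin{itemize}
\item Part (iii) uses only $\sup_{z\in K}|m_t^\eps(z)-m_t^0(z)|\to0$ at the fixed $t>0$, hence $e^{-M^\eps t}\to L_t$ in norm.
\item The same fact also gives the Hurwitz property of $M^\eps=I^\eps B$ for small $\eps$ invoked in part (i). Indeed, $L_t$ is block upper triangular with spectrum $\{0\}\cup\sigma(e^{-Ct})$, which lies inside the open unit disc because $\mathrm{Re}\,\sigma(C)>0$. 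Since the spectrum depends continuously on the matrix, the spectral radius of $e^{-M^\eps t}$ is below $1$ for small $\eps$.
\end{itemize}

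The one place to tighten is your Step 1, where ``should give'' can be made rigorous exactly as in the paper. From $W\le c\sup_K|w_0|+\eps(c_1W+c_2Y)$ and $Y\le a_{t,K}+b_tW$, absorb the $W$ terms for $\eps\le\eps_0$. For $\eps\in[\eps_0,1]$, use the crude bound $\|e^{-I^\eps Bs}\|\le e^{\|B\|t/\eps_0}$.
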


\begin{proof}
Since $B_{11}$ is Hurwitz, there exist $M,\alpha>0$ such that
$\|e^{-B_{11}r}\|\le Me^{-\alpha r}$ for all $r\ge0$, hence
\begin{equation}\label{eq:fast-semigroup}
\|e^{-(\eps^{-1}B_{11})r}\|=\|e^{-B_{11}(r/\eps)}\|\le M e^{-\alpha r/\eps},\qquad r\ge0.
\end{equation}
Differentiating $w_s^\eps=x_s^\eps+B_{11}^{-1}B_{12}y_s^\eps$ and using $\dot m_s^\eps=-(I^\eps B)m_s^\eps$
yields the coupled system
\begin{equation}\label{eq:wYsys-short}
\dot w_s^\eps=-(\eps^{-1}B_{11}+B_{11}^{-1}B_{12}B_{21})\,w_s^\eps-B_{11}^{-1}B_{12}C\,y_s^\eps,
\qquad
\dot y_s^\eps=-B_{21}w_s^\eps-Cy_s^\eps.
\end{equation}

Fix a compact $K\subset\R^d$ and set
\[
W^\eps:=\sup_{z\in K}\sup_{0\le s\le t}|w_s^\eps(z)|,\qquad
Y^\eps:=\sup_{z\in K}\sup_{0\le s\le t}|y_s^\eps(z)|.
\]
Variation of constants applied to the first equation in Eq.~\eqref{eq:wYsys-short}, together with
Eq.~\eqref{eq:fast-semigroup}, yields for all $z\in K$ and $s\in[0,t]$
\[
|w_s^\eps(z)|
\le M e^{-\alpha s/\eps}|w_0(z)|
+M\!\int_0^s e^{-\alpha(s-r)/\eps}\Big(\|B_{11}^{-1}B_{12}B_{21}\|\,|w_r^\eps(z)|
+\|B_{11}^{-1}B_{12}C\|\,|y_r^\eps(z)|\Big)\,dr .
\]
Taking suprema and using $\int_0^s e^{-\alpha(s-r)/\eps}dr\le \eps/\alpha$ gives
\begin{equation}\label{eq:Wineq-short}
W^\eps \le M \sup_{z\in K}|w_0(z)| + \eps c_1 W^\eps + \eps c_2 Y^\eps,
\end{equation}
for constants $c_1,c_2$ depending only on the matrices.
Similarly, variation of constants for the second equation in Eq.~\eqref{eq:wYsys-short} yields
\[
|y_s^\eps(z)|
\le \|e^{-Cs}\|\,|y|+\int_0^s \|e^{-C(s-r)}\|\,\|B_{21}\|\,|w_r^\eps(z)|\,dr
\le C_t\sup_{z\in K}|y|+ C_t\|B_{21}\|\,t\,W^\eps,
\]
where $C_t:=\sup_{0\le r\le t}\|e^{-Cr}\|<\infty$ since $C$ is Hurwitz. Hence
\begin{equation}\label{eq:Yineq-short}
Y^\eps \le a_{t,K}+b_t W^\eps,
\end{equation}
with $a_{t,K}:=C_t\sup_{z\in K}|y|$ and $b_t:=C_t\|B_{21}\|t$.

Combining Eq.~\eqref{eq:Wineq-short}--Eq.~\eqref{eq:Yineq-short} gives
\[
W^\eps \le M \sup_{z\in K}|w_0(z)| + \eps c_1 W^\eps + \eps c_2(a_{t,K}+b_t W^\eps)
\le A_{t,K} + \eps \tilde c_t W^\eps,
\]
for suitable constants $A_{t,K},\tilde c_t<\infty$.
Choosing $\eps_0\in(0,1]$ such that $\eps_0\tilde c_t\le \tfrac12$ yields
$W^\eps\le 2A_{t,K}$ for all $\eps\le \eps_0$, and inserting this back into Eq.~\eqref{eq:Wineq-short}
implies $W^\eps\le C_{t,K}\eps$ for all $\eps\le\eps_0$, proving Eq.~\eqref{eq:w-Oeps} (and trivially extending to $\eps\in[\eps_0,1]$
by enlarging $C_{t,K}$).

Finally, using the variation-of-constants formula for $y_s^\eps$ and the averaged solution
$\bar y_s:=e^{-Cs}\Phi(z)$,
\[
y_s^\eps(z)-\bar y_s
=-\int_0^s e^{-C(s-r)}B_{21}w_r^\eps(z)\,dr,
\]
so Eq.~\eqref{eq:w-Oeps} implies
\[
\sup_{z\in K}\sup_{0\le s\le t}|y_s^\eps(z)-e^{-Cs}\Phi(z)|
\le C_t\|B_{21}\|\int_0^t \sup_{z\in K}|w_r^\eps(z)|\,dr
\le C_{t,K}\eps,
\]
which is Eq.~\eqref{eq:y-Oeps}. Since
$x_s^\eps=w_s^\eps-B_{11}^{-1}B_{12}y_s^\eps$ and
$x_s^0=-B_{11}^{-1}B_{12}e^{-Cs}\Phi(z)$,
the bounds of Eq.~\eqref{eq:w-Oeps}--Eq.~\eqref{eq:y-Oeps} imply
$\sup_{z\in K}\sup_{0\le s\le t}|m_s^\eps(z)-m_s^0(z)|\to0$ as $\eps\to0$.
\end{proof}

\begin{lemma}\label{lem:standing}
Assume that $B_{11}$ and $C$ are Hurwitz. Then \Cref{ass:standing,ass:coeff-weak} holds.
\end{lemma}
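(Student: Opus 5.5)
We verify \Cref{ass:standing} and \Cref{ass:coeff-weak} separately for the OU family \eqref{eq:OU-eps}. Throughout we exploit Gaussianity. The law of $Z_t^\eps$ started at $z$ is $\mathcal N(m_t^\eps(z),\Sigma_t^\eps)$ with
\[
\Sigma_t^\eps=\int_0^t e^{-I^\eps B s}\,2I^\eps\,e^{-B^\top I^\eps s}\,ds .
\]
The coefficients are linear: $\gamma^\eps(z)=I^\eps((\Sigma^\eps)^{-1}-B)z$ and $A^\eps=I^\eps$. The whole verification therefore reduces to the asymptotics of $m_t^\eps$, $\Sigma_t^\eps$, $\Sigma^\eps$ and $I^\eps(\Sigma^\eps)^{-1}$ as $\eps\to0$.

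\emph{Step 1: invariant measures (\Cref{ass:standing}(i)).} We solve the Lyapunov equation \eqref{eq:Lyap-eps} blockwise, writing $\Sigma^\eps=\begin{pmatrix}\Sigma_{xx}^\eps&\Sigma_{xy}^\eps\\ \Sigma_{yx}^\eps&\Sigma_{yy}^\eps\end{pmatrix}$. Multiplying the $xx$ and $xy$ blocks by $\eps$ and letting $\eps\to0$, we expect the limits
\[
\Sigma_{xy}^0=-B_{11}^{-1}B_{12}\Sigma^y,\qquad \Sigma_{yy}^0=\Sigma^y,\qquad
B_{11}\Sigma_{xx}^0+\Sigma_{xx}^0B_{11}^\top=2I+(\text{terms in }\Sigma^y).
\]
This corresponds to $\Pi$ being the law with $y\sim\bar\pi$ and $x\,|\,y\sim\mu_y$. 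Uniqueness and boundedness of $\Sigma^\eps$ follow from the Hurwitz property of $I^\eps B$, which we get for small $\eps$ from a standard block-triangularisation: $B_{11}$ and the Schur complement $C$ are Hurwitz, so singular-perturbation spectral theory applies. Hence $\Sigma^\eps\to\Sigma^0$, Gaussian laws converge weakly, $\pi^\eps\rightharpoonup\Pi:=\mathcal N(0,\Sigma^0)$, and $\Phi_\#\Pi=\mathcal N(0,\Sigma^y)=\bar\pi$.

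\emph{Step 2: dynamic convergence (\Cref{ass:standing}(ii)).} We take $\bar f:=\mathcal Pf$, the $\mu_y$-average. \Cref{lem:OU-mean} gives compact-uniform convergence of the means $m_t^\eps\to m_t^0$. For the covariances we show $\Sigma_t^\eps\to\Sigma_t^0$, the limit having $yy$-block $\int_0^t e^{-Cs}2e^{-C^\top s}ds$ and fast block equal to the $\mu_y$-covariance. The fast block reaches stationarity on the time scale $O(\eps)$, which is harmless for fixed $t>0$. For this we write $\Sigma_t^\eps$ as the solution of the differential Lyapunov equation and repeat the variation-of-constants argument of \Cref{lem:OU-mean} on the transformed blocks $(w,y)$. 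Then
\[
P_t^\eps f(z)=\int f(m_t^\eps(z)+(\Sigma_t^\eps)^{1/2}\xi)\,\gamma(d\xi).
\]
Since $f$ is bounded and continuous and the parameters converge uniformly on compact $K$, dominated convergence (with uniform continuity of $f$ on compacts plus tightness) gives uniform convergence on $K$ to $\int f\,d\mathcal N(m_t^0(z),\Sigma_t^0)$. It remains to identify this limit with $\bar P_t\bar f(\Phi(z))$. This holds because the limiting Gaussian is exactly $y_t\sim\bar P_t(\Phi(z),\cdot)$ with $x\,|\,y_t\sim\mu_{y_t}$; we verify this by matching the means and covariances, using $\Sigma_{xy}$-type identities as in Step 1.

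\emph{Step 3: coefficient convergence (\Cref{ass:coeff-weak}).} Here $D\Phi=(0\ I)$, so $D\Phi A^\eps D\Phi^\top=I_{d_y}$. Then $Q^\eps=I\,\pi^\eps\rightharpoonup I\,\Pi$ with $\Phi_\#(I\,\Pi)=I\,\bar\pi=\bar A\bar\pi$, which gives (ii). Next, $D\Phi\gamma^\eps(z)=G^\eps z$, where $G^\eps$ is the $y$-row of $I^\eps((\Sigma^\eps)^{-1}-B)$; this row carries no $\eps^{-1}$ factor. Its convergence follows from $\Sigma^\eps\to\Sigma^0$ with $\Sigma^0$ nondegenerate, which holds since its fast Schur complement is the positive $\mu_y$-covariance. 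Hence $G^\eps\to G^0$. Integrability of linear and quadratic functions is uniform, because Gaussian moments converge, so weak convergence extends to test functions of linear growth. This yields (iii), since $\mathcal J^\eps_{\mathrm{hk,proj}}=\int|G^\eps z|^2d\pi^\eps$ stays bounded, and also (i) with $\bar J=\Phi_\#(G^0z\,\Pi)$. It remains to check $\bar J=\bar\gamma\bar\pi$, where $\bar\gamma(y)=(\Sigma^{y,-1}-C)y$. We do this by conditioning: $\E_\Pi[G^0Z\,|\,y]$ is linear in $y$, and we compare coefficients using the Step 1 identities.

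The main obstacle is the matrix bookkeeping in this last identification and in the covariance limit of Step 2, in particular the $\eps^{-1}$-scaled blocks of $(\Sigma^\eps)^{-1}$. My first attempt would be the change of variables $w=x+B_{11}^{-1}B_{12}y$, which block-triangularises $B$ up to $O(\eps)$ and makes the limits transparent.
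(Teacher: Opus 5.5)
Your outline is correct and follows the paper's skeleton: Gaussian transition laws, the block Lyapunov equations, convergence of the means at fixed $t$ from \Cref{lem:OU-mean}, identification of the limit by Gaussian conditioning, and the same arguments for \Cref{ass:coeff-weak}(ii)--(iii). It departs from the paper in three places, and the first of these needs tightening.

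(1) The $\eps$-uniform bound on $\Sigma^\eps$ does not follow from $I^\eps B$ being Hurwitz for each small $\eps$. That only gives existence and uniqueness of $\Sigma^\eps$, so as written you would still need the uniform estimates supplied by an exact decoupling transformation. The paper closes this step more cheaply by compactness. It normalises $\widehat\Sigma_n=\Sigma^{\eps_n}/\|\Sigma^{\eps_n}\|$ and passes to the limit in \eqref{eq:TR-stand-final} (multiplied by $\eps_n$), \eqref{eq:BR-stand-final} and \eqref{eq:TL-stand-final}. The Hurwitz property of $C$ and $B_{11}$ then forces $\widehat\Sigma_*=0$, contradicting $\|\widehat\Sigma_*\|=1$.

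(2) You plan a separate differential-Lyapunov analysis of $\Sigma_t^\eps$ with its initial layer. The paper avoids this by using $\Sigma_t^\eps=\Sigma^\eps-e^{-I^\eps Bt}\Sigma^\eps e^{-B^\top I^\eps t}$, which follows directly from the stationary Lyapunov equation. Hence $\Sigma_t^\eps\to\Sigma^0-L_t\Sigma^0L_t^\top$, with $L_tz:=m_t^0(z)$, follows at once from Step 1 and the norm convergence $e^{-I^\eps Bt}\to L_t$. A small correction: in the limit it is the conditional covariance of $x$ given $y$ that equals the $\mu_y$-covariance, not the $xx$-block itself.

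(3) For the projected current the paper invokes \Cref{prop:drift-implies-current}. Since $D\Phi\,b^\eps=-(B_{21}x+B_{22}y)$ is $\eps$-independent, only $\E_\Pi[X\mid Y=y]=-B_{11}^{-1}B_{12}y$ is needed, and no block computation of $(\Sigma^\eps)^{-1}$ is required. Your direct route ($G^\eps\to G^0$, uniform Gaussian moments, then conditioning) is also valid and more self-contained. The identification you single out as the main obstacle is in fact short. The conditional expectation of the joint score is the marginal score, so
$\E_\Pi[((\Sigma^0)^{-1}Z)_y\mid Y=y]=(\Sigma^y)^{-1}y$,
and therefore
$\E_\Pi[G^0Z\mid Y=y]=((\Sigma^y)^{-1}-C)y=\bar\gamma(y)$.
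This makes $\bar J=\bar\gamma\bar\pi$ explicit. In the paper it is only implicit in the definition of $\bar J$ inside the proposition, and it needs a further Gaussian integration by parts of the $\bar Q$-term against $\bar\pi$.
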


\begin{proof}
\textbf{Proof of \Cref{ass:standing}}

(i)
Let $M^\varepsilon:=I^\varepsilon B$. Since $B_{11}$ and $C:=B_{22}-B_{21}B_{11}^{-1}B_{12}$ are Hurwitz,
\Cref{lem:OU-mean} implies exponential stability of the mean dynamics when $\varepsilon$ is small; in particular,
there exists $\varepsilon_0>0$ such that $M^\varepsilon$ is Hurwitz for all $\varepsilon\in(0,\varepsilon_0]$.
Hence, for each $\varepsilon\in(0,\varepsilon_0]$, the OU process with generator $\mathcal L^\varepsilon$
admits a unique invariant Gaussian measure $\pi^\varepsilon=\mathcal N(0,\Sigma^\varepsilon)$, where
$\Sigma^\varepsilon\succ0$ is the unique solution of the Lyapunov equation
\begin{equation}\label{eq:Lyap-eps-proof}
I^{\varepsilon}B\,\Sigma^\varepsilon+\Sigma^\varepsilon B^\top I^{\varepsilon}=2I^{\varepsilon}.
\end{equation}

Write
\[
\Sigma^\varepsilon=
\begin{pmatrix}
\Sigma_{xx}^\varepsilon & \Sigma_{xy}^\varepsilon\\
\Sigma_{yx}^\varepsilon & \Sigma_{yy}^\varepsilon
\end{pmatrix},
\qquad \Sigma_{yx}^\varepsilon=(\Sigma_{xy}^\varepsilon)^\top.
\]
Expanding Eq.~\eqref{eq:Lyap-eps-proof} in blocks gives
\begin{align}
&\textup{(TL)}\quad
B_{11}\Sigma_{xx}^\varepsilon+\Sigma_{xx}^\varepsilon B_{11}^\top
+B_{12}\Sigma_{yx}^\varepsilon+\Sigma_{xy}^\varepsilon B_{12}^\top
=2I_{d_x},\label{eq:TL-stand-final}\\
&\textup{(TR)}\quad
\varepsilon^{-1}\bigl(B_{11}\Sigma_{xy}^\varepsilon+B_{12}\Sigma_{yy}^\varepsilon\bigr)
+\Sigma_{xx}^\varepsilon B_{21}^\top+\Sigma_{xy}^\varepsilon B_{22}^\top
=0,\label{eq:TR-stand-final}\\
&\textup{(BL)}\quad
B_{21}\Sigma_{xx}^\varepsilon+B_{22}\Sigma_{yx}^\varepsilon
+\varepsilon^{-1}\bigl(\Sigma_{yx}^\varepsilon B_{11}^\top+\Sigma_{yy}^\varepsilon B_{12}^\top\bigr)
=0,\label{eq:BL-stand-final}\\
&\textup{(BR)}\quad
B_{21}\Sigma_{xy}^\varepsilon+B_{22}\Sigma_{yy}^\varepsilon
+\Sigma_{yx}^\varepsilon B_{21}^\top+\Sigma_{yy}^\varepsilon B_{22}^\top
=2I_{d_y}.\label{eq:BR-stand-final}
\end{align}

\emph{Boundedness (by contradiction).}
Assume $\|\Sigma^\varepsilon\|\to\infty$ along $\varepsilon_n\downarrow0$ and set
$\widehat\Sigma_n:=\Sigma^{\varepsilon_n}/\|\Sigma^{\varepsilon_n}\|$. Extract $\widehat\Sigma_n\to\widehat\Sigma_*$.
Multiplying Eq.~\eqref{eq:TR-stand-final} by $\varepsilon_n$ and letting $n\to\infty$ gives
$B_{11}\widehat\Sigma_{xy,*}+B_{12}\widehat\Sigma_{yy,*}=0$, hence
$\widehat\Sigma_{xy,*}=-B_{11}^{-1}B_{12}\widehat\Sigma_{yy,*}$.
Passing to the limit in the rescaled Eq.~\eqref{eq:BR-stand-final} yields
$C\,\widehat\Sigma_{yy,*}+\widehat\Sigma_{yy,*}C^\top=0$, so $\widehat\Sigma_{yy,*}=0$ since $C$ is Hurwitz,
and thus $\widehat\Sigma_{xy,*}=0$.
Passing to the limit in the rescaled Eq.~\eqref{eq:TL-stand-final} gives
$B_{11}\widehat\Sigma_{xx,*}+\widehat\Sigma_{xx,*}B_{11}^\top=0$, hence $\widehat\Sigma_{xx,*}=0$
since $B_{11}$ is Hurwitz. This contradicts $\|\widehat\Sigma_*\|=1$, so $(\Sigma^\varepsilon)$ is bounded.

\emph{Identification of the limit.}
Let $\varepsilon_n\downarrow0$ and extract a subsequence $\Sigma^{\varepsilon_n}\to\Sigma^0$.
Multiplying Eq.~\eqref{eq:TR-stand-final} by $\varepsilon_n$ and letting $n\to\infty$ yields
$\Sigma^0_{xy}=-B_{11}^{-1}B_{12}\Sigma^0_{yy}$.
Passing to the limit in Eq.~\eqref{eq:BR-stand-final} gives
$C\Sigma^0_{yy}+\Sigma^0_{yy}C^\top=2I_{d_y}$, which has a unique solution since $C$ is Hurwitz.
Hence $\Sigma^0_{yy}$ and $\Sigma^0_{xy},\Sigma^0_{yx}$ are uniquely determined.
The remaining block $\Sigma^0_{xx}$ is uniquely determined from the limit of Eq.~\eqref{eq:TL-stand-final}
(Lyapunov equation with drift $B_{11}$).
Therefore $\Sigma^\varepsilon\to\Sigma^0$ as $\varepsilon\to0$, and thus
$\pi^\varepsilon\Rightarrow \Pi:=\mathcal N(0,\Sigma^0)$ and $\Phi_\#\Pi=\mathcal N(0,\Sigma^0_{yy})=:\bar\pi$.

\smallskip
\noindent\emph{(ii) Conditional kernel and projection.}
Let $(X,Y)\sim\Pi$. Since $\Sigma^0_{yy}\succ0$, Gaussian conditioning yields
\[
\mathcal L(X\mid Y=y)=\mathcal N\!\Bigl(\Sigma^0_{xy}(\Sigma^0_{yy})^{-1}y,\,
\Sigma^0_{xx}-\Sigma^0_{xy}(\Sigma^0_{yy})^{-1}\Sigma^0_{yx}\Bigr).
\]
Using $\Sigma^0_{xy}=-B_{11}^{-1}B_{12}\Sigma^0_{yy}$ gives
$\Sigma^0_{xy}(\Sigma^0_{yy})^{-1}=-B_{11}^{-1}B_{12}$.
Define
\[
\Sigma^x:=\Sigma^0_{xx}-\Sigma^0_{xy}(\Sigma^0_{yy})^{-1}\Sigma^0_{yx},
\qquad
\mu_y:=\mathcal N(-B_{11}^{-1}B_{12}y,\Sigma^x),
\]
and $(\mathcal Pf)(y):=\int_{\R^{d_x}} f(x,y)\,\mu_y(dx)$ for $f\in\mathcal M$.
Then $\mathcal Pf\in C_b(\bar E)$ by dominated convergence, and the tower property gives
\[
\int_E f(z)\,\varphi(\Phi(z))\,d\Pi(z)
=\int_{\bar E}(\mathcal Pf)(y)\,\varphi(y)\,d\bar\pi(y),
\qquad \forall\,\varphi\in C_b(\bar E),
\]
which is \Cref{ass:standing}\textup{(ii)}.

\smallskip
\noindent\emph{(iii) Dynamic convergence.}
Fix $t>0$. For each $\eps>0$ and $z\in\R^d$,
\[
Z_t^\eps\mid(Z_0^\eps=z)\sim\mathcal N(m_t^\eps(z),Q_t^\eps),
\qquad
m_t^\eps(z)=e^{-M^\eps t}z,
\qquad
Q_t^\eps=\Sigma^\eps-e^{-M^\eps t}\Sigma^\eps e^{-(M^\eps)^\top t}.
\]
Let
\[
m_t^0(z):=\bigl(-B_{11}^{-1}B_{12}e^{-Ct}\Phi(z),\ e^{-Ct}\Phi(z)\bigr),
\qquad
L_t z:=m_t^0(z).
\]
By \Cref{lem:OU-mean}, for every compact $K\subset\R^d$ we have
$\sup_{z\in K}|m_t^\eps(z)-m_t^0(z)|\to0$ as $\eps\to0$, and in particular
$e^{-M^\eps t}e_i\to L_t e_i$ for the standard basis $\{e_i\}_{i=1}^d$, hence
\begin{equation}\label{eq:opnorm-semigroup}
\|e^{-M^\eps t}-L_t\|\to0.
\end{equation}
Together with $\Sigma^\eps\to\Sigma^0$, this implies
$Q_t^\eps\to Q_t^0:=\Sigma^0-L_t\Sigma^0 L_t^\top$ in the matrix norm.

Let $f\in\mathcal M$. Writing
\[
u^\eps(t,z)=\E[f(Z_t^\eps)\mid Z_0^\eps=z]=\int_E f(\xi)\,\mathcal N(m_t^\eps(z),Q_t^\eps)(d\xi),
\]
the convergence of $(m_t^\eps,Q_t^\eps)$ on compacts and uniform continuity of the Gaussian integral map
on compact parameter sets yield local uniform convergence $u^\eps(t,\cdot)\to u^0(t,\cdot)$, where
\[
u^0(t,z):=\int_E f(\xi)\,\mathcal N(m_t^0(z),Q_t^0)(d\xi).
\]
Finally, under $\mathcal N(m_t^0(z),Q_t^0)$ the marginal law of $Y$ coincides with that of the averaged OU
$\bar Y_t$ started at $\Phi(z)$, and the conditional law of $X$ given $Y=y'$ equals $\mu_{y'}$ from (ii)
(Gaussian conditioning using $\Sigma^0_{xy}=-B_{11}^{-1}B_{12}\Sigma^0_{yy}$ and the definition of $L_t$).
Therefore,
\[
u^0(t,z)=\E\big[\E[f(X,Y)\mid Y]\big]=\E\big[(\mathcal Pf)(Y)\big]
=\E_{\Phi(z)}\big[(\mathcal Pf)(\bar Y_t)\big]=\bar u(t,\Phi(z)),
\]
which is \Cref{ass:standing}\textup{(iii)}.

\medskip
\noindent\textbf{Proof of \Cref{ass:coeff-weak}.}

\smallskip
\noindent\emph{(ii) Projected diffusivity.}
Here $A^\eps\equiv I^\eps$, hence $D\Phi A^\eps D\Phi^\top\equiv I_{d_y}$ and
$Q^\eps=(D\Phi A^\eps D\Phi^\top)\pi^\eps=I_{d_y}\pi^\eps$.
Thus $Q^\eps\rightharpoonup I_{d_y}\Pi$ and $\Phi_\#(I_{d_y}\Pi)=I_{d_y}\bar\pi=:\bar Q$.

\smallskip
\noindent\emph{(iii) Uniform projected housekeeping dissipation.}
Since $\gamma^\eps(z)=I^\eps((\Sigma^\eps)^{-1}-B)z$, $D\Phi\,\gamma^\eps$ is linear in $z$ and,
because $\Sigma^\eps\to\Sigma^0\succ0$, we have $\sup_{\eps\le\eps_0}\|(\Sigma^\eps)^{-1}\|<\infty$.
Hence $|D\Phi\,\gamma^\eps(z)|\le C|z|$ uniformly for $\eps\le\eps_0$, and therefore
\[
\sup_{0<\eps\le\eps_0}\int_E |D\Phi\,\gamma^\eps(z)|^2\,d\pi^\eps(z)
\le C\sup_{0<\eps\le\eps_0}\int_E |z|^2\,d\pi^\eps(z)
= C\sup_{0<\eps\le\eps_0}\tr(\Sigma^\eps)<\infty.
\]

\smallskip
\noindent\emph{(i) Weak convergence of projected current.}
We apply \Cref{prop:drift-implies-current}. Here $\bar E=\R^{d_y}$ is open and $\Phi(x,y)=y$ is affine.
Write $\pi^\eps(dz)=e^{-V^\eps(z)}dz$ with $V^\eps(z)=\tfrac12 z^\top(\Sigma^\eps)^{-1}z+\mathrm{const}$.
Since $A^\eps$ is constant, $\nabla\!\cdot A^\eps\equiv0$, and
\[
b^\eps=\gamma^\eps-A^\eps\nabla V^\eps
=I^\eps\big((\Sigma^\eps)^{-1}-B\big)z-I^\eps(\Sigma^\eps)^{-1}z
=-I^\eps Bz,
\]
so $D\Phi\,b^\eps(x,y)=-(B_{21}x+B_{22}y)$ is independent of $\eps$.

Let $\xi\in C_b(\bar E;\R^{d_y})$ and set $\Xi:=\xi\circ\Phi$. Using $\pi^\eps\Rightarrow\Pi$ and
$\sup_{\eps\le\eps_0}\int|z|^2\,d\pi^\eps<\infty$, we may pass to the limit:
\[
\int_E \langle \xi(\Phi(z)),D\Phi b^\eps(z)\rangle\,d\pi^\eps(z)
\to
\int_E \langle \xi(\Phi(z)),D\Phi b^0(z)\rangle\,d\Pi(z).
\]
Under $\Pi$, $\E[X\mid Y=y]=-B_{11}^{-1}B_{12}y$, hence the right-hand side equals
\[
\int_{\bar E}\big\langle \xi(y),-\bigl(B_{22}-B_{21}B_{11}^{-1}B_{12}\bigr)y\big\rangle\,d\bar\pi(y)
=
\int_{\bar E}\langle \xi(y),-Cy\rangle\,d\bar\pi(y).
\]
Thus $B^\eps=\Phi_\#(D\Phi\,b^\eps\,\pi^\eps)\rightharpoonup \bar B$ with $\bar B(dy)=(-Cy)\bar\pi(dy)$, and
\Cref{prop:drift-implies-current} yields \Cref{ass:coeff-weak}\textup{(i)}.

This completes the proof.
\end{proof}

\begin{lemma}
    Assume that \( \Sym(B_{11})\succ 0\), then \Cref{ass:CDkappa} holds.
    \label{lem:OU-CD}
\end{lemma}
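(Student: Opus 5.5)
The plan is to verify the Bakry--\'Emery inequality $\Gamma_2^\eps(f)+\kappa\,\Gamma^\eps(f)\ge0$ directly, with a $\kappa$ that does not depend on $\eps$. We then pass to the gradient commutation form \eqref{eq:ex3-2}, using the equivalence recalled at the start of Appendix~\ref{subsec:II} (cf.\ \cite[Ex.~3]{bolley-gentil-phi-entropy}), or equivalently \Cref{thm:Ricci} in its constant-diffusion form. Here the diffusion matrix $A^\eps=I^\eps$ is constant and the drift is linear, $-M^\eps z$ with $M^\eps:=I^\eps B$. Therefore $\Gamma^\eps(f)=\nabla f^\top I^\eps\nabla f$, and the third-order terms in $\Gamma_2^\eps$ cancel.

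\emph{Step 1 (computing $\Gamma_2^\eps$).} For $f\in C_c^\infty$ we expand $\tfrac12\mathcal L^\eps\Gamma^\eps(f)-\Gamma^\eps(f,\mathcal L^\eps f)$. The second-order part gives the nonnegative term $\tr\big((I^\eps D^2f)^2\big)=\|(I^\eps)^{1/2}D^2f\,(I^\eps)^{1/2}\|_{HS}^2$. The drift part gives $\nabla f^\top I^\eps (M^\eps)^\top\nabla f=\nabla f^\top I^\eps B^\top I^\eps\nabla f$. Hence
\[
\Gamma_2^\eps(f)\ \ge\ \nabla f^\top I^\eps\,\Sym(B)\,I^\eps\nabla f .
\]
Setting $v:=(I^\eps)^{1/2}\nabla f$, so that $\Gamma^\eps(f)=|v|^2$, it therefore suffices to show
\[
\mathsf T^\eps:=(I^\eps)^{1/2}\Sym(B)(I^\eps)^{1/2}+\kappa I\ \succeq\ 0\qquad\text{for all }\eps\in(0,\eps_0].
\]

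\emph{Step 2 ($\eps$-uniform matrix bound).} Write $S:=\Sym(B)$ with blocks $S_{ij}$, so $S_{11}=\Sym(B_{11})\succ0$ by hypothesis. In block form,
\[
\mathsf T^\eps=\begin{pmatrix}\eps^{-1}S_{11}+\kappa I & \eps^{-1/2}S_{12}\\ \eps^{-1/2}S_{21} & S_{22}+\kappa I\end{pmatrix}.
\]
The upper-left block is positive definite. Its Schur complement equals
\[
S_{22}+\kappa I-S_{21}\big(S_{11}+\eps\kappa I\big)^{-1}S_{12}\ \succeq\ S_{22}+\kappa I-S_{21}S_{11}^{-1}S_{12},
\]
where the inequality uses $(S_{11}+\eps\kappa I)^{-1}\preceq S_{11}^{-1}$. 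The $\eps$-scalings cancel exactly in this Schur complement, which is the key point. Choosing
\[
\kappa:=\max\big\{0,\,-\lambda_{\min}\big(S_{22}-S_{21}S_{11}^{-1}S_{12}\big)\big\}
\]
makes the Schur complement positive semidefinite, so $\mathsf T^\eps\succeq0$ for every $\eps>0$. This is exactly the structure of \Cref{thm:CD_schur_averaging} with $a_1=I$, $a_2=I$, and $-\mathcal Jb=B$.

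\emph{Step 3 (from $\Gamma_2$ to \eqref{eq:ex3-2}).} With $\Gamma_2^\eps\ge-\kappa\Gamma^\eps$ established on $C_c^\infty$, the standard semigroup interpolation argument applies. One differentiates $s\mapsto e^{-2\kappa s}P_s^\eps\Gamma^\eps(P_{t-s}^\eps f)$ and shows it is nondecreasing, which yields $\Gamma^\eps(P_t^\eps f)\le e^{2\kappa t}P_t^\eps\Gamma^\eps(f)$.

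I expect this last step to be the main obstacle. The coefficients are unbounded and $f\in\mathcal M$ is not compactly supported, so the interpolation must be justified. For the OU case there is a simpler route that avoids it. Because the flow is linear and synchronous coupling is deterministic, one has $\nabla P_t^\eps f(z)=e^{-(M^\eps)^\top t}\,P_t^\eps\nabla f(z)$. The condition \eqref{eq:sync-assumption} of \Cref{thm:flow-CD} then reduces to the deterministic estimate
\[
\tfrac{d}{dt}\big(w^\top (I^\eps)^{-1}w\big)=-2\,w^\top \Sym(B)\,w,\qquad w_t=e^{-M^\eps t}w_0,
\]
and Step~2 bounds the right-hand side below by $-2\kappa\,w^\top (I^\eps)^{-1}w$ (taking $v=(I^\eps)^{-1/2}w$). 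Gr\"onwall then gives $e^{2\kappa t}$ growth, and \Cref{thm:flow-CD} yields \Cref{ass:CDkappa} with $\rho=\kappa$, uniformly in $\eps$.
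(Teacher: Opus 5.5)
Your proof is correct and follows the paper's argument. You compute $\Gamma_2^\eps$ for the constant-diffusion, linear-drift generator, drop the Hessian term, and bound $(I^\eps)^{1/2}\Sym(B)(I^\eps)^{1/2}$ from below uniformly in $\eps$ via the Schur complement of $\Sym(B_{11})$, which gives the paper's constant $\kappa=-\rho=\max\{0,-\lambda_{\min}(S_{22}-S_{21}S_{11}^{-1}S_{12})\}$. You also make explicit two steps the paper leaves implicit: the $\eps$-cancellation via $(S_{11}+\eps\kappa I)^{-1}\preceq S_{11}^{-1}$, and the passage to the commutation form \eqref{eq:ex3-2}, which the paper simply takes from \cite[Ex.~3]{bolley-gentil-phi-entropy}. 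Your formula $\nabla P_t^\eps f=e^{-(M^\eps)^\top t}P_t^\eps\nabla f$ lets you run the Cauchy--Schwarz step of \Cref{thm:flow-CD} directly, which matters because its $C_b^2$ hypothesis fails for linear drift.
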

\begin{proof}
    For the curvature–dimension estimate, we write
\[
  \mathrm{Sym}(B) := \tfrac12\bigl(B+B^\top\bigr)
  =
  \begin{pmatrix}
    S_{11} & S_{12}\\[2pt]
    S_{21} & S_{22}
  \end{pmatrix},
  \qquad
  S_{11} = \mathrm{Sym}(B_{11}).
\]
By assumption, $S_{11}$ is positive definite and hence invertible. Denote by
\[
  S := S_{22} - S_{21}S_{11}^{-1}S_{12}
\]
the Schur complement of $\mathrm{Sym}(B)$ with respect to $S_{11}$, and set
\[
  \rho := \min\bigl\{0,\lambda_{\min}(S)\bigr\} \le 0.
\]
For the generator
\[
  \mathcal{L}^\varepsilon f(z)
  = \mathrm{tr}\bigl(I^{\varepsilon} D^2 f(z)\bigr) - (I^{\varepsilon} Bz)\cdot\nabla f(z),
\]
one computes (see e.g.\ the Bakry--\'Emery calculus for linear diffusions) that
\[
  \Gamma_1^\varepsilon(f)
  = \langle\nabla f, I^{\varepsilon}\nabla f\rangle,
  \qquad
  \Gamma_2^\varepsilon(f)
  = \|D^2 f\|_{I^{\varepsilon}}^2
    + \bigl\langle\nabla f,\,I^{\varepsilon} \mathrm{Sym}(B) I^{\varepsilon}\,\nabla f\bigr\rangle,
\]
where $\|D^2 f\|_{I^{\varepsilon}}^2\ge0$ is the Hessian term. A block decomposition of the quadratic
form $v\mapsto \langle v, I^{\varepsilon} \mathrm{Sym}(B) I^{\varepsilon} v\rangle$ using the Schur
complement shows that, for every $v\in\mathbb{R}^d$ and every $\varepsilon>0$,
\[
  \bigl\langle v,\,I^{\varepsilon} \mathrm{Sym}(B) I^{\varepsilon} v\bigr\rangle
  \;\ge\; \rho\,\langle v, I^{\varepsilon} v\rangle.
\]
In particular,
\[
  \Gamma_2^\varepsilon(f)
  \;\ge\; \bigl\langle\nabla f,\,I^{\varepsilon} \mathrm{Sym}(B) I^{\varepsilon} \nabla f\bigr\rangle
  \;\ge\; \rho\,\Gamma_1^\varepsilon(f),
  \qquad \forall f\in C_c^\infty(\mathbb{R}^d).
\]
Hence $\mathcal{L}^\varepsilon$ satisfies a uniform curvature--dimension bound
$\mathrm{CD}(\rho,\infty)$ for all $0<\varepsilon<\eps_0$.
\end{proof}
\begin{lemma}\label{lem:OU-locking}
Besides the Hurwitz assumption, assume additionally that $B_{11}=\Sym(B_{11})$ and $B_{12}^\top=B_{21}$.
Then \Cref{ass:locking} holds.
\end{lemma}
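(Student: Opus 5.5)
The plan is to verify the locking condition through the canonical simplification of \Cref{prop:canonical-locking-CI}, using the fact that everything in the OU model is linear-Gaussian. First, I record the forces explicitly. Since $A^\eps=I^\eps$ and $\gamma^\eps(z)=I^\eps((\Sigma^\eps)^{-1}-B)z$, the microscopic force is
\[
F^\eps(z)=(A^\eps)^{-1}\gamma^\eps(z)=\big((\Sigma^\eps)^{-1}-B\big)z=:D^\eps z .
\]
The macroscopic force is $\bar F(y)=((\Sigma^y)^{-1}-C)y$, with $\bar A=I_{d_y}$ and $\Sigma^y=\Sigma^0_{yy}$. Writing $D^\eps=\begin{pmatrix}D^\eps_{x\cdot}\\ D^\eps_{y\cdot}\end{pmatrix}$ by block rows, the integrand in \eqref{eq:canonical-locking} becomes
\[
\eps^{-1}|D^\eps_{x\cdot}z|^2+\big|D^\eps_{y\cdot}z-\bar F(y)\big|^2 .
\]
Since $u^\eps\le\|f\|_\infty$ and $\sup_\eps\int|z|^2d\pi^\eps<\infty$ (by \Cref{lem:standing}), it suffices to prove two matrix statements:
\[
\text{(a)}\ \ \|D^\eps_{x\cdot}\|=o(\sqrt\eps),\qquad
\text{(b)}\ \ D^\eps_{y\cdot}\to\big(0,\ (\Sigma^y)^{-1}-C\big).
\]

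The identification limit \eqref{eq:CI-barF} is checked directly. Here $D\Phi A^\eps D\Phi^\top=I_{d_y}$, and $\bar F$ is linear, so $|\bar F(\Phi(z))|^2\le c|z|^2$. I then combine local uniform convergence of $u^\eps$, weak convergence $\pi^\eps\Rightarrow\Pi$, and uniform integrability of $|z|^2$. The last holds because the measures are Gaussian with convergent covariances, so even fourth moments are uniformly bounded. The same truncation argument as in \Cref{lem:weighted-cross-quadratic-conv} then applies with quadratic weights.

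Next comes the zeroth-order identification of $(\Sigma^0)^{-1}$. From the proof of \Cref{lem:standing}, $\Sigma^0_{xy}=-B_{11}^{-1}B_{12}\Sigma^0_{yy}$, and $\Sigma^x=\Sigma^0_{xx}-\Sigma^0_{xy}(\Sigma^0_{yy})^{-1}\Sigma^0_{yx}$. Substituting into the limiting (TL) equation \eqref{eq:TL-stand-final} gives $B_{11}\Sigma^x+\Sigma^xB_{11}^\top=2I$, after the cross terms cancel. Since $B_{11}=B_{11}^\top$ is Hurwitz, $\Sigma^x=B_{11}^{-1}$. Block inversion then gives
\[
(\Sigma^0)^{-1}_{x\cdot}=\big((\Sigma^x)^{-1},\ (\Sigma^x)^{-1}B_{11}^{-1}B_{12}\big)=(B_{11},B_{12})=B_{x\cdot},
\]
so $D^0_{x\cdot}=0$. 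For the $y$-row, the Schur-complement formula gives $(\Sigma^0)^{-1}_{yy}=(\Sigma^y)^{-1}+B_{12}^\top B_{11}^{-1}B_{12}$ and $(\Sigma^0)^{-1}_{yx}=B_{12}^\top$. With $B_{21}=B_{12}^\top$ this yields $D^0_{yx}=B_{12}^\top-B_{21}=0$ and
\[
D^0_{yy}=(\Sigma^y)^{-1}+B_{21}B_{11}^{-1}B_{12}-B_{22}=(\Sigma^y)^{-1}-C .
\]
Because $\Sigma^\eps\to\Sigma^0\succ0$, this already proves (b). This is exactly where both symmetry hypotheses enter.

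The main obstacle is the rate in (a): convergence alone gives only $D^\eps_{x\cdot}\to0$, but we need $o(\sqrt\eps)$. I would show $\|\Sigma^\eps-\Sigma^0\|=O(\eps)$. The Lyapunov equation, multiplied in its $x$-rows by $\eps$, is a linear system $\mathcal A(\eps)\Sigma^\eps=R(\eps)$ whose coefficients are polynomial in $\eps$. Its limit operator at $\eps=0$ is the one shown injective in the uniqueness part of \Cref{lem:standing}, since it is built from Lyapunov operators for the Hurwitz matrices $B_{11}$ and $C$. Hence the solution is analytic (or at least Lipschitz) in $\eps$ near $0$, by the implicit function theorem or a Neumann-series argument. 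Then $(\Sigma^\eps)^{-1}=(\Sigma^0)^{-1}+O(\eps)$, so $\|D^\eps_{x\cdot}\|=O(\eps)$ and $\eps^{-1}\|D^\eps_{x\cdot}\|^2=O(\eps)\to0$. This establishes \eqref{eq:canonical-locking}, and \Cref{prop:canonical-locking-CI} yields \Cref{ass:locking}.
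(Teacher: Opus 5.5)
Your proposal is correct and follows the paper's overall structure. Both reduce to the canonical condition via \Cref{prop:canonical-locking-CI}, check the identification limit \eqref{eq:CI-barF} with Gaussian moment bounds, and then show two things about $K^\eps=(\Sigma^\eps)^{-1}-B$: its fast row is $O(\eps)$, and its slow row converges to $(0,(\Sigma^y)^{-1}-C)$.

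The difference lies in how the $O(\eps)$ rate is obtained. The paper works at fixed $\eps>0$ directly with the Lyapunov blocks:
\begin{enumerate}
\item It extracts $\|\Sigma^\eps_{xy}(\Sigma^\eps_{yy})^{-1}+B_{11}^{-1}B_{12}\|\le C\eps$ from \eqref{eq:TR-stand-final}.
\item It derives a perturbed Lyapunov equation $B_{11}S^\eps+S^\eps B_{11}=2I_{d_x}+\eps R_S^\eps$ for the Schur complement $S^\eps$. This step already uses $B_{11}=B_{11}^\top$.
\item It reads off $O(\eps)$ bounds on $(\Sigma^\eps)^{-1}_{xx}-B_{11}$ and $(\Sigma^\eps)^{-1}_{xy}-B_{12}$ by block inversion. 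For the slow row it only needs the qualitative convergence $K^\eps_{yy}\to K^0_{yy}$.
\end{enumerate}
You instead obtain $\Sigma^\eps=\Sigma^0+O(\eps)$ for all blocks at once, by regular perturbation of the rescaled linear system. Its $\eps=0$ operator is injective by the Lyapunov arguments for $B_{11}$ and $C$, and it maps between spaces of equal dimension, so it is invertible. You then do all the algebra exactly at $\eps=0$.

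Your route cleanly separates the soft rate input from the exact identification of $(\Sigma^0)^{-1}$, which is where both symmetry hypotheses enter. It also gives a rate for every block. The paper's route is more hands-on and avoids the perturbation-theoretic step.

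One point of phrasing: the $\eps$-polynomial system should be written for the independent blocks, namely \eqref{eq:TL-stand-final}, $\eps$ times \eqref{eq:TR-stand-final}, and \eqref{eq:BR-stand-final}. Literally multiplying the $x$-rows of the matrix equation by $\eps$ leaves $\eps^{-1}$ in the \eqref{eq:BL-stand-final} block. That block is redundant, being the transpose of \eqref{eq:TR-stand-final}, so dropping it is harmless.
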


\begin{proof}
Fix $t>0$ and suppose \Cref{ass:standing} holds. Then for some $\eps_0>0$,
$\Sigma^\eps\to\Sigma^0$ as $\eps\to0$ along $(0,\eps_0]$ and
\[
\sup_{0<\eps\le\eps_0}\tr(\Sigma^\eps)<\infty,\qquad
\sup_{0<\eps\le\eps_0}\|(\Sigma^\eps)^{-1}\|<\infty.
\]
Since $B_{11}$ is Hurwitz and symmetric, $B_{11}\succ0$. Throughout $0<\eps\le\eps_0$.

In the OU setting $A^\eps\equiv I^\eps$, $\Phi(x,y)=y$, and
\[
(A^\eps)^{-1}\gamma^\eps(z)=\big((\Sigma^\eps)^{-1}-B\big)z=:K^\eps z,\qquad
D\Phi^\top\psi(\Phi(z))=(0,\psi(y)),\qquad
\|v\|_{I^\eps}^2=\eps^{-1}|v_x|^2+|v_y|^2.
\]
Moreover $\bar A\equiv I_{d_y}$ and
\[
\bar F(y):=\bar A^{-1}\bar\gamma(y)=\big((\Sigma^y)^{-1}-C\big)y,\qquad
C:=B_{22}-B_{21}B_{11}^{-1}B_{12},\qquad \Sigma^y=\Sigma^0_{yy}.
\]

\medskip
\emph{Step 1 (CI).}
Here $D\Phi A^\eps D\Phi^\top\equiv I_{d_y}$ and $\bar A\equiv I_{d_y}$, so Eq.~\eqref{eq:CI-barF} reduces to
\[
\int_E u^\eps(t,z)\,|\bar F(\Phi(z))|^2\,d\pi^\eps(z)\ \to\
\int_{\bar E}\bar u(t,y)\,|\bar F(y)|^2\,d\bar\pi(y).
\]
Since $\bar F$ is linear, $|\bar F(y)|^2\le c_F^2|y|^2$. Let $\chi_k\in C_b^\infty(\bar E;[0,1])$ be a cutoff and set
$\bar F_k:=\chi_k\bar F\in C_b(\bar E;\R^{d_y})$.
By Lemma~A.2, Eq.~\eqref{eq:w-quad} applied to $\psi=\bar F_k$ we have convergence for $\bar F_k$:
\[
\int_E u^\eps|\bar F_k\circ\Phi|^2\,d\pi^\eps \to \int_{\bar E}\bar u|\bar F_k|^2\,d\bar\pi.
\]
To pass $k\to\infty$, note that $u^\eps(t,\cdot)$ is bounded uniformly in $\eps$ by the maximum principle
(for bounded terminal data), and write $\nu^\eps:=\Phi_\#\pi^\eps=\mathcal N(0,\Sigma_{yy}^\eps)$.
Then
\[
\int_E u^\eps|\bar F\circ\Phi-\bar F_k\circ\Phi|^2\,d\pi^\eps
\le \|u^\eps(t,\cdot)\|_\infty\,c_F^2\int_{\bar E}|y|^2\mathbf 1_{\{|y|>k\}}\,d\nu^\eps(y).
\]
For centered Gaussians $Y\sim\mathcal N(0,\Sigma)$ one has the moment bound
$\E|Y|^4\le C_{d_y}(\tr\Sigma)^2$, hence by Markov inequality
\[
\int |y|^2\mathbf 1_{\{|y|>k\}}\,d\nu^\eps(y)
\le k^{-2}\int |y|^4\,d\nu^\eps(y)
\le \frac{C}{k^2},
\qquad \text{uniformly in }0<\eps\le\eps_0,
\]
because $\sup_{\eps\le\eps_0}\tr(\Sigma_{yy}^\eps)\le \sup_{\eps\le\eps_0}\tr(\Sigma^\eps)<\infty$.
The same estimate holds for $\bar\pi$.
Therefore the tail errors vanish uniformly as $k\to\infty$, and Eq.~\eqref{eq:CI-barF} follows.

\medskip
\emph{Step 2 (canonical reduction).}
By \Cref{prop:canonical-locking-CI}, it suffices to show
\[
\limsup_{\eps\to0}R^\eps(t;\bar F)=0,
\qquad
R^\eps(t;\bar F)=\int_E u^\eps(t,z)\,\|K^\eps z-(0,\bar F(y))\|_{I^\eps}^2\,d\pi^\eps(z).
\]
Equivalently,
\begin{equation}\label{eq:Rsplit-OU}
R^\eps(t;\bar F)
=\int_E u^\eps(t,z)\Big(\eps^{-1}|(K^\eps z)_x|^2+|(K^\eps z)_y-\bar F(y)|^2\Big)\,d\pi^\eps(z).
\end{equation}

\medskip
\emph{Step 3 (fast row estimate).}
Write $\Sigma^\eps$ in blocks and use $B_{11}=B_{11}^\top$, $B_{21}=B_{12}^\top$.
From the $(\mathrm{TR})$ Lyapunov block in Eq.~\eqref{eq:TR-stand-final}, we have
\begin{equation}\label{eq:TR-lock-short}
B_{11}\Sigma_{xy}^\eps+B_{12}\Sigma_{yy}^\eps
=-\eps\big(\Sigma_{xx}^\eps B_{12}+\Sigma_{xy}^\eps B_{22}^\top\big).
\end{equation}
Since $\Sigma^\eps\to\Sigma^0$ and $\Sigma_{yy}^0\succ0$, there is $M<\infty$ with
\begin{equation}\label{eq:blocks-bdd-short}
\sup_{\eps\le\eps_0}\Big(\|\Sigma_{xx}^\eps\|+\|\Sigma_{xy}^\eps\|+\|\Sigma_{yy}^\eps\|
+\|(\Sigma_{yy}^\eps)^{-1}\|\Big)\le M.
\end{equation}
Right-multiplying Eq.~\eqref{eq:TR-lock-short} by $(\Sigma_{yy}^\eps)^{-1}$ gives
\begin{equation}\label{eq:xyyy-short}
\Big\|\Sigma_{xy}^\eps(\Sigma_{yy}^\eps)^{-1}+B_{11}^{-1}B_{12}\Big\|\le C\eps.
\end{equation}

Let $S^\eps:=\Sigma_{xx}^\eps-\Sigma_{xy}^\eps(\Sigma_{yy}^\eps)^{-1}\Sigma_{yx}^\eps\succ0$.
Using the definition of $S^\eps$ and substituting Eq.~\eqref{eq:TR-lock-short} and its transpose into
$B_{11}\Sigma_{xy}^\eps(\Sigma_{yy}^\eps)^{-1}$ and $(\Sigma_{yy}^\eps)^{-1}\Sigma_{yx}^\eps B_{11}$ yields
\begin{align}
B_{11}S^\eps+S^\eps B_{11}
&=(B_{11}\Sigma_{xx}^\eps+\Sigma_{xx}^\eps B_{11})
+(B_{12}\Sigma_{yx}^\eps+\Sigma_{xy}^\eps B_{12}^\top)
+\eps\,R_S^\eps,\label{eq:Schur-3}\\
&=2I_{d_x}+\eps\,R_S^\eps,\nonumber
\end{align}
where the last line uses the $(\mathrm{TL})$ block in Eq.~\eqref{eq:TL-stand-final}. Moreover,
Eq.~\eqref{eq:blocks-bdd-short} implies $\sup_{\eps\le\eps_0}\|R_S^\eps\|\le C$.

Since $B_{11}\succ0$, the Lyapunov operator $X\mapsto B_{11}X+XB_{11}$ is invertible on symmetric matrices, hence
$\|S^\eps-B_{11}^{-1}\|\le C\eps$. In particular, $S^\eps\to B_{11}^{-1}\succ0$, so shrinking $\eps_0$ if needed we may assume
$\sup_{\eps\le\eps_0}\|(S^\eps)^{-1}\|<\infty$. Using
\[
(S^\eps)^{-1}-B_{11}=(S^\eps)^{-1}(B_{11}^{-1}-S^\eps)B_{11},
\]
we obtain $\|(S^\eps)^{-1}-B_{11}\|\le C\eps$ for $0<\eps\le\eps_0$.
By the block inverse formula,
\[
(\Sigma^\eps)^{-1}_{xx}=(S^\eps)^{-1},\qquad
(\Sigma^\eps)^{-1}_{xy}=-(S^\eps)^{-1}\Sigma_{xy}^\eps(\Sigma_{yy}^\eps)^{-1},
\]
so combining with Eq.~\eqref{eq:xyyy-short} gives
\begin{equation}\label{eq:fastrow}
\|(\Sigma^\eps)^{-1}_{xx}-B_{11}\|+\|(\Sigma^\eps)^{-1}_{xy}-B_{12}\|\le C\eps.
\end{equation}
Therefore for $K^\eps=(\Sigma^\eps)^{-1}-B$,
\begin{equation}\label{eq:Kxstar}
\|K_{x*}^\eps\|\le C\eps,\qquad \|K_{yx}^\eps\|\le C\eps,
\end{equation}
where the second bound uses symmetry of $(\Sigma^\eps)^{-1}$ and $B_{21}=B_{12}^\top$.

\medskip
\emph{Step 4 (estimate $R^\eps$).}
Let $M_u:=\sup_\eps\|u^\eps(t,\cdot)\|_\infty$. Since $\pi^\eps=\mathcal N(0,\Sigma^\eps)$,
$\int|z|^2\,d\pi^\eps=\tr(\Sigma^\eps)\le C$ uniformly.

By Eq.~\eqref{eq:Kxstar}, $|(K^\eps z)_x|\le \|K_{x*}^\eps\||z|\le C\eps|z|$, hence
\[
\int u^\eps\,\eps^{-1}|(K^\eps z)_x|^2\,d\pi^\eps \le M_u\,\eps^{-1}(C\eps)^2\int|z|^2\,d\pi^\eps \le C\eps\to0.
\]

Since $(\Sigma^\eps)^{-1}\to(\Sigma^0)^{-1}$, we have $K_{yy}^\eps\to K_{yy}^0$.
Letting $\eps\to0$ in Eq.~\eqref{eq:fastrow} gives $(\Sigma^0)^{-1}_{xx}=B_{11}$ and $(\Sigma^0)^{-1}_{xy}=B_{12}$, hence
$(\Sigma^0)^{-1}_{yx}=B_{21}$. The Schur identity yields
\[
(\Sigma^0)^{-1}_{yy}=(\Sigma^0_{yy})^{-1}+B_{21}B_{11}^{-1}B_{12}=(\Sigma^y)^{-1}+B_{21}B_{11}^{-1}B_{12},
\]
so $K_{yy}^0=(\Sigma^y)^{-1}-C$ and therefore $\bar F(y)=K_{yy}^0y$.
Thus
\[
(K^\eps z)_y-\bar F(y)=K_{yx}^\eps x+(K_{yy}^\eps-K_{yy}^0)y,
\]
and using Eq.~\eqref{eq:Kxstar} and $\|K_{yy}^\eps-K_{yy}^0\|\to0$ gives
\[
\int u^\eps\,|(K^\eps z)_y-\bar F(y)|^2\,d\pi^\eps
\le C\|K_{yx}^\eps\|^2\int|x|^2\,d\pi^\eps + C\|K_{yy}^\eps-K_{yy}^0\|^2\int|y|^2\,d\pi^\eps \to0.
\]

Together with Eq.~\eqref{eq:Rsplit-OU}, it yields $R^\eps(t;\bar F)\to0$, hence Eq.~\eqref{eq:canonical-locking}.
By \Cref{prop:canonical-locking-CI} we conclude \Cref{ass:locking}.
\end{proof}

\subsection{Ito-Kunita method in the averaging model}
\begin{assumptionx}[Fast--slow structure and structural constants]
\label{ass:IKB}
We consider the fast--slow SDE in Eq.~\eqref{eq:NL-avg-SDE} on $\mathbb R^n\times\mathbb R^m$ with block diffusion/inverse matrices
\[
A^{\varepsilon}(x,y)=\mathrm{diag}\!\big(\varepsilon^{-1}a_1(x,y),\,a_2(y)\big),~
G^{\varepsilon}(x,y)=A^{\varepsilon}(x,y)^{-1}=\mathrm{diag}\big(\varepsilon B_1(x,y),\,B_2(y)\big),
\]
where $B_1=a_1^{-1}$, $B_2=a_2^{-1}$.

\begin{enumerate}
\item \emph{(Two-sided uniform ellipticity)}
There exist $0<\lambda_1\le \Lambda_1$ and $0<\lambda_2\le \Lambda_2$ such that
\[
\lambda_1 I\le a_1(x,y)\le \Lambda_1 I,\qquad
\lambda_2 I\le a_2(y)\le \Lambda_2 I\qquad\text{for all }(x,y).
\]
In particular, $\|B_i\|_{\op}\le \lambda_i^{-1}$.

\item \emph{(Regularity, with slow diffusion independent of $x$)}
$b_1,b_2\in C^2(\mathbb R^n\times\mathbb R^m),\eta_1\in C_b^2(\mathbb R^n\times\mathbb R^m)$.
The slow noise $\eta_2$ depends only on $y$ and satisfies $\eta_2\in C_b^2(\mathbb R^m)$.
Consequently $a_i$ and $B_i=a_i^{-1}$ are $C^2$ with bounded first/second derivatives.
All derivative norms below use the block-derivative conventions fixed at the start of this section:
$\nabla_x b_1$ (resp.\ $\nabla_y b_1$) is the Jacobian in the $x$ (resp.\ $y$) variables and
$\|\nabla_x b_1\|_{\op}:=\sup_{|v|=1}\,|(\nabla_x b_1)v|$ (similarly for $b_2$);
for matrix-valued $B_1$, $\nabla_x B_1$ is the 3-tensor and
$\|\nabla_x B_1\|_{\op}:=\sup_{|v|=1}\,\|(\nabla_x B_1)v\|_{\op}$ (operator norm on the target matrix).
We denote the finite sup-norms
\[
\begin{aligned}
&L_{b_1,x}:=\sup\|\nabla_x b_1\|_{\op},L_{b_1,y}:=\sup\|\nabla_y b_1\|_{\op},L_{b_2,x}:=\sup\|\nabla_x b_2\|_{\op},L_{b_2,y}:=\sup\|\nabla_y b_2\|_{\op},\\
&L_{\eta_1,x}:=\sup\|\nabla_x \eta_1\|_{\op},
L_{\eta_1,y}:=\sup\|\nabla_y \eta_1\|_{\op},L_{\eta_2,y}:=\sup_{y}\|\nabla_y \eta_2\|_{\op},L_{B_2,y}:=\sup_{y}\|\nabla_y B_2\|_{\op}
%\quad(\text{$\eta_2$ has no $x$-dependence}),
\\
&H_{1,\infty}:=\sup\|\eta_1\|_{\op}, H_{2,\infty}:=\sup_{y}\|\eta_2\|_{\op},L_{B_1,x}:=\sup\|\nabla_x B_1\|_{\op},\quad L_{B_1,y}:=\sup\|\nabla_y B_1\|_{\op}.
\end{aligned}
\]

\item \emph{(Weighted structural constants)}
\[
\begin{aligned}
K_x^{(W)}&:=-\sup_{(x,y)}\ \sup_{u\neq0}\ \frac{u^\top B_1\,\big(\nabla_x b_1\big)\,u}{u^\top B_1\,u},B_{xy}^{(W)}:=\sup_{(x,y)}\ \big\|B_1\,\nabla_y b_1\big\|_{\op},\\
B_{2x}^{(W)}&:=\sup_{(x,y)}\ \big\|B_2\,\nabla_x b_2\big\|_{\op},M_{2y}^{(W)}:=\sup_{(x,y)}\ \lambda_{\max}\!\Big(\mathrm{Sym}\big(B_2\,\nabla_y b_2\big)\Big).
\end{aligned}
\]

\item \emph{(Technical energy constants)}
Set the $\varepsilon$–independent constants
\[
\begin{gathered}
C_{1h}:=\frac{2\Lambda_1}{\lambda_1}L_{\eta_1,x}^2,
C_{1j}:=\frac{2\Lambda_2}{\lambda_1}L_{\eta_1,y}^2,
C_{2,\sigma}:=\frac{2\Lambda_2}{\lambda_2}L_{\eta_2,y}^2,
\tilde C_{LfB_1}:=\Lambda_1\sup\|(\mathcal L_f B_1)\|_{\op},\\C_h:=\Lambda_1\sup\|(\mathcal L_s B_1)\|_{\op},
C_j^{(0)}:=\Lambda_2\sup\|M_2\|_{\op},
C_{\mathrm{cross}}:=\frac{2\Lambda_2}{\lambda_2}L_{B_2,y}^2H_{2,\infty}^2,
\end{gathered}
\]
where
\[
(\mathcal L_f B_1):=b_1\cdot\nabla_x B_1+a_1:\nabla_x^2 B_1,
(\mathcal L_s B_1):=b_2\cdot\nabla_y B_1+a_2:\nabla_y^2 B_1,
M_2:=(\nabla_y B_2)\,b_2+a_2:\nabla_y^2 B_2.
\]
For the cross-variation in the $V_1$-It\^o computation, there exist finite constants
\[
C_{X1}=c_{r_1}\,L_{B_1,x}H_{1,\infty}\Big(L_{\eta_1,x}+\tfrac12L_{\eta_1,y}\Big)\Lambda_1,
C_{Y1}=c_{r_1}\,L_{B_1,x}H_{1,\infty}\Big(\tfrac12L_{\eta_1,y}\Big)\Lambda_2,
\]
with $c_{r_1}>0$ depending only on the column-dimension $r_1$ of $\eta_1$.

\item \emph{(Derived constants and structural gap)}
\[
\alpha_0:=2K_x^{(W)}-\big(\Lambda_1 B_{xy}^{(W)}+C_{1h}+\tilde C_{LfB_1}+C_{X1}\big),\quad
\beta_0:=\Lambda_2 B_{xy}^{(W)}+C_{1j}+C_{Y1},
\]
\[
c:=\Lambda_1 B_{2x}^{(W)},\qquad
d:=\Lambda_2 B_{2x}^{(W)}+2M_{2y}^{(W)}+C_{2,\sigma}+C_j^{(0)}+C_{\mathrm{cross}},
\]
and assume $\alpha_0>c$. Finally set $\rho:=(\beta_0+d)/2$.
\end{enumerate}
\end{assumptionx}

\begin{proof}of \textbf{\Cref{thm:CD-negative-short}}.]
We use the operator conventions stated above; in particular
$a_1:\nabla_x^2\phi=\mathrm{Tr}(a_1\,\nabla_x^2\phi)$ and
$a_2:\nabla_y^2\phi=\mathrm{Tr}(a_2\,\nabla_y^2\phi)$.

\emph{Step 1: Synchronous coupling and weighted energies.}
Let $(X_t^1,Y_t^1)$ and $(X_t^2,Y_t^2)$ be synchronously coupled solutions of Eq.~\eqref{eq:NL-avg-SDE},
and set $\Delta X_t=X_t^1-X_t^2$, $\Delta Y_t=Y_t^1-Y_t^2$, $Z_t^i=(X_t^i,Y_t^i)$.
Define
\[
V_1(t):=\Delta X_t^\top B_1(Z_t^1)\Delta X_t,\qquad
V_2(t):=\Delta Y_t^\top B_2(Y_t^1)\Delta Y_t,
\]
and $h(t):=\mathbb E V_1(t)$, $j(t):=\mathbb E V_2(t)$, $g_\varepsilon(t):=\varepsilon h(t)+j(t)$.
Uniform ellipticity yields
\[
\frac{\varepsilon}{\Lambda_1}|\Delta X_t|^2\le \varepsilon V_1(t)\le \frac{\varepsilon}{\lambda_1}|\Delta X_t|^2,\qquad
\frac{1}{\Lambda_2}|\Delta Y_t|^2\le V_2(t)\le \frac{1}{\lambda_2}|\Delta Y_t|^2.
\]

\emph{Step 2: It\^o expansions for $B_1$ and $B_2$.}
Write $L_\varepsilon=\varepsilon^{-1}\mathcal L_f+\mathcal L_s$ with
\[
\mathcal L_f\phi=b_1\cdot\nabla_x\phi+a_1:\nabla_x^2\phi,\qquad
\mathcal L_s\phi=b_2\cdot\nabla_y\phi+a_2:\nabla_y^2\phi.
\]
Along the first path $Z_t^1$,
\[
\begin{aligned}
dB_1(Z_t^1)
&=\Big(\tfrac{1}{\varepsilon}(\mathcal L_f B_1)+\mathcal L_s B_1\Big)(Z_t^1)\,dt
+\tfrac{1}{\sqrt\varepsilon}\underbrace{(\nabla_x B_1\,\eta_1)(Z_t^1)}_{=:N_1^{(1)}(t)}\,dW_t^{(1)}
+\underbrace{(\nabla_y B_1\,\eta_2)(Z_t^1)}_{=:N_1^{(2)}(t)}\,dW_t^{(2)},\\
dB_2(Y_t^1)
&=\underbrace{\big((\nabla_y B_2)b_2+a_2:\nabla_y^2 B_2\big)(Z_t^1)}_{=:M_2(Z_t^1)}\,dt
+\underbrace{(\nabla_y B_2\,\eta_2)(Y_t^1)}_{=:N_2(Y_t^1)}\,dW_t^{(2)}.
\end{aligned}
\]

\emph{Step 3: Differential inequality for $h'(t)$.}
Applying It\^o's lemma to $V_1=\Delta X^\top B_1\Delta X$,
\[
dV_1=D_1\,dt+D_2\,dt+Q_1\,dt+Q_2\,dt+dM_t,
\]
where $D_1$ collects drift terms from $d\Delta X_t$, $D_2$ drift terms from $dB_1$, $Q_1=(d\Delta X_t)^\top B_1\,d\Delta X_t$ is the quadratic variation, $Q_2$ is the cross-variation between $d\Delta X_t$ and $dB_1$, and $M_t$ is a local martingale. By a standard localisation argument (see the “Remark (local martingales)” at the end of this proof), we can take expectations and use $\mathbb E[dM_t]=0$ at the level of differentials.

\emph{(i) Drift $D_1$.} Linearise
\[
b_1(Z_t^1)-b_1(Z_t^2)=\int_0^1\big((\nabla_x b_1)\Delta X_t+(\nabla_y b_1)\Delta Y_t\big)\big(Z_t^\theta\big)\,d\theta.
\]
Using $K_x^{(W)}$, $B_{xy}^{(W)}$ and a one-line Young inequality,
\[
\mathbb E[D_1]\le -\frac{2K_x^{(W)}}{\varepsilon}h(t)+\frac{\Lambda_1 B_{xy}^{(W)}}{\varepsilon}h(t)+\frac{\Lambda_2 B_{xy}^{(W)}}{\varepsilon}j(t).
\]

\emph{(ii) Quadratic variation $Q_1$.} With the Lipschitz bounds for $\eta_1$ and ellipticity,
\[
\mathbb E[Q_1]
=\frac{1}{\varepsilon}\,\mathbb E\big[\mathrm{Tr}(\Delta\eta_1^\top B_1\Delta\eta_1)\big]
\le \frac{C_{1h}}{\varepsilon}\,h(t)+\frac{C_{1j}}{\varepsilon}\,j(t).
\]

\emph{(iii) Drift $D_2$ from $dB_1$.} Using $\|(\mathcal L_f B_1)\|$ and $\|(\mathcal L_s B_1)\|$,
\[
\mathbb E[D_2]
=\mathbb E\big[\Delta X^\top(\tfrac1\varepsilon\mathcal L_f B_1+\mathcal L_s B_1)\Delta X\big]
\le \frac{\tilde C_{LfB_1}}{\varepsilon}\,h(t)+C_h\,h(t).
\]

\emph{(iv) Cross-variation $Q_2$.} Only the $W^{(1)}$-channel contributes. By Lemma~\ref{lem:Q2-bound},
\[
\mathbb E[Q_2(t)]\le \frac{C_{X1}}{\varepsilon}\,h(t)+\frac{C_{Y1}}{\varepsilon}\,j(t).
\]

Combining (i)–(iv) yields
\begin{equation}\label{eq:H}
h'(t)\le -\frac{\alpha_0}{\varepsilon}\,h(t)+\frac{\beta_0}{\varepsilon}\,j(t)+C_h\,h(t).
\end{equation}

\emph{Step 4: Differential inequality for $j'(t)$.}
A similar computation for $V_2=\Delta Y^\top B_2\Delta Y$ (using $a_2=a_2(y)$) gives
\begin{equation}\label{eq:J}
j'(t)\le c\,h(t)+d\,j(t).
\end{equation}

\emph{Step 5: Total energy and Gronwall inequality.}
For $g_\varepsilon=\varepsilon h+j$, from Eqs.~\eqref{eq:H},\eqref{eq:J},
\[
g_\varepsilon'(t)\le \big(-\alpha_0+c+\varepsilon C_h\big)h(t)+(\beta_0+d)\,j(t).
\]
By $\alpha_0>c$, choose $\varepsilon_0\in(0,1]$ with
$-\alpha_0+c+\varepsilon C_h\le -(\alpha_0-c)/2<0$ for all $\varepsilon\le\varepsilon_0$.
Since $j\le g_\varepsilon$, we deduce
\[
g_\varepsilon(t)\le e^{(\beta_0+d)t}\,g_\varepsilon(0).
\]
Equivalently, for any $z^1,z^2$,
\begin{equation}\label{eq:sync-energy}
\mathbb E\big[(Z_t^1-Z_t^2)^\top G_\varepsilon(Z_t^1)(Z_t^1-Z_t^2)\big]
\le e^{(\beta_0+d)t}\,(z^1-z^2)^\top G_\varepsilon(z^1)(z^1-z^2).
\end{equation}
\end{proof}

\begin{lemma}[Bound for the cross-variation $Q_2$]\label{lem:Q2-bound}
In the It\^o expansion of $V_1$ above, the cross-variation drift $Q_2$ satisfies, for all $t\ge0$ and $\varepsilon\in(0,1]$,
\[
\mathbb E[Q_2(t)]\ \le\ \frac{C_{X1}}{\varepsilon}\,h(t)\ +\ \frac{C_{Y1}}{\varepsilon}\,j(t),
\]
with the $\varepsilon$–independent constants
\[
C_{X1}=c_{r_1}\,L_{B_1,x}H_{1,\infty}\Big(L_{\eta_1,x}+\tfrac12L_{\eta_1,y}\Big)\Lambda_1,\qquad
C_{Y1}=c_{r_1}\,L_{B_1,x}H_{1,\infty}\Big(\tfrac12L_{\eta_1,y}\Big)\Lambda_2,
\]
where $r_1$ is the column-dimension of $\eta_1$ and $c_{r_1}>0$ depends only on $r_1$.
\end{lemma}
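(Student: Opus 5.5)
The plan is to compute $Q_2$ explicitly from the It\^o product rule and then bound it by Cauchy--Schwarz and Young inequalities, in the weighted energies $h$ and $j$.

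\emph{Explicit form of $Q_2$.} For $V_1=\Delta X^\top B_1(Z^1)\Delta X$, the cross-variation term is
\[
Q_2\,dt \;=\; 2\,\big(d\Delta X_t\big)^\top\,\big(dB_1(Z_t^1)\big)\,\Delta X_t ,
\]
summed over the Brownian channels. Under synchronous coupling, the martingale part of $d\Delta X_t$ is $\varepsilon^{-1/2}\Delta\eta_1\,dW^{(1)}$, with $\Delta\eta_1:=\eta_1(Z^1)-\eta_1(Z^2)$. The martingale part of $dB_1$ is $\varepsilon^{-1/2}N_1^{(1)}dW^{(1)}+N_1^{(2)}dW^{(2)}$. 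Since $dX$ carries no $W^{(2)}$ noise, only the $W^{(1)}$ channel survives. Hence
\[
Q_2=\frac{2}{\varepsilon}\sum_{k=1}^{r_1}\big(\Delta\eta_1 e_k\big)^\top\big((\nabla_xB_1)\,\eta_1(Z^1)e_k\big)\Delta X .
\]
This already exhibits the $\varepsilon^{-1}$ prefactor in the claim.

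\emph{Pointwise bound.} I will take operator norms in each summand. First, $\|(\nabla_xB_1)\eta_1e_k\|_{\op}\le L_{B_1,x}H_{1,\infty}$. Second, by the mean value theorem,
\[
|\Delta\eta_1e_k|\le L_{\eta_1,x}|\Delta X|+L_{\eta_1,y}|\Delta Y| .
\]
Summing over $k$ gives a dimensional constant $c_{r_1}$, which absorbs both the factor $2$ and the column sum. The result is
\[
|Q_2|\le \frac{c_{r_1}}{\varepsilon}L_{B_1,x}H_{1,\infty}\big(L_{\eta_1,x}|\Delta X|^2+L_{\eta_1,y}|\Delta X||\Delta Y|\big).
\]
For the mixed product I apply Young's inequality, $|\Delta X||\Delta Y|\le\tfrac12|\Delta X|^2+\tfrac12|\Delta Y|^2$. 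This is exactly where the coefficients $\tfrac12L_{\eta_1,y}$ in $C_{X1}$ and $C_{Y1}$ come from.

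\emph{Conversion to energies.} Uniform ellipticity gives $|\Delta X|^2\le\Lambda_1V_1$ and $|\Delta Y|^2\le\Lambda_2V_2$, since $B_1\ge\Lambda_1^{-1}I$ and $B_2\ge\Lambda_2^{-1}I$. Taking expectations then yields the claimed bound with exactly the stated $C_{X1}$ and $C_{Y1}$.

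\emph{Main obstacle.} The main difficulty is bookkeeping. I need to check that no $W^{(2)}$ cross term survives, and this is where the structural assumption $a_2=a_2(y)$ and the block noise structure are used. I also need the expectations to be finite, so that integrability is justified. For this I will localise with stopping times, as in the remark on local martingales, and use the boundedness of $\eta_1$ and $\nabla B_1$. Since $\eta_1\in C_b^2$, all second moments of $\Delta X$ and $\Delta Y$ are finite on finite time intervals, and the bound passes to the limit by Fatou's lemma.
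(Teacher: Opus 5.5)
Your proposal is correct and is essentially the paper's proof: you keep only the $W^{(1)}$ channel, bound each column term by $L_{B_1,x}H_{1,\infty}\bigl(L_{\eta_1,x}|\Delta X|+L_{\eta_1,y}|\Delta Y|\bigr)|\Delta X|$ with $c_{r_1}=2r_1$, split the mixed product by $2ab\le a^2+b^2$, and convert to $h,j$ via $|\Delta X|^2\le\Lambda_1V_1$ and $|\Delta Y|^2\le\Lambda_2V_2$. One small correction: the $W^{(2)}$ cross term vanishes simply because $d\Delta X$ carries no $W^{(2)}$ noise (block-diagonal noise with independent $W^{(1)},W^{(2)}$); the assumption $a_2=a_2(y)$ plays no role here and is only needed later, for the $V_2$ estimate.
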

\begin{proof}
Keep only the noise parts that covary:
\[
d\Delta X_t^{\mathrm{noise}}=\tfrac1{\sqrt\varepsilon}\,\Delta\eta_1(t)\,dW_t^{(1)},\qquad
dB_1^{\mathrm{noise}}(Z_t^1)=\tfrac1{\sqrt\varepsilon}(\nabla_x B_1\,\eta_1)(Z_t^1)\,dW_t^{(1)}
+(\nabla_y B_1\,\eta_2)(Z_t^1)\,dW_t^{(2)}.
\]
Independence of $W^{(1)}$ and $W^{(2)}$ implies only $W^{(1)}$ contributes:
\[
\mathbb E[Q_2(t)]
\le \frac{2}{\varepsilon}\,\mathbb E\Big[\sum_{i=1}^{r_1}\|\Delta\eta_1^{(i)}\|\,\|(\nabla_x B_1\,\eta_1)^{(i)}\|\,|\Delta X_t|\Big].
\]
Using the elementary bound
\[
\sum_{i=1}^{r_1} A_i B_i \ \le\ r_1\,\max_i A_i\,\max_i B_i\ \le\ r_1\,\|A\|_{\op}\,\|B\|_{\op},
\]
we obtain, with $c_{r_1}=2r_1$,
\[
\mathbb E[Q_2(t)]\le \frac{c_{r_1}}{\varepsilon}\,\mathbb E\big[\ \|\Delta\eta_1(t)\|_{\op}\cdot \|(\nabla_x B_1\,\eta_1)(Z_t^1)\|_{\op}\cdot |\Delta X_t|\ \big].
\]
Use $\|(\nabla_x B_1\,\eta_1)\|_{\op}\le L_{B_1,x}H_{1,\infty}$ and
$\|\Delta\eta_1\|_{\op}\le L_{\eta_1,x}|\Delta X_t|+L_{\eta_1,y}|\Delta Y_t|$ to get
\[
\mathbb E[Q_2(t)]\le \frac{c_{r_1}L_{B_1,x}H_{1,\infty}}{\varepsilon}
\,\mathbb E\big[L_{\eta_1,x}|\Delta X_t|^2+L_{\eta_1,y}|\Delta X_t|\,|\Delta Y_t|\big].
\]
Apply $2ab\le a^2+b^2$ to the cross term and the ellipticity bounds
$\mathbb E|\Delta X_t|^2\le \Lambda_1 h(t)$, $\mathbb E|\Delta Y_t|^2\le \Lambda_2 j(t)$;
this yields the stated bound with the displayed $C_{X1},C_{Y1}$.%\qed
\end{proof}
\subsection{Subclass \(L^2(\pi)\) strong convergence}
\label{subsec:L2-str}
\begin{lemma}[Closability of the $y$--energy (Fisher-information) form]\label{lem:averaging-close}
Let $E=\R^{d_x}\times\R^{d_y}$ and let
\[
\Pi(dz)=e^{-V(z)}\,dz
\]
be a probability measure on $E$, where $V\in C^1(E)$.
Let $a_2:E\to\R^{d_y\times d_y}$ be measurable and symmetric, and assume that $a_2$ is locally bounded and uniformly elliptic on compact sets (as in \Cref{sec:setting}). Define the pre-form on $L^2(\Pi)$ with core $\mathcal D_0:=C_c^\infty(E)$ by
\[
\mathcal E_{y,0}(v,w)
:=\int_E \langle \nabla_y v(z),\,a_2(z)\nabla_y w(z)\rangle\,\Pi(dz),
\qquad v,w\in\mathcal D_0.
\]
Then $(\mathcal E_{y,0},\mathcal D_0)$ is closable in $L^2(\Pi)$. We denote its closure by
$(\mathcal E_y,\mathcal D(\mathcal E_y))$ and equip $\mathcal D(\mathcal E_y)$ with the norm
$\|v\|_{\mathcal E_y,1}^2:=\|v\|_{L^2(\Pi)}^2+\mathcal E_y(v,v)$.
\end{lemma}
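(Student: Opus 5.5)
The plan is the standard closability argument for gradient forms with a $C^1$ density. We must show: if $v_k\in\mathcal D_0$ satisfies $v_k\to0$ in $L^2(\Pi)$ and $(v_k)$ is $\mathcal E_{y,0}$-Cauchy, then $\mathcal E_{y,0}(v_k,v_k)\to0$. Set $G_k:=a_2^{1/2}\nabla_y v_k$. The Cauchy property says that $(G_k)$ is Cauchy in $L^2(\Pi;\R^{d_y})$. Hence $G_k\to G$ in $L^2(\Pi;\R^{d_y})$ for some $G$, and it suffices to prove $G=0$ $\Pi$-a.e.

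To identify $G$, I would test against smooth compactly supported vector fields. Fix $\xi\in C_c^\infty(E;\R^{d_y})$ and let $K:=\supp\xi$. On $K$, $a_2$ is bounded and uniformly elliptic, so $a_2^{-1/2}$ is bounded there, and $e^{-V}$ is bounded above and below by positive constants since $V\in C^1$. Consequently $\nabla_y v_k=a_2^{-1/2}G_k\to a_2^{-1/2}G=:H$ in $L^2(K,dz)$, and $v_k\to0$ in $L^2(K,dz)$. Integrating by parts in $y$ gives
\[
\int_E\langle \nabla_y v_k,\xi\rangle e^{-V}\,dz=-\int_E v_k\,\bigl(\nabla_y\!\cdot\xi-\langle\nabla_yV,\xi\rangle\bigr)e^{-V}\,dz .
\]
The bracket on the right is continuous and compactly supported precisely because $V\in C^1$, so the right-hand side is bounded by $C_\xi\|v_k\|_{L^2(\Pi)}\to0$. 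The left-hand side converges to $\int_E\langle H,\xi\rangle e^{-V}dz$. Since $\xi$ is arbitrary and $e^{-V}>0$, it follows that $H=0$ a.e., hence $G=a_2^{1/2}H=0$ $\Pi$-a.e.

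Therefore $\mathcal E_{y,0}(v_k,v_k)=\|G_k\|^2_{L^2(\Pi)}\to\|G\|^2=0$, which proves closability. The closure $(\mathcal E_y,\mathcal D(\mathcal E_y))$ and the norm $\|\cdot\|_{\mathcal E_y,1}$ are then defined in the standard way, by completing $\mathcal D_0$ under $\|\cdot\|_{\mathcal E_y,1}$ inside $L^2(\Pi)$.

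The only delicate point is the passage from weighted convergence to convergence of $\nabla_y v_k$ in $L^2(K,dz)$. This requires the local bounds on $a_2^{-1}$ and on $e^{\pm V}$, and it is exactly where the local ellipticity and the assumption $V\in C^1$ are used. No global assumption on $a_2$ or $V$ is needed.
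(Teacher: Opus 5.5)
Your proof is correct and follows essentially the same route as the paper. Both arguments take the $L^2(\Pi)$-limit of $a_2^{1/2}\nabla_y v_k$, use local ellipticity together with integration by parts against compactly supported test functions (this is where $V\in C^1$ enters) to show that the local limit of $\nabla_y v_k$ vanishes, and conclude that the limit is zero. The only cosmetic difference is that you identify the limit directly as $G=a_2^{1/2}H$ with $H=a_2^{-1/2}G$, whereas the paper extracts a separate limit $g^{(\ell)}$ of $\nabla_y v_n$ on each ball and transfers back using local boundedness of $a_2$.
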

\begin{proof}
We use the standard closability criterion. Let $v_n\in C_c^\infty(E)$ satisfy
$v_n\to0$ in $L^2(\Pi)$ and $\mathcal E_{y,0}(v_n-v_k,v_n-v_k)\to0$ as $n,k\to\infty$.
Set $w_n:=a_2^{1/2}\nabla_y v_n$. Then
\[
\|w_n-w_k\|_{L^2(\Pi)}^2=\mathcal E_{y,0}(v_n-v_k,v_n-v_k)\to0,
\]
so $w_n\to w$ in $L^2(\Pi)$ for some $w$.

Fix $\ell\in\N$ and $K_\ell=\{|z|\le \ell\}$. By uniform ellipticity on $K_\ell$,
\[
\int_{K_\ell}|\nabla_y(v_n-v_k)|^2\,d\Pi
\le \lambda_{K_\ell}^{-1}\mathcal E_{y,0}(v_n-v_k,v_n-v_k)\to0,
\]
hence $\nabla_y v_n\to g^{(\ell)}$ in $L^2(K_\ell,\Pi)$ for some $g^{(\ell)}$.
For any $\varphi\in C_c^\infty(E)$ with $\supp\varphi\subset K_\ell^\circ$ and any $i$,
integration by parts gives
\[
\int \partial_{y_i}v_n\,\varphi\,d\Pi
=-\int v_n(\partial_{y_i}\varphi-\varphi\,\partial_{y_i}V)\,d\Pi\to0,
\]
since $v_n\to0$ in $L^2(\Pi)$ and the coefficient is bounded on $\supp\varphi$.
Passing to the limit also yields
$\int g^{(\ell)}_i\,\varphi\,d\Pi=0$ for all such $\varphi$, hence $g^{(\ell)}=0$ $\Pi$--a.e.\ on $K_\ell^\circ$.
Therefore $\nabla_y v_n\to0$ in $L^2(K_\ell^\circ,\Pi)$.

By local boundedness of $a_2$, $\|a_2^{1/2}\|_{L^\infty(K_\ell)}<\infty$, so
\[
\|w_n\|_{L^2(K_\ell^\circ,\Pi)}\le \|a_2^{1/2}\|_{L^\infty(K_\ell)}\|\nabla_y v_n\|_{L^2(K_\ell^\circ,\Pi)}\to0.
\]
Since also $w_n\to w$ in $L^2(\Pi)$, we get $w=0$ on each $K_\ell^\circ$, hence $w=0$ $\Pi$--a.e.\ on $E$.
Thus $w_n\to0$ in $L^2(\Pi)$ and
\[
\mathcal E_{y,0}(v_n,v_n)=\|w_n\|_{L^2(\Pi)}^2\to0,
\]
which proves closability.
\end{proof}

\begin{proof}[Proof of \textbf{\Cref{thm:L2-strong}}.]
Set $v^\varepsilon:=\sqrt{u^\varepsilon(t)}$ and $v:=\sqrt{\bar u(t)\circ\Phi}$.
By the Markov property, $0\le u^\varepsilon(t,\cdot),\bar u(t,\cdot)\le \|f\|_\infty$, hence
$0\le v^\varepsilon,v\le \|f\|_\infty^{1/2}$.

By \Cref{ass:standing}\textup{(iii)} and the standard compact/tail argument on the probability space $(E,\pi)$,
$u^\varepsilon(t)\to \bar u(t)\circ\Phi$ in $L^1(\pi)$; thus
\[
\|v^\varepsilon-v\|_{L^2(\pi)}^2\le \|u^\varepsilon-\bar u(t)\circ\Phi\|_{L^1(\pi)}\to0
\qquad(|\sqrt a-\sqrt b|^2\le |a-b|).
\]

By Eq.~\eqref{eq:I-def} and the block form of $A^\varepsilon$,
\[
\I^\varepsilon(t)=4\Big(\tfrac1\varepsilon\|\nabla_x v^\varepsilon\|_{L^2(\pi;a_1)}^2
+\|\nabla_y v^\varepsilon\|_{L^2(\pi;a_2)}^2\Big)=4(\alpha^\varepsilon+\beta^\varepsilon).
\]
Under \Cref{ass:standing,ass:CDkappa}, \Cref{thm:FI-conv} yields $\I^\varepsilon(t)\to\bar\I(t)$, hence
$\sup_\varepsilon\beta^\varepsilon<\infty$. By \Cref{lem:averaging-close},
\[
a_2^{1/2}\nabla_y v^\varepsilon \rightharpoonup a_2^{1/2}\nabla_y v \text{ in }L^2(\pi),
\qquad
\liminf_{\varepsilon\to0}\beta^\varepsilon\ge \|a_2^{1/2}\nabla_y v\|_{L^2(\pi)}^2.
\]
By \Cref{lem:tractable-static}, $\|a_2^{1/2}\nabla_y v\|_{L^2(\pi)}^2=\bar\I(t)/4$. Since $\alpha^\varepsilon\ge0$ and
$\alpha^\varepsilon+\beta^\varepsilon=\I^\varepsilon(t)/4\to\bar\I(t)/4$, we get $\beta^\varepsilon\to\bar\I(t)/4$ and
$\alpha^\varepsilon\to0$, proving (i); moreover weak convergence plus norm convergence give
$a_2^{1/2}\nabla_y v^\varepsilon\to a_2^{1/2}\nabla_y v$ strongly in $L^2(\pi)$, proving (ii).

Using $u^\varepsilon=(v^\varepsilon)^2$ and the chain rule,
$\nabla_x u^\varepsilon=2v^\varepsilon\nabla_x v^\varepsilon$ and $\nabla_y u^\varepsilon=2v^\varepsilon\nabla_y v^\varepsilon$.
Then (iii) follows from $\|v^\varepsilon\|_\infty\le \|f\|_\infty^{1/2}$ and $\alpha^\varepsilon\to0$.
For (iv), with $h^\varepsilon:=a_2^{1/2}\nabla_y v^\varepsilon$ and $h:=a_2^{1/2}\nabla_y v$,
\[
a_2^{1/2}\nabla_y u^\varepsilon-a_2^{1/2}\nabla_y(\bar u(t)\circ\Phi)
=2(v^\varepsilon-v)h^\varepsilon+2v(h^\varepsilon-h),
\]
and the RHS $\to0$ in $L^2(\pi)$ by $v^\varepsilon\to v$ in $L^2(\pi)$, $h^\varepsilon\to h$ in $L^2(\pi)$, the $L^\infty$ bound on $v^\varepsilon$,
and the standard Vitali theorem.
\end{proof}

%\newpage

%\bibliographystyle{spmpsci}
\bibliographystyle{spphys}
\bibliography{references}
\end{document}